\documentclass{article}
\usepackage[english]{babel}
\usepackage[margin=36mm]{geometry}

\usepackage{amsmath}
\usepackage{mathtools}
\usepackage{graphicx}
\usepackage{amssymb}
\usepackage[colorlinks=true, allcolors=blue]{hyperref}
\usepackage{amsthm}
\usepackage{enumitem}
\usepackage{tikz}
\usepackage{tikz-cd}
\usepackage{todonotes}
\usetikzlibrary{arrows.meta,
                decorations.markings, 
                 positioning}
\usetikzlibrary{topaths,calc}
\usepackage{pgfplots}
\pgfplotsset{compat=1.18}
\usepackage{adjustbox}
\usepackage{subcaption} 
\usepackage{caption} 
\usepackage{floatrow}
\usepackage{nicematrix}
\usepackage{algpseudocode}
\usepackage{appendix}
\usepackage{comment}

\usepackage{bbm}
\usepackage{bm}

\usepackage[capitalise]{cleveref}
\usepackage{placeins}
\usepackage{cases}

\usepackage{ifthen}

\newcommand{\Email}[1]{\ifthenelse{\equal{#1}{}}{}{\par\noindent {\rm E-mail: }{\it  #1} \par}}
\newcommand{\EmailMarked}[1]{\ifthenelse{\equal{#1}{}}{}{\par\noindent $^*$~{\rm E-mail: }{\it  #1} \par}}
\newcommand{\URLaddress}[1]{\ifthenelse{\equal{#1}{}}{}{\par\noindent {\rm URL: }{\tt  #1} \par}}
\newcommand{\URLaddressMarked}[1]{\ifthenelse{\equal{#1}{}}{}{\par\noindent $^*$~{\rm URL: }{\tt  #1} \par}}
\newcommand{\EmailD}[1]{\ifthenelse{\equal{#1}{}}{}{\par\noindent {$\phantom{^{{\rm a)}}}$~\rm E-mail: }{\it  #1} \par}}
\newcommand{\EmailDD}[1]{\ifthenelse{\equal{#1}{}}{}{\par\noindent {$\phantom{{}^{\dag^1}}$~\rm E-mail: }{\it  #1} \par}}

\newtheorem{resultx}{Result}

\newtheorem{theorem}{Theorem}[section]
\newtheorem{lemma}{Lemma}[section]
\newtheorem{proposition}{Proposition}[section]
\newtheorem{corollary}{Corollary}[section]
\newtheorem{definition}{Definition}[section]
\newtheorem{example}{Example}[section]
\newtheorem{remark}{Remark}[section]

\newcommand{\R}{\mathbb{R}}

\newcommand{\KK}{\mathcal{K}}
\newcommand{\LL}{\mathcal{L}}

\newcommand{\im}{\mathrm{im}}

\newcommand{\id}{\mathrm{Id}}

\newcommand{\sgn}{\,\operatorname{Sgn}}

\newcommand{\dist}{\operatorname{dist}}

\newcommand{\Ft}{\operatorname{Ft}}
\newcommand{\diam}{\operatorname{diam}}
\newcommand{\one}{\mathbf{1}}

\DeclareMathOperator*{\argmin}{arg\,min}
\DeclareMathOperator*{\spec}{spec}

\usepackage{authblk}
\date{} 

\title{Cheeger Inequalities for the Persistent Laplacian}
\author[1]{Magnus Bakke Botnan}
\author[1]{Rui Dong}

\affil[1]{Department of Mathematics, Vrije Universiteit Amsterdam}

\begin{document}
\maketitle

\begin{abstract}
We study Cheeger-type inequalities for persistent Laplacians associated with inclusions of simplicial complexes \(\KK\hookrightarrow \LL\). We introduce a persistent up \(p\)-Laplacian \(\Delta_{q,p,\mathrm{up}}^{\KK,\LL}\) for \(p\geq 1\). For \(p=2\), this recovers the usual persistent up Laplacian, while for \(p=1\) it yields a nonzero persistent Cheeger constant \(\varphi_q^{\KK,\LL}\). We prove a Cheeger-type inequality relating \(\varphi_q^{\KK,\LL}\) to the smallest nonzero eigenvalue of \(\Delta_{q,\mathrm{up}}^{\KK,\LL}\). This gives a persistent extension of recent work by Jost and Zhang \cite{jost2024cheeger}.

We then study two structured settings. Under a locally complete \(q\)-skeleton assumption on \(\KK\), we extend the complete-skeleton isoperimetric inequality of Parzanchevski--Rosenthal--Tessler \cite{MR3516884} to the persistent setting. For orientable \((q+1)\)-dimensional pseudomanifolds, we prove a Kron-type reduction of the persistent up Laplacian to a vertex- and edge-weighted graph Laplacian, possibly with Dirichlet boundary terms, and obtain two-sided Cheeger inequalities; this is related to the dual-graph perspective in the work of Steenbergen--Klivans--Mukherjee \cite{MR3194207}. We also describe the nonzero persistent Cheeger constant \(\varphi_q^{\KK,\LL}\) explicitly in terms of the dual graph in the non-branching pseudomanifold case.

Finally, we specialize our two constructions to graph inclusions and compare them with the graph-pair theory of Mémoli--Wan--Wang \cite{pers_lap}. Mémoli, Wan, and Wang introduced persistent Cheeger constants via Kron reduction/effective conductance and proved a corresponding persistent Cheeger inequality. Our constructions also yield persistent Cheeger constants and inequalities for graph inclusions, but through the $p$-Laplacian and complete-skeleton approaches described above rather than through Kron reduction. We establish precise relationships between the two persistent Cheeger constants arising from our constructions and the corresponding constants derived via Kron reduction.

\end{abstract}

\tableofcontents

\section{Introduction}\label{sec:introduction}

Cheeger inequalities play a fundamental role in graph theory, relating the combinatorial expansion properties of a graph to the smallest nonzero eigenvalue of its Laplacian. Several works have extended such inequalities to simplicial complexes, typically under additional assumptions such as orientability or completeness of the \(q\)-skeleton \cite{jost2024cheeger,MR3382297,MR3516884,MR3194207}. In this paper, we aim for a further generalization to the \emph{persistent} Laplacian.

To recall the graph-theoretic starting point, let \(G=(V,E)\) be a finite graph. For \(S\subseteq V\), write \(S^c:=V\setminus S\), and let
\[
E(S,S^c):=\{\,\{u,v\}\in E : u\in S,\ v\in S^c\,\}
\]
be the set of edges crossing from \(S\) to its complement.

\begin{definition}[{\cite[Section 2.5]{MR1421568}}]\label{def:graph_asymm_cheeger}
The Cheeger constant \(\varphi(G)\) associated with \(G\) is defined by
\begin{equation}\label{eq:graph_asymm_cheeger}
\varphi(G)=\min_{0<|S|\leq |V|/2}\frac{|E(S,S^c)|}{|S|}.
\end{equation}
\end{definition}

Thus \(\varphi(G)\) measures how efficiently one can separate the graph into two parts. This combinatorial quantity is intimately linked with the eigenvalues of the (unnormalized) \emph{graph Laplacian} \(L=II^T\), where \(I\) is the \emph{incidence matrix} defined by
\[
I_{ij}=
\begin{cases}
-1, & \text{if edge } j \text{ leaves vertex } i,\\
1, & \text{if edge } j \text{ enters vertex } i,\\
0, & \text{otherwise}.
\end{cases}
\]
\begin{theorem}[{\cite[Section 2.5]{MR1421568}}]
Let $G$ be a connected graph and let \(\lambda(G)\) be the second smallest eigenvalue of \(L\). Then
\begin{equation}\label{eq:graph_symm_cheeger_ineq}
\frac{\varphi^2(G)}{2\max\limits_v \deg(v)}\leq \lambda(G) \leq 2\varphi(G).
\end{equation}
\end{theorem}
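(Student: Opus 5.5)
The plan is to use the Rayleigh quotient characterization
\[
\lambda(G)=\min_{f\perp \one,\ f\neq 0}\frac{\sum_{\{u,v\}\in E}(f(u)-f(v))^2}{\sum_v f(v)^2},
\]
which holds because \(L=II^T\) is positive semidefinite and \(\one\) spans its kernel when \(G\) is connected. We treat the two inequalities separately.

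\emph{Upper bound \(\lambda(G)\le 2\varphi(G)\).} Take \(S\) attaining the minimum in \eqref{eq:graph_asymm_cheeger}, so \(|S|\le |V|/2\). Use the test function \(f=|S^c|\one_S-|S|\one_{S^c}\), which is orthogonal to \(\one\). Only crossing edges contribute to the numerator, each contributing \(|V|^2\), so the numerator is \(|E(S,S^c)|\,|V|^2\). The denominator is \(|S||S^c||V|\). Hence
\[
\lambda(G)\le \frac{|E(S,S^c)|\,|V|}{|S||S^c|}\le \frac{2|E(S,S^c)|}{|S|}=2\varphi(G),
\]
where the last step uses \(|S^c|\ge |V|/2\).

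\emph{Lower bound.} This is the main obstacle. Let \(g\) be an eigenvector for \(\lambda=\lambda(G)\). Shift it by its median: choose \(m\) so that \(\{g>m\}\) and \(\{g<m\}\) each have at most \(|V|/2\) vertices. Split \(g-m=f_+-f_-\) into positive and negative parts.

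First, \(g-m\) is orthogonal to \(\ker L\) only up to the constant shift, but \(\langle L(g-m),g-m\rangle=\lambda\|g\|^2\le\lambda\|g-m\|^2\). The last inequality holds because \(g\perp\one\) minimizes the norm among all shifts of \(g\).

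Second, the standard computation \(\langle Lf_+,f_+\rangle\le \langle L(g-m),f_+\rangle\) holds edge by edge. Together with the eigen-equation evaluated on the support of \(f_+\), this gives \(\langle Lf_+,f_+\rangle\le\lambda\|f_+\|^2\), up to a constant correction term that is handled by the median choice. The same holds for \(f_-\). At least one of \(f_\pm\), say \(f=f_+\), then has Rayleigh quotient at most \(\lambda\) and support of size at most \(|V|/2\).

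Third, run the co-area argument on \(f\). Write
\[
\sum_{E}|f(u)^2-f(v)^2|=\int_0^\infty |E(S_t,S_t^c)|\,dt\ge \varphi(G)\int_0^\infty|S_t|\,dt=\varphi(G)\|f\|^2,
\]
where \(S_t=\{f^2>t\}\) has size at most \(|V|/2\). Then apply Cauchy--Schwarz:
\[
\sum_E|f(u)^2-f(v)^2|\le\Big(\sum_E(f(u)-f(v))^2\Big)^{1/2}\Big(\sum_E(f(u)+f(v))^2\Big)^{1/2},
\]
and bound the second factor by \(\big(2\max_v\deg(v)\,\|f\|^2\big)^{1/2}\). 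Combining the two displays gives
\[
\varphi(G)\|f\|^2\le \sqrt{\lambda}\,\|f\|\cdot\sqrt{2\max_v\deg(v)}\,\|f\|,
\]
that is, \(\varphi^2(G)\le 2\max_v\deg(v)\,\lambda(G)\).

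I expect the delicate point to be the second step: getting the truncated function's Rayleigh quotient bounded by \(\lambda\). I would handle it by working with \(f_+\) on \(\{g>m\}\) and using \((Lg)(v)=\lambda g(v)\) pointwise, noting that \(g(v)-m\le g(v)\) whenever \(m\ge0\), and otherwise replacing \(g\) by \(-g\) to ensure \(m\ge 0\).
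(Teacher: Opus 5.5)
Your upper bound is correct and matches the paper. The paper cites Chung for this theorem, but its appendix proof of \cref{thm:weighted_graph_cheeger_dirichlet} in the boundaryless case, with unit vertex and edge weights, gives exactly this statement ($h_{\mathrm{vol}}=\varphi(G)$, $\Delta=\max_v\deg(v)$). There the test function is $\chi_S-\alpha\one$, a rescaling of yours. Your co-area and Cauchy--Schwarz steps are also the paper's. The gap is in the truncation step, exactly where you expected it, and your proposed fix goes the wrong way. By the eigen-equation and $L\one=0$,
\[
\langle L(g-m),f_+\rangle=\lambda\langle g,f_+\rangle=\lambda\|f_+\|^2+\lambda m\|f_+\|_1 ,
\]
so for $m>0$ the edgewise inequality only gives $\langle Lf_+,f_+\rangle\le\lambda\|f_+\|^2+\lambda m\|f_+\|_1$. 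The median choice controls support sizes, not the sign of this correction. Your observation $g(v)-m\le g(v)$ on $\{g>m\}$ just says $\lambda\|f_+\|^2\le\lambda\langle g,f_+\rangle$, i.e.\ the target is the \emph{smaller} quantity, so it proves nothing. It is also false that the bound holds for both $f_\pm$. Take the tree obtained from the path $1$--$2$--$3$ by attaching two leaves $4,5$ to vertex $3$. There $\lambda\approx 0.519$, and after your normalization $m\ge 0$ the median is forced to be $m\approx 0.287$ with $\{g>m\}=\{4,5\}$. Then $f_+$ is a multiple of $\chi_{\{4,5\}}$, whose Rayleigh quotient is $1>\lambda$. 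With $f=f_+$ your chain of inequalities therefore does not yield $\varphi^2(G)\le 2\max_v\deg(v)\,\lambda(G)$.

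Any of the following repairs works.
\begin{enumerate}
\item Keep $m\ge 0$ but use $f_-=(m-g)^+$ instead. The correction term becomes $-\lambda m\|f_-\|_1\le 0$, so $\langle Lf_-,f_-\rangle\le\lambda\|f_-\|^2$, and $|\{g<m\}|\le|V|/2$.
\item Make your first step, which is currently unused, do the work. Edgewise $\langle Lf_+,f_-\rangle\le 0$, so
\[
\langle Lf_+,f_+\rangle+\langle Lf_-,f_-\rangle\le\langle L(g-m),g-m\rangle\le\lambda\bigl(\|f_+\|^2+\|f_-\|^2\bigr).
\]
Hence one of $f_\pm$ has Rayleigh quotient at most $\lambda$, and both have small support.
\item Drop the median, which is the paper's route. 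Since $\{g>0\}$ and $\{g<0\}$ are disjoint, after possibly replacing $g$ by $-g$ you have $|\{g>0\}|\le|V|/2$. Then $\langle Lg^+,g^+\rangle\le\langle Lg,g^+\rangle=\lambda\|g^+\|^2$, with no correction term. This identity, rather than the bare comparison of the energies of $g^+$ and $g$ stated in the paper, is what justifies reducing to a nonnegative function.
\end{enumerate}
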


Analogous bounds exist for higher eigenvalues in terms of how efficiently the graph can be partitioned into \(k\) pieces \cite{lee2014multiway}. Cheeger inequalities are therefore a cornerstone of spectral graph theory; see, for example, \cite{MR1421568,nica2018brief}.

\paragraph{From graphs to complexes.}
Extending Cheeger inequalities from graphs to simplicial complexes is subtle. In degree zero, a cut is simply a partition of vertices. In higher degree, the relevant objects are no longer only subsets of vertices: orientations, multiplicities, homology, and the interaction between adjacent simplices all enter into the Laplacian. Consequently, there is no unique higher-dimensional analogue of the graph Cheeger constant, and different definitions are appropriate under different assumptions. Recent work of Jost and Zhang \cite{jost2024cheeger} gives a systematic approach to this problem using the \(1\)-Laplacian and, more generally, nonlinear \(p\)-Laplacians. The \(1\)-Laplacian plays a distinguished role: while the ordinary Hodge Laplacian is an \(\ell^2\)-object, Cheeger constants are \(\ell^1\)-type quantities, and in the case \(p=1\) the corresponding Cheeger-type inequalities become equalities. This provides a bridge between the spectral theory of the \(2\)-Laplacian and the combinatorics of cuts. Based on this framework, Jost and Zhang obtain Cheeger-type inequalities for simplicial complexes, although the abstract nature of the resulting Cheeger constants makes their geometric interpretation difficult in full generality. We return to this setup in \cref{sec:cheeger_perslap}. For a broader overview of Cheeger inequalities for simplicial complexes, we refer the reader to the introduction of \cite{jost2024cheeger}.

\paragraph{Pairs of complexes.}
In this paper, our focus is the persistent Laplacian \cite{MR4164275,pers_lap,gen_pers_lap}, a generalization of the classical Laplacian to pairs of spaces. It was introduced independently by Lieutier et al. \cite{lapslides} and Wang et al. \cite{MR4164275}, and was later studied systematically by M\'emoli et al. \cite{pers_lap}, 
who proved the persistent Hodge/nullity theorem, developed algorithms for computing persistent Laplacians, related the persistent
 up Laplacian to Schur complements, and, in the graph-pair case, introduced the first persistent Cheeger constant and persistent Cheeger inequality for persistent Laplacians via the Kron reduction/effective conductance interpretation.

For an inclusion of simplicial complexes \(\KK\hookrightarrow \LL\), the \(q\)-th persistent Laplacian \(\Delta_q^{\KK,\LL}\) is defined as the sum of the persistent up Laplacian \(\Delta_{q,\mathrm{up}}^{\KK,\LL}\) and the standard down Laplacian \(\Delta_{q,\mathrm{down}}^\KK\) on \(\KK\).
A
 key property, 
 proved in this persistent setting in \cite[Theorem 2.7]{pers_lap},
 is that Eckmann’s discrete Hodge theorem admits a persistent analogue: the nullity of the $q$-th persistent Laplacian is equal to the $q$-th persistent Betti number of the inclusion $\KK\hookrightarrow \LL$.
Thus the barcode of persistent homology is encoded in the kernels of persistent Laplacians. Beyond the kernel, the spectra of persistent Laplacians contain finer geometric information about the underlying filtered space. The aim of this paper is to understand such spectral information through Cheeger-type inequalities.

The persistent Laplacian has become an active area of research, with recent developments in theoretical, applied, and algorithmic directions \cite{davies23c,wolf2025generalized,MR5053447,jones2025petls}. We refer the reader to \cite{wei2023} for a recent overview.

The goal of this paper is to extend several higher-dimensional Cheeger frameworks for simplicial complexes to the persistent-Laplacian setting, 
then restrict to $1$-dimensional cases and compare these different $1$-dimensional persistent Cheeger theories with 
the graph-pair persistent Cheeger theory developed in \cite{pers_lap}.
We focus on three existing approaches to Cheeger theory for simplicial complexes \cite{jost2024cheeger,MR3516884,MR3194207} and show that each admits a natural persistent extension. Similar extensions are likely to exist for other versions of Cheeger inequalities. We leave the development of a more unified framework for spectral inequalities of persistent Laplacians as an open direction in the discussion.

\subsection{Outline and contributions}
\label{subsec:overview_contributions}

We begin in \cref{sec.graphs} by recalling the graph-theoretic material needed later, including edge- and vertex-weighted graph Laplacians, incidence matrices, graph Cheeger constants, and Dirichlet boundary conditions. We also prove a Cheeger inequality for weighted graph Laplacians with Dirichlet boundary conditions. This result is used in the pseudomanifold setting, where the persistent Laplacian naturally reduces to a vertex- and edge-weighted graph Laplacian with possible boundary terms.

In \cref{sec.laplacians}, we recall the combinatorial Hodge Laplacian and the persistent Laplacian associated with an inclusion \(\KK\hookrightarrow\LL\). We also recall the Schur-complement formula for the persistent up Laplacian and show that this operator may be viewed as a discrete Dirichlet-to-Neumann map, providing a novel interpretation of the persistent Laplacian, which plays an integral role in \cref{sec.graphs}.

The main contributions of the paper are organized around four themes. 
\paragraph{Theme 1: a general framework.} In \cref{sec:cheeger_perslap}, we extend the \(1\)-Laplacian approach to Cheeger inequalities of Jost--Zhang \cite{jost2024cheeger} to the persistent setting. We show that the persistent up Laplacian can be written as \(BPB^T\), where \(P\) is an orthogonal projection. This projection formula allows us to define a persistent \(p\)-Laplacian and, for \(p=1\), a nonzero persistent Cheeger constant \(\varphi_q^{\KK,\LL}\). 
\begin{resultx}
\label{res:general_persistent_cheeger}
Let \(\lambda_{\min}^+\) denote the smallest nonzero eigenvalue of the persistent up Laplacian $\Delta_{q,\mathrm{up}}^{\KK,\LL}$
associated with an inclusion \(\KK\hookrightarrow \LL\) of simplicial complexes. Then
\[
\frac{(\varphi_q^{\KK,\LL})^2}{|S_{q+1}^{\LL}|}
\leq
\lambda_{\min}^+
\leq
|S_q^\KK|(\varphi_q^{\KK,\LL})^2,
\]
 where $|S^\LL_{q+1}|$ denotes the number of $(q+1)$-simplices in $\LL$, and similarly for $|S_q^\KK|$. 
When \(\KK=\LL\), this gives a Cheeger inequality for the ordinary up Laplacian; see \cref{cor:hodge_cheeger_ineq}.
\end{resultx}
Our arguments are similar to those in \cite{jost2024cheeger} although we work with a slightly different eigenvalue; the approach applies to both settings \cref{remark.connection.to.jost}. 

The strength of this result is its generality: it holds without additional assumptions on the simplicial complexes. At this level of generality, however, the bounds retain some coarse features, since the constants involve the number of simplices in the complexes, and the Cheeger constant itself is not always easy to interpret geometrically. In \cref{sec:psd_mfld}, we show that in the pseudomanifold setting the same quantity admits a concrete dual-graph interpretation.

That said, it is natural to ask whether additional constraints on the simplicial complexes lead to Cheeger inequalities closer to those familiar from graph theory. This is the motivation behind the next two themes.

\paragraph{Theme 2: complete skeleta.} In \cref{sec:loc_cmplt_skl}, we extend the isoperimetric inequality of Parzanchevski--Rosenthal--Tessler \cite{MR3516884} for complexes with complete skeleta to inclusions \(\KK\hookrightarrow\LL\), under the assumption that each connected component of \(\KK\) has a complete \(q\)-skeleton. The corresponding Cheeger constant is a persistent version of the complete-skeleton constant from \cite{MR3516884} which is a generalization of the graph Cheeger constant:
\[
h_q^{\LL}
=
\min_{V=\coprod_{i=0}^{q+1}A_i}
\frac{|V|\cdot |F(A_0,\ldots,A_{q+1})|}
{|A_0|\cdots |A_{q+1}|}.
\]
Here $F(A_0, \ldots, A_{q+1})$ counts the number of $(q+1)$-simplices with vertices in different sets in the partition of the vertices $V$ in $\LL$. Our persistent Cheeger constant $h_q^{\KK,\LL}$ is rather involved in general, but it has a clear description when $\KK$ is connected; see \cref{eq:nt_cheeger_inq_graph}.

\begin{resultx}
\label{res:locally_complete_skeletons}
Let \(\KK\hookrightarrow \LL\) be an inclusion of simplicial complexes such that each connected component of \(\KK\) has a complete \(q\)-skeleton. Let
$\lambda_{+}$
be the smallest \emph{non-trivial} eigenvalue of the persistent up Laplacian $\Delta_{q,\mathrm{up}}^{\KK,\LL}$. Then,
\[
\lambda_{+}\leq h_q^{\KK,\LL};
\]
see \cref{prop:nt_cheeger_ineq_cmplx}. When \(\KK=\LL\), this recovers the non-persistent complete-skeleton upper Cheeger inequality.
\end{resultx}
Note that, in contrast to $\lambda_{\min}^+$, $\lambda_{+}$ can be $0$, and this happens precisely when the induced map $H_q(\KK;\R)\to H_q(\LL;\R)$ of homology vector spaces is non-trivial. If the induced map is trivial, the two eigenvalues coincide. 

\paragraph{Theme 3: pseudomanifolds.} In \cref{sec:psd_mfld}, inspired by the dual-graph viewpoint in the work of Steenbergen--Klivans--Mukherjee \cite{MR3194207}, we study persistent Laplacians over orientable pseudomanifolds. In this setting, the persistent up Laplacian admits a Kron-type reduction to an edge- and vertex-weighted graph Laplacian on the dual graph associated with the \((q+1)\)-chains of \(\LL\) relative to \(\KK\). This Kron reduction is significantly easier to interpret than the Kron reduction to a hypergraph Laplacian for (potentially unoriented) pseudomanifolds in \cite{MR5053447}.

Importantly, our Kron reduction leads to two natural Cheeger constants: \(h_{\mathrm{vol}}\) in the boundaryless case, and \(h_{\mathrm{vol}}^{\mathrm{Bdy}}\) when boundary terms are present. The term $\Delta$ is the maximal weighted degree (in the dual graph), normalized by the corresponding vertex weight.

\begin{resultx}
\label{res:pseudomanifold_cheeger}
Let \(\KK\hookrightarrow \LL\) be an inclusion of simplicial complexes, where \(\LL\) is an orientable \((q+1)\)-dimensional pseudomanifold. Let \(\lambda_{\min}^+\) denote the smallest nonzero eigenvalue of
the persistent up Laplacian \(\Delta_{q,\mathrm{up}}^{\KK,\LL}\). 
\begin{enumerate}
    \item If \(\LL\) is boundaryless, then
    \[
    \frac{h_{\mathrm{vol}}^2}{2\Delta}\leq \lambda_{\min}^+\leq 2h_{\mathrm{vol}}.
    \]
    \item Otherwise,
    \[
    \frac{(h^{\mathrm{Bdy}}_{\mathrm{vol}})^2}{2\Delta}\leq \lambda_{\min}^+\leq h^{\mathrm{Bdy}}_{\mathrm{vol}}.
    \]
\end{enumerate}
See \cref{thm.pers.cheeger}.
\end{resultx}
We also show that the orientability assumption is essential for the lower bound.

\paragraph{Theme 4: graphs.} In \cref{sec.graph.comparisons}, we compare the different persistent Cheeger constants in the special case of graph inclusions. M\'emoli et al. introduced a persistent Cheeger constant for graph pairs via Kron reduction/effective conductance and proved a persistent Cheeger inequality for it \cite[Definition 4.18 and Theorem 4.21]{pers_lap}.
We compare those graph-pair constants with the ones arising from the general persistent definitions above.

\begin{resultx}
\label{res:graph_comparison}
For an inclusion of graphs \(H\hookrightarrow G\), we have
\[
h_{\operatorname{Kron}}^{H,G}\leq h_0^{H,G},
\qquad
\varphi_{\operatorname{Kron}}^{H,G}\leq \varphi_0^{H,G};
\]
see \cref{prop:ineq_cheger_kron_nonzero_1,prop:ineq_cheger_kron_nonzero}.
\end{resultx}

These inequalities are not equivalences in general. Under additional assumptions on the inclusion \(H\hookrightarrow G\), namely that \(H^c\) is independent and every vertex of \(H\) has exactly one neighbor in \(H^c\), we obtain the two-sided comparison
\[
\varphi_{\operatorname{Kron}}^{H,G}
\leq
\varphi_0^{H,G}
\leq
2\varphi_{\operatorname{Kron}}^{H,G};
\]
see \cref{prop:kron_nonzero_cheeger_equiv}.

\section{Spectral graph theory}
\label{sec.graphs}

Throughout this section, let \(G=(V,E)\) be a finite connected graph with vertex set \(V=\{1,\ldots,m\}\). We allow symmetric edge weights \(w_{ij}=w_{ji}\geq 0\) for \(i\neq j\), with \(w_{ij}=0\) whenever \(i\) and \(j\) are not adjacent.

\subsection{Edge- and vertex-weighted graph Laplacians}
\label{sec:edge-vertex-laplacian}

\begin{definition}
The \emph{edge-weighted graph Laplacian} of \(G\) is the matrix
\(L^{\mathrm{edge}}\in\mathbb{R}^{m\times m}\) with entries
\[
L^{\mathrm{edge}}_{ij}
=
\begin{cases}
-w_{ij}, & i\neq j,\\[4pt]
\displaystyle\sum_{k\neq i}w_{ik}, & i=j.
\end{cases}
\]
Equivalently, \(L^{\mathrm{edge}}\) is the unique symmetric matrix whose quadratic form is the Dirichlet energy
\[
g^T L^{\mathrm{edge}}g
=
\sum_{i<j}w_{ij}(g_i-g_j)^2,
\qquad g\in\mathbb{R}^m.
\]
In particular, all row sums vanish:
\[
\sum_{j=1}^m L^{\mathrm{edge}}_{ij}=0
\qquad
\text{for all } i.
\]
\end{definition}
In terms of the weighted incidence matrix \(B=I\Omega^{1/2}\), where \(w_e=w_{ij}\), this becomes
\[
L^{\mathrm{ev}}
=
W^{-1/2}BB^T W^{-1/2}.
\]

We now introduce vertex weights by conjugating \(L^{\mathrm{edge}}\) with a positive diagonal matrix.

\begin{definition}
\label{def.vertex.edge.weighted.laplacian}
Let \(V_1,\ldots,V_m>0\) be \emph{vertex weights}, and set
\[
W:=\operatorname{diag}(V_1,\ldots,V_m).
\]
The \emph{edge- and vertex-weighted graph Laplacian} associated with
\((G,(w_{ij}),(V_i))\) is the symmetric matrix
\[
L^{\mathrm{ev}}:=W^{-1/2}L^{\mathrm{edge}}W^{-1/2}.
\]
\end{definition}

To interpret this definition, let \(x\in\mathbb{R}^m\) and write \(g:=W^{-1/2}x\). Then
\[
x^T L^{\mathrm{ev}}x
=
g^T L^{\mathrm{edge}}g
=
\sum_{i<j}w_{ij}(g_i-g_j)^2,
\qquad
x^Tx
=
g^TWg
=
\sum_{i=1}^m V_i g_i^2.
\]
Thus the energy is still the usual edge-based Dirichlet energy, while the norm is weighted by the vertex measure. This differs from the standard normalized graph Laplacian, where the vertex weights are chosen to be the degrees; here the vertex weights are arbitrary positive numbers.

\subsection{Dirichlet boundary conditions}
\label{sec.dir}
Let
$B \subseteq V$ be a nonempty set of \emph{boundary vertices}.
The complementary set $I := V \setminus B$
is called the set of \emph{interior vertices}. In addition to \cite{MR1421568}, \cite[Chamber 0]{MR4383783} is an excellent resource for graphs with Dirichlet boundary conditions.

\begin{definition}
A function $f \colon V \to \mathbb{R}$ is said to satisfy a
\emph{Dirichlet boundary condition} on $B$ if
\[
f(i) = 0 \qquad \text{for all } i \in B.
\]
Equivalently, in vector form, the components of $f$ indexed by $B$
are fixed to zero.
\end{definition}

Given any (weighted or unweighted) Laplacian matrix $L$ on $V$, the \emph{Dirichlet
Laplacian} is obtained by restricting $L$ to the interior vertices:
\[
L_{II}
\quad \in \mathbb{R}^{|I|\times|I|},
\]
namely, the principal submatrix of $L$ with both rows and columns indexed
by $I$.

\begin{definition}
The Dirichlet Laplacian associated with the boundary set $B$ is
the matrix $L_{II}$.
It acts on functions $f$ satisfying the Dirichlet condition
$f|_{B} = 0$ via
\[
(L_{II} f_{I})_i
= (Lf)(i),
\qquad i\in I.
\]
\end{definition}
The proof of the following proposition is straightforward. We refer the reader to \cite{MR4383783} for a classification of matrices arising from various types of graph Laplacians.
\begin{proposition}\label{prop.dualgraph}
Let $X$ be an $m\times m$ symmetric matrix with strictly positive diagonal entries and non-positive off-diagonal entries, and let $W$ be a diagonal matrix with positive entries. Assume that all row sums of $X$ are non-negative. Construct a weighted graph $G$ on vertices $\{1,\dots,m\}$ by assigning an edge between $i$ and $j$ of weight $w_{ij} := -X_{ij}$ whenever $i\neq j$ and $X_{ij}<0$.

\begin{enumerate}
\label{prop.reduce.to.graph.lap}
    \item If all row sums of $X$ are equal to $0$, then $X$ is the (edge-weighted) graph Laplacian $L_G$ of $G$ (i.e.,\ the $0$-th up Laplacian). 
    
    \item If all row sums of $X$ are non-negative and at least one is strictly positive, then $X$ is the graph Laplacian $L^\mathrm{Dir}_G$ of the weighted graph $G$ with Dirichlet boundary conditions at the vertices whose row sums are positive.
\end{enumerate}
In both cases, the matrix $W^{-1/2}XW^{-1/2}$, is a \emph{vertex and edge} weighted graph Laplacian (with boundary conditions), where the vertex weight at index $i$ is $W_{ii}$; see \cref{def.vertex.edge.weighted.laplacian}.
\end{proposition}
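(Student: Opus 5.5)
The plan is to compare $X$ entrywise with the edge-weighted Laplacian $L^{\mathrm{edge}}$ of the graph $G$ built in the statement. Set $w_{ij}:=-X_{ij}\ge 0$ for $i\neq j$; then $w_{ij}=w_{ji}$ by symmetry of $X$, and $w_{ij}>0$ exactly when $\{i,j\}$ is an edge of $G$. Form $L^{\mathrm{edge}}$ from these weights as in \cref{sec:edge-vertex-laplacian}. The off-diagonal entries agree, $L^{\mathrm{edge}}_{ij}=-w_{ij}=X_{ij}$. For the diagonal entries, write the row sum $r_i:=\sum_j X_{ij}$. Then
\[
X_{ii}=r_i-\sum_{j\neq i}X_{ij}=r_i+\sum_{j\neq i}w_{ij}=L^{\mathrm{edge}}_{ii}+r_i,
\]
and therefore
\[
X=L^{\mathrm{edge}}+R,\qquad R:=\operatorname{diag}(r_1,\ldots,r_m),
\]
with $R\ge 0$ because the row sums are assumed non-negative.

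\textbf{Case 1.} If every $r_i=0$, then $R=0$ and $X=L^{\mathrm{edge}}=L_G$, which is the first claim.

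\textbf{Case 2.} Suppose some $r_i>0$, and let $B:=\{i: r_i>0\}$ be the boundary set. The aim is to show that $X$ is a Dirichlet Laplacian in the sense of \cref{sec.dir}. I would enlarge the graph: adjoin one new boundary vertex $b_i$ for each $i\in B$, with a single edge $\{i,b_i\}$ of weight $r_i$. Alternatively, one could adjoin a single ground vertex joined to every $i\in B$ with weight $r_i$; either construction works.

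On the enlarged vertex set, the edge-weighted Laplacian $\tilde L$ has the following restriction to the original vertices $\{1,\dots,m\}$. Its off-diagonal entries are $-w_{ij}=X_{ij}$. Its diagonal entries are $\sum_{j\neq i}w_{ij}+r_i=X_{ii}$, where the $r_i$ term appears only for $i\in B$ and is $0$ otherwise. Hence the principal submatrix $\tilde L_{II}$, with $I=\{1,\dots,m\}$, equals $X$. This shows $X$ is the Dirichlet Laplacian $L^{\mathrm{Dir}}_G$, with the Dirichlet condition imposed at the added vertices attached to the indices whose row sums are positive.

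The quadratic form confirms this reading:
\[
x^TXx=\sum_{i<j}w_{ij}(x_i-x_j)^2+\sum_{i\in B}r_i x_i^2 ,
\]
which is the Dirichlet energy with the boundary values fixed to $0$.

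For the final claim, conjugate by $W^{-1/2}$ in either case. In Case 1, $W^{-1/2}XW^{-1/2}=W^{-1/2}L^{\mathrm{edge}}W^{-1/2}$ is exactly $L^{\mathrm{ev}}$ of \cref{def.vertex.edge.weighted.laplacian}, with vertex weights $W_{ii}$. In Case 2 the same conjugation gives the vertex- and edge-weighted version with boundary term $W^{-1/2}RW^{-1/2}$. Under $g=W^{-1/2}x$ this has quadratic form $\sum w_{ij}(g_i-g_j)^2+\sum_{i\in B} r_i g_i^2$ with norm $\sum_i W_{ii}g_i^2$.

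The only delicate point is interpretive rather than computational: the phrase ``Dirichlet boundary conditions at the vertices whose row sums are positive'' must be read through the auxiliary boundary vertices $b_i$, and equivalently as the diagonal potential $R$. Once that reading is fixed, the rest is the routine bookkeeping above.
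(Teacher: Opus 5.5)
Your proof is correct and follows the paper's own (implicit) argument. The paper calls this proposition straightforward, and both the remark right after it and the appendix proof of \cref{thm:weighted_graph_cheeger_dirichlet} use your decomposition $X_{ii}=\sum_{j\neq i}w_{ij}+b_i$, with the row-sum surplus $b_i$ realized as edges to dummy (or a single ground) boundary vertices, so that $X$ is the principal submatrix on the interior; your reading of ``Dirichlet conditions at the vertices with positive row sum'' through these auxiliary vertices matches the paper's phantom-vertex example.
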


We see that 
any symmetric matrix with positive diagonal entries and non-positive 
off-diagonal entries can be realized as the Dirichlet Laplacian of a 
weighted graph. Indeed, given such a matrix $A \in \mathbb{R}^{m \times m}$, 
one may construct an extended weighted graph by adding a set of dummy 
boundary vertices and assigning suitable edge weights so that every row sum 
of the full Laplacian vanishes. The original matrix $A$ is then obtained 
precisely as the Dirichlet restriction of this Laplacian to the interior 
vertices. 

\subsection{Cheeger bounds for edge- and vertex-weighted graphs}
Let \(X\) be an \(m\times m\) symmetric matrix with strictly positive diagonal entries, non-positive off-diagonal entries, and non-negative row sums. Let \(W=\operatorname{diag}(\mu_1,\ldots,\mu_m)\) be a diagonal matrix with \(\mu_i>0\). From \cref{prop.reduce.to.graph.lap}, we regard \(X\) as a weighted graph Laplacian, potentially with Dirichlet boundary conditions. Thus, for \(i\neq j\), the (interior) edge weight between \(i\) and \(j\) is
\[
w_{ij}:=-X_{ij},
\]
whenever \(X_{ij}<0\), and \(w_{ij}=0\) otherwise. The row-sum $
b_i:=\sum_{j=1}^m X_{ij}$ records the total boundary weight attached to the interior vertex \(i\). Equivalently,
\[
X_{ii}=\sum_{j\neq i}w_{ij}+b_i.
\]

The edge- and vertex-weighted Dirichlet Laplacian associated with this data is
\[
M:=W^{-1/2}XW^{-1/2}.
\]

We now introduce the relevant Cheeger constants. 

\begin{definition}

For \(S\subseteq \{1,\ldots,m\}\), define
\[
\operatorname{Vol}(S):=\sum_{i\in S}\mu_i,\qquad
\operatorname{Cut}(S):=\sum_{\substack{i\in S\\ j\notin S}}w_{ij},\qquad
\operatorname{Bdy}(S):=\sum_{i\in S}b_i.
\]
If all row sums of \(X\) vanish, set
\[
h_{\mathrm{vol}}:=
\min_{\substack{\varnothing\neq S\subseteq \{1,\ldots,m\}\\ \operatorname{Vol}(S)\leq V_{\mathrm{tot}}/2}}
\frac{\operatorname{Cut}(S)}{\operatorname{Vol}(S)}.
\]
If at least one row sum of \(X\) is positive, set
\[
h_{\mathrm{vol}}^{\mathrm{Bdy}}:=
\min_{\varnothing\neq S\subseteq \{1,\ldots,m\}}
\frac{\operatorname{Cut}(S)+\operatorname{Bdy}(S)}{\operatorname{Vol}(S)}.
\]
\end{definition}

\begin{remark}
Geometrically, the constant \(h_{\mathrm{vol}}\) corresponds to the usual Cheeger problem for a graph without Dirichlet boundary: it measures how efficiently one can separate the vertex set \(S\) from its complement using only \emph{interior} edges. In contrast, \(h_{\mathrm{vol}}^{\mathrm{Bdy}}\) is the appropriate quantity when the graph has Dirichlet boundary contributions, equivalently when at least one row sum of \(X\) is positive. In that case, \(S\) can be separated not only along edges connecting it to its complement, but also along the boundary encoded by the row-sum surplus. Hence, different Cheeger constants are appropriate in the two cases.
\end{remark}
We now state a two-sided Cheeger inequality for edge- and vertex-weighted graphs, possibly with Dirichlet boundary conditions. Many related results appear in the literature, including Cheeger inequalities for graphs with Dirichlet boundary conditions \cite{MR1421568}. However, we were unable to find a statement in precisely the form needed here. Since this estimate is used in the proof of the pseudomanifold result below, we include the details in \cref{sec.app.proof.dirichlet}.

Let \(V_{\mathrm{tot}}:=\operatorname{Vol}(\{1,\ldots,m\})\), and set
\[
\Delta:=\max_{1\leq i\leq m}\frac{X_{ii}}{\mu_i},
\]
which is the maximal weighted degree, normalized by the vertex weight.

\begin{theorem}
\label{thm:weighted_graph_cheeger_dirichlet}
Let \(X,W,M,\Delta\) be as above, and assume that the interior graph associated with \(X\) is connected.
\begin{enumerate}
    \item (Boundaryless) If all row sums of \(X\) are equal to \(0\), then
    \[
    \frac{h_{\mathrm{vol}}^2}{2\Delta}\leq \lambda_1(M)\leq 2h_{\mathrm{vol}},
    \]
    where \(\lambda_1(M)\) denotes the smallest nonzero eigenvalue of \(M\).
    \item (Dirichlet) If at least one row sum of \(X\) is positive, then \(M\) is positive definite and
    \[
    \frac{(h_{\mathrm{vol}}^{\mathrm{Bdy}})^2}{2\Delta}\leq \lambda_{1}(M)\leq h_{\mathrm{vol}}^{\mathrm{Bdy}},
    \]
    where \(\lambda_1(M)\) denotes the smallest eigenvalue of \(M\).
\end{enumerate}
\end{theorem}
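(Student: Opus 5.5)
Throughout I work with the Rayleigh quotient in the variable \(g=W^{-1/2}x\): for \(x\neq 0\),
\[
R(g)=\frac{x^TMx}{x^Tx}=\frac{\sum_{i<j}w_{ij}(g_i-g_j)^2+\sum_i b_ig_i^2}{\sum_i \mu_i g_i^2}.
\]
This follows from \(X_{ii}=\sum_{j\neq i}w_{ij}+b_i\). All eigenvalue statements about \(M\) become min--max statements for \(R\) over \(g\), with orthogonality taken in the \(W\)-inner product.

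In the boundaryless case the kernel of \(M\) is spanned by \(W^{1/2}\one\), since the interior graph is connected. So \(\lambda_1(M)=\min\{R(g): \sum_i\mu_ig_i=0\}\).

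In the Dirichlet case \(R(g)>0\) for \(g\neq 0\). Indeed, \(R(g)=0\) forces \(g\) to be constant by connectedness, and then \(b_i>0\) for some \(i\) forces \(g=0\). Hence \(M\) is positive definite and \(\lambda_1(M)=\min_g R(g)\).

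\textbf{Upper bounds.} I use test vectors built from indicators.

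In the Dirichlet case, take \(g=\one_S\). Then \(R(\one_S)=(\operatorname{Cut}(S)+\operatorname{Bdy}(S))/\operatorname{Vol}(S)\), and minimizing over \(S\) gives \(\lambda_1\le h^{\mathrm{Bdy}}_{\mathrm{vol}}\).

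In the boundaryless case, take \(g=\one_S/\operatorname{Vol}(S)-\one_{S^c}/\operatorname{Vol}(S^c)\), which has \(W\)-mean zero. The numerator equals \(\operatorname{Cut}(S)(1/\operatorname{Vol}(S)+1/\operatorname{Vol}(S^c))^2\), and the denominator equals \(1/\operatorname{Vol}(S)+1/\operatorname{Vol}(S^c)\). Therefore
\[
R(g)=\operatorname{Cut}(S)\Bigl(\tfrac1{\operatorname{Vol}(S)}+\tfrac1{\operatorname{Vol}(S^c)}\Bigr)\le \frac{2\operatorname{Cut}(S)}{\operatorname{Vol}(S)}
\]
whenever \(\operatorname{Vol}(S)\le V_{\mathrm{tot}}/2\). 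This gives \(\lambda_1\le 2h_{\mathrm{vol}}\).

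\textbf{Lower bounds.} These follow the standard co-area (sweep) argument.

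Dirichlet case: let \(g\) be the first eigenfunction. By Perron--Frobenius, or because \(R(|g|)\le R(g)\), I may take \(g\ge0\). I then consider \(f=g^2\) and the level sets \(S_t=\{f>t\}\). The co-area identity gives
\[
\sum_{i<j}w_{ij}|g_i^2-g_j^2|+\sum_i b_ig_i^2=\int_0^\infty\bigl(\operatorname{Cut}(S_t)+\operatorname{Bdy}(S_t)\bigr)\,dt\ge h^{\mathrm{Bdy}}_{\mathrm{vol}}\sum_i\mu_ig_i^2 .
\]
On the left-hand side, write \(|g_i^2-g_j^2|=|g_i-g_j|(g_i+g_j)\) and apply Cauchy--Schwarz, bounding the edge part by
\[
\Bigl(\sum_{i<j} w_{ij}(g_i-g_j)^2\Bigr)^{1/2}\Bigl(\sum_{i<j} w_{ij}(g_i+g_j)^2\Bigr)^{1/2}.
\]
For the second factor, \(\sum_{i<j} w_{ij}(g_i+g_j)^2\le 2\sum_i(\sum_j w_{ij})g_i^2\).

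The boundary term needs care to come out with the constant \(2\Delta\). I would group edge and boundary terms together, writing \(b_ig_i^2=\sqrt{b_i}g_i\cdot\sqrt{b_i}g_i\). Then a single Cauchy--Schwarz bounds the whole left side by \(\sqrt{N}\sqrt{D}\), where \(N\) is the Rayleigh numerator and
\[
D=2\sum_i\Bigl(\sum_j w_{ij}\Bigr)g_i^2+\sum_ib_ig_i^2\le 2\sum_iX_{ii}g_i^2\le 2\Delta\sum_i\mu_ig_i^2 .
\]
Squaring yields \((h^{\mathrm{Bdy}}_{\mathrm{vol}})^2\le 2\Delta\,\lambda_1\).

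Boundaryless case: take the eigenfunction \(g\) with \(W\)-mean zero. Shift it by a weighted median \(c\), chosen so that \(\operatorname{Vol}(\{g>c\})\) and \(\operatorname{Vol}(\{g<c\})\) are both at most \(V_{\mathrm{tot}}/2\). Shifting only decreases the denominator, since \(\sum\mu_i(g_i-c)^2\ge\sum\mu_ig_i^2\), and leaves the numerator unchanged. I then split \(g-c=g_+-g_-\). Using \(\lambda_1\,\mu_i g_i=\sum_j w_{ij}(g_i-g_j)\) on the support, the Rayleigh quotient of one of \(g_\pm\) is at most \(\lambda_1\). Each of \(g_\pm\) is supported on a set of volume at most \(V_{\mathrm{tot}}/2\), so every level set qualifies for \(h_{\mathrm{vol}}\), and the same co-area plus Cauchy--Schwarz argument (with \(b\equiv0\)) gives \(h_{\mathrm{vol}}^2\le 2\Delta\lambda_1\).

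\textbf{Main obstacle.} The step I expect to be delicate is this last one: justifying \(R(g_\pm)\le\lambda_1\) in the vertex-weighted setting with the median shift. I would handle it as in Chung's proof, by pairing the eigen-equation with \(g_+\) and using \((g_i-g_j)(g_{+,i}-g_{+,j})\ge (g_{+,i}-g_{+,j})^2\).
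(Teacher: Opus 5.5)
Your proposal is correct. For the upper bounds and the Dirichlet lower bound it is the paper's argument: your test vector $\one_S/\operatorname{Vol}(S)-\one_{S^c}/\operatorname{Vol}(S^c)$ is a multiple of the paper's $\one_S-\alpha\one$. Putting $b_ig_i^2=\sqrt{b_i}g_i\cdot\sqrt{b_i}g_i$ into a single Cauchy--Schwarz is exactly the paper's auxiliary ground vertex with $\widetilde g_0=0$ and $\widetilde w_{i0}=b_i$ (the paper keeps $|g_i+g_j|$, so it never needs $g\ge 0$).

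The genuine difference is the boundaryless lower bound. The paper takes the $W$-mean-zero eigenvector and flips its sign so that $\operatorname{Vol}(\{g>0\})\le V_{\mathrm{tot}}/2$. It then passes to $g^+$, citing only that the energy of $g^+$ is at most that of $g$. Since $\sum_i\mu_i(g_i^+)^2$ also drops, this alone does not give $R(g^+)\le\lambda_1$. The eigen-equation pairing you propose, $\sum_i g^+_i\sum_j w_{ij}(g_i-g_j)\ge\sum_{i<j}w_{ij}(g^+_i-g^+_j)^2$, is exactly what closes that step.

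Once you use the eigen-equation, your median shift is unnecessary. For the unshifted eigenvector, pairing with $g^+$ and with $g^-$ gives both $R(g^+)\le\lambda_1$ and $R(g^-)\le\lambda_1$. One of the two disjoint supports then has volume at most $V_{\mathrm{tot}}/2$.

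If you do shift, then for $f=g-c\one$ the equation reads $\sum_j w_{ij}(f_i-f_j)=\lambda_1\mu_i(f_i+c)$. Pairing with $f_+$ gives $R(f_+)\le\lambda_1+\lambda_1c\sum_i\mu_if_{+,i}/\sum_i\mu_if_{+,i}^2$, which is useful when $c\le 0$; pairing with $f_-$ is useful when $c\ge 0$. So your ``one of $g_\pm$'' must be selected by the sign of $c$, and you should say so.

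Alternatively, keep the shift and drop the eigen-equation. The edgewise inequality $(f_i-f_j)^2\ge(f_{+,i}-f_{+,j})^2+(f_{-,i}-f_{-,j})^2$ lets you apply the co-area bound to both $f_+$ and $f_-$ and add. This gives $\lambda_1\sum_i\mu_ig_i^2=\sum_{i<j}w_{ij}(f_i-f_j)^2\ge\frac{h_{\mathrm{vol}}^2}{2\Delta}\sum_i\mu_if_i^2\ge\frac{h_{\mathrm{vol}}^2}{2\Delta}\sum_i\mu_ig_i^2$.

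Finally, a wording slip: the shift \emph{increases} the denominator, as your displayed inequality correctly says, which is why the quotient decreases.
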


\section{Combinatorial Laplacians}
\label{sec.laplacians}
The graph Laplacian is the degree-zero instance of the Hodge Laplacian on a simplicial complex. Precisely, for $\LL$ a simplicial complex, we denote by $C_q^\LL$ the $q$-chains of $\LL$ over real numbers $\mathbb{R}$,and equip each $C_{q}^\LL$ an inner product structure. While the definitions carry through for arbitrary inner products, we shall focus on the unweighted case, i.e., when $\langle \sigma_i, \sigma_j\rangle = \delta_{ij}$ for $q$-simplices $\sigma_i$ and $\sigma_j$. 

The \emph{combinatorial Hodge $q$-Laplacian} is defined by
\[
\Delta_{q}^\LL=
\underbrace{\partial_{q+1}^\LL(\partial_{q+1}^\LL)^*}_{\text{up}} + \underbrace{(\partial_q^\LL)^* \partial_q^\LL}_{\text{down}}.
\]
We call the first part the \emph{ up Laplacian} and the second part the  \emph{down Laplacian}, and denote them by $\Delta_{q, \text{up}}^\LL$ and $\Delta_{q, \text{down}}^\LL$ respectively. These operators are often called Eckmann Laplacians, after Eckmann's discrete Hodge theorem \cite{MR0013318}, and were studied extensively for different choices of inner products in \cite{MR3077874}. Their kernels encode homology, while their nonzero eigenvalues reflect finer quantitative information about the way \(q\)-simplices are connected through \((q+1)\)-simplices. 
\paragraph{Decomposing the spectrum}
For a self-adjoint operator \(A\) on a finite-dimensional inner product space, we write $\spec(A)$ for the multiset of eigenvalues of \(A\), counted with algebraic multiplicity. We write $\spec_{\neq 0}(A)$
for the multiset obtained by removing the zero eigenvalues.

It is well known (see e.g., the introduction in \cite{MR3077874}) that the nonzero spectra of \(\Delta_q^\LL\), \(\Delta_{q,\mathrm{up}}^\LL\), and \(\Delta_{q,\mathrm{down}}^\LL\) are positive and satisfy
\[
\spec_{\neq 0}(\Delta_q^\LL)
=
\spec_{\neq 0}(\Delta_{q,\mathrm{up}}^\LL)
\sqcup
\spec_{\neq 0}(\Delta_{q,\mathrm{down}}^\LL),
\]
where \(\sqcup\) denotes disjoint union of multisets. Moreover,
\[
\spec_{\neq 0}(\Delta_{q,\mathrm{up}}^\LL)
=
\spec_{\neq 0}(\Delta_{q+1,\mathrm{down}}^\LL).
\]
(This is also a direct consequence of \cref{prop:laplaceusefulLA}.)

Therefore, it is enough to study one of the up or down families. In particular, much of the earlier work focuses on bounding the nonzero eigenvalues of the up Laplacian \(\Delta_{q,\mathrm{up}}^\LL\).

\subsection{The persistent Laplacian}
The persistent Laplacian was first independently introduced by Lieutier et al. in \cite{lapslides} and by Wang et al. in \cite{MR4164275}, 
and then systematically studied by M\'{e}moli et al. in \cite{pers_lap}.
We recall the persistent Laplacian with respect to a pair of simplicial complexes $\KK\subset \LL$.
We refer to \cite{MR4164275, pers_lap} for more details.
Let $C_q^\KK$ (resp. $C_q^\LL$) be the $q$-chain space with coefficients in $\R$ of $\KK$ (resp. $\LL$), 
and $\partial_{q}^\KK: C_q^\KK\to C_{q-1}^\KK$ (resp. $\partial_{q}^\LL: C_q^\LL\to C_{q-1}^\LL$) be the $q$-th boundary operator over $\KK$ (resp. $\LL$).
We always assume that the simplicial complex $\LL$ is unweighted,
i.e.,
each simplex in $\LL$ has the uniform weight $1$.
We then equip the $q$-chain space $C_q^\LL$ with the standard inner product structure for each $q$,
and $C_q^\KK$ is equipped with the restricted inner product structure.
Let 
\[C_q^{\LL, \KK}:= \{c\in C_q^\LL: \partial_q^\LL(c)\in C_{q-1}^\KK\}\subset C_q^\LL,\]
this sub-chain space $C_q^{\LL, \KK}$ is also endowed with the restricted inner product structure, 
and we denote by $\partial_{q}^{\LL, \KK}=\partial_{q}^\LL\big|_{C_{q}^{\LL, \KK}}$ the restriction of the boundary operator $\partial_{q}^\LL$ over $C_{q+1}^{\LL, \KK}$.
The \emph{$q$-th persistent Laplacian} $\Delta_{q}^{\KK, \LL}: C_q^\KK\to C_q^\KK$ is defined as (see \cref{fig_dig_per_lap})
\[
 \Delta_{q}^{\KK, \LL}\coloneqq \partial_{q+1}^{\LL, \KK}\circ \left(\partial_{q+1}^{\LL, \KK}\right)^\ast +
 \left(\partial_q^\KK\right)^\ast \circ \partial_q^\KK,
\]
here $\left(\partial_{q+1}^{\LL, \KK}\right)^\ast$ and $\left(\partial_q^\KK\right)^\ast$ refer to the adjoints of $\partial_{q+1}^{\LL, \KK}$ and $\partial_q^\KK$, respectively. 

\begin{figure}
\centering
\begin{tikzcd}
\centering
     C_{q+1}^\KK\arrow{rr}{\partial_{q+1}^\KK}\arrow[hookrightarrow,dashed,gray]{dd} && C_q^\KK\arrow[rr,shift left=.75ex,blue,"\partial_q^\KK"]\arrow[hookrightarrow,dashed,gray]{dd}\arrow[dl,blue,"\left(\partial_{q+1}^{\LL, \KK}\right)^*" {yshift=-2pt, xshift=-7pt}] && C_{q-1}^\KK\arrow[hookrightarrow,dashed,gray]{dd}\arrow[ll,shift left=.75ex,blue,"\left(\partial_q^\KK\right)^*"]\\
      &C_{q+1}^{\LL, \KK}\arrow[hookrightarrow,dashed,gray]{dl}\arrow[ur,shift left=.75ex,blue,"\partial_{q+1}^{\LL, \KK}"]&& \,\,\,\,\,\,\,\,\,\, & \\
        C_{q+1}^\LL\arrow{rr}{\partial_{q+1}^\LL}  && C_q^\LL\arrow{rr}{\partial_q^\LL} && C_{q-1}^\LL
\end{tikzcd}
\caption{The relevant morphisms in the definition of the persistent Laplacian $\Delta_{q}^{\KK, \LL}$.}
\label{fig_dig_per_lap}
\end{figure}

\begin{proposition}[{\cite[Theorem 3.1.]{pers_lap}}]\label{prop:pers_up_lap}
Assume that $\dim(C_{q+1}^{\LL, \KK})>0$.
Choose any basis of $C_{q+1}^{\LL, \KK}\subset C_{q+1}^{\LL}$ represented by a column matrix $Z$, 
and let $B_{q+1}^{\LL, \KK}$ be the matrix representation of $\partial_{q+1}^{\LL, \KK}$ with respect to this basis.
Then the matrix representation of persistent up Laplacian is expressed as follows:
\begin{equation}\label{eq:pers_up_lap}
\Delta_{q, \mathrm{up}}^{\KK, \LL}
=
B_{q+1}^{\LL, \KK}(Z^TZ)^{-1}(B_{q+1}^{\LL, \KK})^T.
\end{equation}
\end{proposition}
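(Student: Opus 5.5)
We must prove that $\Delta_{q,\mathrm{up}}^{\KK,\LL}=\partial_{q+1}^{\LL,\KK}(\partial_{q+1}^{\LL,\KK})^*$, written in coordinates, equals $B_{q+1}^{\LL,\KK}(Z^TZ)^{-1}(B_{q+1}^{\LL,\KK})^T$. The only subtlety is that the adjoint is taken with respect to the inner product that $C_{q+1}^{\LL,\KK}$ inherits from $C_{q+1}^\LL$, not the standard inner product on the coefficient vectors of the chosen basis. The plan is to express that inner product in the basis $Z$ and then compute the adjoint directly.

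\textbf{Setup.} Let $k=\dim C_{q+1}^{\LL,\KK}>0$, and let $Z\in\R^{|S_{q+1}^\LL|\times k}$ have as columns the coordinates, in the standard simplex basis, of a basis $z_1,\dots,z_k$ of $C_{q+1}^{\LL,\KK}$. A chain $c=Za$ with $a\in\R^k$ has inherited inner product
\[
\langle Za,Zb\rangle=a^TZ^TZb,
\]
so the Gram matrix is $G:=Z^TZ$. Since the columns of $Z$ are linearly independent, $G$ is symmetric positive definite and in particular invertible. On $C_q^\KK$ we use the standard basis of $q$-simplices of $\KK$, which is orthonormal for the restricted inner product. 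By definition of $C_{q+1}^{\LL,\KK}$, each $\partial_{q+1}^\LL z_j$ lies in $C_q^\KK$. Hence $B:=B_{q+1}^{\LL,\KK}$ is the $|S_q^\KK|\times k$ matrix with $\partial_{q+1}^{\LL,\KK}(Za)=Ba$; concretely, $B$ is the rows of $B_{q+1}^\LL Z$ indexed by the $q$-simplices of $\KK$, the other rows being zero.

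\textbf{Computing the adjoint.} Let $(\partial_{q+1}^{\LL,\KK})^*y=Za^*$ for $y\in C_q^\KK$. The defining identity is
\[
y^TBb=\langle y,\partial_{q+1}^{\LL,\KK}(Zb)\rangle=\langle Za^*,Zb\rangle=(a^*)^TGb \quad\text{for all } b\in\R^k.
\]
This forces $Ga^*=B^Ty$, so $a^*=G^{-1}B^Ty$. Composing with $\partial_{q+1}^{\LL,\KK}$ gives
\[
\Delta_{q,\mathrm{up}}^{\KK,\LL}y=Ba^*=BG^{-1}B^Ty,
\]
which is \eqref{eq:pers_up_lap}.

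\textbf{Independence of the basis.} Changing basis by an invertible $Q$ sends $Z\mapsto ZQ$, $B\mapsto BQ$ and $G\mapsto Q^TGQ$. Then
\[
BQ\,(Q^TGQ)^{-1}(BQ)^T=BG^{-1}B^T,
\]
so the formula is independent of the choice of basis, consistent with the operator being intrinsic.

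The only step needing care is identifying the inner product on $C_{q+1}^{\LL,\KK}$ with the Gram matrix $Z^TZ$ and checking that $G$ is invertible; both follow from linear independence of the columns of $Z$. No further obstacle is expected.
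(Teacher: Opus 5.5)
Your argument is correct and follows essentially the same route as the proof behind this statement. The paper does not reprove it but cites \cite{pers_lap}, where it follows from the adjoint formula $[f^*]=G_V^{-1}[f]^TG_W$ (recorded in \cref{prop:matrixformadjoint}), applied with $G_V=Z^TZ$ on $C_{q+1}^{\LL,\KK}$ and $G_W$ the identity on $C_q^\KK$; your direct computation $a^*=(Z^TZ)^{-1}(B_{q+1}^{\LL,\KK})^Ty$ derives exactly this, and your basis-independence check is a correct extra, consistent with the paper's later rewriting via the orthogonal projection $P=Z(Z^TZ)^{-1}Z^T$.
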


As in the standard combinatorial setting, the nonzero spectrum of the persistent Laplacian decomposes into its up and down parts:
\[
\spec_{\neq 0}\bigl(\Delta_q^{\KK,\LL}\bigr)
=
\spec_{\neq 0}\bigl(\Delta_{q,\mathrm{up}}^{\KK,\LL}\bigr)
\sqcup
\spec_{\neq 0}\bigl(\Delta_{q,\mathrm{down}}^\KK\bigr).
\]
Cheeger inequalities for the down part both exist in the literature and follow from our results by setting \(\KK=\LL\), using the fact that
\[
\spec_{\neq 0}\bigl(\Delta_{q,\mathrm{down}}^\LL\bigr)
=
\spec_{\neq 0}\bigl(\Delta_{q-1,\mathrm{up}}^\LL\bigr).
\]
Thus, Cheeger-type inequalities for the persistent up part, combined with the spectral decomposition above, yield Cheeger-type inequalities for the total persistent Laplacian. We therefore focus on the up part from here onward.

\subsection{Schur complements and Kron reductions}
\label{subsec.schur}

The authors in \cite{pers_lap} provided another equivalent expression for $\Delta_{q,\mathrm{up}}^{\KK,\LL}$ using Schur complements. Let $\LL$ be a simplicial complex and let $\KK\subseteq\LL$ be a subcomplex. We write
\[
\mathcal{B}:=S_q^\KK,\qquad \mathcal{I}:=S_q^\LL\setminus S_q^\KK,
\]
where $S_q^\KK$ and $S_q^\LL$ denote the sets of $q$-simplices of $\KK$ and $\LL$, respectively. Thus $\mathcal{B}$ indexes the $q$-simplices retained in $\KK$, while $\mathcal{I}$ indexes the remaining $q$-simplices of $\LL$. With respect to the decomposition of $C_q^\LL$ into coordinates indexed by $\mathcal{B}$ and $\mathcal{I}$, write
\[
\Delta_{q,\mathrm{up}}^\LL=\begin{bNiceMatrix}[first-row,code-for-first-row=\scriptstyle,first-col,code-for-first-col=\scriptstyle]
& \mathcal{B} & \mathcal{I} \\
\mathcal{B} & L_{\mathcal{B}\mathcal{B}} & L_{\mathcal{B}\mathcal{I}} \\
\mathcal{I} & L_{\mathcal{I}\mathcal{B}} & L_{\mathcal{I}\mathcal{I}}
\end{bNiceMatrix}.
\]
Then the persistent up Laplacian is the Schur complement of the interior block:
\[
\Delta_{q,\mathrm{up}}^{\KK,\LL}=L_{\mathcal{B}\mathcal{B}}-L_{\mathcal{B}\mathcal{I}}L_{\mathcal{I}\mathcal{I}}^\dagger L_{\mathcal{I}\mathcal{B}},
\]
where $L_{\mathcal{I}\mathcal{I}}^\dagger$ denotes the Moore--Penrose pseudoinverse of $L_{\mathcal{I}\mathcal{I}}$. This observation is important for computational purposes, and it also provides a concrete way to recognize the persistent Laplacian of graphs as a Kron reduction.

\begin{proposition}[{\cite[Lemma 2.1]{MR3017573}, \cite[Proposition 4.10]{pers_lap}}]
\label{lem.kron}
Let $H\hookrightarrow G$ be an inclusion of graphs. Then $\Delta_{0,\mathrm{up}}^{H,G}$ equals the graph Laplacian of an edge-weighted graph with the same vertex set as $H$.
\end{proposition}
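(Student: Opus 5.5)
We combine the Schur-complement formula for $\Delta_{0,\mathrm{up}}^{H,G}$ with \cref{prop.dualgraph}. For an inclusion of graphs $H\hookrightarrow G$ we take $q=0$, so $\mathcal{B}=V(H)$ and $\mathcal{I}=V(G)\setminus V(H)$. The ambient operator $\Delta_{0,\mathrm{up}}^G=L_G$ is the ordinary graph Laplacian. By the discussion in \cref{subsec.schur},
\[
\Delta_{0,\mathrm{up}}^{H,G}=L_{\mathcal{B}\mathcal{B}}-L_{\mathcal{B}\mathcal{I}}L_{\mathcal{I}\mathcal{I}}^\dagger L_{\mathcal{I}\mathcal{B}}=:S.
\]
By \cref{prop.dualgraph}(1), it suffices to show three things: $S$ is symmetric, its off-diagonal entries are non-positive, and its row sums vanish. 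The graph is then read off with weights $w_{ij}=-S_{ij}$ for $i\neq j$.

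\textbf{Step 1: symmetry and row sums.} Symmetry is immediate from the formula. For the row sums we use the variational (harmonic extension) description of the Schur complement: for $x\in\R^{\mathcal{B}}$,
\[
x^TSx=\min_{y\in\R^{\mathcal{I}}}(x,y)^TL_G(x,y).
\]
This holds with the pseudoinverse because $L_{\mathcal{B}\mathcal{I}}^T x$ lies in the range of $L_{\mathcal{I}\mathcal{I}}$, which in turn follows from positive semidefiniteness of $L_G$. Taking $x=\one$ and $y=\one$ gives $\one^TS\one\le \one^TL_G\one=0$. Since $S$ is positive semidefinite, this forces $S\one=0$.

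\textbf{Step 2: sign of the off-diagonal entries.} This is the main obstacle. The plan is to reduce to the invertible case, handling components of the induced graph $G[\mathcal{I}]$ separately.
\begin{itemize}
\item A component of $G[\mathcal{I}]$ attached to $\mathcal{B}$ by at least one edge makes the corresponding block of $L_{\mathcal{I}\mathcal{I}}$ a nonsingular, irreducibly diagonally dominant M-matrix. Its inverse is therefore entrywise non-negative.
\item A component with no edge to $\mathcal{B}$ contributes nothing to $L_{\mathcal{B}\mathcal{I}}$, so it can be discarded.
\end{itemize}
After this reduction, $L_{\mathcal{I}\mathcal{I}}^\dagger$ may be replaced by a genuine inverse with non-negative entries. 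The matrices $L_{\mathcal{B}\mathcal{I}}$ and $L_{\mathcal{I}\mathcal{B}}$ have non-positive entries. Hence $L_{\mathcal{B}\mathcal{I}}L_{\mathcal{I}\mathcal{I}}^{-1}L_{\mathcal{I}\mathcal{B}}$ is entrywise non-negative, and for $i\neq j$ in $\mathcal{B}$,
\[
S_{ij}=(L_G)_{ij}-\bigl(L_{\mathcal{B}\mathcal{I}}L_{\mathcal{I}\mathcal{I}}^{-1}L_{\mathcal{I}\mathcal{B}}\bigr)_{ij}\le 0 .
\]
An alternative route, if the M-matrix facts are unwanted, is to eliminate the interior vertices one at a time (Gaussian elimination / star–mesh transform). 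Each step visibly keeps Laplacian form: removing vertex $k$ adds edge weight $w_{ik}w_{kj}/d_k$ between its neighbours. By the quotient property of Schur complements, iterating these single-vertex eliminations gives $S$.

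\textbf{Step 3: conclusion.} With symmetry, non-positive off-diagonal entries and zero row sums established, \cref{prop.dualgraph}(1) shows that $S$ is the edge-weighted graph Laplacian on $V(H)$ with $w_{ij}=-S_{ij}$. Diagonal entries could vanish for isolated vertices, but this causes no harm: such a vertex simply has zero row and column.
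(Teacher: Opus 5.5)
Your proof is correct. The paper gives no argument of its own for this statement; it only cites D\"orfler--Bullo \cite{MR3017573} and M\'emoli--Wan--Wang \cite{pers_lap}. So what you have written is a self-contained version of the standard Kron-reduction argument. In that standard setting $G$ is connected, the interior block $L_{\mathcal I\mathcal I}$ is invertible, and Laplacian form is preserved under Schur complementation (for instance by your star--mesh, one-vertex-at-a-time elimination).

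\textbf{What your version adds.} The added ingredient is the handling of the pseudoinverse, which matters because the statement as written does not assume $G$ connected.
\begin{itemize}
\item $L_{\mathcal I\mathcal I}$ is block diagonal over the components of $G[\mathcal I]$, since no edges join distinct components, so $L_{\mathcal I\mathcal I}^\dagger$ is block diagonal as well.
\item Components with no edge to $V(H)$ contribute zero, because the corresponding columns of $L_{\mathcal B\mathcal I}$ vanish.
\item Every remaining block is an irreducibly diagonally dominant, hence nonsingular, M-matrix with an entrywise non-negative inverse.
\end{itemize}
It would be worth stating the block-diagonality of $L_{\mathcal I\mathcal I}^\dagger$ explicitly, since the reduction depends on it.

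\textbf{Row sums.} Your zero-row-sum argument uses the variational characterization of the Schur complement, that is, \cref{lemma:schur_optimization}, applied to $x=\one$. This is the same mechanism the paper uses in \cref{prop:ineq_cheger_kron_nonzero_1,prop:ineq_cheger_kron_nonzero}.

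\textbf{A small simplification.} You do not actually need \cref{prop.dualgraph}. Its hypothesis of strictly positive diagonal entries is what forced your remark about isolated vertices. A symmetric matrix with non-positive off-diagonal entries and vanishing row sums is, by definition, the edge-weighted graph Laplacian with $w_{ij}=-S_{ij}$, and a zero diagonal entry simply corresponds to a vertex of weighted degree $0$.
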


While interpreting the eigenvalues of the persistent Laplacian is difficult in general, this result shows that the graph case is more transparent. This reduction is important in electrical engineering \cite{MR3017573}. For $q>0$, there is no Kron reduction in general: one cannot always realize the persistent up Laplacian of an inclusion $\KK\hookrightarrow\LL$ as the simplicial up Laplacian of a single simplicial complex \cite{pers_lap}. We shall see below, however, that such a reduction does exist under additional structural assumptions on $\LL$.

\subsection{Energy-minimizing extensions and Dirichlet-to-Neumann}

In this section, we explain how the persistent up Laplacian arises from a Dirichlet-to-Neumann construction and how its Schur-complement form encodes an energy-minimizing extension principle. 
The
 Schur-complement and Kron reduction/effective conductance viewpoint was developed in \cite{pers_lap}.
 Here we formulate the closely related Dirichlet-to-Neumann interpretation explicitly and use it in the Cheeger-type arguments.
The boundary/interior notation $\mathcal{B},\mathcal{I}$ is the one introduced in \cref{subsec.schur}; it is analogous to the graph notation used in \cref{sec.dir}.

\begin{lemma}\label{lemma:schur_optimization}
For every $u_{\mathcal{B}}\in C_q^\KK$, one has
\[
u_{\mathcal{B}}^T\Delta_{q,\mathrm{up}}^{\KK,\LL}u_{\mathcal{B}}=\inf_{u_{\mathcal{I}}}\begin{bmatrix}u_{\mathcal{B}}\\u_{\mathcal{I}}\end{bmatrix}^T\Delta_{q,\mathrm{up}}^\LL\begin{bmatrix}u_{\mathcal{B}}\\u_{\mathcal{I}}\end{bmatrix},
\]
where the infimum is taken over all interior $q$-chain values indexed by $\mathcal{I}$.
\end{lemma}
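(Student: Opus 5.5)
The plan is to reduce the statement to the standard variational characterization of the Schur complement of a positive semidefinite block matrix, using the Schur-complement formula for \(\Delta_{q,\mathrm{up}}^{\KK,\LL}\) recalled in \cref{subsec.schur}. Write \(L:=\Delta_{q,\mathrm{up}}^\LL=\partial_{q+1}^\LL(\partial_{q+1}^\LL)^*\), which is symmetric positive semidefinite, and fix \(u_{\mathcal B}\). The quadratic form to be minimized is
\[
f(u_{\mathcal I}) = u_{\mathcal B}^T L_{\mathcal B\mathcal B}u_{\mathcal B} + 2u_{\mathcal I}^T L_{\mathcal I\mathcal B}u_{\mathcal B} + u_{\mathcal I}^T L_{\mathcal I\mathcal I}u_{\mathcal I},
\]
which is a convex quadratic in \(u_{\mathcal I}\), since \(L_{\mathcal I\mathcal I}\) is a principal submatrix of a PSD matrix and hence PSD.

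The first key step is to show that the infimum is attained. For this I need \(L_{\mathcal I\mathcal B}u_{\mathcal B}\in\operatorname{im}L_{\mathcal I\mathcal I}\). The standard PSD fact that \(\ker L_{\mathcal I\mathcal I}\subseteq\ker L_{\mathcal B\mathcal I}\) gives this. To see the fact, take \(v\) with \(L_{\mathcal I\mathcal I}v=0\). Then \((0,v)^TL(0,v)=0\), so \(L(0,v)=0\) by positive semidefiniteness, and hence \(L_{\mathcal B\mathcal I}v=0\). Taking orthogonal complements yields \(\operatorname{im}L_{\mathcal I\mathcal B}\subseteq\operatorname{im}L_{\mathcal I\mathcal I}\). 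Equivalently, \(L=BB^T\) with \(B\) split into blocks \(B_{\mathcal B},B_{\mathcal I}\), and both images lie in \(\operatorname{im}B_{\mathcal I}=\operatorname{im}B_{\mathcal I}B_{\mathcal I}^T\).

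Next I will show that \(u_{\mathcal I}^*=-L_{\mathcal I\mathcal I}^\dagger L_{\mathcal I\mathcal B}u_{\mathcal B}\) is a minimizer. Because \(L_{\mathcal I\mathcal I}L_{\mathcal I\mathcal I}^\dagger\) is the orthogonal projection onto \(\operatorname{im}L_{\mathcal I\mathcal I}\), this choice satisfies the first-order condition \(L_{\mathcal I\mathcal I}u_{\mathcal I}+L_{\mathcal I\mathcal B}u_{\mathcal B}=0\). By convexity, a point satisfying the first-order condition is a global minimizer. To avoid appealing to convexity, I can complete the square directly: for \(w=u_{\mathcal I}-u_{\mathcal I}^*\),
\[
f(u_{\mathcal I}) = u_{\mathcal B}^T\bigl(L_{\mathcal B\mathcal B}-L_{\mathcal B\mathcal I}L_{\mathcal I\mathcal I}^\dagger L_{\mathcal I\mathcal B}\bigr)u_{\mathcal B}+w^TL_{\mathcal I\mathcal I}w .
\]
This identity follows from the pseudoinverse identities \(L^\dagger_{\mathcal I\mathcal I}L_{\mathcal I\mathcal I}L^\dagger_{\mathcal I\mathcal I}=L^\dagger_{\mathcal I\mathcal I}\) and \(L_{\mathcal I\mathcal I}L^\dagger_{\mathcal I\mathcal I}L_{\mathcal I\mathcal B}=L_{\mathcal I\mathcal B}\); the latter is exactly the range inclusion from the previous step. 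Since \(w^TL_{\mathcal I\mathcal I}w\ge 0\), with equality at \(w=0\), the infimum equals the Schur complement form. By the Schur-complement formula of \cref{subsec.schur}, that form is \(u_{\mathcal B}^T\Delta_{q,\mathrm{up}}^{\KK,\LL}u_{\mathcal B}\).

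I expect the main obstacle to be the range inclusion \(\operatorname{im}L_{\mathcal I\mathcal B}\subseteq\operatorname{im}L_{\mathcal I\mathcal I}\). Without it, the cross term \(2w^T(L_{\mathcal I\mathcal I}u^*_{\mathcal I}+L_{\mathcal I\mathcal B}u_{\mathcal B})\) need not vanish, and the infimum could even be \(-\infty\). The remaining steps are routine expansion.
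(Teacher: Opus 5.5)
Your proof is correct and follows the same route as the paper, which simply invokes the quadratic-minimization characterization of the (generalized) Schur complement \cite[Appendix A.5.5]{MR2061575} together with the Schur-complement formula of \cref{subsec.schur}; you have written out that cited fact in full. Your range-inclusion step $\ker L_{\mathcal I\mathcal I}\subseteq\ker L_{\mathcal B\mathcal I}$ is exactly what the paper uses, citing \cite{MR245582}, in the discussion right after the lemma, and your completion of the square is the standard argument behind the citation.
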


\begin{proof}
This is the quadratic-minimization characterization of the Schur complement, applied to the block decomposition in \cref{subsec.schur}; see \cite[Appendix A.5.5]{MR2061575}.
\end{proof}

Spelled out using the adjoint property,
\[
\begin{bmatrix}u_{\mathcal{B}}\\u_{\mathcal{I}}\end{bmatrix}^T\Delta_{q,\mathrm{up}}^\LL\begin{bmatrix}u_{\mathcal{B}}\\u_{\mathcal{I}}\end{bmatrix}=\left\|(\partial_{q+1}^\LL)^*\begin{bmatrix}u_{\mathcal{B}}\\u_{\mathcal{I}}\end{bmatrix}\right\|_2^2,
\]
which measures how the $q$-chain values vary across $(q+1)$-simplices, with orientation taken into account. Hence, once the boundary values are prescribed, Schur-complement elimination determines the energy-minimizing extension to $\LL$ (see \cite[Appendix A.5.5]{MR2061575}):
\[
u_{\mathcal I}^{\min}=-L_{\mathcal I\mathcal I}^{\dagger}L_{\mathcal I\mathcal B}u_{\mathcal B}+r,\qquad r\in\ker L_{\mathcal I\mathcal I},
\]
and the quadratic form of the persistent up Laplacian gives the corresponding minimal energy. For a graph, this reduces to extending function values from boundary vertices to interior vertices so that the total variation along edges is minimized. The lemma above extends this same principle to simplicial complexes. More generally, the same argument applies to any symmetric positive semidefinite operator on a finite-dimensional vector space.

\subsubsection{Dirichlet-to-Neumann}

This subsection is not needed for the main arguments of the paper and may be skipped. It is included because the Dirichlet-to-Neumann perspective on persistent Hodge theory may be of independent interest.

The \emph{Dirichlet-to-Neumann map} on $q$-chains is defined by
\[
\Lambda:C_q^\KK\longrightarrow C_q^\KK,\qquad u_{\mathcal{B}}\longmapsto\left(\Delta_{q,\mathrm{up}}^\LL u\right)\big|_{\mathcal{B}},
\]
where $u\in C_q^\LL$ solves
\[
\Delta_{q,\mathrm{up}}^\LL u(\sigma)=0\quad\text{for }\sigma\in\mathcal{I},\qquad u(\sigma)=u_{\mathcal{B}}(\sigma)\quad\text{for }\sigma\in\mathcal{B}.
\]
Thus the interior $q$-chain values are \emph{harmonic}, i.e. trivial under the up Laplacian, while the boundary values are prescribed. With respect to the block decomposition from \cref{subsec.schur}, the Dirichlet problem is equivalently described by
\begin{equation}\label{eq:mtx_form_DtN}
\begin{bmatrix}L_{\mathcal{B}\mathcal{B}}&L_{\mathcal{B}\mathcal{I}}\\L_{\mathcal{I}\mathcal{B}}&L_{\mathcal{I}\mathcal{I}}\end{bmatrix}\begin{bmatrix}u_{\mathcal{B}}\\u_{\mathcal{I}}\end{bmatrix}=\begin{bmatrix}z\\0\end{bmatrix},
\end{equation}
where $z\in C_q^\KK$ is the resulting boundary flux, so $\Lambda u_{\mathcal{B}}=z$. From \cref{lemma:schur_optimization} and the ensuing discussion,
\[
\inf_{u_{\mathcal I}}\begin{bmatrix}u_{\mathcal B}\\u_{\mathcal I}\end{bmatrix}^T\Delta_{q,\mathrm{up}}^\LL\begin{bmatrix}u_{\mathcal B}\\u_{\mathcal I}\end{bmatrix}
=
u_{\mathcal B}^T\Delta_{q,\mathrm{up}}^{\KK,\LL}u_{\mathcal B},
\]
with minimizing interior values
\[
u_{\mathcal I}^{\min}=-L_{\mathcal I\mathcal I}^{\dagger}L_{\mathcal I\mathcal B}u_{\mathcal B}+r,\qquad r\in\ker L_{\mathcal I\mathcal I}.
\]
This minimizing extension is harmonic on the interior. Indeed, since $\Delta_{q,\mathrm{up}}^\LL$ is positive semidefinite, \cite[Theorem~1]{MR245582} gives $\ker L_{\mathcal I\mathcal I}\subseteq\ker L_{\mathcal B\mathcal I}$. If $h\in\ker L_{\mathcal I\mathcal I}$, then $L_{\mathcal B\mathcal I}h=0$, and hence
\[
h^TL_{\mathcal I\mathcal B}u_{\mathcal B}=u_{\mathcal B}^TL_{\mathcal B\mathcal I}h=0.
\]
Thus $L_{\mathcal I\mathcal B}u_{\mathcal B}\in(\ker L_{\mathcal I\mathcal I})^\perp=\operatorname{im}L_{\mathcal I\mathcal I}$, and therefore
\[
L_{\mathcal I\mathcal I}L_{\mathcal I\mathcal I}^{\dagger}L_{\mathcal I\mathcal B}u_{\mathcal B}=L_{\mathcal I\mathcal B}u_{\mathcal B}.
\]
It follows that
\[
L_{\mathcal I\mathcal B}u_{\mathcal B}+L_{\mathcal I\mathcal I}u_{\mathcal I}^{\min}
=
L_{\mathcal I\mathcal B}u_{\mathcal B}-L_{\mathcal I\mathcal I}L_{\mathcal I\mathcal I}^{\dagger}L_{\mathcal I\mathcal B}u_{\mathcal B}+L_{\mathcal I\mathcal I}r=0.
\]
Multiplying out the boundary row gives
\[
\begin{aligned}
z&=L_{\mathcal B\mathcal B}u_{\mathcal B}+L_{\mathcal B\mathcal I}u_{\mathcal I}^{\min}\\
&=L_{\mathcal B\mathcal B}u_{\mathcal B}-L_{\mathcal B\mathcal I}L_{\mathcal I\mathcal I}^{\dagger}L_{\mathcal I\mathcal B}u_{\mathcal B}+L_{\mathcal B\mathcal I}r\\
&=\left(L_{\mathcal B\mathcal B}-L_{\mathcal B\mathcal I}L_{\mathcal I\mathcal I}^{\dagger}L_{\mathcal I\mathcal B}\right)u_{\mathcal B}
=\Delta_{q,\mathrm{up}}^{\KK,\LL}u_{\mathcal B},
\end{aligned}
\]
where $L_{\mathcal B\mathcal I}r=0$ follows from $r\in\ker L_{\mathcal I\mathcal I}$ and $\ker L_{\mathcal I\mathcal I}\subseteq\ker L_{\mathcal B\mathcal I}$. We conclude the following.

\begin{proposition}
Let $\KK\hookrightarrow\LL$ be an inclusion of simplicial complexes. Then the $q$-th persistent up Laplacian coincides with the Dirichlet-to-Neumann map; that is,
\[
\Lambda=\Delta_{q,\mathrm{up}}^{\KK,\LL}.
\]
\end{proposition}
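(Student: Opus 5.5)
The plan is to verify that the Dirichlet-to-Neumann map $\Lambda$ is well defined, and then identify it with the Schur complement form of $\Delta_{q,\mathrm{up}}^{\KK,\LL}$ recalled in \cref{subsec.schur}. The Dirichlet problem \cref{eq:mtx_form_DtN} fixes $u_{\mathcal B}$ and asks for $u_{\mathcal I}$ with $L_{\mathcal I\mathcal B}u_{\mathcal B}+L_{\mathcal I\mathcal I}u_{\mathcal I}=0$. The map is then $u_{\mathcal B}\mapsto z=L_{\mathcal B\mathcal B}u_{\mathcal B}+L_{\mathcal B\mathcal I}u_{\mathcal I}$.

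First, I establish solvability. Since $\Delta_{q,\mathrm{up}}^\LL=\partial_{q+1}^\LL(\partial_{q+1}^\LL)^*$ is positive semidefinite, the standard fact on PSD block matrices (the cited \cite[Theorem~1]{MR245582}) gives $\ker L_{\mathcal I\mathcal I}\subseteq\ker L_{\mathcal B\mathcal I}$. By symmetry, $L_{\mathcal I\mathcal B}=L_{\mathcal B\mathcal I}^T$. Hence $\operatorname{im}L_{\mathcal I\mathcal B}\subseteq(\ker L_{\mathcal I\mathcal I})^\perp=\operatorname{im}L_{\mathcal I\mathcal I}$, so a harmonic extension exists. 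One explicit choice is $u_{\mathcal I}=-L_{\mathcal I\mathcal I}^\dagger L_{\mathcal I\mathcal B}u_{\mathcal B}$, using $L_{\mathcal I\mathcal I}L_{\mathcal I\mathcal I}^\dagger=$ projection onto $\operatorname{im}L_{\mathcal I\mathcal I}$. The general solution differs from it by $r\in\ker L_{\mathcal I\mathcal I}$.

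Second, I check that $z$ does not depend on the choice of solution. Changing $u_{\mathcal I}$ by $r\in\ker L_{\mathcal I\mathcal I}$ changes $z$ by $L_{\mathcal B\mathcal I}r=0$, again by the kernel inclusion. So $\Lambda$ is a well-defined linear map.

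Third, I substitute the explicit solution into the boundary row. This gives $z=(L_{\mathcal B\mathcal B}-L_{\mathcal B\mathcal I}L_{\mathcal I\mathcal I}^\dagger L_{\mathcal I\mathcal B})u_{\mathcal B}$, which is exactly the Schur complement formula for $\Delta_{q,\mathrm{up}}^{\KK,\LL}$ from \cite{pers_lap}. This concludes $\Lambda=\Delta_{q,\mathrm{up}}^{\KK,\LL}$. As a consistency check, \cref{lemma:schur_optimization} shows that the harmonic extension is also the energy minimizer.

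The only real obstacle is the kernel inclusion $\ker L_{\mathcal I\mathcal I}\subseteq\ker L_{\mathcal B\mathcal I}$ when $L_{\mathcal I\mathcal I}$ is singular, since both well-definedness and the pseudoinverse manipulation rest on it. It can be proven directly. If $L_{\mathcal I\mathcal I}h=0$, then the vector $(0,h)$ has zero quadratic form $\|(\partial_{q+1}^\LL)^*(0,h)\|^2=0$. Hence $(0,h)$ lies in the kernel of the full PSD matrix $\Delta_{q,\mathrm{up}}^\LL$, and in particular $L_{\mathcal B\mathcal I}h=0$.
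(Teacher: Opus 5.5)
Your proof is correct and follows essentially the paper's argument: the kernel inclusion $\ker L_{\mathcal I\mathcal I}\subseteq\ker L_{\mathcal B\mathcal I}$ gives the harmonic extension $-L_{\mathcal I\mathcal I}^{\dagger}L_{\mathcal I\mathcal B}u_{\mathcal B}+r$. Independence of the flux from $r\in\ker L_{\mathcal I\mathcal I}$ and substitution into the boundary row then recover the Schur complement. The differences are minor: you prove the kernel inclusion directly from positive semidefiniteness instead of citing it, and you start from the harmonic equation rather than from the energy-minimizing extension, which makes the well-definedness of $\Lambda$ explicit.
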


Dirichlet-to-Neumann maps and their eigenvalue problems have also been studied for graphs; see \cite{MR3722072,MR4445686}. The study of persistent Laplacians on simplicial complexes is thus a natural generalization of that line of work.

\begin{remark}
In the continuous setting, where $\Omega\subset\mathbb{R}^n$ is a bounded domain with sufficiently regular boundary, the continuous Dirichlet-to-Neumann operator \cite{MR3616276} solves
\[
\Delta u_f=0\quad \text{in }\Omega,\qquad u_f=f\quad \text{on }\partial\Omega.
\]The associated eigenvalue problem is the \emph{Steklov eigenvalue problem}, which encodes geometric and topological information about the domain; see \cite{MR3137253} for a historical survey. In physical terms, if $f$ is the voltage on $\partial\Omega$, then $\Lambda f$ is the resulting boundary current flux. This is the mathematical foundation of \emph{Electrical Impedance Tomography} and the \emph{Calderón inverse problem} \cite{MR590275,MR3460047}. 
\end{remark}

\section{A Cheeger inequality for the persistent Laplacian}
\label{sec:cheeger_perslap}

In this section, we prove a Cheeger-type inequality for the persistent up Laplacian associated with an inclusion
\(\KK\hookrightarrow \LL\) of simplicial complexes. The result is essentially an extension of \cite[Proposition 2.1]{jost2024cheeger}. One difference is that \cite{jost2024cheeger} focuses on the first non-trivial eigenvalue, whereas we restrict our attention to the first nonzero eigenvalue. Nevertheless, a Cheeger-type inequality for the persistent Laplacian in that setting can be obtained by essentially the same argument; see \cref{remark.connection.to.jost}.

The key observation is that one can give a well-defined definition of the \(p\)-persistent Laplacian in the persistent setting. We then introduce a persistent Cheeger constant and prove \cref{prop_cheeger_ineq}.

We believe that the introduction of a \(p\)-Laplacian in the persistent setting is of independent interest and will be considered in subsequent work.

\subsection{A persistent \(p\)-Laplacian}\label{sec:pers_p_lap}

Following \cite{jost2024cheeger}, the \(q\)-th up \(p\)-Laplacian on \(\LL\), defined on chains, is
\[
\Delta_{q, \text{up}}^{\LL}
\coloneqq
B_{q+1}^{\LL}\circ \alpha_p\circ
(B_{q+1}^\LL)^T ,
\]
where
\[
\alpha_p:
(t_1,t_2,\ldots)
\mapsto
\begin{cases}
\bigl(|t_1|^{p-2}t_1, |t_2|^{p-2}t_2,\ldots\bigr), & p>1,\\[2mm]
\{(\xi_1,\xi_2,\ldots): \xi_i\in \sgn(t_i)\ \text{for all }i\}, & p=1,
\end{cases}
\]
and \(\sgn(\cdot)\) is defined by
\[
\sgn(t)=
\begin{cases}
\{1\}, & t>0,\\
[-1,1], & t=0,\\
\{-1\}, & t<0.
\end{cases}
\]

This definition is given with respect to the standard bases of \(q\)- and \((q+1)\)-simplices. Since the \(p\)-Laplacian is nonlinear, its action is not invariant under an arbitrary change of basis. Moreover, the persistent Laplacian is closely tied to the choice of a basis of chains, namely the columns of \(Z\). Thus, one might expect that any definition of a persistent \(p\)-Laplacian depends on the computation of \(Z\). We now show that this is not the case.

Let \(S_q^{\KK}\) and \(S_q^{\LL}\) be the ordered sets of \(q\)-simplices in \(\KK\) and \(\LL\), respectively. We assume that the ordering of the \(q\)-simplices
\[
S_q^{\LL}=\{[\sigma_i]\}_{i=1}^{n_q^\LL}
\]
is chosen so that
\[
S_q^{\KK}=\{[\sigma_i]\}_{i=1}^{n_q^\KK}.
\]
When \(n_q^\KK<n_q^\LL\), we can write the matrix \(B_{q+1}^{\LL}\) in block form as
\[
B_{q+1}^{\LL}
=
\begin{bmatrix}
B\\
D
\end{bmatrix},
\]
where the row indices of \(B\) and \(D\) correspond to \(S_q^{\KK}\) and
\(S_q^{\LL}\setminus S_q^{\KK}\), respectively.

When \(\dim(C_{q+1}^{\LL,\KK})>0\), \cite[Lemma 3.4]{pers_lap} gives
\[
B_{q+1}^{\LL,\KK}=BZ.
\]
Therefore, the persistent up Laplacian can be written as
\[
\Delta_{q,\mathrm{up}}^{\KK,\LL}
=
B_{q+1}^{\LL, \KK}(Z^TZ)^{-1}(B_{q+1}^{\LL, \KK})^T=
BZ(Z^T Z)^{-1}Z^T B^T.
\]
Let
\[
P=Z(Z^T Z)^{-1}Z^T.
\]
Then $P$ is a linear transformation $P\colon C_{q+1}^\LL\to C_{q+1}^\LL$ satisfying
\[
P=P^T= P^*\qquad \qquad  \text{and} \qquad \qquad P=P^2.
\]
Hence
\begin{equation}\label{eq:P_pers_up_lap}
\Delta_{q,\mathrm{up}}^{\KK,\LL}
=
BPB^T
=
(BP)(PB^T).
\end{equation}
Moreover, \(P\) is the orthogonal projection from \(C_{q+1}^{\LL}\) onto the subspace
\(C_{q+1}^{\LL,\KK}\). It is therefore independent of the choice of basis for
\(C_{q+1}^{\LL,\KK}\), and we thus have obtained a basis-free definition of the persistent Laplacian. With respect to the standard basis of $q$-simplices we get the following definition.

\begin{definition}\label{def:pers_lap}
Let \(\KK\hookrightarrow \LL\) be an inclusion of simplicial complexes. The
\(q\)-th up persistent \(p\)-Laplacian associated with this inclusion is
defined for \(x\in C_q^{\KK}\) by
\[
\Delta_{q,p,\mathrm{up}}^{\KK,\LL}(x)
=
BP\alpha_p(PB^T x).
\]
\end{definition}

As in the case of the \(2\)-Laplacian, one can use Rayleigh quotients (\cref{thm.rayleigh}) to
define variational eigenvalues. Concretely, set
\begin{equation}\label{eq:var_eigenval}
\lambda_i\bigl(\Delta_{q,p,\mathrm{up}}^{\KK,\LL}\bigr)
=
\inf_{\gamma(S)\geq i}
\sup_{x\in S}
\frac{\|PB^T x\|_p^p}{\|x\|_p^p},
\qquad
i=1,2,\ldots, |S_q^\KK|.
\end{equation}
Here
\[
\gamma(S)=
\begin{cases}
\min \left\{
k\in \mathbb Z^+ :
\text{there exists an odd continuous map }
\varphi:S\to \mathbb S^{k-1}
\right\}, & S\neq \varnothing,\\[2mm]
0, & S=\varnothing,
\end{cases}
\]
denotes the Krasnoselskii genus of a centrally symmetric set
\(S\subset \mathbb R^n\setminus\{0\}\).

For \(p=2\), the variational eigenvalues defined in
\cref{eq:var_eigenval} are precisely the eigenvalues of the persistent up
Laplacian. For \(p\neq 2\), these variational eigenvalues form only a subset
of the eigenvalues satisfying the generalized eigenvalue problem
\[
\Delta_{q,p,\mathrm{up}}^{\KK,\LL}(x)
=
\lambda \alpha_p(x),
\qquad p>1.
\]
(For $p=1$, the eigenvalue problem involves the Clarke subdifferential \cite{clarke1990optimization}; see \cite{jost2024cheeger}.)
Determining whether the total number of such generalized eigenvalues is
finite remains an open problem in graph theory; see, for example,
\cite{deidda2025nonlinear} for a recent overview.

\subsection{A persistent Cheeger constant}\label{sec:JZ_cheeger_cnst}
For the purposes of this paper, we focus on the \(p\)-Laplacian only in the cases \(p=1\) and \(p=2\). Our goal is to define the persistent Cheeger constant in terms of the smallest non-zero variational eigenvalue for \(p=1\). This definition then leads naturally to a Cheeger-type inequality. Moreover, in the case of graphs, the resulting Cheeger constant agrees with the Cheeger constant appearing in \cref{def:graph_asymm_cheeger}.

First, observe that
\[
\ker\bigl(\Delta_{q,p,\mathrm{up}}^{\KK,\LL}\bigr)
=
\ker(PB^T).
\]
Since \(PB^T\) vanishes on \(\ker(PB^T)\), and since
\[
\gamma(\ker(PB^T)\cap S_p)=\dim \ker(PB^T),
\qquad
S_p:=\{x:\|x\|_p=1\},
\]
we have
\[
\lambda_i\bigl(\Delta_{q,p,\mathrm{up}}^{\KK,\LL}\bigr)=0
\qquad
\text{for all } i\leq \dim \ker(PB^T).
\]
In fact, the smallest non-zero variational eigenvalue is
\[
\lambda_{\dim \ker(PB^T)+1}
\bigl(\Delta_{q,p,\mathrm{up}}^{\KK,\LL}\bigr).
\]
This result is well-known; see, for instance \cite[Section 2]{tudisco2022nonlinear}.

The next result shows that, in the case \(p=1\), this first non-zero
variational eigenvalue admits an equivalent formulation in terms of
\(\ell^1\)-minimal representatives. 

\begin{lemma}
\label{lem:p1cheeger_equiv}
Let
\[
i=\dim\ker(PB^T)+1.
\]
Then
\[
\lambda_i\bigl(\Delta_{q,1,\mathrm{up}}^{\KK,\LL}\bigr)
=
\inf_{\gamma(S)\geq i}
\sup_{x\in S}
\frac{\|PB^T x\|_1}{\|x\|_1}
=
\min_{\substack{x\neq 0\\ x\perp^1\ker(PB^T)}}
\frac{\|PB^T x\|_1}{\|x\|_1}.
\]
Here \(x\perp^1 Y\) means that
\[
\|x+y\|_1\geq \|x\|_1
\qquad
\text{for every } y\in Y.
\]
Equivalently, \(x\) is an \(\ell^1\)-minimal representative of the affine
space \(x+Y\).
\end{lemma}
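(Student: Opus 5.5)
We prove the equality between the minimax value and the $\ell^1$-orthogonal minimum by establishing two inequalities. Throughout we write $N:=\ker(PB^T)$, $k:=\dim N$, $i=k+1$, and
\[
R(x):=\frac{\|PB^Tx\|_1}{\|x\|_1},\qquad \mu:=\inf_{\substack{x\neq 0\\ x\perp^1 N}}R(x).
\]
The first equality in the statement is just the definition \cref{eq:var_eigenval} with $p=1$. It therefore remains to show $\lambda_i=\mu$ and that the infimum defining $\mu$ is attained.

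\emph{Step 1: attainment and reduction.} The set $X:=\{x:\|x\|_1=1,\ x\perp^1 N\}$ is closed, being an intersection of closed conditions $\|x+y\|_1\geq\|x\|_1$, and bounded, hence compact. The numerator $\|PB^Tx\|_1$ is continuous. Every $x\notin N$ has an $\ell^1$-minimal representative $x_0=x+y$ in $x+N$, since a continuous coercive function on an affine space attains its minimum. Moreover $x_0\neq0$, and
\[
PB^Tx_0=PB^Tx,\qquad \|x_0\|_1\leq\|x\|_1.
\]
Hence $R(x_0)\geq R(x)$ whenever $x\notin N$, and $\mu$ is a minimum over $X$. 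In particular $\mu>0$: on $X$ we have $x\notin N$, because if $x\in N$ then taking $y=-x$ forces $\|x\|_1\leq 0$.

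\emph{Step 2: $\lambda_i\leq\mu$.} Let $x^*\in X$ attain $\mu$. Take $S$ to be the unit $\ell^1$-sphere of $\mathrm{span}(N\cup\{x^*\})$, which has dimension $k+1$ and so genus $k+1=i$. Any $z\in S$ can be written $z=tx^*+y$ with $y\in N$ and $t\neq0$ unless $z\in N$. If $z\in N$ then $R(z)=0$. Otherwise
\[
R(z)=\frac{|t|\,\|PB^Tx^*\|_1}{\|tx^*+y\|_1}\leq\frac{|t|\,\|PB^Tx^*\|_1}{|t|\,\|x^*\|_1}=\mu,
\]
where the inequality uses $x^*\perp^1N$ applied to $y/t$. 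Thus $\sup_S R\leq\mu$, giving $\lambda_i\leq\mu$.

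\emph{Step 3: $\lambda_i\geq\mu$.} This is the main obstacle: we need that every symmetric compact $S$ with $\gamma(S)\geq k+1$ contains a point $z\notin N$ with $R(z)\geq\mu$. By Step 1, $R(z)\geq\mu$ holds for every $z\notin N$. Indeed, choosing the minimal representative $z_0$ of $z+N$ gives $z_0/\|z_0\|_1\in X$, so
\[
R(z)=\frac{\|PB^Tz_0\|_1}{\|z\|_1}\geq\frac{\|PB^Tz_0\|_1}{\|z_0\|_1}\geq\mu.
\]
So it suffices to show $S\not\subseteq N$. This follows from monotonicity of the genus: if $S\subseteq N\setminus\{0\}$, then radial projection followed by a linear isomorphism $N\cong\mathbb R^k$ gives an odd continuous map $S\to\mathbb S^{k-1}$, so $\gamma(S)\leq k<i$. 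Hence $\sup_S R\geq\mu$ for every admissible $S$, and $\lambda_i\geq\mu$. The only delicate points are compactness and closedness conventions in the genus definition, which we handle by restricting to compact symmetric sets, as is standard in \cite{tudisco2022nonlinear,jost2024cheeger}.
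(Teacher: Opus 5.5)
Your proof is correct, but it takes a different route from the paper. The paper does not argue directly. It applies the abstract min--max theorem \cite[Theorem 2.1]{jost2021discrete} to $F(x)=\|PB^Tx\|_1$ and $G(x)=\|x\|_1$ with zero space $\Pi=\ker(PB^T)$, which yields a minimum over those $x\neq 0$ with $\partial\|x\|_1\cap\ker(PB^T)^\perp\neq\varnothing$. It then uses \cite[Proposition 2.3]{jost2021discrete} to identify that subdifferential condition with $\ell^1$-minimality in the coset $x+\ker(PB^T)$.

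You instead prove both inequalities by hand:
\begin{itemize}
\item Attainment follows from compactness of the set of $\ell^1$-orthogonal unit vectors.
\item The upper bound $\lambda_i\leq\mu$ comes from the explicit test set given by the unit sphere of $\ker(PB^T)\oplus\mathrm{span}(x^*)$, where $\perp^1$ controls $\|tx^*+y\|_1$.
\item The lower bound rests on two facts. Every $z\notin\ker(PB^T)$ already satisfies $R(z)\geq\mu$ once you pass to a minimal coset representative. And a symmetric set of genus larger than $\dim\ker(PB^T)$ cannot lie inside $\ker(PB^T)$.
\end{itemize}

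Your argument is self-contained: it needs only Borsuk--Ulam (for the genus of the test sphere) and finite-dimensional compactness, and it avoids subdifferentials entirely. It also makes clear exactly what is used: invariance of the numerator under $\ker(PB^T)$, the fact that its zero set is exactly $\ker(PB^T)$, and $1$-homogeneity. Consequently it works verbatim with any norm in the denominator, such as the degree-weighted norm of Jost--Zhang, with $\perp^1$ defined relative to that norm.

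The paper's route is a short citation instead. It phrases admissibility as $\partial\|x\|_1\cap\ker(PB^T)^\perp\neq\varnothing$, the form that connects directly to the subdifferential eigenvalue problem for the $1$-Laplacian. The degenerate case $PB^T=0$ is implicitly excluded in both versions: there your set $X$ is empty and $i$ exceeds $|S_q^\KK|$.
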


\begin{proof}
Set
\[
T:=PB^T,
\qquad
K:=\ker T.
\]
We apply \cite[Theorem 2.1]{jost2021discrete} with
\[
F(x)=\|Tx\|_1,
\qquad
G(x)=\|x\|_1,
\qquad
\Pi=K.
\]
The functions \(F\) and \(G\) are even, nonnegative, and \(1\)-homogeneous.
Moreover,
\[
F(x+y)=F(x)
\qquad
\text{for every } y\in K,
\]
and
\[
F(x)=0
\quad\Longleftrightarrow\quad
Tx=0
\quad\Longleftrightarrow\quad
x\in K.
\]
Thus the zero space appearing in the min--max theorem is precisely
\(\Pi=K\). Hence \cite[Theorem 2.1]{jost2021discrete} gives
\[
\inf_{\gamma(S)\geq \dim K+1}
\sup_{x\in S}
\frac{\|Tx\|_1}{\|x\|_1}
=
\min_{\substack{x\neq 0\\
\partial\|x\|_1\cap K^\perp\neq\varnothing}}
\frac{\|Tx\|_1}{\|x\|_1}.
\]

It remains to identify the subdifferential condition with the stated
\(\ell^1\)-orthogonality condition. By
\cite[Proposition 2.3]{jost2021discrete}, the condition
\[
\partial\|x\|_1\cap K^\perp\neq\varnothing
\]
is equivalent to \(x\) being an \(\ell^1\)-minimal representative of its
coset modulo \(K\), that is,
\[
\|x+y\|_1\geq \|x\|_1
\qquad
\text{for every } y\in K.
\]
This is precisely the condition \(x\perp^1 K\). Since \(T=PB^T\), the claim
follows.
\end{proof}

This motivates the following definition.

\begin{definition}\label{def:nonzero_cheeger}
Let \(\KK\hookrightarrow \LL\) be an inclusion of simplicial complexes. The
persistent Cheeger constant of this inclusion is
\[
\varphi_q^{\KK,\LL}
=
\min_{\substack{x\neq 0\\ x\perp^1\ker(PB^T)}}
\frac{\|PB^T x\|_1}{\|x\|_1}.
\]
\end{definition}

When \(\KK=\LL\), we have \(P=I\) (the identity matrix), and therefore
\begin{equation}\label{eq:non_zero_cheeger}
\varphi_q^{\LL}
=
\min_{\substack{x\neq 0\\
x\perp^1\ker\left((B_{q+1}^{\LL})^T\right)}}
\frac{\|(B_{q+1}^{\LL})^T x\|_1}{\|x\|_1}.
\end{equation}
In the case of graphs, this expression agrees with the graph Cheeger constant
appearing in \cref{eq:graph_asymm_cheeger}
(see {\cite[Theorem 2.12.]{MR1421568}}).

Below, we study this Cheeger constant for oriented pseudomanifolds
(\cref{sec:cheeger_phi_psdmfld}) and graphs
(\cref{sec.graph.comparisons}) in the setting of the persistent Laplacian.

\subsection{A persistent Cheeger-type inequality}\label{sec:JZ_cheeger_ineq}

We now prove a Cheeger-type inequality for the smallest non-zero variational
eigenvalue of the persistent up Laplacian.

\begin{proposition}\label{prop_cheeger_ineq}
Let \(\KK\hookrightarrow \LL\) be an inclusion of simplicial complexes, and
let \(|S_q^\KK|\) and \(|S_{q+1}^\LL|\) denote the numbers of \(q\)-simplices
in \(\KK\) and \((q+1)\)-simplices in \(\LL\), respectively. Let
\(\lambda_{\min}^+\) denote the smallest non-zero eigenvalue of
the persistent up Laplacian \(\Delta_{q,\mathrm{up}}^{\KK,\LL}\). Then
\begin{equation}\label{ineq:persistent_cheeger}
\frac{\bigl(\varphi_q^{\KK,\LL}\bigr)^2}{|S_{q+1}^{\LL}|}
\leq
\lambda_{\min}^+
\leq
|S_q^\KK|\bigl(\varphi_q^{\KK,\LL}\bigr)^2 .
\end{equation}
\end{proposition}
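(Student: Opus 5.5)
Write \(T:=PB^T\colon C_q^\KK\to C_{q+1}^\LL\) and \(K:=\ker T\), so that \(\Delta_{q,\mathrm{up}}^{\KK,\LL}=T^TT\) by \cref{eq:P_pers_up_lap}. The smallest nonzero eigenvalue then has the Rayleigh characterization
\[
\lambda_{\min}^+=\min_{x\neq 0,\ x\perp K}\frac{\|Tx\|_2^2}{\|x\|_2^2},
\]
with \(\perp\) the Euclidean orthogonal complement. The constant \(\varphi:=\varphi_q^{\KK,\LL}\) is the analogous \(\ell^1\) quantity over \(\ell^1\)-minimal representatives, by \cref{def:nonzero_cheeger}. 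Both quotients are invariant under adding elements of \(K\) to the numerator argument, since \(T(x+y)=Tx\) for \(y\in K\). So the plan is to compare the two optimization problems on the quotient \(C_q^\KK/K\), passing between \(\ell^1\) and \(\ell^2\) norms with the standard estimates \(\|v\|_2\le\|v\|_1\le\sqrt{n}\,\|v\|_2\) in dimension \(n\).

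\emph{Lower bound.} Take an \(\ell^2\)-minimizer \(x\perp K\) for \(\lambda_{\min}^+\). Let \(\tilde x=x+y\), \(y\in K\), be an \(\ell^1\)-minimal representative of the coset \(x+K\); it exists because the coset is a closed affine set and \(\|\cdot\|_1\) is coercive. It satisfies \(\tilde x\perp^1 K\), and \(\tilde x\neq0\) since \(x\notin K\). Then
\[
\varphi\le\frac{\|T\tilde x\|_1}{\|\tilde x\|_1}=\frac{\|Tx\|_1}{\|\tilde x\|_1}.
\]
Using \(\|Tx\|_1\le\sqrt{|S_{q+1}^\LL|}\,\|Tx\|_2\) and
\[
\|\tilde x\|_1\ge\|\tilde x\|_2\ge\|x\|_2,
\]
where the last inequality holds because \(x\) is the \(\ell^2\)-orthogonal projection of \(\tilde x\) onto \(K^\perp\), we get \(\varphi\le\sqrt{|S_{q+1}^\LL|}\,\|Tx\|_2/\|x\|_2=\sqrt{|S_{q+1}^\LL|\,\lambda_{\min}^+}\). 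Squaring gives the left inequality.

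\emph{Upper bound.} Take an \(\ell^1\)-minimizer \(x\perp^1 K\) attaining \(\varphi\). The minimum is attained since the feasible set is a closed cone and the quotient is \(0\)-homogeneous. Let \(x'\) be the \(\ell^2\)-projection of \(x\) onto \(K^\perp\); it is nonzero. Then
\[
\lambda_{\min}^+\le\frac{\|Tx'\|_2^2}{\|x'\|_2^2}=\frac{\|Tx\|_2^2}{\|x'\|_2^2}\le\frac{\|Tx\|_1^2}{\|x'\|_2^2}.
\]
The remaining step is a lower bound on \(\|x'\|_2\). Since \(x\) is \(\ell^1\)-minimal in its coset, which contains \(x'\), we have
\[
\|x\|_1\le\|x'\|_1\le\sqrt{|S_q^\KK|}\,\|x'\|_2.
\]
Hence \(\|x'\|_2^2\ge\|x\|_1^2/|S_q^\KK|\), and so \(\lambda_{\min}^+\le|S_q^\KK|\,\|Tx\|_1^2/\|x\|_1^2=|S_q^\KK|\varphi^2\).

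The main point to get right is this interplay between representatives. In the lower bound we need the \(\ell^2\)-orthogonal representative to have the smallest \(\ell^2\) norm in its coset. In the upper bound we need the \(\ell^1\)-minimal representative to have the smallest \(\ell^1\) norm in its coset. Each norm comparison is applied on the side where the minimality holds, so the dimension factors \(|S_{q+1}^\LL|\) and \(|S_q^\KK|\) come out exactly as in \cref{ineq:persistent_cheeger}. We also use \cref{lem:p1cheeger_equiv} only to justify that \(\varphi\) is a well-defined minimum. The case \(T=0\), where there is no nonzero eigenvalue, is excluded implicitly by the statement.
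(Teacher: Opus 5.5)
Your proof is correct, but it takes a different route from the paper's. The paper compares the two Rayleigh quotients pointwise for the same $x$:
\[
\frac{1}{|S_{q+1}^{\LL}|}\frac{\|PB^Tx\|_1^2}{\|x\|_1^2}\leq\frac{\|PB^Tx\|_2^2}{\|x\|_2^2}\leq|S_q^\KK|\frac{\|PB^Tx\|_1^2}{\|x\|_1^2}.
\]
It then takes the Krasnoselskii-genus min--max of both sides at the common index $i=\dim\ker(PB^T)+1$. The identification of $\varphi_q^{\KK,\LL}$ with the $i$-th variational eigenvalue of the persistent $1$-Laplacian is exactly \cref{lem:p1cheeger_equiv}, which relies on the min--max theorem of Jost--Zhang. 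You instead work directly from \cref{def:nonzero_cheeger} and Courant--Fischer. Since both numerators are constant on cosets of $K=\ker T$, you can switch between two representatives of a coset. The $\ell^2$-orthogonal representative minimizes $\|\cdot\|_2$ in its coset, and the $\ell^1$-minimal representative minimizes $\|\cdot\|_1$. You apply each norm comparison on the side where the relevant minimality holds. This makes explicit why the mismatch between the constraint sets $x\perp K$ and $x\perp^1 K$ is harmless; in the paper this is handled implicitly by the genus characterization. Your argument is also self-contained: it needs neither the genus nor the external min--max theorem. For the upper bound you do not even need $\varphi$ to be attained, since any feasible $x$ gives the estimate before taking the infimum. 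The paper's route gains generality: the same pointwise comparison gives $\lambda_k(\Delta_{q,1,\mathrm{up}}^{\KK,\LL})^2/|S_{q+1}^{\LL}|\leq\lambda_k(\Delta_{q,\mathrm{up}}^{\KK,\LL})\leq|S_q^\KK|\,\lambda_k(\Delta_{q,1,\mathrm{up}}^{\KK,\LL})^2$ for every variational index $k$. It also presents $\varphi_q^{\KK,\LL}$ as an eigenvalue of the persistent $1$-Laplacian, which is the motivation for the definition.
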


\begin{proof}
Let \(x\in C_q^\KK\) be nonzero. We use the elementary norm inequalities
\[
1\leq \frac{\|x\|_1^2}{\|x\|_2^2}\leq |S_q^\KK|,
\qquad
1\leq
\frac{\|PB^T x\|_1^2}{\|PB^T x\|_2^2}
\leq |S_{q+1}^{\LL}|,
\]
whenever \(PB^T x\neq 0\). Hence
\[
\frac{1}{|S_{q+1}^{\LL}|}
\frac{\|PB^T x\|_1^2}{\|x\|_1^2}
\leq
\frac{\|PB^T x\|_2^2}{\|x\|_2^2}
\leq
|S_q^\KK|
\frac{\|PB^T x\|_1^2}{\|x\|_1^2}.
\]

Set
\[
i=\dim\ker(PB^T)+1.
\]
By the discussion above, this is precisely the index of the smallest non-zero variational eigenvalue.By \cref{lem:p1cheeger_equiv} and \cref{def:nonzero_cheeger},
\[
\varphi_q^{\KK,\LL}
=
\lambda_i\bigl(\Delta_{q,1,\mathrm{up}}^{\KK,\LL}\bigr)
=
\inf_{\gamma(S)\geq i}
\sup_{x\in S}
\frac{\|PB^T x\|_1}{\|x\|_1}.
\]
On the other hand,
\[
\lambda_i\bigl(\Delta_{q,\mathrm{up}}^{\KK,\LL}\bigr)
=
\inf_{\gamma(S)\geq i}
\sup_{x\in S}
\frac{\|PB^T x\|_2^2}{\|x\|_2^2}.
\]
Applying the preceding norm comparison to the Rayleigh quotients gives
\[
\frac{1}{|S_{q+1}^{\LL}|}
\left(
\inf_{\gamma(S)\geq i}
\sup_{x\in S}
\frac{\|PB^T x\|_1}{\|x\|_1}
\right)^2
\leq
\lambda_i\bigl(\Delta_{q,\mathrm{up}}^{\KK,\LL}\bigr)
\leq
|S_q^\KK|
\left(
\inf_{\gamma(S)\geq i}
\sup_{x\in S}
\frac{\|PB^T x\|_1}{\|x\|_1}
\right)^2.
\]
Using the identity above with \(\varphi_q^{\KK,\LL}\), we obtain
\[
\frac{\bigl(\varphi_q^{\KK,\LL}\bigr)^2}{|S_{q+1}^{\LL}|}
\leq
\lambda_i\bigl(\Delta_{q,\mathrm{up}}^{\KK,\LL}\bigr)
\leq
|S_q^\KK|\bigl(\varphi_q^{\KK,\LL}\bigr)^2,
\]
as claimed.
\end{proof}

When \(\KK=\LL\), we obtain the following corollary.

\begin{corollary}\label{cor:hodge_cheeger_ineq}
Let \(\lambda_{\min}^+\) be the smallest non-zero variational eigenvalue of
the up Laplacian \(\Delta_{q,\mathrm{up}}^{\LL}\). Then
\begin{equation}\label{ineq:hodge_cheeger}
\frac{\bigl(\varphi_q^{\LL}\bigr)^2}{|S_{q+1}^{\LL}|}
\leq
\lambda_{\min}^+
\leq
|S_q^\LL|\bigl(\varphi_q^{\LL}\bigr)^2 .
\end{equation}
\end{corollary}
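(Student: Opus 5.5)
The corollary is the special case \(\KK=\LL\) of \cref{prop_cheeger_ineq}, so the plan is to specialize that proposition and check that every ingredient reduces to the non-persistent objects.

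\emph{Specializing the data.} When \(\KK=\LL\), the set \(S_q^\LL\setminus S_q^\KK\) is empty. Hence the block decomposition of \(B_{q+1}^\LL\) has \(B=B_{q+1}^\LL\) and an empty \(D\), and the condition \(\partial c\in C_q^\KK\) is vacuous. It follows that \(C_{q+1}^{\LL,\LL}=C_{q+1}^\LL\). The orthogonal projection onto this subspace is \(P=I\), which does not depend on the choice of \(Z\) (one may take \(Z=I\)).

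\emph{Identifying the operators and constants.} Substituting \(P=I\) into \eqref{eq:P_pers_up_lap} gives
\[
\Delta_{q,\mathrm{up}}^{\LL,\LL}=B_{q+1}^\LL(B_{q+1}^\LL)^T=\Delta_{q,\mathrm{up}}^\LL .
\]
Definition \ref{def:nonzero_cheeger} then reduces to \eqref{eq:non_zero_cheeger}, so \(\varphi_q^{\LL,\LL}=\varphi_q^\LL\). We also have \(|S_q^\KK|=|S_q^\LL|\).

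\emph{Conclusion.} The smallest nonzero variational eigenvalue of \(\Delta_{q,\mathrm{up}}^\LL\) has index \(\dim\ker (B_{q+1}^\LL)^T+1\). For \(p=2\) it equals the smallest nonzero eigenvalue, by the Rayleigh-quotient characterization. Plugging all of this into \eqref{ineq:persistent_cheeger} yields \eqref{ineq:hodge_cheeger}.

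The only point needing care is the degenerate case \(\dim C_{q+1}^\LL=0\). There \(\Delta_{q,\mathrm{up}}^\LL=0\), so there is no nonzero eigenvalue and the statement is vacuous. Otherwise the hypothesis \(\dim C_{q+1}^{\LL,\KK}>0\) of \cref{prop:pers_up_lap} holds. No other step presents an obstacle.
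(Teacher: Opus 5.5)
Your proposal is correct and matches the paper's argument exactly: the paper obtains the corollary by setting \(\KK=\LL\) in \cref{prop_cheeger_ineq}. In that case \(C_{q+1}^{\LL,\LL}=C_{q+1}^\LL\) forces \(P=I\), so \(\Delta_{q,\mathrm{up}}^{\LL,\LL}\) and \(\varphi_q^{\LL,\LL}\) reduce to \(\Delta_{q,\mathrm{up}}^\LL\) and \(\varphi_q^\LL\). Your extra remark on the degenerate case \(C_{q+1}^\LL=0\), where the statement is vacuous, is a harmless addition that the paper leaves implicit.
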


\begin{remark}
\label{remark.connection.to.jost}
Following the definition of Cheeger constant in \cite{jost2024cheeger},
we obtain the following bound of the smallest \emph{non-trivial} persistent Cheeger constant:
\[
\psi_{q}^{\KK, \LL}
=
\min_{0\neq x\perp^1\im\big((B_q^\KK)^T\big)}\frac{\|PB^T x\|_1}{\|x\|_1}.
\]
Let 
\[\lambda_{+}^{\KK, \LL}
:=
\min_{0 \neq x\in \ker(B_q^\KK)}\frac{\langle x, \Delta_{q, \text{up}}^{\KK, \LL}x\rangle}{\|x\|^2}
\]
denote the smallest \emph{non-trivial} eigenvalue of $\Delta_{q, \text{up}}^{\KK, \LL}$,
using exactly the same argument as 
the proof of \cref{prop_cheeger_ineq},
we obtain 
\[
\frac{\bigl(\psi_q^{\KK,\LL}\bigr)^2}{|S_{q+1}^{\LL}|}
\leq
\lambda_{+}^{\KK, \LL}
\leq
|S_q^\KK|\bigl(\psi_q^{\KK,\LL}\bigr)^2 .
\]
See also \cref{rem.pseudo.jz}
\end{remark}

\section{Locally complete skeleta}\label{sec:loc_cmplt_skl}
One of the first instances of a Cheeger inequality for simplicial complexes was given in \cite{MR3516884} under the assumption that the simplicial complex has a complete $q$-skeleton. Concretely, they considered $\Delta^{\LL}_{q,\mathrm{up}}$ under the assumption that all possible $q$-dimensional simplices were included in $\LL$. For $q=0$, this assumption is void and applies to any graph. For $q\geq 1$, however, this is a fairly strong assumption.

In this section, we extend this result to inclusions $\KK\hookrightarrow \LL$ of simplicial complexes, under the assumption that $\KK$ every connected component of $\KK$ has a complete $q$-skeleton. This allows one to, e.g., consider $\KK$ to be a collection of triangles ($1$-cycles), and then the inclusion into $\LL$ measures how these cycles are connected in the ambient space. 
\begin{remark}
    Subsequent to \cite{MR3516884}, Gundert and Szedl\'ak \cite{MR3382297} extended this framework to general simplicial complexes without the requirement of a complete skeleton. We conjecture that their result naturally extends to the persistent setting; however, due to the technical nature of such an extension, we limit our discussion to the simpler setting.
\end{remark}

\subsection{The Cheeger constant}

Let $\KK \subseteq \LL$ be locally $q$-complete. That is,
\[
\KK = \coprod_{\alpha=1}^N \KK^\alpha,
\]
where each $\KK^\alpha$ is a connected component of $\KK$ with complete
$q$-skeleton. We denote the vertex sets of $\LL$ and $\KK^\alpha$ by
$V^\LL$ and $V^{\KK^\alpha}$, respectively.

We will consider labelings of the vertices of $\LL$ by the labels
$0,\ldots,q+1$. For a subcomplex $\LL'\subseteq \LL$, let
$V_i^{\LL'}$ denote the set of vertices of $\LL'$ with label $i$. The following condition ensures that each connected component of $\KK$
contains every label.

\begin{definition}\label{def:effective}
A partition
\[
\mathcal{P} = \coprod_{i=0}^{q+1} V_i^\LL
\]
of $V^\LL$ is called \emph{effective} if
\[
V_i^\LL \cap V^{\KK^\alpha} \neq \varnothing
\]
for every $0 \leq i \leq q+1$ and every $1 \leq \alpha \leq N$.
\end{definition}

In particular, \cref{def:effective} implies that each component
$\KK^\alpha$ has at least $q+2$ vertices.

\begin{example}\label{ex:complete_skeleton}
We illustrate the setup in \cref{fig:disk}. In this example, $\LL$ is a
$2$-dimensional simplicial complex, and
\[
\KK = \KK^1 \coprod \KK^2
\]
has two connected components, each with complete $1$-skeleton. These components are colored red.

We partition the vertices of $\LL$ into three color classes, colored
blue, yellow, and green. This partition is effective because each
component $\KK^1$ and $\KK^2$ contains vertices of all three colors, as
shown in \cref{fig:disk}.

\begin{figure}[H]
    \centering

\begin{tikzpicture}[scale=2.5]
  \colorlet{fillcolor}{blue!10}
  \colorlet{edgecolor}{blue!60!black}
  \fill[fillcolor] (0.3380,-0.0645) -- (0.8413,0.5406) -- (0.3585,0.6075) -- cycle;
  \fill[fillcolor] (-0.6549,-0.7557) -- (-0.1423,-0.9898) -- (-0.2552,-0.0452) -- cycle;
  \fill[fillcolor] (0.0468,0.5008) -- (0.3380,-0.0645) -- (0.3585,0.6075) -- cycle;
  \fill[fillcolor] (0.0468,0.5008) -- (0.3585,0.6075) -- (-0.1423,0.9898) -- cycle;
  \fill[fillcolor] (-0.9595,0.2817) -- (-0.5394,0.0970) -- (-0.6549,0.7557) -- cycle;
  \fill[fillcolor] (-0.6549,-0.7557) -- (-0.5732,-0.0958) -- (-0.9595,-0.2817) -- cycle;
  \fill[fillcolor] (-0.5732,-0.0958) -- (-0.6549,-0.7557) -- (-0.2552,-0.0452) -- cycle;
  \fill[fillcolor] (0.3585,0.6075) -- (0.4154,0.9096) -- (-0.1423,0.9898) -- cycle;
  \fill[fillcolor] (0.8413,0.5406) -- (0.4154,0.9096) -- (0.3585,0.6075) -- cycle;
  \fill[fillcolor] (0.5373,-0.3720) -- (0.8413,-0.5406) -- (1.0000,0.0000) -- cycle;
  \fill[fillcolor] (0.3380,-0.0645) -- (0.5373,-0.3720) -- (1.0000,0.0000) -- cycle;
  \fill[fillcolor] (0.8413,-0.5406) -- (0.5373,-0.3720) -- (0.4154,-0.9096) -- cycle;
  \fill[fillcolor] (0.5373,-0.3720) -- (0.3380,-0.0645) -- (-0.2552,-0.0452) -- cycle;
  \fill[fillcolor] (-0.1423,-0.9898) -- (0.5373,-0.3720) -- (-0.2552,-0.0452) -- cycle;
  \fill[fillcolor] (-0.6549,0.7557) -- (-0.2682,0.6758) -- (-0.1423,0.9898) -- cycle;
  \fill[fillcolor] (-0.2682,0.6758) -- (0.0468,0.5008) -- (-0.1423,0.9898) -- cycle;
  \fill[fillcolor] (-0.5394,0.0970) -- (-0.4631,-0.0002) -- (-0.2552,-0.0452) -- cycle;
  \fill[fillcolor] (-0.4631,-0.0002) -- (-0.5732,-0.0958) -- (-0.2552,-0.0452) -- cycle;
  \fill[fillcolor] (-0.9595,0.2817) -- (-0.5903,-0.0185) -- (-0.5394,0.0970) -- cycle;
  \fill[fillcolor] (-0.5903,-0.0185) -- (-0.4631,-0.0002) -- (-0.5394,0.0970) -- cycle;
  \fill[fillcolor] (-0.4631,-0.0002) -- (-0.5903,-0.0185) -- (-0.5732,-0.0958) -- cycle;
  \fill[fillcolor] (-0.5903,-0.0185) -- (-0.9595,0.2817) -- (-0.9595,-0.2817) -- cycle;
  \fill[fillcolor] (-0.5732,-0.0958) -- (-0.5903,-0.0185) -- (-0.9595,-0.2817) -- cycle;
  \fill[red!30, opacity=0.5] (-0.2682,0.6758) -- (-0.1232,0.4469) -- (0.0468,0.5008) -- cycle;
  \fill[red!30, opacity=0.5] (-0.1232,0.4469) -- (-0.5394,0.0970) -- (-0.2552,-0.0452) -- cycle;
  \fill[fillcolor] (0.3380,-0.0645) -- (-0.1232,0.4469) -- (-0.2552,-0.0452) -- cycle;
  \fill[fillcolor] (0.0468,0.5008) -- (-0.1232,0.4469) -- (0.3380,-0.0645) -- cycle;
  \fill[fillcolor] (0.8413,0.5406) -- (0.3380,-0.0645) -- (1.0000,0.0000) -- cycle;
  \fill[fillcolor] (0.5373,-0.3720) -- (-0.1423,-0.9898) -- (0.4154,-0.9096) -- cycle;
  \fill[red!30, opacity=0.5] (-0.5394,0.0970) -- (-0.2682,0.6758) -- (-0.6549,0.7557) -- cycle;
  \fill[fillcolor] (-0.1232,0.4469) -- (-0.2682,0.6758) -- (-0.5394,0.0970) -- cycle;
  \fill[red!30, opacity=0.5] (-0.2552,-0.0452) -- (-0.1423,-0.9898) -- (0.5373,-0.3720) -- cycle;
  \fill[red!30, opacity=0.5] (0.8413,-0.5406) -- (0.5373,-0.3720) -- (0.4154,-0.9096) -- cycle;
  \draw[edgecolor, thick] (0.8413,0.5406) -- (0.3380,-0.0645) -- (1.0000,0.0000) -- cycle;
  \draw[edgecolor, thick] (0.3380,-0.0645) -- (0.8413,0.5406) -- (0.3585,0.6075) -- cycle;
  \draw[edgecolor, thick] (-0.6549,-0.7557) -- (-0.1423,-0.9898) -- (-0.2552,-0.0452) -- cycle;
  \draw[edgecolor, thick] (0.0468,0.5008) -- (0.3380,-0.0645) -- (0.3585,0.6075) -- cycle;
  \draw[edgecolor, thick] (0.0468,0.5008) -- (0.3585,0.6075) -- (-0.1423,0.9898) -- cycle;
  \draw[edgecolor, thick] (-0.9595,0.2817) -- (-0.5394,0.0970) -- (-0.6549,0.7557) -- cycle;
  \draw[edgecolor, thick] (-0.6549,-0.7557) -- (-0.5732,-0.0958) -- (-0.9595,-0.2817) -- cycle;
  \draw[edgecolor, thick] (-0.5732,-0.0958) -- (-0.6549,-0.7557) -- (-0.2552,-0.0452) -- cycle;
  \draw[edgecolor, thick] (0.3585,0.6075) -- (0.4154,0.9096) -- (-0.1423,0.9898) -- cycle;
  \draw[edgecolor, thick] (0.8413,0.5406) -- (0.4154,0.9096) -- (0.3585,0.6075) -- cycle;
  \draw[edgecolor, thick] (0.5373,-0.3720) -- (0.8413,-0.5406) -- (1.0000,0.0000) -- cycle;
  \draw[edgecolor, thick] (0.3380,-0.0645) -- (0.5373,-0.3720) -- (1.0000,0.0000) -- cycle;
  \draw[edgecolor, thick] (0.8413,-0.5406) -- (0.5373,-0.3720) -- (0.4154,-0.9096) -- cycle;
  \draw[edgecolor, thick] (0.5373,-0.3720) -- (-0.1423,-0.9898) -- (0.4154,-0.9096) -- cycle;
  \draw[edgecolor, thick] (0.5373,-0.3720) -- (0.3380,-0.0645) -- (-0.2552,-0.0452) -- cycle;
  \draw[edgecolor, thick] (-0.1423,-0.9898) -- (0.5373,-0.3720) -- (-0.2552,-0.0452) -- cycle;
  \draw[edgecolor, thick] (-0.6549,0.7557) -- (-0.2682,0.6758) -- (-0.1423,0.9898) -- cycle;
  \draw[edgecolor, thick] (-0.2682,0.6758) -- (0.0468,0.5008) -- (-0.1423,0.9898) -- cycle;
  \draw[edgecolor, thick] (-0.5394,0.0970) -- (-0.2682,0.6758) -- (-0.6549,0.7557) -- cycle;
  \draw[edgecolor, thick] (-0.5394,0.0970) -- (-0.4631,-0.0002) -- (-0.2552,-0.0452) -- cycle;
  \draw[edgecolor, thick] (-0.4631,-0.0002) -- (-0.5732,-0.0958) -- (-0.2552,-0.0452) -- cycle;
  \draw[edgecolor, thick] (-0.9595,0.2817) -- (-0.5903,-0.0185) -- (-0.5394,0.0970) -- cycle;
  \draw[edgecolor, thick] (-0.5903,-0.0185) -- (-0.4631,-0.0002) -- (-0.5394,0.0970) -- cycle;
  \draw[edgecolor, thick] (-0.4631,-0.0002) -- (-0.5903,-0.0185) -- (-0.5732,-0.0958) -- cycle;
  \draw[edgecolor, thick] (-0.5903,-0.0185) -- (-0.9595,0.2817) -- (-0.9595,-0.2817) -- cycle;
  \draw[edgecolor, thick] (-0.5732,-0.0958) -- (-0.5903,-0.0185) -- (-0.9595,-0.2817) -- cycle;
  \draw[edgecolor, thick] (-0.1232,0.4469) -- (-0.2682,0.6758) -- (-0.5394,0.0970) -- cycle;
  \draw[edgecolor, thick] (-0.2682,0.6758) -- (-0.1232,0.4469) -- (0.0468,0.5008) -- cycle;
  \draw[edgecolor, thick] (-0.1232,0.4469) -- (-0.5394,0.0970) -- (-0.2552,-0.0452) -- cycle;
  \draw[edgecolor, thick] (0.3380,-0.0645) -- (-0.1232,0.4469) -- (-0.2552,-0.0452) -- cycle;
  \draw[edgecolor, thick] (0.0468,0.5008) -- (-0.1232,0.4469) -- (0.3380,-0.0645) -- cycle;

  \fill[green!30, opacity=0.5] (-0.1423,-0.9898) -- (0.4154,-0.9096) -- (0.5373,-0.3720) -- cycle;
  \fill[green!30, opacity=0.5] (-0.5394,0.0970) -- (-0.2682,0.6758) -- (-0.1232,0.4469) -- cycle;

  \draw[red, very thick, line cap=round] (-0.1423,-0.9898) -- (0.4154,-0.9096);
  \draw[red, very thick, line cap=round] (0.4154,-0.9096) -- (0.5373,-0.3720);
  \draw[red, very thick, line cap=round] (0.5373,-0.3720) -- (-0.1423,-0.9898);
  \draw[red, very thick, line cap=round] (-0.5394,0.0970) -- (-0.2682,0.6758);
  \draw[red, very thick, line cap=round] (-0.2682,0.6758) -- (-0.1232,0.4469);
  \draw[red, very thick, line cap=round] (-0.1232,0.4469) -- (-0.5394,0.0970);
  \node[red, font=\small] at (0.25,-0.8) {$\mathcal{K}^1$};
  \node[red, font=\small] at (-0.277,0.4119) {$\mathcal{K}^2$};
  \fill[yellow!90!orange] (1.0000,0.0000) circle (0.7pt);
  \draw[orange!80!black, line width=0.4pt] (1.0000,0.0000) circle (0.7pt);
  \fill[cyan!60!blue] (0.8413,0.5406) circle (0.7pt);
  \draw[blue!80!black, line width=0.4pt] (0.8413,0.5406) circle (0.7pt);
  \fill[yellow!90!orange] (0.4154,0.9096) circle (0.7pt);
  \draw[orange!80!black, line width=0.4pt] (0.4154,0.9096) circle (0.7pt);
  \fill[green!70!black] (-0.1423,0.9898) circle (0.7pt);
  \draw[green!40!black, line width=0.4pt] (-0.1423,0.9898) circle (0.7pt);
  \fill[yellow!90!orange] (-0.6549,0.7557) circle (0.7pt);
  \draw[orange!80!black, line width=0.4pt] (-0.6549,0.7557) circle (0.7pt);
  \fill[yellow!90!orange] (-0.9595,0.2817) circle (0.7pt);
  \draw[orange!80!black, line width=0.4pt] (-0.9595,0.2817) circle (0.7pt);
  \fill[yellow!90!orange] (-0.9595,-0.2817) circle (0.7pt);
  \draw[orange!80!black, line width=0.4pt] (-0.9595,-0.2817) circle (0.7pt);
  \fill[cyan!60!blue] (-0.6549,-0.7557) circle (0.7pt);
  \draw[blue!80!black, line width=0.4pt] (-0.6549,-0.7557) circle (0.7pt);
  \fill[green!70!black] (-0.1423,-0.9898) circle (0.7pt);
  \draw[green!40!black, line width=0.4pt] (-0.1423,-0.9898) circle (0.7pt);
  \fill[yellow!90!orange] (0.4154,-0.9096) circle (0.7pt);
  \draw[orange!80!black, line width=0.4pt] (0.4154,-0.9096) circle (0.7pt);
  \fill[green!70!black] (0.8413,-0.5406) circle (0.7pt);
  \draw[green!40!black, line width=0.4pt] (0.8413,-0.5406) circle (0.7pt);
  \fill[green!70!black] (-0.1232,0.4469) circle (0.7pt);
  \draw[green!40!black, line width=0.4pt] (-0.1232,0.4469) circle (0.7pt);
  \fill[cyan!60!blue] (-0.4631,-0.0002) circle (0.7pt);
  \draw[blue!80!black, line width=0.4pt] (-0.4631,-0.0002) circle (0.7pt);
  \fill[yellow!90!orange] (0.3585,0.6075) circle (0.7pt);
  \draw[orange!80!black, line width=0.4pt] (0.3585,0.6075) circle (0.7pt);
  \fill[green!70!black] (-0.5732,-0.0958) circle (0.7pt);
  \draw[green!40!black, line width=0.4pt] (-0.5732,-0.0958) circle (0.7pt);
  \fill[yellow!90!orange] (-0.5394,0.0970) circle (0.7pt);
  \draw[orange!80!black, line width=0.4pt] (-0.5394,0.0970) circle (0.7pt);
  \fill[green!70!black] (0.0468,0.5008) circle (0.7pt);
  \draw[green!40!black, line width=0.4pt] (0.0468,0.5008) circle (0.7pt);
  \fill[cyan!60!blue] (0.3380,-0.0645) circle (0.7pt);
  \draw[blue!80!black, line width=0.4pt] (0.3380,-0.0645) circle (0.7pt);
  \fill[yellow!90!orange] (-0.5903,-0.0185) circle (0.7pt);
  \draw[orange!80!black, line width=0.4pt] (-0.5903,-0.0185) circle (0.7pt);
  \fill[yellow!90!orange] (-0.2552,-0.0452) circle (0.7pt);
  \draw[orange!80!black, line width=0.4pt] (-0.2552,-0.0452) circle (0.7pt);
  \fill[cyan!60!blue] (-0.2682,0.6758) circle (0.7pt);
  \draw[blue!80!black, line width=0.4pt] (-0.2682,0.6758) circle (0.7pt);
  \fill[cyan!60!blue] (0.5373,-0.3720) circle (0.7pt);
  \draw[blue!80!black, line width=0.4pt] (0.5373,-0.3720) circle (0.7pt);
\end{tikzpicture}

\caption{Example of locally $q$-complete $\KK\subset \LL$, with $\KK=\KK^1\coprod \KK^2$.}
\label{fig:disk}
\end{figure}
\end{example}
Before defining the Cheeger constant, we introduce some additional notation.
Since each component $\KK^\alpha$ has a complete $q$-skeleton, any set of
$q+1$ vertices in $V^{\KK^\alpha}$ spans a $q$-simplex of $\KK^\alpha$.
Thus, if a $(q+1)$-simplex $\tau\in S_{q+1}^{\LL}$ has exactly
$q+1$ vertices in $V^{\KK^\alpha}$, then these vertices span a unique
$q$-face of $\tau$ contained in $\KK^\alpha$.

We define the \emph{$(q+1)$-frontier of $\KK^\alpha$}, denoted
$\Ft(\KK^\alpha)$, to be the set of $(q+1)$-simplices of $\LL$ that have
exactly one vertex outside $V^{\KK^\alpha}$ and whose remaining $q+1$
vertices span a $q$-simplex in $\KK^\alpha$. Equivalently,
\[
\Ft(\KK^\alpha)
=
\left\{
\tau \in S_{q+1}^{\LL}
:
\begin{array}{l}
\exists v\in \tau \text{ such that }
\tau\setminus\{v\}\in S_q^{\KK^\alpha},\\
\text{and } v\notin V^{\KK^\alpha}
\end{array}
\right\}.
\]
These are the pink $2$-simplices in \cref{fig:disk}.

We then define the set of \emph{interior} $(q+1)$-simplices of $\LL$ by
\[
\LL^\circ
:=
S_{q+1}^{\LL}
\setminus
\left(
S_{q+1}^{\KK}
\cup
\bigcup_{\alpha=1}^N \Ft(\KK^\alpha)
\right).
\]
These correspond to the purple $2$-simplices in \cref{fig:disk}.

Let $\mathcal P$ be an effective partition. If
$\mathcal X\subseteq \LL$ is a subcomplex, we write
$S_{q+1}^{\mathcal X}$ for its set of $(q+1)$-simplices. If $\mathcal A\subseteq S_{q+1}^{\LL}$ is a set of $(q+1)$-simplices,
we define
\[
F_{\mathcal P}(\mathcal A)
=
\left\{
\tau\in \mathcal A :
\tau \text{ has } q+2 \text{ distinct labels with respect to }
\mathcal P
\right\}.
\]
For a subcomplex $\mathcal X\subseteq \LL$, we use the shorthand
\[
F_{\mathcal P}(\mathcal X):=F_{\mathcal P}(S_{q+1}^{\mathcal X}).
\]

Next, for each $0 \leq i \leq q+1$, let $G_{\mathcal{P}}(\Ft(\KK^\alpha),i)$ be the set of $(q+1)$-simplices in $\Ft(\KK^\alpha)$ whose vertices have $q+2$ distinct labels and whose unique vertex outside $\KK^\alpha$ has label $i$; that is,
\[
G_{\mathcal{P}}(\Ft(\KK^\alpha),i)
=
\left\{
\tau \in \Ft(\KK^\alpha)
:
\begin{array}{l}
\exists v \in \tau \text{ such that } 
v \in V_i^\LL \setminus V_i^{\KK^\alpha}, \\
\tau \setminus \{v\} \in S_q^{\KK^\alpha}
\text{ has } q+1 \text{ distinct labels, all different from } i
\end{array}
\right\}.
\]

We also let $H_{\mathcal{P}}(\Ft(\KK^\alpha),i)$ be the set of $(q+1)$-simplices in $\Ft(\KK^\alpha)$ whose $q$-face in $\KK^\alpha$ has $q+1$ distinct labels, all different from $i$, and whose unique vertex outside $\KK^\alpha$ has the same label as one of the vertices in this $q$-face; equivalently,
\[
H_{\mathcal{P}}(\Ft(\KK^\alpha),i)
=
\left\{
\tau \in \Ft(\KK^\alpha)
:
\begin{array}{l}
\exists v \in \tau \text{ such that }
\tau \setminus \{v\} \in S_q^{\KK^\alpha}, \\
\tau \setminus \{v\}
\text{ has } q+1 \text{ distinct labels, all different from } i, \\
v \in V^\LL \setminus V_i^\LL
\end{array}
\right\}.
\]

\begin{definition}\label{def:nt_cheeger}
Let $\KK\hookrightarrow \LL$ be an inclusion of simplicial complexes, and assume that $\KK$ is locally $q$-complete with decomposition
\[
\KK = \coprod_{\alpha=1}^N \KK^\alpha.
\]
Let
\[
m_i = \min_{1 \leq \alpha \leq N} |V_i^{\KK^\alpha}|,
\qquad
m = \sum_{i=0}^{q+1} m_i,
\]
and define
\[
\epsilon_i^\alpha = |V_i^{\KK^\alpha}| - m_i,
\qquad
s_i^\alpha = m + \epsilon_i^\alpha .
\]
For an effective partition $\mathcal{P}$, define the \emph{interior, boundary, and frontier contributions} by
\begin{align*}
I_{\mathcal{P}}
&=
m^2 |F_{\mathcal{P}}(\LL^\circ)|, \\
B_{\mathcal{P}}
&=
\sum_{\alpha=1}^N
|V^{\KK^\alpha}|^2
|F_{\mathcal{P}}(\KK^\alpha)|, \\
R_{\mathcal{P}}
&=
\sum_{i=0}^{q+1}
\sum_{\alpha=1}^N
\left(
(s_i^\alpha)^2
|G_{\mathcal{P}}(\Ft(\KK^\alpha),i)|
+
(\epsilon_i^\alpha)^2
|H_{\mathcal{P}}(\Ft(\KK^\alpha),i)|
\right).
\end{align*}
The \emph{persistent Cheeger constant} is defined by
\[
h_q^{\KK,\LL}
=
\min_{\mathcal{P}}
\frac{
I_{\mathcal{P}} + B_{\mathcal{P}} + R_{\mathcal{P}}
}{
\displaystyle
\sum_{\alpha=1}^N
|V^{\KK^\alpha}|
\prod_{i=0}^{q+1}
|V_i^{\KK^\alpha}|
},
\]
where the minimum is taken over all effective partitions $\mathcal{P}$.
\end{definition}

When $\KK$ contains only one component ($N=1$), then $\epsilon_i^1=0$ and $s_i^1=m=|V^\KK|$ for each $i\leq q+1$. Hence, the Cheeger constant reduces to
\[
h_{q}^{\KK, \LL}
=
\min_{\mathcal{P}}
\frac{
|V^\KK|^2\cdot \big|F_{\mathcal{P}}(\LL^\circ)\cup F_{\mathcal{P}}(\KK) \cup \big(\bigcup_{i=0}^{q+1}G_{\mathcal{P}}(\Ft(\KK), i)\big)\big|
}{|V^\KK|\cdot \prod_{i=0}^{q+1}|V^\KK_i|}.
\]
Observe that in the numerator the set $F_{\mathcal{P}}(\LL^\circ)\cup F_{\mathcal{P}}(\KK) \cup \big(\bigcup_{i=0}^{q+1}G_{\mathcal{P}}(\Ft(\KK), i)\big)=F_{\mathcal{P}}(\LL)$,
hence we obtain the following persistent Cheeger constant:
\begin{equation}\label{eq:nt_cheeger_inq_graph}
h_{q}^{\KK, \LL}=
\min_{\mathcal{P}}
\frac{|V^\KK|\cdot|F_{\mathcal{P}}(\LL)|}{\prod_{i=0}^{q+1}|V^\KK_i|}.
\end{equation}

If $\KK=\LL$, this recovers the Cheeger constant in \cite[Theorem 1.2]{MR3516884}.

Furthermore, note that this Cheeger constant can be zero; e.g., if $\KK=\LL$ is the $1$-skeleton of a tetrahedron with one added $2$-simplex. One can then partition the vertices so that the labels of the vertices of the $2$-simplex are not distinct.
 
\subsection{A (one-sided) Cheeger inequality}\label{sec:cmplt_cheeger_ineq}
In this section, we let $\lambda_{+}^{\KK, \LL}$ be the smallest non-trivial eigenvalue of $\Delta_{q, \text{up}}^{\KK, \LL}$,
i.e.,
\[
\lambda_{+}^{\KK, \LL}
:=
\min_{0 \neq x\in \ker(\partial_q^\KK)}\frac{\langle x, \Delta_{q, \text{up}}^{\KK, \LL}x\rangle}{\|x\|^2}.
\]
The additional constraint that $x\in \ker(\partial_q^\KK)$ allows us to deduce precisely when this eigenvalue is non-zero; see also \cref{remark.connection.to.jost}.

\begin{lemma}
Let $\KK \subseteq \LL$. Then $\lambda_{+}^{\KK,\LL}=0$ if and only if the map
\[
H_q(\KK;\R) \longrightarrow H_q(\LL;\R)
\]
induced by the inclusion is nonzero.
\end{lemma}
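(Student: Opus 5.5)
Since $\Delta_{q,\mathrm{up}}^{\KK,\LL}$ is positive semidefinite and $\ker(\partial_q^\KK)$ is a linear subspace, the minimum defining $\lambda_{+}^{\KK,\LL}$ is attained. Hence $\lambda_{+}^{\KK,\LL}=0$ if and only if there is a nonzero $x\in\ker(\partial_q^\KK)$ with $\langle x,\Delta_{q,\mathrm{up}}^{\KK,\LL}x\rangle=0$. The plan is to rewrite this quadratic form so that this condition becomes a homological one. From \cref{eq:P_pers_up_lap} we have $\langle x,\Delta_{q,\mathrm{up}}^{\KK,\LL}x\rangle=\|PB^Tx\|_2^2$, so the condition reads $x\in\ker(PB^T)=\ker(\Delta_{q,\mathrm{up}}^{\KK,\LL})$. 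The task is therefore to show that
\[
\ker(\partial_q^\KK)\cap\ker(\Delta_{q,\mathrm{up}}^{\KK,\LL})\neq 0
\iff
H_q(\KK;\R)\to H_q(\LL;\R)\ \text{is nonzero.}
\]

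The key step identifies $\ker(\Delta_{q,\mathrm{up}}^{\KK,\LL})$ with $(\im\partial_{q+1}^{\LL,\KK})^\perp$ inside $C_q^\KK$. Using the definition $\Delta_{q,\mathrm{up}}^{\KK,\LL}=\partial_{q+1}^{\LL,\KK}(\partial_{q+1}^{\LL,\KK})^*$, we get $\langle x,\Delta_{q,\mathrm{up}}^{\KK,\LL} x\rangle=\|(\partial_{q+1}^{\LL,\KK})^*x\|^2$. Its kernel is $\ker(\partial_{q+1}^{\LL,\KK})^*=(\im\partial_{q+1}^{\LL,\KK})^\perp$, with orthogonal complements taken in $C_q^\KK$. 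Next, $\im\partial_{q+1}^{\LL,\KK}=\im\partial_{q+1}^\LL\cap C_q^\KK$: a boundary $\partial_{q+1}^\LL c$ lies in $C_q^\KK$ exactly when $c\in C_{q+1}^{\LL,\KK}$. I write $B^{\LL}_{\KK}:=\im\partial_{q+1}^\LL\cap C_q^\KK$, which is contained in $Z_q^\KK:=\ker\partial_q^\KK$ because $\partial\partial=0$.

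It remains to do linear algebra inside $Z_q^\KK$. The map $H_q(\KK)\to H_q(\LL)$ sends $[z]$ to $[z]$, and its kernel is $B^\LL_\KK/\im\partial_{q+1}^\KK$. So the map is nonzero if and only if $B^\LL_\KK\subsetneq Z_q^\KK$. Because $B^\LL_\KK\subseteq Z_q^\KK$, the subspace $Z_q^\KK\cap (B^\LL_\KK)^{\perp}$ is the orthogonal complement of $B^\LL_\KK$ within $Z_q^\KK$. That complement is nonzero if and only if the inclusion is proper, which proves both directions at once.

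The step most likely to need care is the identification $\im\partial_{q+1}^{\LL,\KK}=\im\partial_{q+1}^\LL\cap C_q^\KK$, together with the fact that the adjoint is taken with respect to the restricted inner products. Both follow directly from the definitions. The degenerate case $\dim C_{q+1}^{\LL,\KK}=0$, in which the up Laplacian is zero, fits the same argument with $B^\LL_\KK=0$.
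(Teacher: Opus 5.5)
Your proof is correct, but it takes a more elementary route than the paper. The paper starts from the decomposition $\ker(\Delta_{q,\mathrm{up}}^{\KK,\LL})=\im((\partial_q^\KK)^*)\oplus\ker(\Delta_q^{\KK,\LL})$. It notes that the intersection of $\ker(\partial_q^\KK)$ with $\ker(\Delta_{q,\mathrm{up}}^{\KK,\LL})$ is exactly $\ker(\Delta_q^{\KK,\LL})$. It then uses the persistent Hodge theorem of M\'emoli--Wan--Wang as a black box: the nullity of $\Delta_q^{\KK,\LL}$ equals the rank of $H_q(\KK;\R)\to H_q(\LL;\R)$.

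You instead identify that same intersection directly as the orthogonal complement of $\im\partial_{q+1}^\LL\cap C_q^\KK$ inside $Z_q^\KK$, and you compute the kernel of the induced map by hand. This amounts to reproving the relevant part of the persistent Hodge theorem from the definitions. In fact your argument gives the full dimension count: $\dim Z_q^\KK-\dim(\im\partial_{q+1}^\LL\cap C_q^\KK)$ equals the rank of the induced map, not just the zero/nonzero dichotomy.

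The paper's version is shorter and places the lemma inside the persistent Hodge framework. Yours is self-contained. It also covers the degenerate case $C_{q+1}^{\LL,\KK}=0$, where the matrix formula of \cref{prop:pers_up_lap} is unavailable, without special treatment. For the same reason, the detour through $PB^T$ in your first paragraph is unnecessary; the definitional identity $\langle x,\Delta_{q,\mathrm{up}}^{\KK,\LL}x\rangle=\|(\partial_{q+1}^{\LL,\KK})^*x\|^2$ is all you use.

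Two small precision points. First, attainment of the minimum comes from compactness of the unit sphere in $Z_q^\KK$. Positive semidefiniteness is what gives $\langle x,\Delta_{q,\mathrm{up}}^{\KK,\LL}x\rangle=0\Rightarrow \Delta_{q,\mathrm{up}}^{\KK,\LL}x=0$. Second, attainment presupposes $Z_q^\KK\neq 0$. If $Z_q^\KK=0$, then $\lambda_{+}^{\KK,\LL}$ is a minimum over an empty set and $H_q(\KK;\R)=0$, so the induced map is zero and nothing breaks.
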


\begin{proof}
By the Hodge decomposition theorem, we have the orthogonal decomposition
\[
\ker(\Delta_{q,\mathrm{up}}^{\KK,\LL})
=
\ker\bigl((\partial_{q+1}^{\KK,\LL})^*\bigr)
=
\im\bigl((\partial_q^\KK)^*\bigr)
\oplus
\ker(\Delta_q^{\KK,\LL}).
\]
Since
\[
\im\bigl((\partial_q^\KK)^*\bigr)
=
\ker(\partial_q^\KK)^\perp,
\]
it follows that
\[
\ker(\Delta_{q,\mathrm{up}}^{\KK,\LL})
=
\ker(\partial_q^\KK)^\perp
\oplus
\ker(\Delta_q^{\KK,\LL}).
\]
Moreover, $\dim \ker(\Delta_q^{\KK,\LL})$ is equal to the rank of the map
\[
H_q(\KK;\R) \longrightarrow H_q(\LL;\R).
\]
Therefore, there exists a nonzero element
$x \in \ker(\partial_q^\KK)$ such that
\[
\Delta_{q,\mathrm{up}}^{\KK,\LL} x = 0
\]
if and only if
\[
\ker(\Delta_q^{\KK,\LL}) \neq 0.
\]
Hence, $\lambda_{\min}^{\KK,\LL}=0$ if and only if the induced map is nonzero.
\end{proof}

We now prove a one-sided persistent Cheeger-type inequality. In \cite[Section 4.2]{MR3516884}, the authors give an example using the M\"obius strip showing that a lower Cheeger inequality is not possible in general.

\begin{theorem}\label{prop:nt_cheeger_ineq_cmplx}
Let $\KK\hookrightarrow \LL$ be an inclusion of simplicial complexes, and assume that $\KK$ is locally $q$-complete with decomposition
\[
\KK = \coprod_{\alpha=1}^N \KK^\alpha.
\]
We have the following inequality:
\begin{equation}\label{eq:cheeger_ineq_cmplt_skl}
\lambda_{+}^{\KK, \LL}\leq h_{q}^{\KK, \LL}.
\end{equation}
\end{theorem}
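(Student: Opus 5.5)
The plan is a test-vector argument in the spirit of Parzanchevski--Rosenthal--Tessler, combined with the variational characterization of the Schur complement in \cref{lemma:schur_optimization}. Fix an effective partition $\mathcal P=\coprod_{i=0}^{q+1}V_i^\LL$. The goal is a nonzero $x\in\ker(\partial_q^\KK)$ with
\[
\langle x,\Delta_{q,\mathrm{up}}^{\KK,\LL}x\rangle\le I_{\mathcal P}+B_{\mathcal P}+R_{\mathcal P}
\quad\text{and}\quad
\|x\|^2=\sum_{\alpha}|V^{\KK^\alpha}|\prod_{i}|V_i^{\KK^\alpha}|.
\]
Taking the minimum over $\mathcal P$ then gives \cref{eq:cheeger_ineq_cmplt_skl}.

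\textbf{Step 1: the test chain on $\KK$.} On each component $\KK^\alpha$ we use the PRT chain. For an oriented $q$-simplex $\sigma=[v_0,\dots,v_q]$ of $\KK^\alpha$ whose vertices carry $q+1$ distinct labels, let $i$ be the missing label. Set
\[
x(\sigma)=\pm|V_i^{\KK^\alpha}|,
\]
where the sign is that of the permutation sorting the labels of $(v_0,\dots,v_q,\text{missing }i)$ into increasing order. Set $x(\sigma)=0$ otherwise. Equivalently, $x|_{\KK^\alpha}$ is, up to sign, the boundary of the $(q+1)$-chain
\[
\sum [a_0,\dots,a_{q+1}],\qquad a_j\in V_j^{\KK^\alpha},
\]
on the full simplex spanned by $V^{\KK^\alpha}$. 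This chain lives in $\KK^\alpha$ because the $q$-skeleton is complete. Hence $\partial_q x=0$, so $x\in\ker\partial_q^\KK$. A direct count gives
\[
\|x|_{\KK^\alpha}\|^2=\sum_i |V_i^{\KK^\alpha}|^2\prod_{j\ne i}|V_j^{\KK^\alpha}|=|V^{\KK^\alpha}|\prod_i|V_i^{\KK^\alpha}|.
\]
Effectiveness of $\mathcal P$ guarantees $x\neq0$.

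\textbf{Step 2: extension and energy bound.} By \cref{lemma:schur_optimization},
\[
x^T\Delta_{q,\mathrm{up}}^{\KK,\LL}x\le\|(\partial_{q+1}^\LL)^*u\|^2=\sum_{\tau\in S_{q+1}^\LL}\Bigl(\sum_{\sigma\subset\tau}[\tau:\sigma]\,u(\sigma)\Bigr)^2
\]
for any extension $u$ of $x$ to all of $C_q^\LL$. The extension will be defined on $q$-simplices outside $\KK$ with $q+1$ distinct labels, missing label $i$, by the same signed formula but with uniform weight $m_i$ in place of $|V_i^{\KK^\alpha}|$. We then evaluate the coboundary simplex by simplex.
\begin{itemize}
\item A $(q+1)$-simplex with a repeated label gets coboundary $0$, apart from frontier cases treated below. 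This is because its faces pair off with cancelling signs, exactly as in PRT.
\item A simplex in $\LL^\circ$ with all labels distinct has coboundary $\pm\sum_i m_i=\pm m$. This gives $I_{\mathcal P}$.
\item A simplex of $\KK^\alpha$ with all labels distinct has coboundary $\pm|V^{\KK^\alpha}|$. This gives $B_{\mathcal P}$.
\item A frontier simplex $\tau\in\Ft(\KK^\alpha)$ mixes one $\KK^\alpha$-weight with $q+1$ weights $m_j$. If $\tau\in G_{\mathcal P}(\Ft(\KK^\alpha),i)$, the sum is $m+\epsilon_i^\alpha=s_i^\alpha$. If $\tau\in H_{\mathcal P}(\Ft(\KK^\alpha),i)$, the two faces with the repeated label contribute $m_{\cdot}$ and $|V^{\KK^\alpha}_{\cdot}|$-type terms, leaving the discrepancy $\epsilon_i^\alpha$. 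This gives $R_{\mathcal P}$.
\end{itemize}

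\textbf{Main obstacle.} I expect the hard part to be the frontier bookkeeping in the last item. One must check the signs carefully, since the sign convention must be consistent across components and with the global weights $m_i$. One must also confirm that every other frontier configuration contributes nothing. These are the cases where the face in $\KK^\alpha$ has a repeated label, or where the outside vertex's label appears elsewhere in a way not covered by $H_{\mathcal P}$.

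I would first verify the $q=0$ and $q=1$ cases by hand to fix the signs. Then I would argue generally by grouping the faces of $\tau$ according to which face, if any, is the $\KK^\alpha$-face.

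Dividing the energy bound of Step 2 by $\|x\|^2$ from Step 1 and minimizing over effective partitions yields $\lambda_+^{\KK,\LL}\le h_q^{\KK,\LL}$.
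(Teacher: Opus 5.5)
Your construction matches the paper's proof. It uses the same Parzanchevski--Rosenthal--Tessler chain on each $\KK^\alpha$, the same extension by the weights $m_i$, the same appeal to \cref{lemma:schur_optimization}, and the same case split. Your description of $x|_{\KK^\alpha}$ as the boundary of $\sum[a_0,\dots,a_{q+1}]$ is a clean way to see $\partial_q^\KK x=0$.

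The gap is not in the frontier bookkeeping. It is in the bullet you treat as routine: a distinct-label simplex $\tau\in\LL^\circ$ need not have coboundary $\pm m$. By definition, $\LL^\circ$ contains every $(q+1)$-simplex of $\LL$ that is neither in $\KK$ nor in a frontier. This includes simplices whose $q+2$ vertices all lie in a single $V^{\KK^\alpha}$. Nothing forces such simplices into $\KK^\alpha$, which is only assumed to contain the full $q$-skeleton on its vertices. All $q$-faces of such a $\tau$ lie in $\KK^\alpha$, so your extension never enters. The coboundary is then $\pm|V^{\KK^\alpha}|=\pm\bigl(m+\sum_i\epsilon_i^\alpha\bigr)$, which exceeds $m$ as soon as some $\epsilon_i^\alpha>0$. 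Since $u$ is forced on every face of $\tau$, no other choice of extension repairs this. The paper's Case~1 makes exactly the same evaluation, so following it does not close the gap.

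The problem is not confined to the proof: the inequality as stated fails. Take $q=1$. Let $\KK^1$ be the complete graph on $\{a,b,c,d\}$ and $\KK^2$ the boundary of the triangle on $\{e,f,g\}$. Let $\LL$ be $\KK$ together with the four triangles of $\partial[abcd]$ and the triangle $[efg]$.
\begin{itemize}
\item Every edge of $\LL$ lies in $\KK$, so $\Delta_{1,\mathrm{up}}^{\KK,\LL}=\Delta_{1,\mathrm{up}}^{\LL}$.
\item Since $H_1(\LL;\R)=0$, we have $\ker\partial_1^\KK=\operatorname{im}\partial_2^\LL$. Hence $\lambda_+^{\KK,\LL}$ is the smallest nonzero eigenvalue of $\Delta_{2,\mathrm{down}}^{\LL}$, whose spectrum is $\{0,4,4,4\}\cup\{3\}$. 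So $\lambda_+^{\KK,\LL}=3$.
\item Every effective partition gives $\KK^1$ label counts $(2,1,1)$ up to order and $\KK^2$ counts $(1,1,1)$. Thus $m_i=1$ and $m=3$.
\item There are no frontier simplices and $S_2^\KK=\varnothing$, so all five triangles lie in $\LL^\circ$. Exactly three of them have distinct labels.
\end{itemize}
Therefore $h_1^{\KK,\LL}=\frac{9\cdot 3}{4\cdot 2+3\cdot 1}=\frac{27}{11}<3$. The true energy of your test chain is $9+16+16=41$, not $27$.

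Your argument goes through, with every case an exact identity for $q\ge 1$, under one change to the constant. Remove the $(q+1)$-simplices of $\LL$ spanned by $V^{\KK^\alpha}$ from $\LL^\circ$, and count them in $B_{\mathcal P}$ with weight $|V^{\KK^\alpha}|^2$. For $q=0$ the inequality is trivial, since $\lambda_+^{\KK,\LL}=0$. The corrected constant agrees with $h_q^{\KK,\LL}$, so the theorem holds as stated, in two situations:
\begin{itemize}
\item when $N=1$, because then $m=|V^\KK|$;
\item when each $\KK^\alpha$ contains every $(q+1)$-simplex of $\LL$ spanned by its vertices.
\end{itemize}
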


\begin{proof}
Let $\mathcal{P}=\coprod_{i=0}^{q+1}V^\LL_i$ be an effective partition that realizes the minimum in $h_{q}^{\KK, \LL}$. For simplicity, we write $\mathrm{label}(v) = j$ if $v\in V_j^\LL$.  

We put a total order on the vertices of $\LL$ such that $v<w$ if $v\in V_i^\LL$ and $w\in V_j^\LL$ for $i<j$. Then, we orient the higher-order simplices according to this ordering. In particular, for a $q$-simplex $\sigma$ in $\LL$, we write $\sigma = [v_0v_1\cdots v_q]$ where $v_r < v_s$ for $r < s$.

For each such simplex, we define
\[
f([v_0v_1\cdots v_q])=
\begin{cases}
    (-1)^i|V^{\KK^\alpha}_{i}| & \text{if } \sigma \in \KK^\alpha \text{ and } \bigcup_{j=0}^q \mathrm{label}(v_j) = \{0, \ldots, q+1\}\setminus \{i\};\\
    (-1)^im_{i} & \text{if } \sigma \notin \KK \text{ and } \bigcup_{j=0}^q \mathrm{label}(v_j) = \{0, \ldots, q+1\}\setminus \{i\};\\
    0 & \text{otherwise},
\end{cases}
\]
and define the associated $q$-chains $f_\LL:=\sum_{\sigma\in S_q^\LL}f(\sigma)\sigma\in C_q^\LL$ and 
$f_\KK:=\sum_{\sigma\in S_q^\KK}f(\sigma)\sigma\in C_q^\KK$, respectively.

The $q$-chain $f_\KK$ will serve as our test chain to give an upper bound on the eigenvalue. To that end, we first need to show that $f_\KK\in \ker(\partial_q^\KK)$. This is immediate by noting that restricted to each $\KK^\alpha$, $f_\KK$ coincides with the $q$-chain (or $(q-1)$-chain in their language) constructed for a simplicial complex with a complete skeleton in \cite{MR3516884}.  

Furthermore, following the construction in the same paper, it is straightforward to verify that 
\[\sum_{\sigma\in S_q^{\KK^\alpha}} f_{\KK}(\sigma)^2 = |V^{\KK^\alpha}|\cdot \prod_{i=0}^{q+1}|V^{\KK^\alpha}_i|.\]
To see this: a $q$-simplex $\sigma$ contributes to the sum $\sum_{\sigma\in S_{q}^{\KK^\alpha}}f_\KK(\sigma)^2$ only if the vertices of $\sigma$ have $q+1$ different labels. That is, there is a label $i$ such that $V_i^{\KK^\alpha}\cap \sigma=\varnothing$, and thus $\sigma$ contributes $|V_i^{\KK^\alpha}|^2$ to the sum. Since $\KK^\alpha$ is $q$-complete, there are $\prod_{j \neq i} |V_j^{\KK^\alpha}|$ such $q$-simplices whose vertices have different labels and do not intersect with $V_i^{\KK^\alpha}$. Therefore,
\[
\sum_{\sigma\in S_{q}^{\KK^\alpha}}f_\KK(\sigma)^2=
\sum_{i=0}^{q+1}\left(\prod_{j\neq i}|V_j^{\KK^\alpha}|\right) |V_i^{\KK^\alpha}|^2=
|V^{\KK^\alpha}|\cdot \prod_{i=0}^{q+1}|V^{\KK^\alpha}_i|.
\]

Summing over each connected component of $\KK$, we obtain
\[
\| f_\KK\|^2=
\sum_{\sigma\in S_q^\KK}f_\KK(\sigma)^2=
\sum_{\alpha=1}^N \sum_{\sigma\in S_q^{\KK^\alpha}} f_\KK(\sigma)^2 =
\sum_{\alpha=1}^N \left(|V^{\KK^\alpha}|\cdot \prod_{i=0}^{q+1}|V^{\KK^\alpha}_i|\right).
\]

According to \cref{lemma:schur_optimization},
$\langle f_\KK, \Delta_{q, \text{up}}^{\KK, \LL}\, f_\KK \rangle \leq \langle f_\LL, \Delta_{q, \text{up}}^{\LL}\, f_\LL \rangle$,
hence
\[
\lambda_{+}^{\KK, \LL}
=
\min_{0 \neq x\in \ker(\partial_q^\KK)}\frac{\langle x, \Delta_{q, \text{up}}^{\KK, \LL}x\rangle}{\|x\|^2}\leq
\frac{\langle f_\KK, \Delta_{q, \text{up}}^{\KK, \LL}\, f_\KK \rangle}{\|f_\KK\|^2}\leq
\frac{\langle f_\LL, \Delta_{q, \text{up}}^{\LL}\, f_\LL \rangle}{\|f_\KK\|^2}.
\]
Hence, it suffices to show that
\[\frac{\langle f_\LL, \Delta_{q, \text{up}}^{\LL}\, f_\LL \rangle}{\|f_\KK\|^2}=
h_{q}^{\KK, \LL},\]
or, equivalently, 
\[
\langle f_\LL, \Delta_{q, \text{up}}^{\LL}\, f_\LL \rangle
=
I_{\mathcal{P}} + B_{\mathcal{P}} + R_{\mathcal{P}}.
\]

\paragraph{Evaluating the chain $f_\LL$.}
Recall that the coefficient of $(\partial_{q+1}^\LL)^*(f_\LL)$ at the simplex $\tau\in S^\LL_{q+1}$ is given by 
\[\langle(\partial_{q+1}^\LL)^*(f_\LL),\tau\rangle = \langle f_\LL, \partial_{q+1}^\LL(\tau)\rangle.\]
Hence, 
\[\langle f_\LL, \Delta_{q, \text{up}}^\LL f_\LL \rangle= \|(\partial_{q+1}^\LL)^*(f_\LL)\|^2 = \sum_{\tau\in S_{q+1}^\LL}|\langle f_\LL, \partial_{q+1}^\LL(\tau)\rangle|^2.\]
We compute this quantity through a case-by-case analysis.

\paragraph{Case 0a: repeated labels.}
If $\tau$ contains three vertices sharing the same label (i.e., $v_{i_1}, v_{i_2}, v_{i_3}\in V^\LL_i$), or two pairs of vertices sharing the same labels (i.e., $v_{i_1}, v_{i_2}\in V^\LL_i$ and $v_{j_1}, v_{j_2}\in V^\LL_j$), then removing any single vertex leaves a $q$-face that still contains duplicate labels. Thus, $\langle f_\LL, \partial_{q+1}^\LL(\tau)\rangle=0$.

\paragraph{Case 0b: repeated labels outside the frontier.}
Assume that $\tau\in \LL^\circ \cup \KK$, and that $\tau$ contains exactly one pair of vertices with the same label. That is, we can write $\tau = [v_0v_1\cdots v_{q+1}]$ where $\mathrm{label}(v_i) = \mathrm{label}(v_{i+1})$ for some index $i$. For $j\neq i,i+1$, the face $\tau\setminus \{v_j\}$ still contains duplicated labels, so $f(\tau \setminus \{v_j\}) = 0$. Hence
\[\langle f_\LL,\partial_{q+1}^\LL(\tau)\rangle = (-1)^i f(\tau\setminus \{v_i\}) + (-1)^{i+1} f(\tau\setminus \{v_{i+1}\}) = 0,\]
since $f$ only depends on the labels of the vertices, meaning $f(\tau \setminus \{v_i\}) = f(\tau \setminus \{v_{i+1}\})$. 

\paragraph{Case 1 (interior): $\tau \in F_{\mathcal{P}}(\LL^\circ)$.}
For $\tau \in F_{\mathcal{P}}(\LL^\circ)$, all $q+2$ labels are present. Removing the vertex $v_i$ with label $i$ yields a face that misses label $i$. Because $\tau$ is in the interior, this face evaluates to
\[
f(\tau\setminus \{v_i\})=(-1)^im_{i}. 
\]
Hence, 
\[
\langle f_\LL, \partial_{q+1}^\LL(\tau)\rangle = \sum_{i=0}^{q+1}(-1)^i f(\tau\setminus \{v_i\}) = \sum_{i=0}^{q+1}(-1)^i(-1)^i m_i = m,
\]
and therefore
\[
\sum_{\tau\in F_{\mathcal{P}}(\LL^\circ)}|\langle f_\LL, \partial_{q+1}^\LL(\tau)\rangle|^2=
m^2 |F_{\mathcal{P}}(\LL^\circ)| = I_\mathcal{P}.
\]

\paragraph{Case 2 (boundary): $\tau \in F_{\mathcal{P}}(\KK^\alpha)$.}
Similarly, for $\tau \in F_{\mathcal{P}}(\KK^\alpha)$, 
\[
f(\tau\setminus \{v_i\})=(-1)^i|V^{\KK^\alpha}_{i}|. 
\]
Following the argument from the previous case, 
\[
\langle f_\LL, \partial_{q+1}^\LL(\tau)\rangle = \sum_{i=0}^{q+1} (-1)^i(-1)^i|V^{\KK^\alpha}_{i}| = |V^{\KK^\alpha}|.
\]
Summing over all such simplices yields
\[
\sum_{\tau\in F_{\mathcal{P}}(\KK^\alpha)}|\langle f_\LL, \partial_{q+1}^\LL(\tau)\rangle|^2 = |V^{\KK^\alpha}|^2 |F_{\mathcal{P}}(\KK^\alpha)|,
\]
and thus summing over all components gives $\sum_{\alpha=1}^N |V^{\KK^\alpha}|^2 |F_{\mathcal{P}}(\KK^\alpha)| = B_\mathcal{P}$.

\paragraph{Case 3 (frontier with distinct labels): $\tau\in G_{\mathcal{P}}(\Ft(\KK^\alpha), i)$.}
Recall that $i$ denotes the label of the unique vertex in $\tau$ not contained in $\KK$. If we remove this vertex $v_i$, the remaining face is in $\KK^\alpha$ and misses label $i$, giving
\[f(\tau\setminus \{v_i\}) = (-1)^i|V_i^{\KK^\alpha}|.\]
If we remove any other vertex $v_j$ (where $j\neq i$), the remaining face is not in $\KK$ and misses label $j$, yielding
\[f(\tau\setminus \{v_j\}) = (-1)^j m_j.\]
Therefore,
\[\langle f_\LL, \partial_{q+1}^\LL(\tau)\rangle =
\sum_{j=0}^{q+1}(-1)^j f(\tau\setminus \{v_j\})=
|V^{\KK^\alpha}_i| + \sum_{j\neq i}m_j
= s_i^\alpha,
\]
and 
\[
\sum_{\tau\in G_{\mathcal{P}}(\Ft(\KK^\alpha), i)}\big|\langle f_\LL, \partial_{q+1}^\LL(\tau)\rangle|^2=
(s_i^\alpha)^2 |G_{\mathcal{P}}(\Ft(\KK^\alpha), i)|.
\]

\paragraph{Case 4 (frontier with distinct labels in $\KK^\alpha$): $\tau\in H_{\mathcal{P}}(\Ft(\KK^\alpha), i)$.}
By definition, the $q$-face of $\tau$ in $\KK^\alpha$ misses label $i$. The unique vertex outside $\KK^\alpha$, say $v_{out}$, has the same label as exactly one vertex $v_{in} \in \KK^\alpha$. Let this duplicated label be $k$ (where $k \neq i$).

Because $v_{out}$ and $v_{in}$ share the same label, they are adjacent in our ordered vertices of $\tau$. Let their indices be $a$ and $a+1$. If we remove any vertex other than $v_{out}$ or $v_{in}$, the remaining $q$-face still contains duplicate labels, meaning $f$ evaluates to zero. We thus only consider removing $v_{out}$ or $v_{in}$:
\begin{itemize}
    \item Removing $v_{in}$ yields a face not in $\KK$ that misses label $i$. Thus, $f(\tau \setminus \{v_{in}\}) = (-1)^i m_i$.
    \item Removing $v_{out}$ yields a face in $\KK^\alpha$ that misses label $i$. Thus, $f(\tau \setminus \{v_{out}\}) = (-1)^i |V^{\KK^\alpha}_i|$.
\end{itemize}
Therefore, the inner product is
\begin{align*}
\langle f_\LL, \partial_{q+1}^\LL(\tau)\rangle 
&= (-1)^a f(\tau \setminus \{v_{in}\}) + (-1)^{a+1} f(\tau \setminus \{v_{out}\}) \\
&= (-1)^{a+i} m_i - (-1)^{a+i} |V^{\KK^\alpha}_i| \\
&= (-1)^{a+i+1} (|V^{\KK^\alpha}_i| - m_i) \\
&= \pm \epsilon_i^\alpha.
\end{align*}
Squaring this yields
\[
\sum_{\tau\in H_{\mathcal{P}}(\Ft(\KK^\alpha), i)}\big|\langle f_\LL, \partial_{q+1}^\LL(\tau)\rangle|^2
= \sum_{\tau\in H_\mathcal{P}(\Ft(\KK^\alpha),i)} (\epsilon_i^\alpha)^2 
= (\epsilon_i^\alpha)^2 |H_{\mathcal{P}}(\Ft(\KK^\alpha), i)|.
\]

\paragraph{Case 5 (frontier with duplicate labels in $\KK^\alpha$).}
The remaining case is that $\tau\in \Ft(\KK^\alpha)$, where the $q+1$ vertices in $\KK^\alpha$ span at most $q$ labels. This means there are two vertices in $\KK^\alpha$ sharing a label. Following the precise argument from Case 0b, we obtain $\langle f_\LL,\partial_{q+1}^\LL(\tau)\rangle = 0$.

\paragraph{Bringing it all together.}
Combining Cases 3 and 4 over all choices of $i$ and $\alpha$, we get
\[
\sum_{i=0}^{q+1}\sum_{\alpha=1}^N
\left(
\sum_{\tau\in G_{\mathcal{P}}(\Ft(\KK^\alpha),i)}
\big|\langle f_\LL,\partial_{q+1}^\LL(\tau)\rangle\big|^2
+
\sum_{\tau\in H_{\mathcal{P}}(\Ft(\KK^\alpha),i)}
\big|\langle f_\LL,\partial_{q+1}^\LL(\tau)\rangle\big|^2
\right)
=
R_{\mathcal{P}}.
\]
Combining this with Cases 1 and 2, we thus obtain
\begin{align*}
\langle f_\LL, \Delta_{q, \text{up}}^\LL f_\LL \rangle 
&= \sum_{\tau\in S_{q+1}^\LL}|\langle f_\LL, \partial_{q+1}^\LL(\tau)\rangle|^2\\ 
&= \sum_{\tau\in F_{\mathcal{P}}(\LL^\circ)}|\langle f_\LL, \partial_{q+1}^\LL(\tau)\rangle|^2 + \sum_{\tau\in F_{\mathcal{P}}(\KK)}|\langle f_\LL, \partial_{q+1}^\LL(\tau)\rangle|^2  \\
&\quad +\sum_{i=0}^{q+1}\sum_{\alpha=1}^N
\left(
\sum_{\tau\in G_{\mathcal{P}}(\Ft(\KK^\alpha),i)}
\big|\langle f_\LL,\partial_{q+1}^\LL(\tau)\rangle\big|^2
+
\sum_{\tau\in H_{\mathcal{P}}(\Ft(\KK^\alpha),i)}
\big|\langle f_\LL,\partial_{q+1}^\LL(\tau)\rangle\big|^2
\right)
\\
&= I_\mathcal{P} + B_\mathcal{P} + R_\mathcal{P}.
\end{align*}
\end{proof}

\subsection{Showing tightness}
In the following example, we show that the bound from \cref{prop:nt_cheeger_ineq_cmplx} is tight.
\begin{example}
\label{ex.cheeger.complete.tight}
\begin{figure}
    \centering

\begin{tikzpicture}[scale=2, line join=round]

  \coordinate (V1) at (0, 1.5);      
  \coordinate (V2) at (-0.9, -0.6);  
  \coordinate (V3) at (1.2, -0.3);   
  \coordinate (V4) at (0.8, 0.6);    
  \coordinate (V5) at (-1.2, 0.3);   
  \coordinate (V6) at (0, -1.5);     

  \fill[blue, fill opacity=0.1]  (V1) -- (V5) -- (V4) -- cycle;
  \fill[blue, fill opacity=0.1]  (V1) -- (V4) -- (V3) -- cycle;
  \fill[blue, fill opacity=0.3] (V6) -- (V5) -- (V4) -- cycle; 
  \fill[blue, fill opacity=0.1]  (V6) -- (V4) -- (V3) -- cycle;

  \draw[dashed, thick, black!60] (V6) -- (V4);
  \draw[dashed, thick, black!60] (V5) -- (V4);

  \fill[green, fill opacity=0.3]  (V1) -- (V5) -- (V2) -- cycle;
  \fill[blue, fill opacity=0.2] (V1) -- (V2) -- (V3) -- cycle; 
  \fill[blue, fill opacity=0.2]  (V6) -- (V5) -- (V2) -- cycle;
  \fill[blue, fill opacity=0.2]  (V6) -- (V2) -- (V3) -- cycle;

  \fill[green, fill opacity=0.3]  (V3) -- (V4) -- (V6) -- cycle;

  \draw[thick] (V1) -- (V5);
  \draw[thick] (V1) -- (V2);
  \draw[thick] (V1) -- (V3);
  \draw[dashed, thick, black!60] (V6) -- (V5);
  \draw[thick] (V6) -- (V2);
  \draw[thick] (V6) -- (V3);
  \draw[thick] (V5) -- (V2);
  \draw[thick] (V2) -- (V3);

  \draw[thick] (V1) -- (V4);
  \draw[thick] (V4) -- (V3);

  \node[circle, fill=red, draw=black, inner sep=2.5pt, label=above:\textbf{1}] at (V1) {};
  \node[circle, fill=yellow, draw=black, inner sep=2.5pt, label=left:\textbf{2}] at (V2) {};
  \node[circle, fill=red, draw=black, inner sep=2.5pt, label=right:\textbf{5}] at (V3) {};
  
  \node[circle, fill=blue, draw=black, inner sep=2.5pt, label=above right:\textbf{4}] at (V4) {};
  \node[circle, fill=blue, draw=black, inner sep=2.5pt, label=above left:\textbf{3}] at (V5) {};
  \node[circle, fill=yellow, draw=black, inner sep=2.5pt, label=below:\textbf{6}] at (V6) {};
\end{tikzpicture}
    \caption{The octahedron from \cref{ex.cheeger.complete.tight}.}
    \label{fig:octahedron}
\end{figure}
We take $\LL$ to be an octahedron and $\KK^1$ and $\KK^2$ to be the faces $[123]$ and $[456]$, respectively.\footnote{More precisely, $\KK^1$ and $\KK^2$ are the closures of the $2$-simplices $[123]$ and $[456]$ (i.e., the smallest simplicial complexes containing them).} Let $\KK=\KK^1\coprod \KK^2$ (see the green triangles in \cref{fig:octahedron}). 

First, we notice that the map $H_1(\KK;\R) \to H_1(\LL;\R)$ induced by the inclusion is zero, hence the smallest non-trivial eigenvalue satisfies $\lambda_{+}^{\KK, \LL}\neq 0$. 

We also observe that in this example $\LL^\circ=\varnothing$, and $|F_{\mathcal{P}}(\KK^1)|=|F_{\mathcal{P}}(\KK^2)|=1$ for any effective partition $\mathcal{P}$ (since an effective partition must assign all three distinct labels to the three vertices of each component). 

We then label the vertices of $[123]$ and $[456]$ with red, yellow, and blue, as shown in \cref{fig:octahedron}. We claim that this specific partition achieves the minimum for the persistent Cheeger constant $h_{1}^{\KK, \LL}$.

Indeed, in each component $\KK^\alpha$, there is exactly one vertex for each label. Thus, for each $0 \leq i \leq 2$ and $\alpha \in \{1, 2\}$, we have $|V_i^{\KK^\alpha}|=1$, $m_i=1$, and $\epsilon_i^\alpha=0$. 
Consequently, the frontier contributions from $H_{\mathcal{P}}$ vanish, and we only need to check $G_{\mathcal{P}}(\Ft(\KK^\alpha), i)$. As displayed in \cref{fig:octahedron}, for each $2$-simplex $\tau\in \Ft(\KK^\alpha)$, the vertices of $\tau$ realize only two different labels. Thus, $G_{\mathcal{P}}(\Ft(\KK^\alpha), i)=\varnothing$ for each $i$ and $\alpha$, meaning $R_{\mathcal{P}} = 0$.

Therefore, the Cheeger constant evaluates to
\[
h_1^{\KK, \LL}
=
\frac{I_{\mathcal{P}} + B_{\mathcal{P}} + R_{\mathcal{P}}}{\displaystyle \sum_{\alpha=1}^2 |V^{\KK^\alpha}|\cdot \prod_{i=0}^2|V^{\KK^\alpha}_i|}
=
\frac{0 + (3^2 \cdot 1 + 3^2 \cdot 1) + 0}{3 \cdot (1 \cdot 1 \cdot 1) + 3 \cdot (1 \cdot 1 \cdot 1)}
=
\frac{18}{6}=3.
\]

On the other hand, by a direct computation, one can deduce that $\lambda_{+}^{\KK, \LL}=3$ as well. Hence, the Cheeger-type inequality \cref{eq:cheeger_ineq_cmplt_skl} holds with equality.
\end{example}

\section{Pseudomanifolds}
\label{sec:psd_mfld}

We assume throughout that $\iota\colon \KK \hookrightarrow \LL$ is an inclusion of simplicial complexes, and that $\LL$ is a $(q+1)$-dimensional \emph{orientable pseudomanifold} (\emph{non-branching}): every $q$-simplex is in the boundary of at most two $(q+1)$-simplices, and the $(q+1)$-simplices can be oriented such that they induce opposite orientations on shared $q$-dimensional faces. Equivalently, 
the boundary matrix 
$\partial_{q+1}^\LL \colon C_{q+1}^\LL \longrightarrow C_{q}^\LL$
with respect to the oriented simplex bases has at most two nonzero entries in each row, and the product of two non-zero elements ($\pm 1)$ in the same row is $-1$. Note that the assumption on $\LL$ is satisfied for any $(q+1)$-dimensional complex embedded in $\R^{q+1}$, but not for e.g., projective spaces. 

The \emph{dual graph} of $\LL$ is the graph with vertex set of $(q+1)$-simplices, and an edge connecting two $(q+1)$-simplices with a shared face. 

The goal of this section is twofold. First, we show that for orientable pseudomanifolds the persistent up Laplacian admits a Kron-type reduction to a graph Laplacian, possibly with Dirichlet boundary conditions. Second, we use this reduction and \cref{thm:weighted_graph_cheeger_dirichlet} to obtain a two-sided Cheeger inequality.

We then specialize \cref{prop_cheeger_ineq} to the setting of pseudomanifolds (but orientable and not) and show that the resulting persistent Cheeger constant has a concrete geometric interpretation on the dual graph.

\begin{remark}
\label{rem.schaub}
Spectra for (non-persistent) Laplacians of planar pseudomanifolds
appear in \cite{grande2024disentangling}.  We believe that the Dirichlet
viewpoint in this section, together with the standard graph Laplacian interpretation for the $1$-down Laplacian, offers a transparent and natural explanation for their results concerning the first Laplacian; see, in particular, \cref{ex:circle.plane}.
\end{remark}

\subsection{A Kron reduction for pseudomanifolds}\label{sec:kron}
As shown in~\cite{MR5053447}, if $\LL$ is a pseudomanifold (non-branching in the terminology of that paper), we may choose a basis 
$c_1,\dots,c_m$ of $C_{q+1}^{\LL, \KK}$ with an appropriate ordering such that  
(i) each $c_i$ is a $0/1$ indicator chain of $(q+1)$-simplices,  
(ii) $\mathrm{supp}(c_i)\cap\mathrm{supp}(c_j)=\varnothing$ for $i\neq j$ ($\mathrm{supp}$ being the simplices on which the chain has non-zero coefficients), and  
(iii) $\partial_{q+1}^\LL c_i\in C_{q}^\KK$.  
For simplicity, we use the standard inner product on chains. Define
\[
V_i:=\langle c_i,c_i\rangle_{q+1},\qquad
\mathrm{sf}_{ij}:=\#\{\text{$q$-faces shared by $c_i,c_j$}\}
\]
\[
\mathrm{free}_i:=\#\{\text{$q$-faces of $c_i$ not shared with any $c_j$}\},
\]
\[
A_i:=\mathrm{free}_i+\sum_{j\neq i}\mathrm{sf}_{ij},\qquad
W:= \mathrm{diag}(V_1,\dots,V_m).
\]
Here $V_i$ counts $(q+1)$-simplices in $c_i$, $\mathrm{sf}_{ij}$ counts shared $q$-faces,
$\mathrm{free}_i$ counts boundary faces not shared with another $c_j$, and $A_i$ is the total
$q$-boundary ``area'' of $c_i$, and $W$ is the Gram matrix (\cref{sec.adjprop}) of the basis for $C_{q+1}^{\LL, \KK}.$ In particular, letting $Z$ have columns the chains $c_i$, we get $Z^TZ = W$ the following expression for the persistent up Laplacian:
\[\Delta_{q, \mathrm{up}}^{\KK, \LL}
=
B_{q+1}^{\LL, \KK}(Z^TZ)^{-1}(B_{q+1}^{\LL, \KK})^T 
= B_{q+1}^{\LL, \KK}W^{-1}(B_{q+1}^{\LL, \KK})^T,
\]
where $B^{\LL,\KK}_{q+1}$ be the boundary matrix of $\partial_{q+1}^\LL$ restricted to $\mathrm{span}\{c_i\}$.
\subsubsection{The Kron reduction}
\begin{proposition}
\label{kron.shared}
We have,
\[
\bigl((B^{\LL,\KK}_{q+1})^{T} B^{\LL,\KK}_{q+1}\bigr)_{ii}=A_i,
\qquad
\bigl((B^{\LL,\KK}_{q+1})^{T} B^{\LL,\KK}_{q+1}\bigr)_{ij}=-\,\mathrm{sf}_{ij}.
\]
\end{proposition}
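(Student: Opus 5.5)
The plan is to compute the Gram matrix $(B^{\LL,\KK}_{q+1})^T B^{\LL,\KK}_{q+1}$ entrywise as inner products of boundaries. Its $(i,j)$ entry is $\langle \partial_{q+1}^\LL c_i, \partial_{q+1}^\LL c_j\rangle_q$, so everything reduces to understanding $\partial_{q+1}^\LL c_i$ coefficient by coefficient. Because of the orientation freedom in the pseudomanifold, there is one point to be careful about. I will choose the orientations of the $(q+1)$-simplices coherently, as in the definition of orientable pseudomanifold, so that two simplices sharing a $q$-face induce opposite orientations on it. The chains $c_i$ are then $0/1$ combinations of these coherently oriented simplices; if the basis from \cite{MR5053447} uses other signs, I would flip signs to reach this convention.

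\textbf{Step 1: the boundary of each $c_i$.} Write $c_i=\sum_{\tau\in\mathrm{supp}(c_i)}\tau$. For a $q$-simplex $\sigma$, the coefficient of $\sigma$ in $\partial c_i$ is the sum of the incidence numbers $[\tau:\sigma]$ over $\tau\in\mathrm{supp}(c_i)$ containing $\sigma$. By non-branching there are at most two such $\tau$, and by coherence, if there are two their signs cancel. Hence each coefficient of $\partial c_i$ lies in $\{0,\pm1\}$. It equals $\pm1$ exactly when $\sigma$ is a face of exactly one simplex of $c_i$; these are the $q$-faces of $c_i$ counted in $\mathrm{free}_i+\sum_j\mathrm{sf}_{ij}$. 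Note that interior faces, shared by two simplices of $c_i$, cancel and do not count.

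\textbf{Step 2: the diagonal entries.} It follows that $\|\partial c_i\|^2$ is the number of boundary $q$-faces of $c_i$. Each such face $\sigma$ lies in at most one other $(q+1)$-simplex $\tau'$. If $\tau'$ belongs to some $c_j$ with $j\neq i$, then $\sigma$ is shared with $c_j$ and is counted in $\mathrm{sf}_{ij}$. Otherwise $\sigma$ is free, which covers both the case where $\tau'$ does not exist and the case where $\tau'$ lies in no $c_j$. This gives $A_i$.

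\textbf{Step 3: the off-diagonal entries.} For $i\neq j$, the supports of $\partial c_i$ and $\partial c_j$ overlap exactly in the $q$-faces shared by $c_i$ and $c_j$. At such a face $\sigma$ we have $\sigma\subset\tau\in c_i$ and $\sigma\subset\tau'\in c_j$, and these are the only two cofaces of $\sigma$. Coherence then gives $[\tau:\sigma][\tau':\sigma]=-1$, so each shared face contributes $-1$, and the entry is $-\mathrm{sf}_{ij}$.

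\textbf{Main obstacle.} The main obstacle is the orientation bookkeeping: showing that the basis of \cite{MR5053447} can be taken to consist of indicator chains of coherently oriented simplices. It is also the delicate point in making ``$q$-faces of $c_i$'' mean boundary faces, with faces shared between two simplices of the same $c_i$ excluded. Property (iii) and disjointness of supports guarantee that no $q$-face is shared by three of the chains, which is what makes the count in Step 2 a clean partition.
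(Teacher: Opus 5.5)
Your proposal is correct and follows the same route as the paper: compute each entry as $\langle \partial c_i,\partial c_j\rangle$, use non-branching to see that each boundary face of $c_i$ is either free or shared with exactly one other $c_j$, and use the coherent orientation to get the $-1$ on shared faces. Your version is more careful than the paper's terse proof. You note that shared faces also contribute $+1$ to the diagonal, that faces interior to a single $c_i$ cancel, and that all $c_i$ must be $0/1$ with respect to one global coherent orientation, a condition the paper leaves implicit. One small correction: it is non-branching together with disjointness of supports, not property (iii), that prevents a face from being shared by three chains.
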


\begin{proof}
Each $q$-face in $\partial_{q+1}^\LL c_i$ appears with coefficient $\pm1$.  
By non-branching, a face is either free (contributing $+1$ to the diagonal) or shared with exactly one $c_j$ (contributing $-1$ to the $(i,j)$ entry since orientations are opposite). Summing these contributions gives the stated formulas.
\end{proof}

Hence, from \cref{prop.dualgraph}, we have that $(B^{\LL,\KK}_{q+1})^{T} B^{\LL,\KK}_{q+1}$ is the graph Laplacian of a graph with Dirichlet boundary conditions. We illustrate the need for phantom vertices in the non-persistent setting.

\begin{example}
The $2$-down Laplacian of a single $2$-simplex is represented by the matrix $[3]$, while its dual graph consists of a single vertex. By padding each of the three edges of the $2$-simplex with simplices (in red, below), we can adjoin three phantom vertices to the dual graph. The $2$-down Laplacian then arises as the restriction of the Laplacian of this extended graph to the row and column corresponding to the original $2$-simplex. In particular, it is precisely the Dirichlet graph Laplacian on the extended graph with the boundary vertices given by the vertices $\sigma_i$ for $i>0$.
\end{example}
\begin{center}
\begin{tikzpicture}[scale=2]

\coordinate (A) at (90:1);
\coordinate (B) at (210:1);
\coordinate (C) at (-30:1);

\filldraw[fill=gray!20,draw=black] (A) -- (B) -- (C) -- cycle;

\coordinate (D) at ($(A)!0.5!(B) + (-0.5,+0.5)$);
\filldraw[fill=red!30,draw=black] (A) -- (B) -- (D) -- cycle;

\coordinate (E) at ($(B)!0.5!(C) + (0,-0.5)$);
\filldraw[fill=red!30,draw=black] (B) -- (C) -- (E) -- cycle;

\coordinate (F) at ($(C)!0.5!(A) + (0.5,0.5)$);
\filldraw[fill=red!30,draw=black] (C) -- (A) -- (F) -- cycle;

\begin{scope}[xshift=0cm]

\node[circle,fill=black,inner sep=2pt,label=above:{\(\sigma_0\)}] (s0) at (0,0) {};
\node[circle,fill=black,inner sep=2pt,label=below:{\(\sigma_1\)}] (s1) at (-0.75,0.5) {};
\node[circle,fill=black,inner sep=2pt,label=below:{\(\sigma_2\)}] (s2) at (0.75,0.5) {};
\node[circle,fill=black,inner sep=2pt,label=right:{\(\sigma_3\)}] (s3) at (0,-0.75) {};

\draw[dashed] (s0) -- (s1);
\draw[dashed] (s0) -- (s2);
\draw[dashed] (s0) -- (s3);
\end{scope}
\end{tikzpicture}
\end{center}
In general, the diagonal of $\Delta_{q+1, \mathrm{down}}^\LL$ will be constant and equal to $q+2$, see, e.g. \cite{MR3077874}. The link between $(q+1)$-down Laplacians and graph Laplacians of dual graphs with boundary conditions was made in \cite[Fig. 5]{MR3194207} for the combinatorial Laplacian. 
The following is immediate from the induced block structure on $B_{q+1}^\LL$. 
\begin{lemma}
    Let $C_1, \ldots, C_k$ denote the connected components of the dual graph of $\LL$, and let $\LL_i$ denote the simplicial complex generated by the $(q+1)$-simplices corresponding to the vertices in $C_i$. Then, \[
C_{q+1}^{\LL,\KK}=\bigoplus_{i=1}^k C_{q+1}^{\LL_i,\KK\cap\LL_i},
\qquad
\Delta_{q,\mathrm{up}}^{\KK,\LL}
=
\bigoplus_{i=1}^k
\Delta_{q,\mathrm{up}}^{\KK\cap\LL_i,\LL_i}.
\]
\end{lemma}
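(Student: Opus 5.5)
The plan is to show that the boundary matrix \(B_{q+1}^\LL\) is block diagonal with respect to the components \(C_1,\dots,C_k\) of the dual graph. Then both the relative chain space \(C_{q+1}^{\LL,\KK}\) and the formula \(\Delta_{q,\mathrm{up}}^{\KK,\LL}=BPB^T\) from \cref{eq:P_pers_up_lap} split accordingly.

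\textbf{Step 1: the row splitting of the \(q\)-simplices.} Order the \((q+1)\)-simplices of \(\LL\) by component, so that \(S_{q+1}^\LL=\coprod_i S_{q+1}^{\LL_i}\). A \(q\)-simplex is a face of \((q+1)\)-simplices lying in at most one component, because two \((q+1)\)-simplices sharing a \(q\)-face are adjacent in the dual graph. Assign each \(q\)-simplex \(\sigma\in S_q^\LL\) to the component containing its cofaces. The \(q\)-simplices with no coface contribute zero rows to \(B_{q+1}^\LL\); assign them to, say, \(\LL_1\) or to a trivial extra block (in the former case they are zero rows and harmless). I interpret the displayed direct sum with this bookkeeping, so \(C_q^\KK\) decomposes as the direct sum of the spans of \(S_q^{\KK}\cap S_q^{\LL_i}\), up to \(q\)-simplices of \(\KK\) lying in no \((q+1)\)-simplex. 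On those \(q\)-simplices \(\Delta_{q,\mathrm{up}}^{\KK,\LL}\) vanishes, since the corresponding rows of \(B\) are zero. With these orderings, \(B_{q+1}^\LL=\bigoplus_i B_{q+1}^{\LL_i}\), and likewise \(B=\bigoplus_i B_i\), where \(B_i\) is the block of rows indexed by \(S_q^{\KK}\cap S_q^{\LL_i}\).

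\textbf{Step 2: the chain space.} Write \(c\in C_{q+1}^\LL\) as \(c=\sum_i c^{(i)}\) with \(c^{(i)}\in C_{q+1}^{\LL_i}\). By block diagonality, \(\partial c=\sum_i\partial c^{(i)}\), and the supports of the \(\partial c^{(i)}\) are pairwise disjoint. Hence \(\partial c\in C_q^\KK\) if and only if each \(\partial c^{(i)}\in C_q^{\KK\cap\LL_i}\). This gives \(C_{q+1}^{\LL,\KK}=\bigoplus_i C_{q+1}^{\LL_i,\KK\cap\LL_i}\). Since the summands are supported on disjoint simplex sets, they are orthogonal, so the orthogonal projection decomposes as \(P=\bigoplus_i P_i\).

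\textbf{Step 3: assembling the Laplacian.} We have \(BPB^T=\bigoplus_i B_iP_iB_i^T=\bigoplus_i\Delta_{q,\mathrm{up}}^{\KK\cap\LL_i,\LL_i}\). Alternatively, choose a basis \(Z\) of \(C_{q+1}^{\LL,\KK}\) as the union of bases of the summands and apply \cref{prop:pers_up_lap}. Then \(Z^TZ\) is block diagonal and the formula splits.

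The only delicate point is the bookkeeping for \(q\)-simplices that are faces of no \((q+1)\)-simplex, together with checking that \(\KK\cap\LL_i\) has exactly the \(q\)-simplices assigned to block \(i\). Everything else is immediate from block diagonality.
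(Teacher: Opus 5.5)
Your proposal is correct and takes the same route as the paper, which only asserts that the lemma is immediate from the induced block structure on $B_{q+1}^\LL$. Your Steps 1--3 spell out that block-diagonal splitting of $B$, of $C_{q+1}^{\LL,\KK}$, and of the projection $P$. Your bookkeeping for $q$-simplices of $\KK$ with no $(q+1)$-coface is a point the paper's statement glosses over, since $\LL$ need not be pure. Treating those simplices as a separate zero block, rather than absorbing them into $\LL_1$, is the choice that makes the displayed equality literally correct.
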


From this lemma, we can consider the components of $(q+1)$-simplices in the dual graph of $\widetilde{\LL}$ separately. Moreover, we shall say that $\LL$ is \emph{boundaryless} if every $q$-simplex is the face of no or two $(q+1)$-simplices.  We now arrive at the \emph{Kron reduction}.

\begin{theorem}
\label{thm.spectum.graph}
Let $\iota\colon \KK \hookrightarrow \LL$ be an inclusion of non-empty simplicial complexes, and assume that $\LL$ is an orientable pseudomanifold of dimension $(q+1)$ with connected dual graph. Let \(G\) be the weighted graph associated to the matrix
\[
X=(B^{\LL,\KK}_{q+1})^T B^{\LL,\KK}_{q+1}
\]
as in \cref{prop.dualgraph}, and let \(W=\mathrm{diag}(V_1,\ldots,V_m)\). Then the non-zero spectrum of $\Delta^{\KK,\LL}_{q,\mathrm{up}}$ coincides with:
\begin{enumerate}
    \item[(i)] the non-zero spectrum of $L_G$, if ${\LL}$ is boundaryless;
    \item[(ii)] the non-zero spectrum of $L_G^{\mathrm{dir}}$, otherwise.
\end{enumerate}
\end{theorem}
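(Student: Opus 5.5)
The plan is to factor the persistent up Laplacian and use the fact that $MM^T$ and $M^TM$ have the same non-zero spectrum (with multiplicities). By the discussion preceding \cref{kron.shared}, with the disjointly supported $0/1$ basis $c_1,\dots,c_m$ of $C_{q+1}^{\LL,\KK}$ we have $Z^TZ=W$. Hence
\[
\Delta_{q,\mathrm{up}}^{\KK,\LL}=B^{\LL,\KK}_{q+1}W^{-1}(B^{\LL,\KK}_{q+1})^T = MM^T,\qquad M:=B^{\LL,\KK}_{q+1}W^{-1/2}.
\]
Since $W$ is diagonal with positive entries $V_i$, the matrix $W^{-1/2}$ is well defined. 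We also get
\[
M^TM=W^{-1/2}(B^{\LL,\KK}_{q+1})^TB^{\LL,\KK}_{q+1}W^{-1/2}=W^{-1/2}XW^{-1/2}.
\]
By the singular value decomposition of $M$, the multisets $\spec_{\neq 0}(MM^T)$ and $\spec_{\neq 0}(M^TM)$ coincide. It therefore remains to identify $W^{-1/2}XW^{-1/2}$ with the (vertex- and edge-weighted) graph Laplacian, with or without Dirichlet boundary, in the sense of \cref{prop.dualgraph}. This is the reading of $L_G$ and $L_G^{\mathrm{dir}}$ I would adopt, with vertex weights $V_i$.

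Second, I would verify the hypotheses of \cref{prop.dualgraph} from \cref{kron.shared}. The off-diagonal entries $X_{ij}=-\mathrm{sf}_{ij}$ are non-positive, and the diagonal entries are $X_{ii}=A_i$. The $i$-th row sum is $A_i-\sum_{j\neq i}\mathrm{sf}_{ij}=\mathrm{free}_i\ge 0$, so row sums are non-negative. For strict positivity of the diagonal, note that if $A_i=0$ then $c_i$ is a $(q+1)$-cycle sharing no face with any other $c_j$. Connectivity of the dual graph then forces $c_i$ to exhaust a whole closed component. In that degenerate case $X=0$ and both non-zero spectra are empty, so I would treat it separately and otherwise assume $A_i>0$. \Cref{prop.dualgraph} then says $X$ is the edge-weighted Laplacian of $G$ when all $\mathrm{free}_i=0$, and the Dirichlet Laplacian with boundary at vertices with $\mathrm{free}_i>0$ otherwise. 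Its final clause gives the same for $W^{-1/2}XW^{-1/2}$ with vertex weights $V_i$.

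The main obstacle is matching this row-sum dichotomy with the geometric dichotomy in the statement, namely $\LL$ boundaryless versus not. One direction is clear. If $\LL$ has a $q$-face lying in exactly one $(q+1)$-simplex $\tau$, and that face bounds some $c_i$, then it contributes to $\mathrm{free}_i$. Conversely, in the boundaryless connected case I would argue that the orientation class, i.e.\ the sum of all coherently oriented $(q+1)$-simplices, is a cycle and hence lies in $C_{q+1}^{\LL,\KK}$. I would then use the structure of the basis from \cite{MR5053447}: each $c_i$ is a maximal cluster of simplices glued along faces not in $\KK$. From this I would show that every face counted in some $\mathrm{free}_i$ is shared with another basis element, so that all row sums vanish.

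If this last step fails, because some $(q+1)$-simplex lies in no support, I would instead state the dichotomy directly as ``all $\mathrm{free}_i=0$'' versus ``some $\mathrm{free}_i>0$''. The spectral identification $\spec_{\neq 0}(\Delta_{q,\mathrm{up}}^{\KK,\LL})=\spec_{\neq 0}(W^{-1/2}XW^{-1/2})$ does not depend on which case occurs. Only the label $L_G$ versus $L_G^{\mathrm{dir}}$ does.
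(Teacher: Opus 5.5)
Your spectral step is the paper's argument in symmetric form. Writing $\Delta^{\KK,\LL}_{q,\mathrm{up}}=MM^T$ and $M^TM=W^{-1/2}XW^{-1/2}$ with $M=B^{\LL,\KK}_{q+1}W^{-1/2}$ amounts to applying \cref{prop:laplaceusefulLA} to $B^{\LL,\KK}_{q+1}$ and $W^{-1}(B^{\LL,\KK}_{q+1})^T$ and then conjugating by $W^{1/2}$, which is what the paper does. Your check that $X_{ii}=A_i>0$, apart from the closed case $m=1$, $X=[0]$, verifies a hypothesis of \cref{prop.dualgraph} that the paper does not check. What is not yet a proof is the step matching the row-sum dichotomy with ``$\LL$ boundaryless or not''. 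That step is what separates (i) from (ii), and the paper gets it from connectivity of the dual graph.

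The direction you call clear only handles boundary faces that lie in some $\partial c_i$, and that can fail. Suppose a $q$-face $\sigma$ lies in a single $(q+1)$-simplex $\tau$ and $\sigma\notin\KK$. Then every $c\in C^{\LL,\KK}_{q+1}$ has $c(\tau)=0$, so the whole cluster of $\tau$ is discarded from the basis. Your description of the $c_i$ as all maximal clusters glued along non-$\KK$ faces misses this. In that case $\sigma$ contributes to no $\mathrm{free}_i$.

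The missing step uses connectivity. Suppose some $(q+1)$-simplex lies in no support and $m\ge 1$. The connected dual graph then has adjacent $\tau\in\mathrm{supp}(c_i)$ and $\tau'$ lying in no support. By non-branching, their common face has coefficient $\pm1$ in $\partial c_i$ and is shared with no $c_j$, so $\mathrm{free}_i\ge 1$. If instead every simplex lies in a support, your original argument applies; the case $m=0$ is trivial.

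For the converse, your orientation-class idea already finishes the argument, so drop the fallback. The fallback would only reproduce the row-sum dichotomy of \cref{prop.dualgraph}, not the geometric statement of the theorem.
\begin{itemize}
\item Since $\sum_\tau \tau$ is a cycle, it lies in $\operatorname{span}\{c_i\}$.
\item The $c_i$ are $0/1$ with disjoint supports, so every $(q+1)$-simplex lies in some support.
\item A face with nonzero coefficient in $\partial c_i$ cannot have both cofaces in $\mathrm{supp}(c_i)$, because the coherent orientations would cancel.
\item So its second coface lies in some $c_j$ with $j\neq i$, and therefore $\mathrm{free}_i=0$.
\end{itemize}
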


\begin{proof}
By \cref{prop:laplaceusefulLA}, the non-zero spectrum of $\Delta^{\KK,\LL}_{q,\mathrm{up}}$ coincides with the non-zero spectrum of
$
W^{-1} (B^{\LL, \KK}_{q+1})^T B^{\LL,\KK}_{q+1}.
$
This matrix is similar to the symmetric matrix
\[
M := W^{-1/2} (B^{\LL, \KK}_{q+1})^T B^{\LL, \KK}_{q+1} W^{-1/2} = W^{-1/2} X W^{-1/2},
\]
which is precisely the graph Laplacian matrix (with Dirichlet boundary conditions when appropriate) constructed in \cref{prop.dualgraph}.

By \cref{kron.shared}, the row sum of the \(i\)-th row of \(X\) is precisely \(\mathrm{free}_i\). Thus all row sums vanish if and only if \(\mathrm{free}_i=0\) for every \(i\). Equivalently, every \(q\)-face appearing in the boundary of a \((q+1)\)-simplex in the support of the \(c_i\)'s is shared with another such \((q+1)\)-simplex. Since the dual graph is connected, this is equivalent to \(\LL\) being boundaryless. In this case no Dirichlet boundary vertices are added, so \(L_G^{\mathrm{dir}}=L_G\). Otherwise some \(\mathrm{free}_i>0\), and the corresponding positive row sum gives a Dirichlet contribution.

\end{proof}

\subsubsection{A persistent Cheeger inequality}\label{sec:psd_mfld_cheeger_ineq}
Given the reduction to a graph with Dirichlet boundary conditions, a Cheeger inequality for the pseudomanifold case is near-immediate from \cref{thm:weighted_graph_cheeger_dirichlet}. However, for the sake of clarity, we recall the definitions in the notation used here.

For any nonempty subset \(S\subseteq \{1,\ldots,m\}\), where \(\{1,\ldots,m\}\) indexes the chains \(c_i\), set
\[
\operatorname{Vol}(S):=\sum_{i\in S}V_i,\qquad 
\operatorname{Cut}(S):=\sum_{\substack{i\in S\\ j\notin S}}\mathrm{sf}_{ij},\qquad 
\operatorname{Bdy}(S):=\sum_{i\in S}\mathrm{free}_i.
\]
We also write \(V_{\mathrm{tot}}:=\operatorname{Vol}(\{1,\ldots,m\})\).

\begin{definition}\label{def.cheeger.const}
Define
\[
h_{\mathrm{vol}}:= 
\min_{\substack{\varnothing\neq S\subseteq \{1,\ldots,m\}\\ \operatorname{Vol}(S)\leq V_{\mathrm{tot}}/2}} 
\frac{\operatorname{Cut}(S)}{\operatorname{Vol}(S)},
\qquad
h^{\mathrm{Bdy}}_{\mathrm{vol}}:= 
\min_{\varnothing\neq S\subseteq \{1,\ldots,m\}} 
\frac{\operatorname{Cut}(S)+\operatorname{Bdy}(S)}{\operatorname{Vol}(S)}.
\]
\end{definition}

\begin{theorem}
\label{thm.pers.cheeger}
Let \(\Delta:=\max_{1\leq i\leq m} A_i/V_i\), and let \(\KK\hookrightarrow\LL\), where \(\LL\) is a \((q+1)\)-dimensional orientable pseudomanifold whose dual graph is connected. Let \(\lambda_{\min}^+\) denote the smallest non-zero eigenvalue of
the persistent up Laplacian \(\Delta_{q,\mathrm{up}}^{\KK,\LL}\). 
\begin{enumerate}
    \item If \(\LL\) is boundaryless, then
    \[
    \frac{h_{\mathrm{vol}}^2}{2\Delta}\leq \lambda_{\min}^+\leq 2h_{\mathrm{vol}}.
    \]
    \item Otherwise,
    \[
    \frac{(h^{\mathrm{Bdy}}_{\mathrm{vol}})^2}{2\Delta}\leq \lambda_{\min}^+\leq h^{\mathrm{Bdy}}_{\mathrm{vol}}.
    \]
\end{enumerate}
\end{theorem}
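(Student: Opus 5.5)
The proof reduces the statement to \cref{thm:weighted_graph_cheeger_dirichlet} through the Kron reduction of \cref{thm.spectum.graph}, after matching the data in the two settings. Set
\[
X:=(B^{\LL,\KK}_{q+1})^TB^{\LL,\KK}_{q+1},\qquad W=\operatorname{diag}(V_1,\ldots,V_m),\qquad \mu_i:=V_i,\qquad M:=W^{-1/2}XW^{-1/2}.
\]
By \cref{kron.shared}, $X_{ii}=A_i>0$ and $X_{ij}=-\mathrm{sf}_{ij}\le 0$ for $i\neq j$. The $i$-th row sum is $A_i-\sum_{j\neq i}\mathrm{sf}_{ij}=\mathrm{free}_i\ge 0$. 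So $X$ meets the hypotheses of \cref{prop.dualgraph}, with edge weights $w_{ij}=\mathrm{sf}_{ij}$ and boundary weights $b_i=\mathrm{free}_i$. Under this dictionary, $\operatorname{Vol}$, $\operatorname{Cut}$ and $\operatorname{Bdy}$ in \cref{def.cheeger.const} are literally the quantities of the graph section. Hence $h_{\mathrm{vol}}$ and $h^{\mathrm{Bdy}}_{\mathrm{vol}}$ coincide with their graph counterparts, and $\Delta=\max_i X_{ii}/\mu_i=\max_i A_i/V_i$ is exactly the constant appearing in \cref{thm:weighted_graph_cheeger_dirichlet}.

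Next I identify the spectra. As in the proof of \cref{thm.spectum.graph}, \cref{prop:laplaceusefulLA} shows that
\[
\Delta^{\KK,\LL}_{q,\mathrm{up}}=B^{\LL,\KK}_{q+1}W^{-1}(B^{\LL,\KK}_{q+1})^T
\]
and $W^{-1}X$ have the same nonzero spectrum. Since $W^{-1}X$ is similar to $M$, we get $\lambda_{\min}^+(\Delta^{\KK,\LL}_{q,\mathrm{up}})=\lambda_{\min}^+(M)$.

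The two cases then follow.
\begin{enumerate}
\item If $\LL$ is boundaryless, every $\mathrm{free}_i=0$, so all row sums of $X$ vanish, and part 1 of \cref{thm:weighted_graph_cheeger_dirichlet} gives the claimed two-sided bound.
\item Otherwise, the argument in the proof of \cref{thm.spectum.graph} (using connectivity of the dual graph) gives some $\mathrm{free}_i>0$. Then $M$ is positive definite, so its smallest eigenvalue is its smallest nonzero eigenvalue, and part 2 of \cref{thm:weighted_graph_cheeger_dirichlet} applies.
\end{enumerate}

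The main obstacle is the connectivity hypothesis of \cref{thm:weighted_graph_cheeger_dirichlet}. We must show that the interior graph on the chains $c_1,\ldots,c_m$, with an edge whenever $\mathrm{sf}_{ij}>0$, is connected, given only that the dual graph of $\LL$ is connected.

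I would argue as follows. Each $c_i$ is a union of $(q+1)$-simplices, and the supports partition the $(q+1)$-simplices of $\LL$; I would first check this from the construction in \cite{MR5053447}, since the relevant chains are built from components of the dual graph after cutting along faces in $\KK$. Any edge of the dual graph of $\LL$ joins two simplices through a shared $q$-face. If the two simplices lie in the same $c_i$, the edge stays inside that chain. If they lie in different chains $c_i$ and $c_j$, the shared face counts toward $\mathrm{sf}_{ij}$, giving an edge between $c_i$ and $c_j$. Contracting each support therefore maps the connected dual graph onto the interior graph, which is consequently connected.

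One degenerate case needs a separate check: some $(q+1)$-simplices may lie outside every $c_i$. In that case I would restrict to the components carrying chains and apply the block decomposition lemma above componentwise. If the partition property fails, this restriction is the step I would need to repair.
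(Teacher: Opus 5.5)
Your route is the paper's: you read $X=(B^{\LL,\KK}_{q+1})^TB^{\LL,\KK}_{q+1}$ as a vertex- and edge-weighted (Dirichlet) graph Laplacian via \cref{kron.shared}, transfer the nonzero spectrum with \cref{prop:laplaceusefulLA}, and quote \cref{thm:weighted_graph_cheeger_dirichlet}. The paper settles the connectivity hypothesis with ``connected by assumption'', so you are right to single it out. Your contraction argument is correct when $\LL$ is boundaryless. In that case no cut-component (a component of the dual graph after cutting along $q$-faces of $\KK$) contains a free face, so each one is some $c_i$ and the supports partition $S^{\LL}_{q+1}$. A dual edge between different chains then crosses a face of $\KK$ that is counted in $\mathrm{sf}_{ij}$.

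When $\LL$ has boundary, however, the step genuinely fails, and your proposed repair does not help. The block-decomposition lemma splits along components of the dual graph of $\LL$, which is connected by hypothesis, so it gives nothing. A cut-component containing a free face of $\LL$ not in $\KK$ carries no chain, and the interior graph can then be disconnected. For instance, take $\LL$ a triangulated disk, $D_1,D_2$ disjoint subdisks in its interior, and $\KK=\partial D_1\cup\partial D_2$. The outer region meets $\partial\LL$, so the only chains are $c_1=D_1$ and $c_2=D_2$, with $\mathrm{sf}_{12}=0$. Hence your inference ``some $\mathrm{free}_i>0$, so $M$ is positive definite'' is not justified as written.

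The fix is to prove the property that is actually needed: every connected component $\mathcal C$ of the interior graph has $\sum_{i\in\mathcal C}\mathrm{free}_i>0$. Suppose not, and let $U=\bigcup_{i\in\mathcal C}\mathrm{supp}(c_i)$. A face outside $\KK$ of a simplex of $c_i$ has coefficient $0$ in $\partial c_i$, so it lies in two simplices of $c_i$. A face in $\KK$ either cancels inside $c_i$ or, since $\mathrm{free}_i=0$, is shared with some $c_j$ with $j\in\mathcal C$. So every $q$-face of a simplex of $U$ lies in two simplices of $U$, and by non-branching $U$ is closed under dual adjacency. Connectivity of the dual graph gives $U=S^{\LL}_{q+1}$, so $\LL$ is boundaryless, a contradiction. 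This yields $\ker M=0$, which is all the Dirichlet half of \cref{thm:weighted_graph_cheeger_dirichlet} really uses: the upper bound is a Rayleigh quotient with trivial kernel, and the layer-cake lower bound never invokes connectivity. With this substitution your argument, and the paper's, is complete.
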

\begin{proof}
From \cref{thm.spectum.graph}, the smallest nonzero eigenvalue of \(\Delta_{q,\mathrm{up}}^{\KK,\LL}\) agrees with the smallest relevant eigenvalue of
\[
M:=W^{-1/2}(B_{q+1}^{\LL,\KK})^TB_{q+1}^{\LL,\KK}W^{-1/2},
\]
where \(W=\operatorname{diag}(V_1,\ldots,V_m)\). The result now follows from \cref{thm:weighted_graph_cheeger_dirichlet} by noting that for $X:=(B_{q+1}^{\LL,\KK})^TB_{q+1}^{\LL,\KK}$,
\[
X_{ii}=A_i,\qquad X_{ij}=-\mathrm{sf}_{ij}\quad (i\neq j).
\]
and
\[
b_i=\sum_jX_{ij}=A_i-\sum_{j\neq i}\mathrm{sf}_{ij}=\mathrm{free}_i.
\]
Moreover, the vertex weights in the graph theorem are \(\mu_i=V_i\), and hence
\[
\operatorname{Vol}(S)=\sum_{i\in S}V_i,\qquad
\operatorname{Cut}(S)=\sum_{\substack{i\in S\\j\notin S}}\mathrm{sf}_{ij},\qquad
\operatorname{Bdy}(S)=\sum_{i\in S}\mathrm{free}_i,
\]
which are exactly the quantities in \cref{def.cheeger.const}. Also,
\[
\Delta=\max_i\frac{X_{ii}}{\mu_i}=\max_i\frac{A_i}{V_i}.
\]

The interior graph associated with \(X\) is the dual graph on the chains \(c_i\), with edge weights \(\mathrm{sf}_{ij}\), and is connected by assumption. Therefore \cref{thm:weighted_graph_cheeger_dirichlet} applies. If \(\LL\) is boundaryless, then \(\mathrm{free}_i=0\) for all \(i\), so all row sums of \(X\) vanish. Hence
\[
\frac{h_{\mathrm{vol}}^2}{2\Delta}\leq \lambda_{\min}^+\leq 2h_{\mathrm{vol}}.
\]
If \(\LL\) has boundary, then at least one \(\mathrm{free}_i\) is positive, so at least one row sum of \(X\) is positive. In this case \cref{thm:weighted_graph_cheeger_dirichlet} gives
\[
\frac{(h_{\mathrm{vol}}^{\mathrm{Bdy}})^2}{2\Delta}\leq \lambda_{\min}^+\leq h_{\mathrm{vol}}^{\mathrm{Bdy}}.
\]
\end{proof}
We emphasize that, unlike in the setting of
\cite{MR3194207}, our result (in the non-persistent case) is
fundamentally combinatorial: the Cheeger constants are non-zero and have a
direct geometric interpretation.  This transparency, however, comes at a cost:
the lower bound requires orientability; see \cref{sec.klein.failure} and
compare with \cite[Theorem~2.7\,(2)]{MR3194207}.

The following example shows how the Kron reduction provides a transparent understanding of the $1$-st persistent up Laplacian for planar simplicial complexes.
\begin{example}
\label{ex:circle.plane}
Consider the following filtration of two-dimensional simplicial complexes in $\R^2$,
\[
\KK_1 \hookrightarrow \KK_2 \hookrightarrow \cdots \hookrightarrow \KK_9 = \LL.
\]
The complexes $\KK_i$ for $i \in \{2,4,7,9\}$ are shown in \cref{fig.filtration}. 
From \cref{prop:laplaceusefulLA} we know that 
$
\lambda_{1, {\min}}^{\KK_i,\LL}
$, the smallest non-zero eigenvalue of $\Delta_{1, \mathrm{up}}^{\KK_i, \LL}$
coincides with the smallest non-zero eigenvalue of 
\[
M := W^{-1/2} (B^{\KK_i,\LL}_{2})^T B^{\KK_i,\LL}_{2}\, W^{-1/2},
\]
which is a Dirichlet Laplacian operator on the $2$-chains $\{c_1,\ldots,c_m\}$ by \cref{thm.spectum.graph}. In \cref{fig.2simp.evalues} we show, for each $i\in\{2,4,7,9\}$, the relative magnitude of the entries 
$v_j$ of an eigenvector $v$ corresponding to the smallest non-zero eigenvalue of $M$, plotted on the 
corresponding chain $c_j$.  
In this example, the chains $c_j$ are precisely the $2$-simplices of $\KK_i$, together with the single chain 
filling the inner hole.  
Plots for the eigenvector of the second-smallest non-zero eigenvalue are shown in 
\cref{fig.2simp.evalues.second}. For comparison, \cref{fig.contlap} shows a numerical solution of the eigenvalue problem 
$\nabla^2 u = -\lambda u$ on the unit disk with Dirichlet boundary condition $u|_{\partial D}=0$.  
The similarity to the persistent Laplacian eigenvectors is obvious.

Finally, \cref{fig.filtration.circle.cheeger} shows how $\lambda^{\KK_i,\LL}_{1, {\min}+}$ evolves along the 
filtration, together with the corresponding upper and lower bounds on the Cheeger constant.  
The red and green curves indicate the Cheeger cut costs for removing the hole-filling chain and for 
cutting off  $\KK_i$ (the outer part), respectively. 

\end{example}

\begin{figure}[h!]
    \centering
    \begin{subfigure}{0.24\textwidth}
        \centering
        \includegraphics[width=\textwidth]{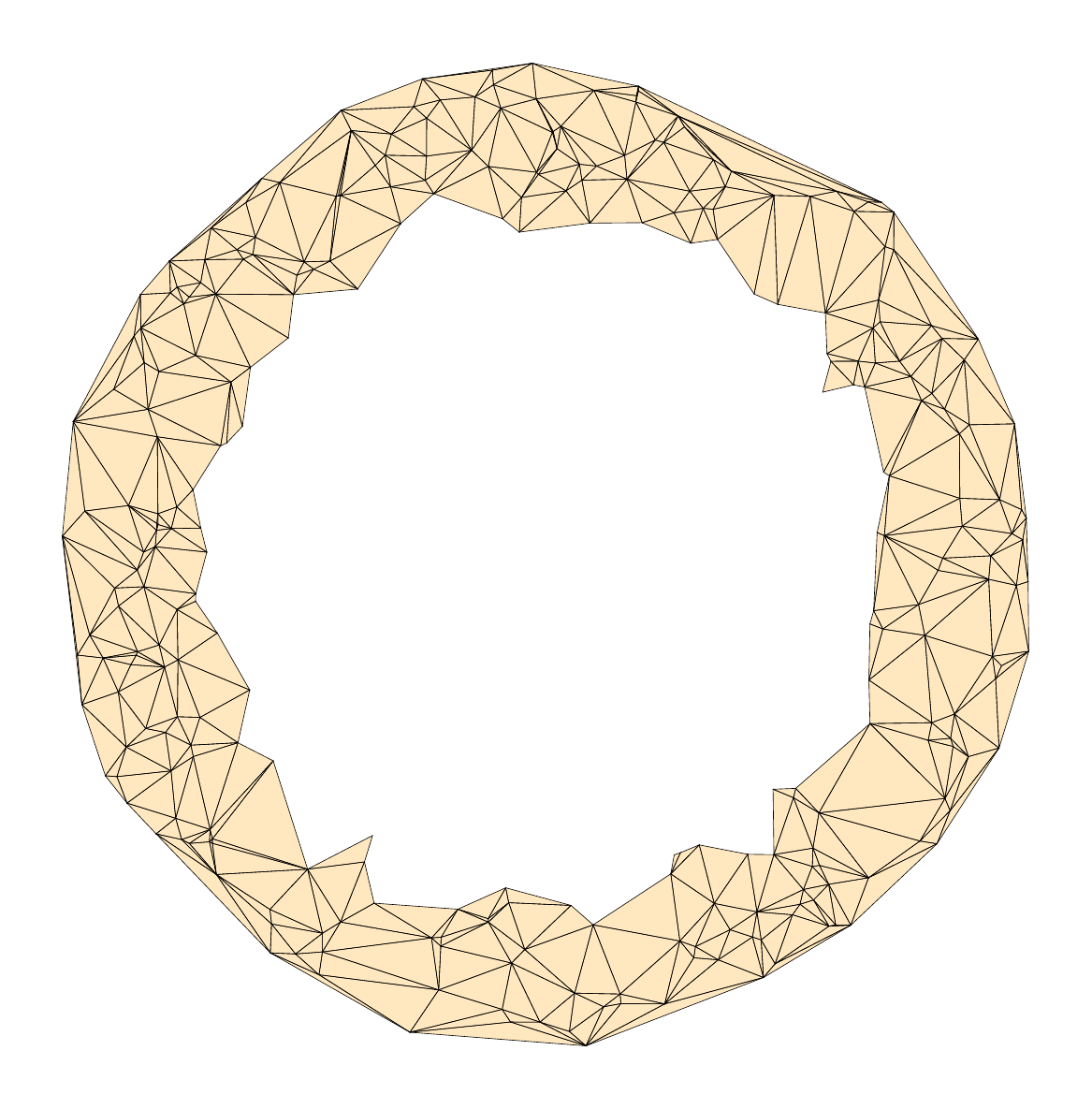}
    \end{subfigure}
        \begin{subfigure}{0.24\textwidth}
        \centering
        \includegraphics[width=\textwidth]{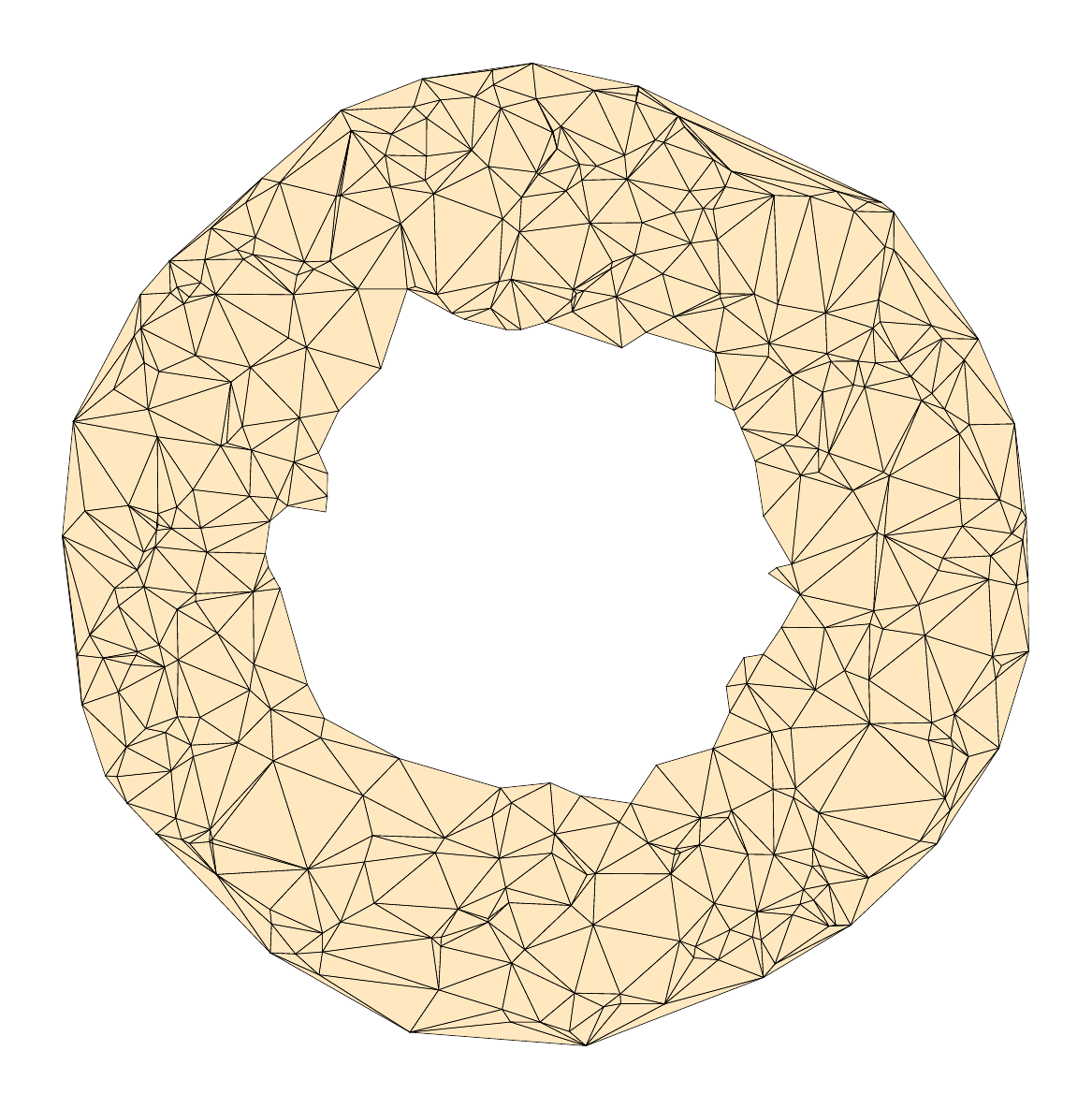}
    \end{subfigure}
    \begin{subfigure}{0.24\textwidth}
        \centering
        \includegraphics[width=\textwidth]{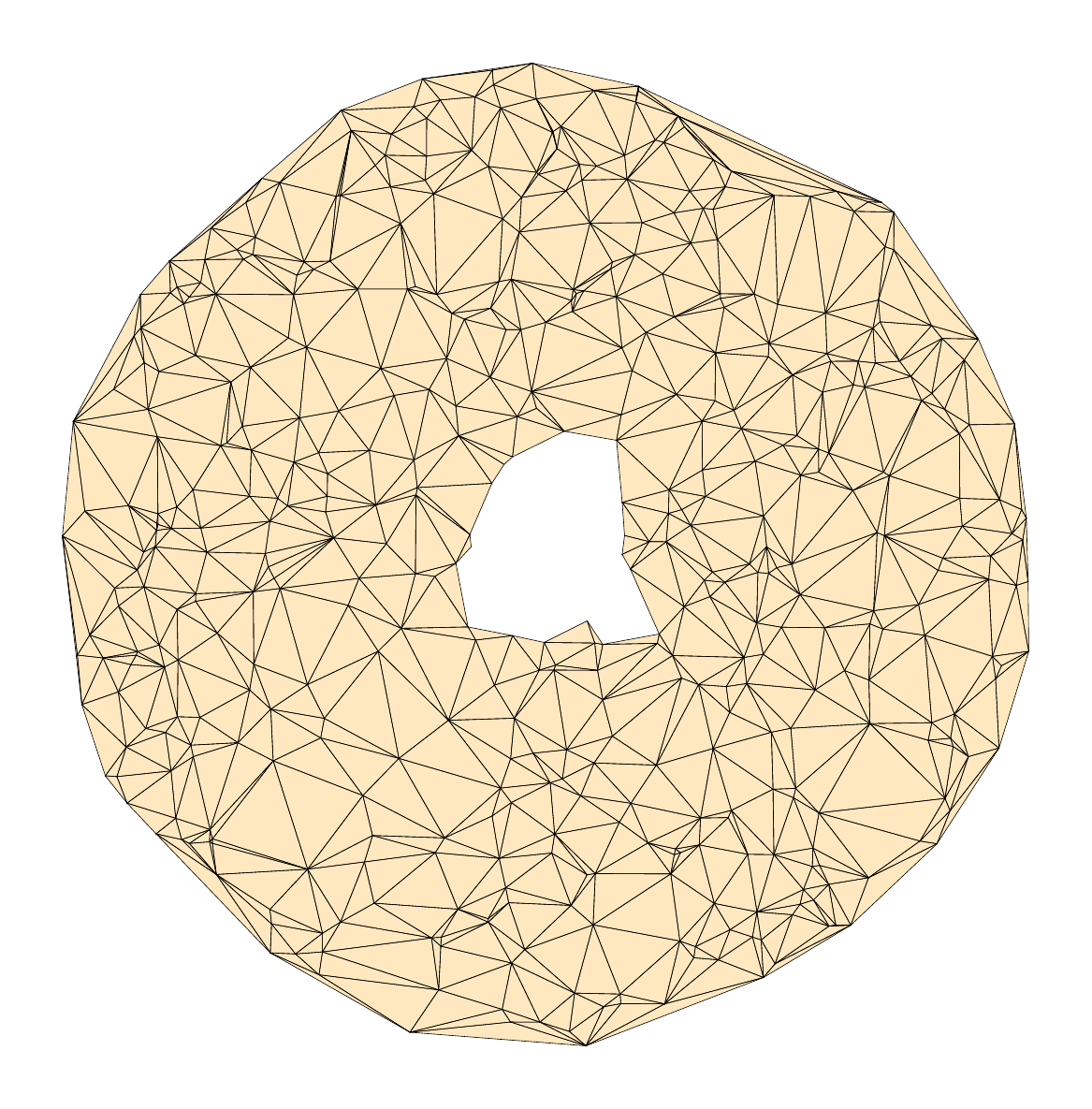}
    \end{subfigure}
    \begin{subfigure}{0.24\textwidth}
        \centering
        \includegraphics[width=\textwidth]{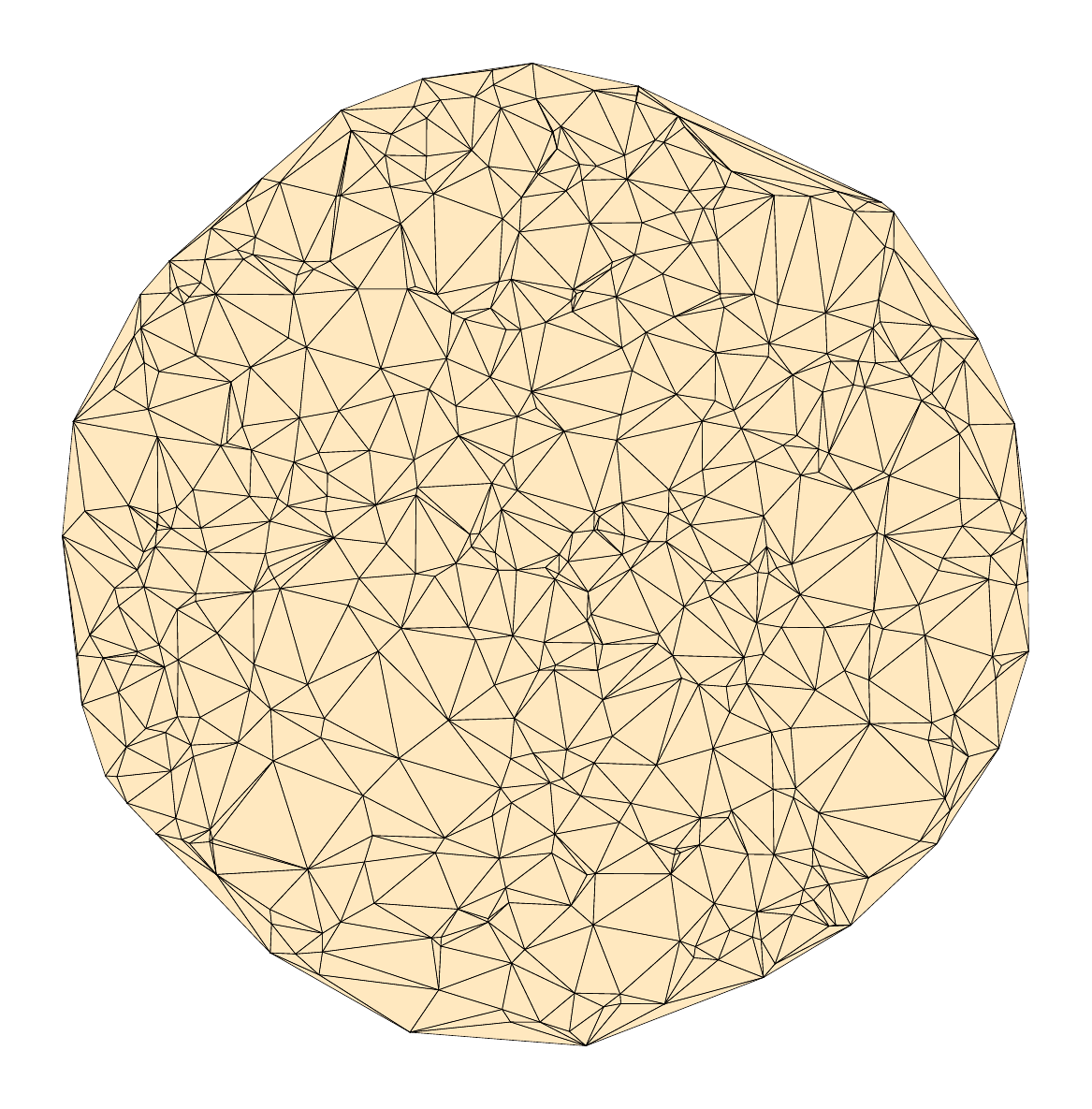}
    \end{subfigure}
    \caption{A filtration of a planar simplicial complex; see \cref{ex:circle.plane}.}
    \label{fig.filtration}
\end{figure}

\begin{figure}[h!]
    \centering
    \begin{subfigure}{0.39\textwidth}
        \centering
        \includegraphics[width=\textwidth]{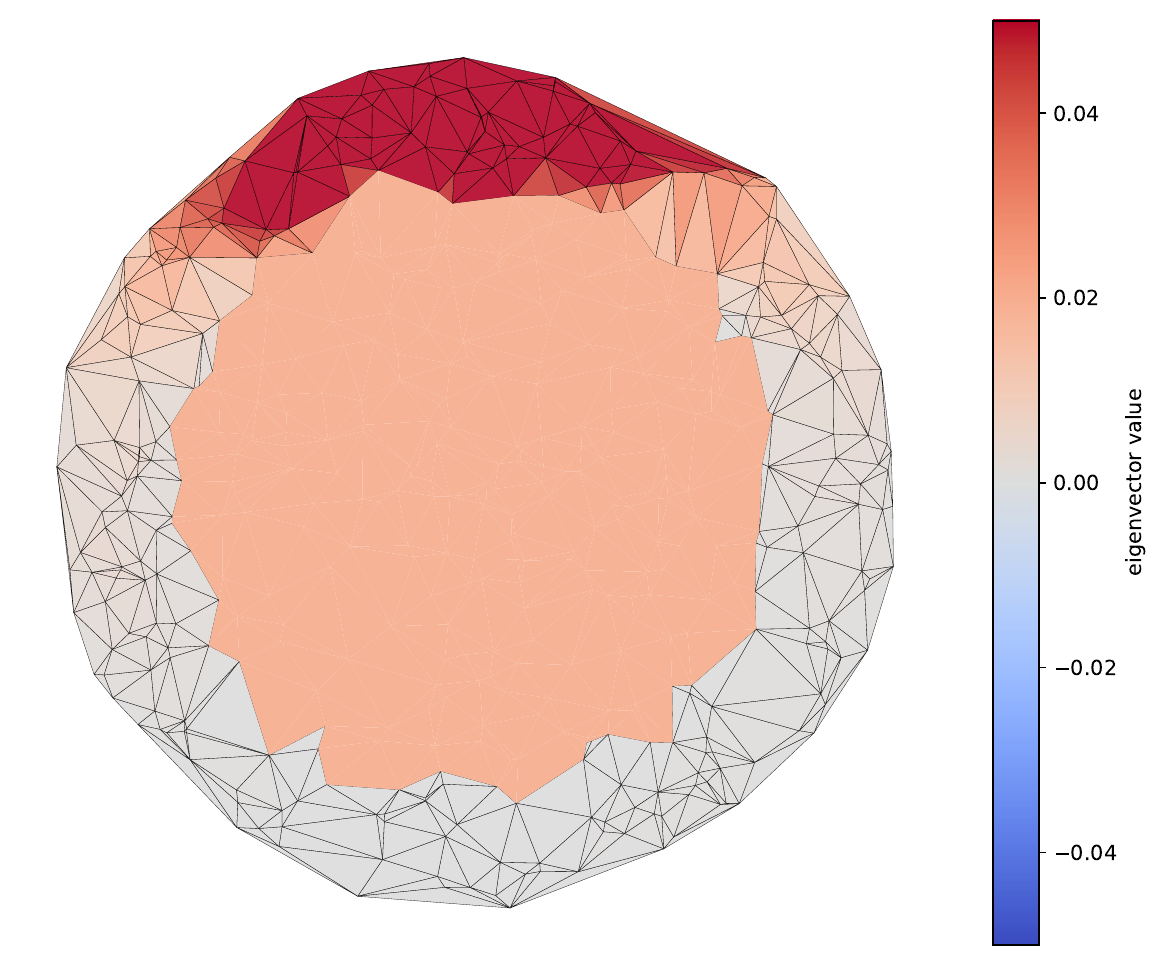}
    \end{subfigure}
    \begin{subfigure}{0.39\textwidth}
        \centering
        \includegraphics[width=\textwidth]{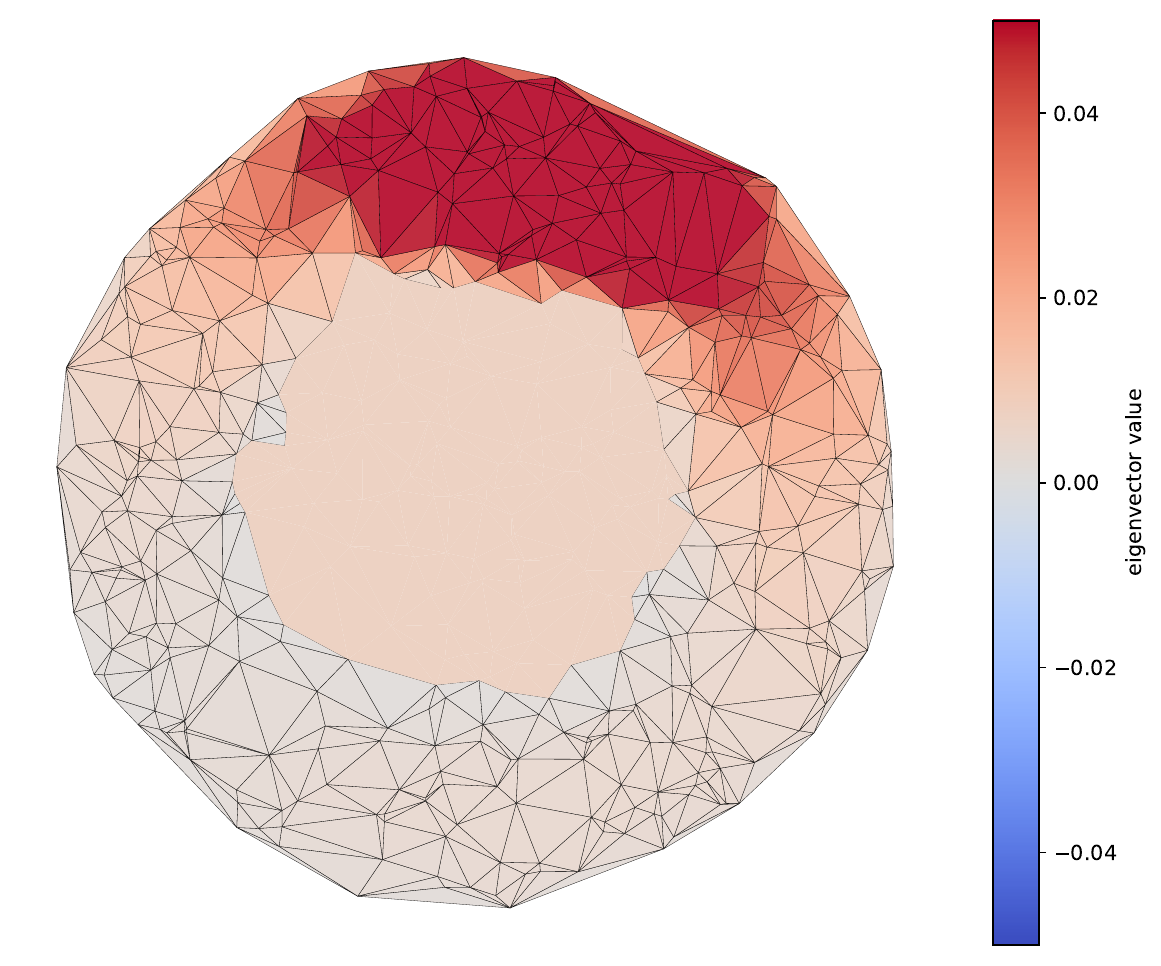}
    \end{subfigure}\\
    \begin{subfigure}{0.39\textwidth}
        \centering
        \includegraphics[width=\textwidth]{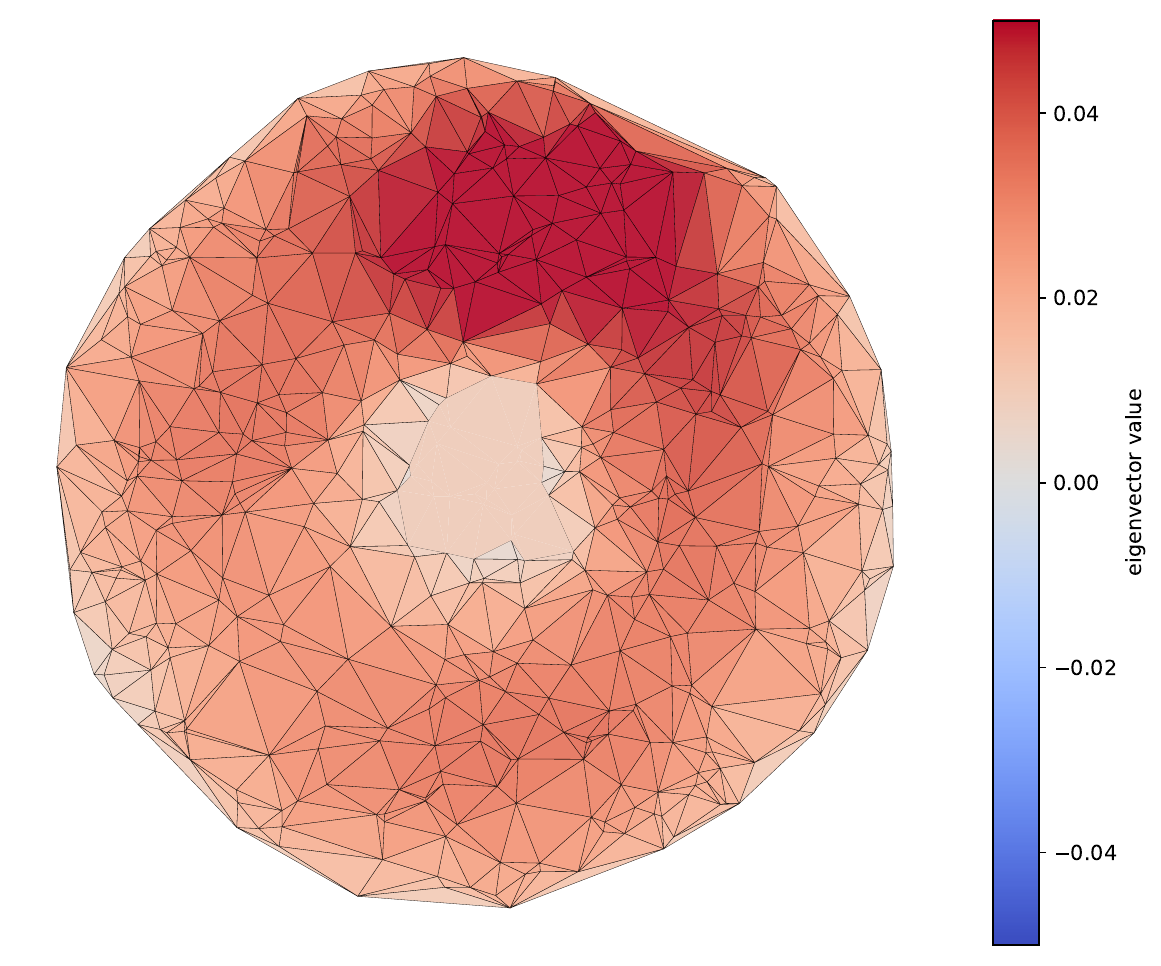}
    \end{subfigure}
        \begin{subfigure}{0.39\textwidth}
        \centering
        \includegraphics[width=\textwidth]{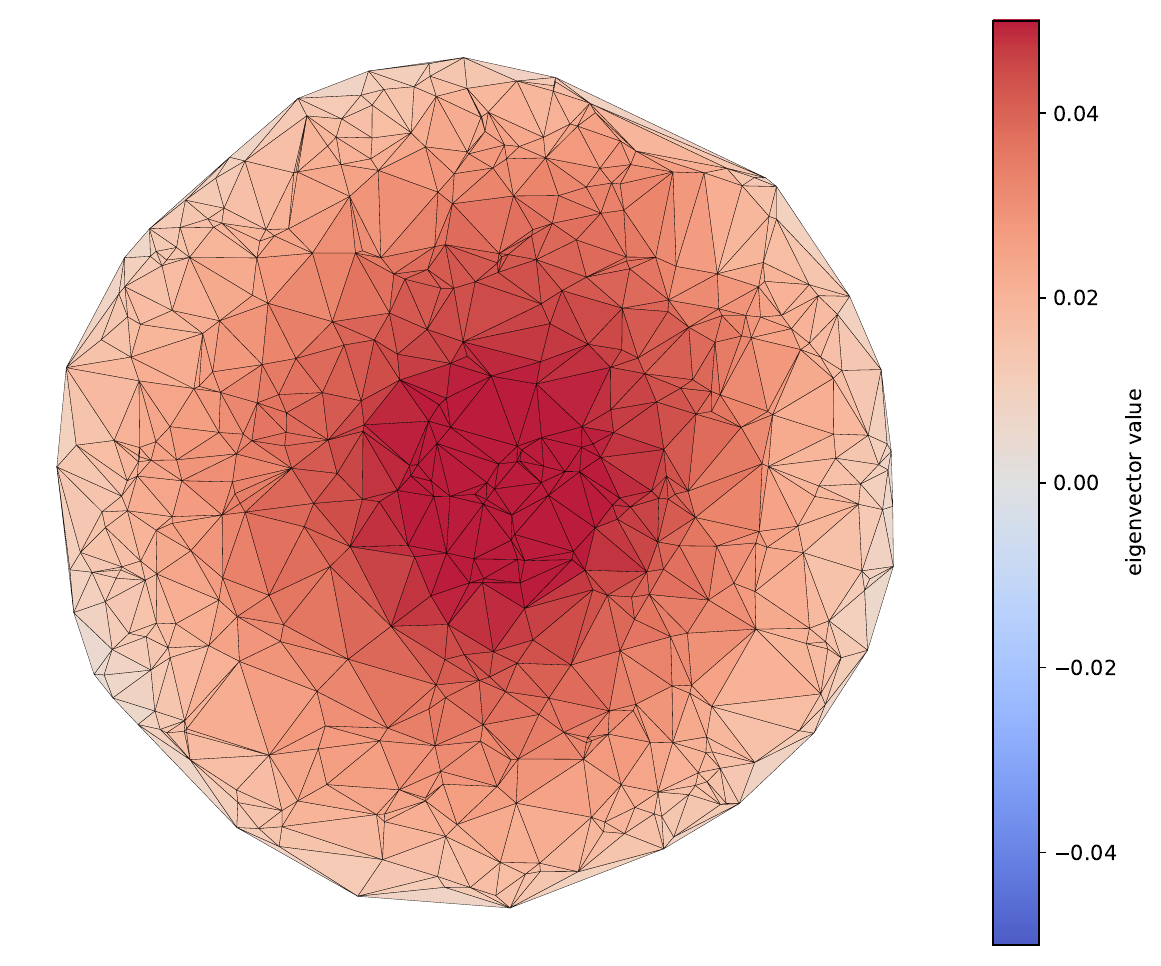}
    \end{subfigure}
    \caption{Eigenvector entries on the $2$-simplices for the smallest non-zero eigenvalue of $M$, for $\KK_i \hookrightarrow \LL$ with $i \in \{2,4,7,9\}$; see \cref{ex:circle.plane}.}
    \label{fig.2simp.evalues}
\end{figure}

\begin{figure}[h!]
    \centering
    \begin{subfigure}{0.39\textwidth}
        \centering
        \includegraphics[width=\textwidth]{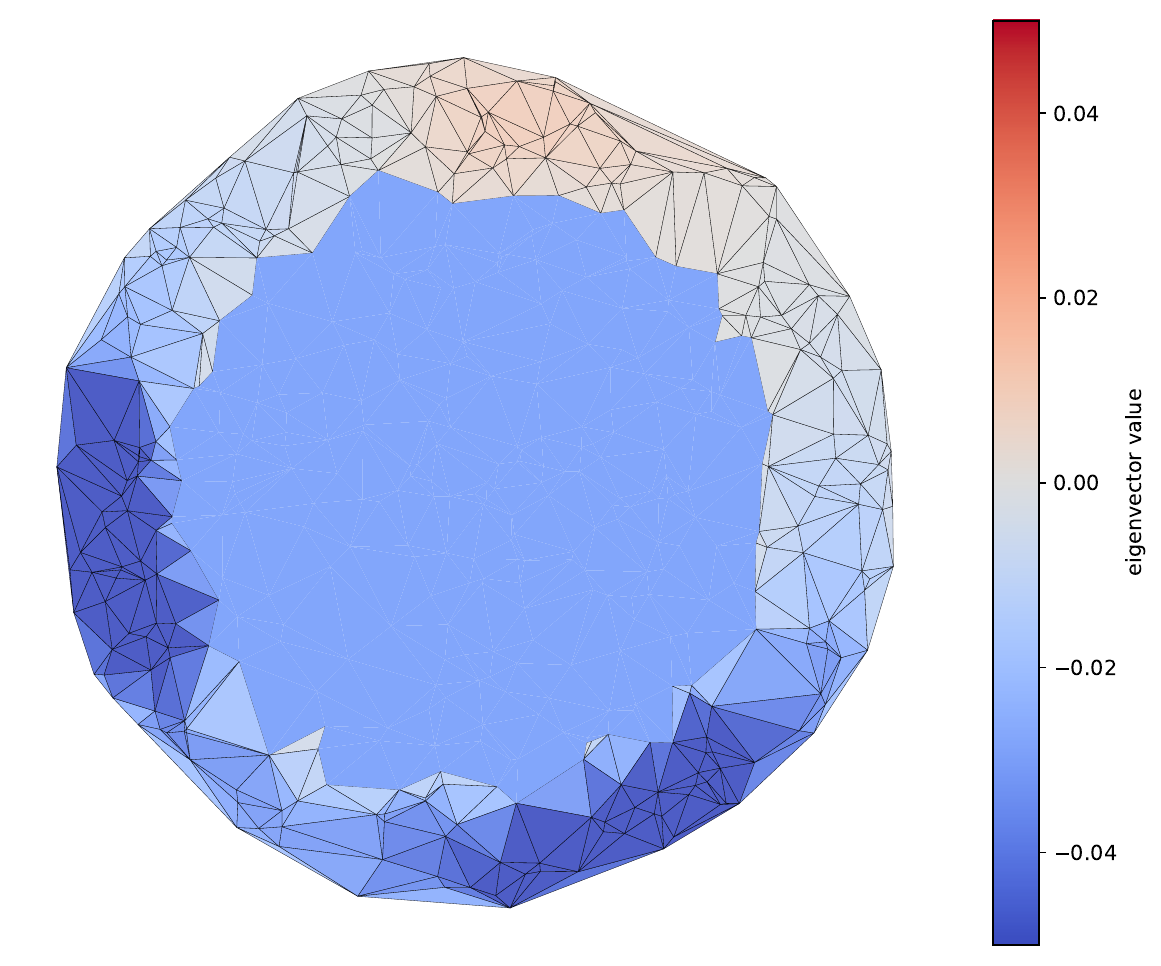}
    \end{subfigure}
    \begin{subfigure}{0.39\textwidth}
        \centering
        \includegraphics[width=\textwidth]{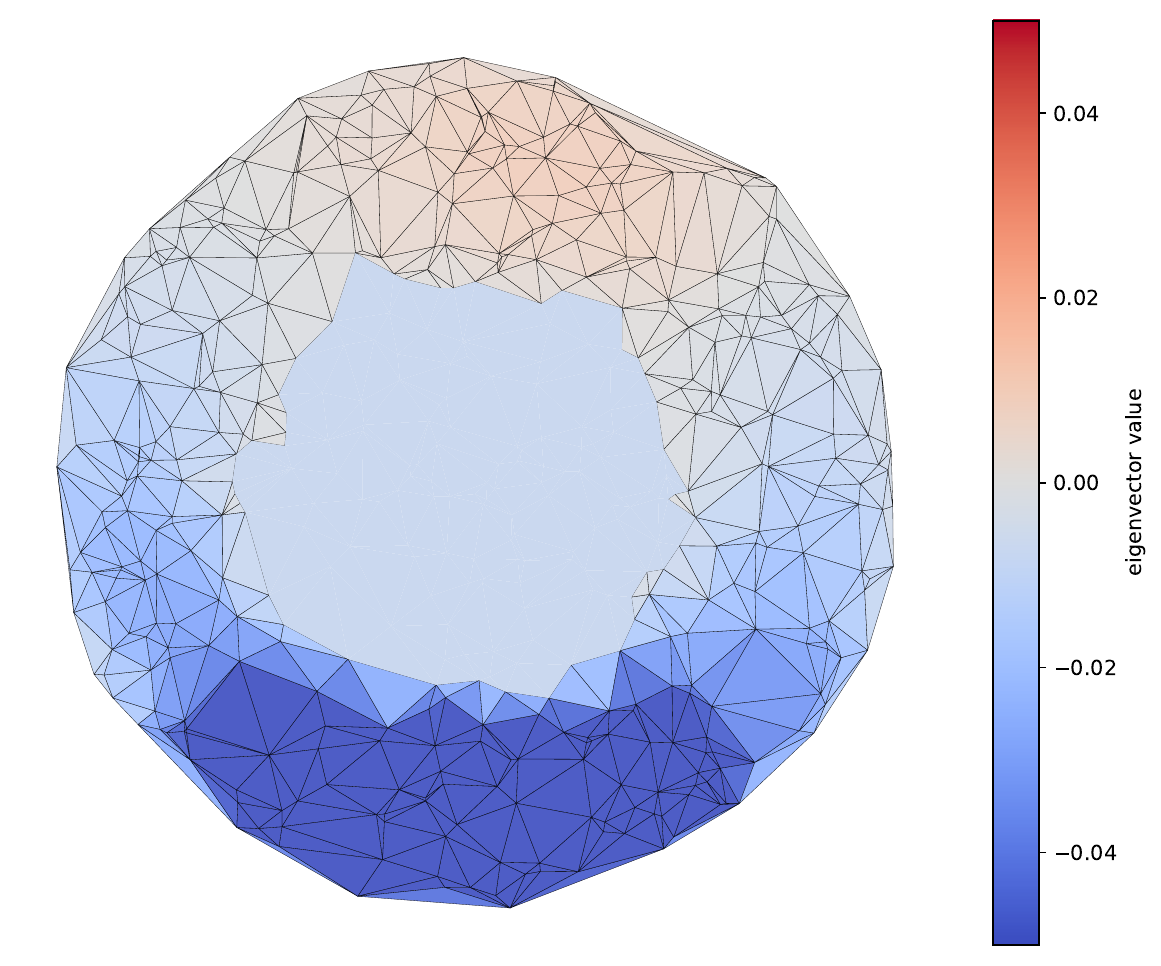}
    \end{subfigure}\\
    \begin{subfigure}{0.39\textwidth}
        \centering
        \includegraphics[width=\textwidth]{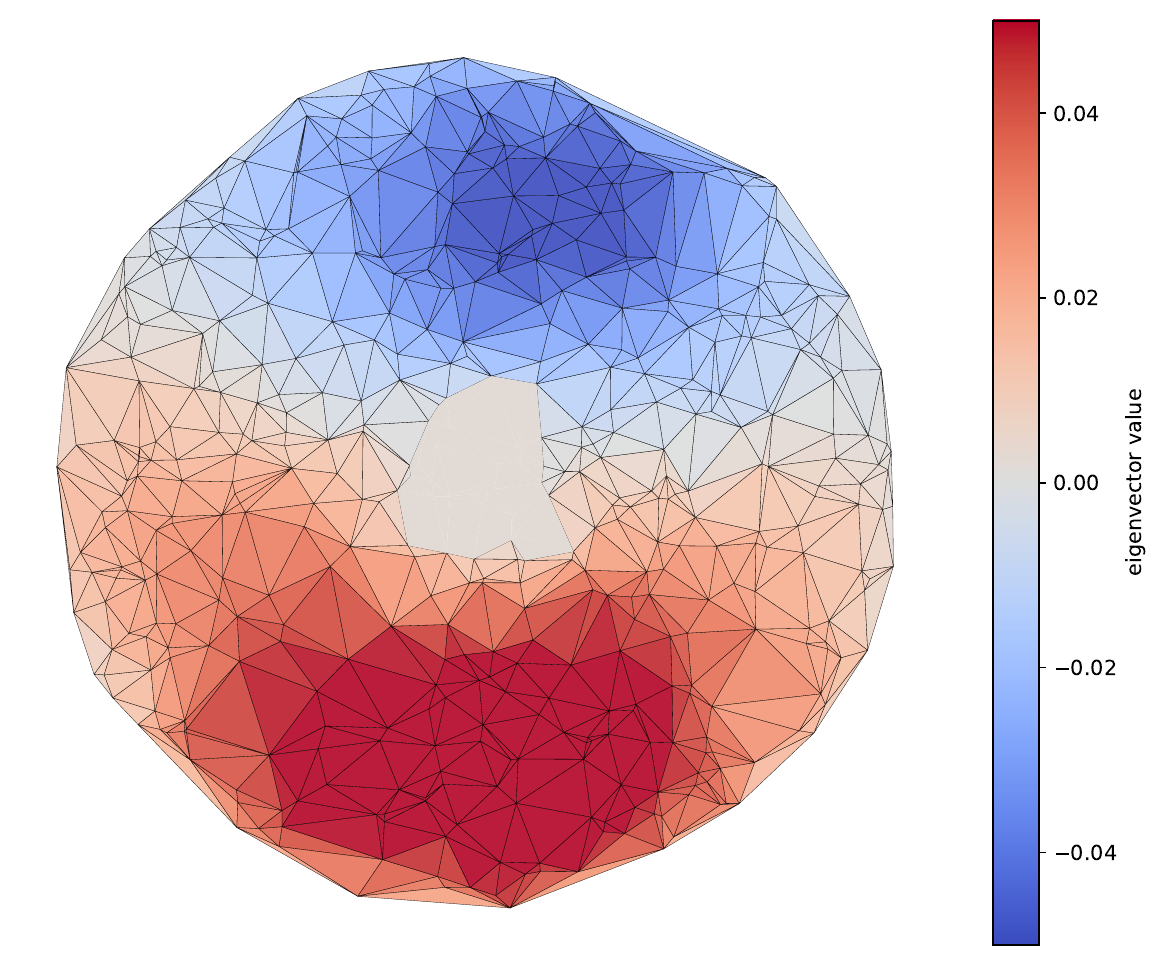}
    \end{subfigure}
        \begin{subfigure}{0.39\textwidth}
        \centering
        \includegraphics[width=\textwidth]{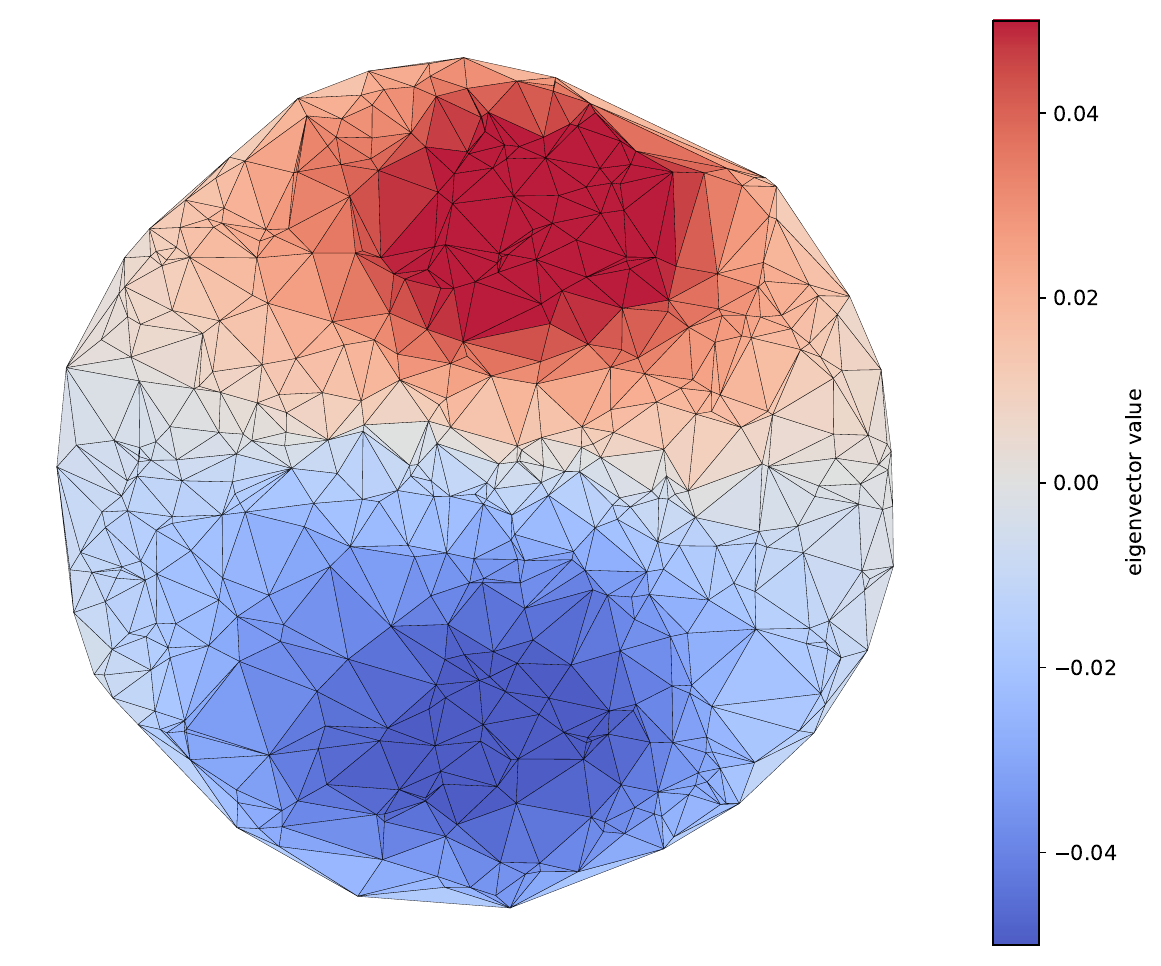}
    \end{subfigure}
    \caption{Eigenvector entries on the $2$-simplices for the second-smallest non-zero eigenvalue of $M$, for $\KK_i \hookrightarrow \LL$ with $i \in \{2,4,7,9\}$; see \cref{ex:circle.plane}.}

    \label{fig.2simp.evalues.second}
\end{figure}

\begin{figure}
    \includegraphics[width=0.7\textwidth]{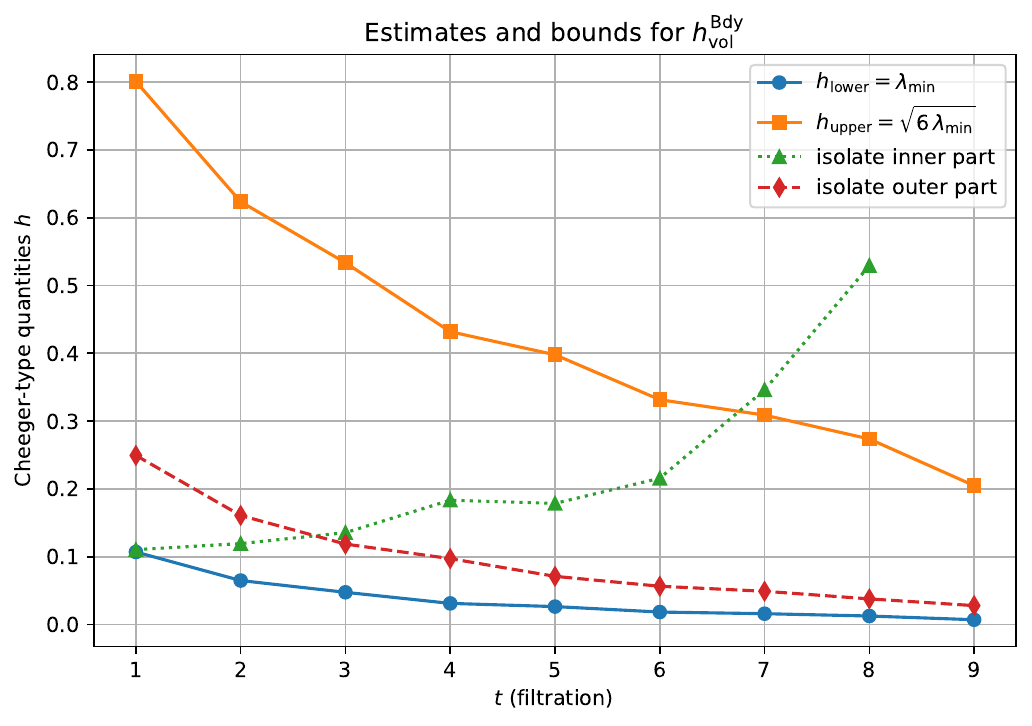}
    \label{fig:disk-graphs}
    \caption{Evolution of the smallest nonzero eigenvalue of the persistent Laplacian for \(\KK_t\hookrightarrow \LL\), together with the corresponding Cheeger bounds for a filtration of the unit disk; see \cref{ex:circle.plane}. In particular, \(h_{\mathrm{vol}}^{\mathrm{Bdy}}\) lies between the two lower curves at each time step. For small \(t\), the chain filling the inner disk has large volume relative to its boundary area, and it either realizes or closely approximates the Cheeger constant. For large \(t\), the situation is nearly symmetric, where it becomes more efficient to isolate the outer region formed by the chains contained in \(\KK_t\).}
    \label{fig.filtration.circle.cheeger}
\end{figure}

\begin{figure}[h!]
    \centering
    \begin{subfigure}{0.39\textwidth}
        \centering
        \includegraphics[width=\textwidth]{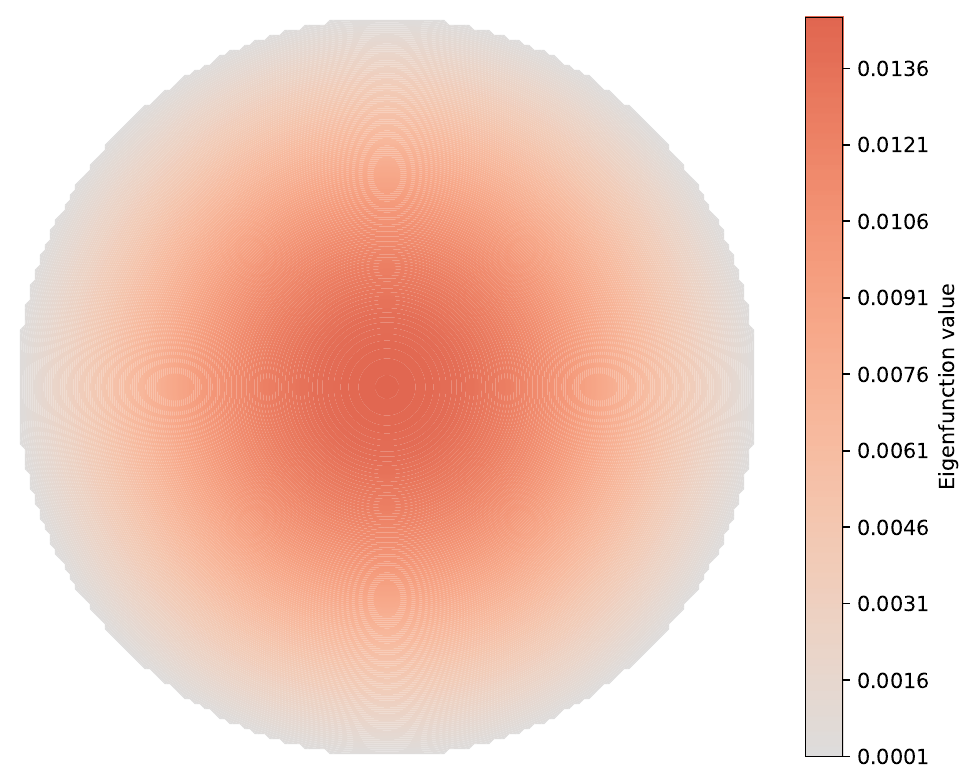}
    \end{subfigure}
    \begin{subfigure}{0.39\textwidth}
        \centering
        \includegraphics[width=\textwidth]{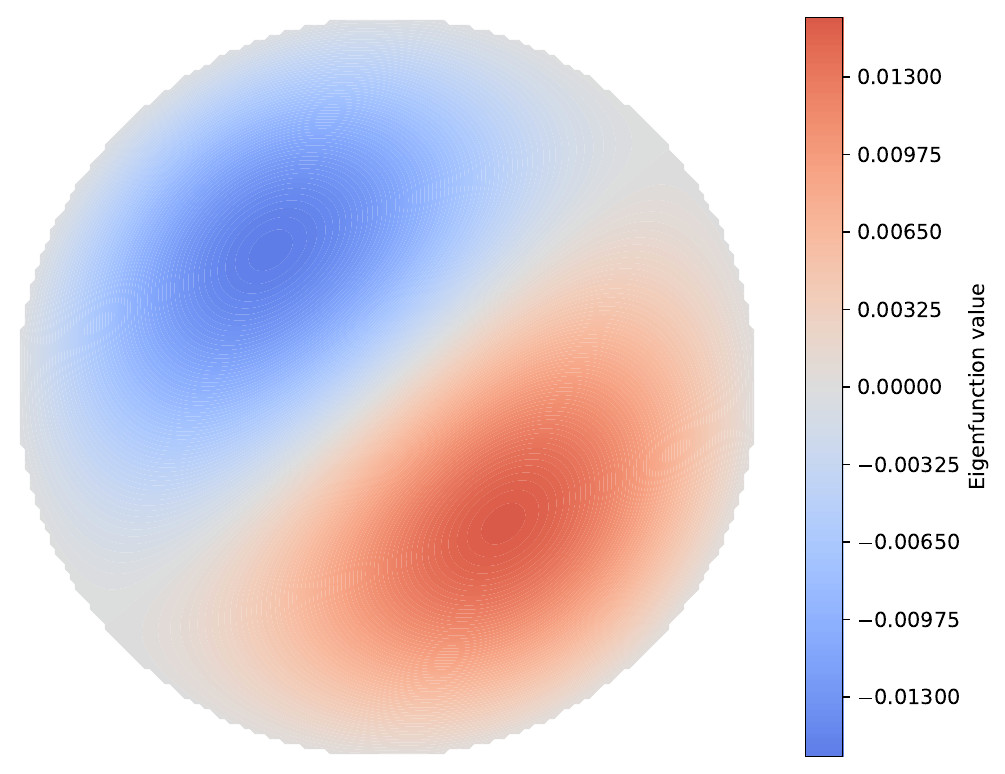}
    \end{subfigure}
   \caption{A numerical solution of the eigenvalue problem 
$\nabla^2 u = -\lambda u$ on the unit disk with Dirichlet boundary condition $u|_{\partial D}=0$; see \cref{ex:circle.plane}.}

    \label{fig.contlap}
\end{figure}

\subsubsection{Failure in the non-orientable setting}

\begin{figure}
\centering
\begin{tikzpicture}[scale=1.6,
    v/.style={circle,fill=black,inner sep=1.5pt}
]

\draw[fill=orange!30] (0,0) -- (4,0) -- (4,3) -- (0,3) -- cycle;

\foreach \x/\y in {0/0, 1/0, 2/0, 0/1, 1/1, 2/1, 0/2, 1/2, 2/2, 3/0, 3/1, 3/2}
{\draw (\x, \y) -- (\x+1, \y+1); \draw (\x, \y) -- (\x+1, \y); \draw (\x, \y)-- (\x, \y+1);}

\foreach \x/\y in {0/0, 1/0, 2/0, 0/1, 1/1, 2/1, 0/2, 1/2, 2/2, 0/3, 1/3, 2/3, 3/3, 3/0, 3/1, 3/2, 4/0, 4/1, 4/2, 4/3}
{\draw[color=black, fill=black] (\x,\y) circle (.05);}

\draw[thick, ->] (4.5, 0) -- (4.5,3);
\draw[thick, ->] (-0.5, 0) -- (-0.5,3);
\draw[thick, ->] (0, -0.5) -- (4,-0.5);
\draw[thick, <-] (0, 3.5) -- (4,3.5);

\draw[ultra thick, blue] (2,0) -- (2,3);
\draw[ultra thick, red] (0,0) -- (0,3);
\draw[ultra thick, red] (4,0) -- (4,3);
\node[below] at (0,0) {Cut B};
\node[below] at (2,0) {Cut A};

\draw[->, thick] (1.65,0.25) +(80:0.2) arc(-270:90:0.2);
\draw[->, thick] (2.25,0.7) +(80:0.2) arc(90:-270:0.2);

\end{tikzpicture}
\caption{A triangulated Klein bottle.}
\label{fig:klein}
\end{figure}

\label{sec.klein.failure}
We show that no analogue of the Cheeger-type lower bound in \cref{thm.pers.cheeger} can hold for non-orientable closed surfaces. Our focus is on \(\Delta_{q+1,\mathrm{down}}^\LL\), since the
non-zero spectra of \(\Delta_{q,\mathrm{up}}^\LL\) and
\(\Delta_{q+1,\mathrm{down}}^\LL\) agree for every simplicial complex
\(\LL\).

We shall use the following result on \emph{expander graphs}, i.e., families
of bounded-degree graphs whose Cheeger constants are uniformly bounded
below by a positive constant.

\begin{theorem}[\cite{ivrissimtzis2019trivalent}]
There exist a constant \(c>0\) and a sequence \((\LL_k)_{k\geq 2}\) of
finite triangulations of closed orientable surfaces such that the number
of two-dimensional simplices of \(\LL_k\) tends to infinity, and
\[
    \phi(G_k) \geq c
\]
for all \(k\), where \(\phi\) is given in
\cref{def:graph_asymm_cheeger}, and \(G_k\) denotes the dual graph of \(\LL_k\).
\label{thm.dualgraph.expander}
\end{theorem}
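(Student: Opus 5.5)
The plan is to build the required triangulations from a known family of cubic expander graphs, in two stages. First realize each cubic graph as the dual of a possibly degenerate triangulation of an orientable surface. Then refine that triangulation into an honest simplicial complex while keeping control of the Cheeger constant of its dual graph.

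\textbf{Step 1 (source expanders).} Fix a family of connected $3$-regular graphs $(H_k)$ with $|V(H_k)|\to\infty$ and $\phi(H_k)\geq c_0>0$. Random cubic graphs work with high probability, and explicit constructions such as cubic versions of Ramanujan graphs also work.

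\textbf{Step 2 (embed and dualize).} Choose any rotation system on $H_k$, that is, a cyclic ordering of the three half-edges at each vertex. By the standard theory of graph embeddings, this determines a cellular embedding of $H_k$ into a closed \emph{orientable} surface $\Sigma_k$. The dual map $T_k$ has one vertex per face of the embedding and one triangle per vertex of $H_k$, because every vertex of $H_k$ has degree $3$. Its dual graph is exactly $H_k$. However, $T_k$ is in general only a $\Delta$-complex (semi-simplicial): it may contain loops, multiple edges, or triangles glued to themselves.

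\textbf{Step 3 (make it simplicial).} Let $\LL_k$ be the second barycentric subdivision of $T_k$. This is a genuine simplicial complex triangulating $\Sigma_k$, so $\LL_k$ is closed and orientable. Each original triangle is replaced by $36$ triangles, so $|S_2^{\LL_k}|=36\,|V(H_k)|\to\infty$. Let $G_k$ be the dual graph of $\LL_k$. It is $3$-regular, and there is a projection $\pi\colon V(G_k)\to V(H_k)$ sending each small triangle to the original triangle containing it. Each fiber of $\pi$ has exactly $36$ vertices and induces a connected subgraph of $G_k$, being the dual graph of a subdivided disk. Each edge of $H_k$ is covered by at least one edge of $G_k$ between the corresponding fibers, since the shared original edge is subdivided into several edges.

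\textbf{Step 4 (comparison of Cheeger constants).} This is the step I expect to be the main obstacle. Take $S\subseteq V(G_k)$ with $0<|S|\leq |V(G_k)|/2$. Call a fiber \emph{split} if it meets both $S$ and $S^c$; every split fiber contributes at least one cut edge, because the fiber is connected. Let $U$ be the set of vertices of $H_k$ whose fibers lie entirely in $S$.
\begin{itemize}
\item If the number of split fibers is at least $\delta|S|/36$ for a small $\delta$, the cut is already at least $\delta|S|/36$.
\item Otherwise $|S|$ and $36|U|$ differ by at most $\delta|S|$. Then $|U|$ is at most roughly half of $|V(H_k)|$, up to a slack of order $\delta$. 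In this regime the expansion of $H_k$ gives at least $c_0'|U|$ edges of $H_k$ leaving $U$; one may first need a standard bound on the expansion of slightly larger-than-half sets in terms of $c_0$. Each such edge either lifts to a cut edge of $G_k$ or ends in a split fiber, and each split fiber absorbs at most $3$ such edges. This yields a cut of order $|U|\gtrsim |S|/36$.
\end{itemize}
Balancing $\delta$ against $c_0$ gives $\phi(G_k)\geq c$ for an explicit $c=c(c_0)>0$ independent of $k$, which is the claim. The delicate points are the bookkeeping for the near-half-size case and checking that fibers are connected and that every edge of $H_k$ lifts. Both follow because the subdivision is uniform, so only finitely many local configurations arise.
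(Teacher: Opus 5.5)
Your proof is correct, and it takes a genuinely different route from the paper. The paper gives no argument of its own. It imports the statement from \cite{ivrissimtzis2019trivalent}, which works with explicit trivalent expander families tessellating compact hyperbolic surfaces with large symmetry groups; the triangulations are their duals, so expansion there is a property of those specific group-theoretic constructions.

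You instead start from an arbitrary cubic expander family and realize each graph as the dual of a possibly degenerate triangulation of an orientable surface via a rotation system. You then pass to the second barycentric subdivision to obtain an honest simplicial complex, and transfer expansion by a fiber-counting argument. This needs nothing beyond the existence of cubic expanders and is fully self-contained. What you give up relative to the cited construction is explicitness, any control of genus or geometry, and a constant factor. None of these is required by the statement or by its later use in the Klein-bottle example. Your Step~4 is essentially a hands-on instance of the quasi-isometry invariance of expansion that the paper itself invokes via \cite[Lemma~4.4]{sawicki2020super} when passing from the dual graphs of $\LL_k$ to those of $\mathbb{U}_k$.

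One simplification: your worry about sets slightly larger than half is unnecessary. Since $36|U|\le |S|\le 18|V(H_k)|$, you already have $|U|\le |V(H_k)|/2$. Write $s$ for the number of split fibers. At least $c_0|U|$ edges of $H_k$ leave $U$, and at most $3s$ of them end in split fibers. Adding the $s$ internal cut edges, the cut is at least both $s$ and $c_0|U|-2s$. Combining this with $|S|\le 36|U|+35s$ gives, for instance, $\phi(G_k)\ge c_0/\bigl(36(c_0+3)\bigr)$ for all $k$, with no case distinction needed.
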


Note that in the non-persistent setting, partitioning the \(2\)-simplices of \(\LL_k\)
is precisely the same as partitioning the vertices of the dual graph
\(G_k\). Hence
\[
    h_{\mathrm{vol}}(\LL_k)=\phi(G_k).
\]

For the purposes of our example, we slightly modify the construction.
Let \(\mathbb U_k\) be the non-orientable surface obtained by taking the
simplicial connected sum of \(\LL_k\) with the triangulation of the
Klein bottle \(\mathbb K\) shown in \cref{fig:klein}. That is, we remove
one \(2\)-simplex from each complex and then pairwise identify the
remaining boundary edges.

Let \(\lambda_{\min}^k\) denote the smallest eigenvalue of \(\Delta_{2,\mathrm{down}}^{\mathbb{U}_k}\).
This eigenvalue is non-zero, since $\Delta_{2}^{\mathbb{U}_k}=\Delta_{2,\mathrm{down}}^{\mathbb{U}_k}$ and 
\[
   \ker \Delta_{2}^{\mathbb{U}_k} \cong H_2(\mathbb U_k;\mathbb R)=0,
\]
as \(\mathbb U_k\) is a closed non-orientable surface.

\begin{proposition}
There exists a constant \(\epsilon>0\) such that
\[
    h_{\mathrm{vol}}(\mathbb U_k)>\epsilon
\]
for all \(k\), while
\[
    \lim_{k\to\infty}\lambda_{\min}^k=0.
\]
\end{proposition}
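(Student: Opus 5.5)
The proof has two independent parts. The first shows that the combinatorial Cheeger constant of $\mathbb U_k$ stays bounded below, using \cref{thm.dualgraph.expander} together with a perturbation argument. The second shows that $\lambda_{\min}^k\to 0$ by building an explicit almost-cycle test chain, which exploits the fact that $\mathbb U_k$ is orientable away from a fixed finite region.

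\emph{Lower bound on $h_{\mathrm{vol}}(\mathbb U_k)$.} In the non-persistent setting $\KK=\LL=\mathbb U_k$, so each $c_i$ is a single $2$-simplex, $V_i=1$ and $\mathrm{sf}_{ij}\in\{0,1\}$. Hence $h_{\mathrm{vol}}(\mathbb U_k)=\phi(D_k)$, where $D_k$ is the dual graph of $\mathbb U_k$. This graph is obtained from $G_k$ minus one vertex $v_k$ and from the fixed dual graph of $\mathbb K$ minus one vertex. The two pieces are joined by three edges, one for each glued boundary edge. Let $N_0$ be the (fixed) number of triangles of $\mathbb K$ minus one, and let $n_k\to\infty$ be the number of triangles of $\LL_k$.

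Take $S$ with $|S|\le |V(D_k)|/2$ and write $S=A\sqcup B$, where $A$ lies in the $\LL_k$-part and $B$ in the Klein part, so $|B|\le N_0$. Viewing $A\subseteq V(G_k)$, we have $|E_{G_k}(A,A^c)|\le |E_{D_k}(S,S^c)|+3$, since only the three edges at $v_k$ change. We also have $|A|\le (n_k+N_0)/2$, so $|V(G_k)\setminus A|\ge n_k/2-N_0/2$. The expander bound $\phi(G_k)\ge c$ applied to the smaller of $A$ and its complement then gives $|E_{D_k}(S,S^c)|\ge c\,\min(|A|,\,n_k/2-N_0/2)-3$. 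I split into two cases with a threshold $M$ chosen depending only on $c$ and $N_0$.
\begin{itemize}
\item If $|S|\ge M$, the estimate above yields $|E_{D_k}(S,S^c)|\ge (c/4)|S|$ for $k$ large.
\item If $|S|<M$, connectivity of $D_k$ gives $|E_{D_k}(S,S^c)|\ge 1$, so the ratio is at least $1/M$.
\end{itemize}
The finitely many small $k$ have positive Cheeger constants because each $D_k$ is connected. Taking the minimum over both cases and these small $k$ gives $\epsilon>0$. The main obstacle is this bookkeeping, in particular making sure that the half-volume constraint on $S$ in $D_k$ transfers to a usable constraint in $G_k$.

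\emph{Upper bound on $\lambda_{\min}^k$.} Since $\ker\Delta_{2,\mathrm{down}}^{\mathbb U_k}=0$, we have
\[
\lambda_{\min}^k=\min_{x\neq 0}\frac{\|\partial_2 x\|^2}{\|x\|^2}.
\]
Let $x$ assign coefficient $\pm1$ to every triangle. On the $\LL_k$-part, choose signs giving a coherent orientation, which exists because $\LL_k$ is orientable and remains so after deleting one triangle. On the punctured Klein bottle, choose signs coherent everywhere except across a single orientation-reversing curve (Cut A in \cref{fig:klein}), and make them compatible with $\LL_k$ along the seam where possible. Then $\partial_2x$ vanishes on every edge shared by two coherently oriented triangles. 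It is supported only on the edges of the cut curve and possibly the three seam edges, each with coefficient of absolute value at most $2$. Hence $\|\partial_2x\|^2\le C$ for a constant $C$ independent of $k$, while $\|x\|^2=n_k+N_0\to\infty$. Therefore $\lambda_{\min}^k\le C/(n_k+N_0)\to 0$.
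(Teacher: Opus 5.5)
Your proof is correct in substance, but its two halves relate differently to the paper.

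\emph{Lower bound on $h_{\mathrm{vol}}(\mathbb U_k)$.} Here you take a different route. The paper argues in one line: the dual graph of $\mathbb U_k$ differs from $G_k$ by a modification of uniformly bounded size, so the two families are uniformly quasi-isometric, and expansion transfers by a quasi-isometry-invariance lemma (Sawicki, Lemma~4.4). You instead do the bookkeeping by hand: split $S=A\sqcup B$, lose at most the three edges at $v_k$, and treat large and small $S$ separately. Your version is self-contained and gives an explicit $\epsilon$ in terms of $c$ and the size of $\mathbb K$. The paper's version is shorter and applies verbatim to any bounded modification, at the cost of an external citation. Your estimate is in fact uniform in $k$. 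Since $|A|\geq |S|-N_0$ and $n_k/2-N_0/2\geq |S|-N_0$, you get $|E_{D_k}(S,S^c)|\geq c(|S|-N_0)-3$ for every $k$. So the qualifier ``for $k$ large'' and the separate treatment of small $k$ can be dropped.

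\emph{Limit $\lambda_{\min}^k\to 0$.} Here you follow the paper's strategy: a $\pm1$ test chain, coherent on the $\LL_k$ summand, whose boundary is confined to a $k$-independent set of edges while $\|x\|^2$ grows. One claim needs correcting, and the paper's proof makes the same assertion ($\partial x_k$ supported on Cut~A, value $12$). No choice of signs is coherent across every edge off Cut~A. Cutting $\mathbb K$ along the one-sided curve Cut~A leaves a M\"obius band with core Cut~B. Cutting $\mathbb U_k$ along it leaves the connected sum of that M\"obius band with $\LL_k$, which is still non-orientable. Put combinatorially: any closed chain of adjacent triangles that reverses orientation must cross an odd number of incoherent edges, and such a chain can run alongside Cut~B without ever crossing Cut~A.

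The fix costs nothing. Once the $\LL_k$ part is coherently oriented, any signs on the fixed punctured Klein bottle put $\partial_2 x$ on edges of $\mathbb K$, with coefficients in $\{0,\pm2\}$. Hence $\|\partial_2 x\|^2\leq 4\,|S_1^{\mathbb K}|$ uniformly in $k$, and the argument goes through. For instance, orient all triangles of the square alike in the plane, with the removed triangle having no edge on the top/bottom side. Then the boundary lies only on the four edges of the top/bottom identification.
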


\begin{proof}
Let \(H_k\) be the dual graph of \(\mathbb U_k\). Then \(H_k\) is
obtained from \(G_k\) by removing one vertex from \(G_k\), removing one
vertex from the fixed dual graph of the triangulated Klein bottle
\(\mathbb K\), and joining the three resulting degree-two vertices on
each side so that the resulting graph is again \(3\)-regular. Thus
\((H_k)\) is obtained from \((G_k)\) by a modification involving only a
uniformly bounded number of vertices and edges. In particular, the
families \((H_k)\) and \((G_k)\) are uniformly \emph{quasi-isometric}, and therefore the lower bound on \(h_{\mathrm{vol}}\) follows
from \cref{thm.dualgraph.expander}; see, for instance,
\cite[Lemma~4.4]{sawicki2020super}. 

It remains to show that \(\lambda_{\min}^k\to 0\). Consider the
triangulation of the Klein bottle in \cref{fig:klein}. Let the vertical
line \(x=0\) be Cut~B, and let the line \(x=1/2\) be Cut~A. Orient all
triangles to the left of Cut~A counterclockwise, as seen in
\(\mathbb R^2\), and all triangles to the right of Cut~A clockwise. Let
\(S\) be the set of \(2\)-simplices whose barycenter \((a,b)\) satisfies
\(a<1/2\), and let \(S^c\) be its complement.

Remove a triangle from \(S\) which does not intersect either cut, and
perform the connected sum with \(\LL_k\). Choose the orientation on the
triangles of \(\LL_k\) so that, along the connected-sum boundary, the
induced orientations cancel with those of the adjacent Klein-bottle
triangles. In particular, the connected-sum seam does not contribute to
the boundary of the chain defined below.

Define \(x_k\in C_2(\mathbb U_k)\) by
\[
(x_k)_\sigma =
\begin{cases}
+1, & \sigma\in S^c,\\
-1, & \text{otherwise}.
\end{cases}
\]
Here the second case includes all \(2\)-simplices in the \(\LL_k\)
summand. By the choice of orientations, the contributions along Cut~B
cancel, whereas the contributions along Cut~A do not cancel. Moreover,
there are no contributions along the connected-sum seam. Hence
\(\partial_{2}^{\mathbb{U}_k}x_k\) is supported on the finitely many edges of Cut~A.

Therefore
\[
    x_k^T\Delta_{2,\mathrm{down}}^{{\mathbb U}_k}x_k
    =
    \|\partial_{2}^{\mathbb{U}_k}x_k\|^2 = 12
\]

Hence the Rayleigh quotient gives
\[
    0<\lambda_{\min}^k
    \leq
    \frac{x_k^T\Delta_{2,\mathrm{down}}^{\mathbb U_k}x_k}
         {\|x_k\|^2}
    \longrightarrow 0.
\]
\end{proof}

Consequently, there cannot exist constants \(C>0\) and \(m>0\) such that
\[
    \lambda_{\min}\geq C\cdot h_{\mathrm{vol}}^m
\]
holds uniformly over all closed non-orientable triangulated surfaces.

\subsection{The Cheeger constant \texorpdfstring{$\varphi_q^{\KK,\LL}$}{phi(q) K,L}}
\label{sec:cheeger_phi_psdmfld}
Let $\KK\hookrightarrow \LL$ be an inclusion where $\LL$ is a $(q+1)$-dimensional orientable pseudomanifold. Since \(\LL\) is an orientable non-branching pseudomanifold, the transpose of \(B_{q+1}^{\LL,\KK}\) can be identified, after choosing orientations, with the signed incidence matrix of an associated dual graph. Thus \(B_{q+1}^{\LL,\KK}\) is totally unimodular; see \cref{sec:TU}.

Leveraging this observation, we obtain a transparent description of the nonzero persistent Cheeger constant $\varphi_{q}^{\KK, \LL}$ (see \cref{def:nonzero_cheeger}) in this setting. At the end of the section, we shall also consider the case where $\LL$ has boundary.

\subsubsection{Some results from graph theory}
We first recall that the distance between two vertices $u, v\in V$ in a graph $G=(V, E)$, denoted by $\dist(u, v)$, is defined as the length of the shortest path joining $u$ and $v$. The diameter of $G$, denoted by $\diam(G)$, is defined as the maximum distance over all pairs of vertices in $G$ (see \cite[Chapter 3]{MR1421568}).

A well-known result bridging linear programming and spectral graph theory demonstrates that the diameter of a connected graph $G$ can be deduced via the incidence matrix $I$ (see \cref{sec:introduction}).
We present the result as follows.

\begin{lemma}\label{lemma:ungrd_graph_diam}
We have the equality
\begin{equation}\label{eq:ungrd_graph_diam}
\min_{x\perp^1\ker(I)}\frac{\|Ix\|_1}{\|x\|_1}=
\frac{2}{\diam(G)}.
\end{equation}
\end{lemma}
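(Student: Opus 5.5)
The plan is to read \cref{eq:ungrd_graph_diam} as a minimum-cost transportation problem on \(G\). Here \(I\) is the \(|V|\times|E|\) incidence matrix, so \(x\in\R^{E}\) is an edge flow, \(Ix=:b\in\R^V\) is its divergence (net demand), and \(\ker(I)\) is the cycle space. The condition \(x\perp^1\ker(I)\) says that \(x\) is an \(\ell^1\)-minimal flow among all flows with the same divergence \(b\). In other words,
\[
\|x\|_1=\min\{\|y\|_1 : Iy=b\}=:W(b),
\]
which is the earth mover's (Wasserstein-\(1\)) distance in the graph metric between \(b^+\) and \(b^-\). Note also that \(x\neq 0\) forces \(b\neq 0\): if \(Ix=0\), then \(-x\in\ker(I)\), and minimality gives \(\|x\|_1\le\|x-x\|_1=0\). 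Finally, every \(b\) with \(\sum_v b_v=0\) lies in \(\im(I)\) because \(G\) is connected. The statement therefore reduces to showing
\[
\min_{0\neq b\perp\mathbf 1}\frac{\|b\|_1}{W(b)}=\frac{2}{\diam(G)}.
\]

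For the lower bound, fix \(b\neq0\) with \(\sum_v b_v=0\), and let \(t:=\|b^+\|_1=\|b^-\|_1=\|b\|_1/2\). I would bound \(W(b)\) directly by an explicit feasible flow. Pick any coupling \(\pi_{uv}\ge0\) of \(b^+\) and \(b^-\), for instance the greedy one, with total mass \(t\). For each pair \((u,v)\), route \(\pi_{uv}\) along a shortest \(u\)–\(v\) path. The resulting flow \(y\) satisfies \(Iy=b\) with the paper's sign convention, possibly after replacing \(y\) by \(-y\), and
\[
\|y\|_1\le\sum_{u,v}\pi_{uv}\dist(u,v)\le t\,\diam(G).
\]
Hence \(W(b)\le t\,\diam(G)\), so \(\|b\|_1/W(b)\ge 2t/(t\,\diam(G))=2/\diam(G)\).

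For the upper bound, choose \(u,v\) with \(\dist(u,v)=\diam(G)\) and set \(b=e_v-e_u\), up to the sign convention. Then \(\|b\|_1=2\). It remains to show \(W(b)=\diam(G)\). The inequality \(W(b)\le\diam(G)\) comes from a shortest path. For the reverse inequality I would use LP duality with a \(1\)-Lipschitz potential: let \(\phi(w)=\dist(u,w)\). Every edge satisfies \(|(I^T\phi)_e|\le1\), so for any \(y\) with \(Iy=b\),
\[
\diam(G)=\phi(v)-\phi(u)=|\langle\phi,b\rangle|=|\langle I^T\phi,y\rangle|\le\|y\|_1 .
\]
The \(\ell^1\)-minimal representative \(x\) of this coset exists because the minimum of a norm over an affine subspace is attained. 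It realizes the ratio \(2/\diam(G)\), so the minimum in \cref{eq:ungrd_graph_diam} is attained and equals \(2/\diam(G)\).

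The step I expect to need the most care is the first reduction: identifying the coset-minimality condition with \(\|x\|_1=W(Ix)\), and keeping track of orientations and signs in \(I\) (the direction of each edge is arbitrary, but \(\ell^1\) norms are insensitive to it). The remaining steps are the standard path-routing bound and the Kantorovich-type potential argument, both of which are short.
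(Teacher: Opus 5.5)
Your proof is correct, and it takes a genuinely different route from the paper's, though both start the same way. The paper also rewrites the left-hand side as $\min_{w\in\im I}\|w\|_1/\|I^+w\|_1$, where $I^+w$ is an $\ell^1$-minimal flow with divergence $w$, i.e.\ your $W(w)$.

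For the lower bound, the paper decomposes $w$ conformally as $\sum c_{ij}(e_i-e_j)$, which is your coupling $\pi$. It then invokes the exact identity $\|I^+(e_i-e_j)\|_1=\dist(i,j)$, proved in the appendix via total unimodularity of $I$, Hoffman--Kruskal integrality, and decomposition of an integral flow into a path plus cycles. You notice that this direction only needs the trivial inequality $W(e_v-e_u)\le\dist(u,v)$, obtained by routing along a shortest path, so no integrality is needed there.

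For attainment, you replace the integrality argument by weak LP duality with the $1$-Lipschitz potential $\dist(u,\cdot)$. This is a Kantorovich--Rubinstein-type certificate giving $W(e_v-e_u)\ge\diam(G)$ for a diametral pair. Your approach has two advantages:
\begin{enumerate}
\item It makes the equality case explicit. The paper's appendix proof writes out only the inequality $\ge 2/\diam(G)$ and leaves attainment implicit in the unimodularity lemma.
\item It avoids relying on uniqueness of $\ell^1$-minimal flows, which fails in general. So $I^+$ is well defined only after a choice, and integrality holds for some but not all minimizers.
\end{enumerate}
Your argument also adapts directly to the grounded statement \cref{lemma:grd_graph_diam}, using the potential $\dist(\cdot,u)$, which vanishes at the ground vertex $u$.

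What the paper's approach buys in exchange is structural information: an optimal flow can be chosen to be a $\{-1,0,1\}$-valued shortest path. This is the point of its remark on Poincar\'e's total unimodularity, and it is what the paper reuses for the grounded corollary.
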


Since the graph $G=(V, E)$ is connected by assumption,
therefore,
the incidence matrix $I$ has rank $|V|-1$, 
and if we remove one row from $I$, 
the remaining rows of this matrix have the full row rank.
We refer to the node corresponding to the removed row as the \emph{ground vertex} and this remaining matrix as the \emph{reduced} incidence matrix of the graph $G$.
Similarly to \cref{lemma:ungrd_graph_diam},
the reduced incidence matrix gives rise to the maximum distance between the ground vertex and other vertices.  

\begin{lemma}\label{lemma:grd_graph_diam}
Let $u\in V$ be the ground vertex and $R$ be the reduced incidence matrix.
Then
\begin{equation}\label{eq:grd_graph_diam}
\min_{x\perp^1\ker(R)}\frac{\|Rx\|_1}{\|x\|_1}=
\min_{i\in V\setminus\{u\}}\frac{1}{\dist(i,u)}.
\end{equation}
\end{lemma}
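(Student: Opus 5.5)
The plan is to read the Rayleigh-type quotient as a min-cost flow problem. We view $x\in\R^{E}$ as a flow on the (arbitrarily oriented) edges. Then $Ix$ is its net divergence at the vertices, and $Rx$ is the divergence recorded only at the vertices of $V\setminus\{u\}$. Since the column sums of $I$ vanish, the divergence at $u$ is determined by the others, so $\ker(R)=\ker(I)$ is the cycle space. More importantly, the coset $x+\ker(R)$ is exactly the set of flows $x'$ with $Rx'=Rx$. Hence the condition $x\perp^1\ker(R)$ says precisely that $x$ is an $\ell^1$-minimal flow among all flows with demand vector $b:=Rx\in\R^{V\setminus\{u\}}$, with the ground vertex $u$ acting as a free source or sink.

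Writing $\mathrm{OPT}(b):=\min\{\|x'\|_1: Rx'=b\}$, the left-hand side of \cref{eq:grd_graph_diam} becomes
\[
\min_{b\neq 0}\frac{\|b\|_1}{\mathrm{OPT}(b)}=\Bigl(\max_{\|b\|_1=1}\mathrm{OPT}(b)\Bigr)^{-1}.
\]
Three points justify this reformulation.
\begin{itemize}
\item A nonzero $\ell^1$-minimal representative must have $b\neq 0$: if $Rx=0$, then $x\in\ker(R)$ and its minimal representative is $0$.
\item Every $b\in\R^{V\setminus\{u\}}$ is attained, because $R$ has full row rank for connected $G$.
\item For each such $b$, the minimal representatives of $b$ are exactly the optimal flows, so minimising the quotient over $x$ is the same as minimising over $b$.
\end{itemize}
The function $\mathrm{OPT}$ is a norm-like convex function: it is positively homogeneous and subadditive, since feasible flows for $b$ and $b'$ add to a feasible flow for $b+b'$. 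Its maximum over the $\ell^1$ unit sphere is therefore attained at an extreme point $\pm e_i$, $i\neq u$, and by symmetry $\mathrm{OPT}(-e_i)=\mathrm{OPT}(e_i)$.

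It remains to show $\mathrm{OPT}(e_i)=\dist(i,u)$. For the upper bound, we route a unit of flow along a shortest path from $i$ to $u$, orienting each edge coefficient with the appropriate sign. This is feasible and has $\ell^1$ cost $\dist(i,u)$.

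The lower bound is the step requiring care, and we do it by LP duality with an explicit potential. Set $\phi(v):=\dist(v,u)$, so that $\phi(u)=0$ and $|\phi(a)-\phi(b)|\le 1$ on every edge. For any feasible $x$,
\[
\|x\|_1\ \ge\ \Bigl|\sum_{e=(a\to b)} x_e\bigl(\phi(b)-\phi(a)\bigr)\Bigr| = \bigl|\langle \phi, Ix\rangle\bigr|.
\]
Because $\phi(u)=0$, this equals $|\langle \phi|_{V\setminus\{u\}}, Rx\rangle| = |\langle \phi|_{V\setminus\{u\}}, e_i\rangle| = \phi(i)=\dist(i,u)$, with the sign conventions of $I$ tracked. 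Hence $\max_{\|b\|_1=1}\mathrm{OPT}(b)=\max_{i\neq u}\dist(i,u)$. Inverting gives $\min_{i\in V\setminus\{u\}}1/\dist(i,u)$, as claimed.

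The main obstacle I expect is the rigorous identification of $x\perp^1\ker(R)$ with flow optimality. In particular, we must make sure the quotient is minimised over minimal representatives rather than over all $x$. The cleanest route is to note that for fixed $b$ the numerator is constant and the denominator is maximised exactly by minimal representatives. This makes the reduction to $\min_b \|b\|_1/\mathrm{OPT}(b)$ an equality, and everything after that is the convexity step and the explicit potential argument.
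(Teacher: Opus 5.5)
Your proof is correct. Its outer skeleton matches the paper's: the paper also rewrites the left-hand side as $\min_{w}\|w\|_1/\|R^{+}w\|_1$ over demand vectors, where $\|R^{+}w\|_1$ is exactly your $\mathrm{OPT}(w)$. It then writes $w=\sum_i c_i\sigma_i e_i$ and uses $\|R^{+}(\sum_i c_i\sigma_i e_i)\|_1\le\sum_i c_i\|R^{+}(\sigma_i e_i)\|_1$, which is your subadditivity/extreme-point step.

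The genuine difference is how you obtain $\mathrm{OPT}(e_i)=\dist(i,u)$. The paper (\cref{cor:tot_unimod_ground} and the lemma before it) uses total unimodularity of the incidence matrix, Hoffman--Kruskal integrality and flow decomposition to show that an optimal flow is a signed shortest path. You instead exhibit the dual certificate $\phi=\dist(\cdot,u)$, whose $1$-Lipschitz property gives $\|x\|_1\ge|\langle\phi,Ix\rangle|=\dist(i,u)$ in one line.

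Your route is more elementary and works only with optimal \emph{values}. So it never needs $R^{+}$ to be a single-valued argmin: with two distinct shortest paths, their average is a non-integral minimizer, and Hoffman--Kruskal only guarantees that \emph{some} optimal solution is integral. It also shows which half is used where. The path upper bound $\mathrm{OPT}(e_i)\le\dist(i,u)$ gives the inequality $\geq$ in \cref{eq:grd_graph_diam}. The duality lower bound at a vertex farthest from $u$ gives $\leq$, which is the direction the paper's displayed chain leaves implicit.

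In return, the paper's approach gives structural information: a minimizing flow can be chosen $\{-1,0,1\}$-valued and supported on a shortest path. That is the combinatorial picture emphasized in the remark after \cref{lemma:grd_graph_diam}.

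There is one wording slip in your final paragraph. For fixed $b$, minimal representatives \emph{minimise} the denominator $\|x\|_1$, and so maximise the quotient. Your bulleted justification already states this correctly, so nothing else is affected.
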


We were unable to find good references for these results in the literature, and thus we provide the proofs of \cref{lemma:ungrd_graph_diam} and \cref{lemma:grd_graph_diam}, along with further details in \cref{sec:missing_proof}.
\begin{remark}
The salient point is that the (reduced) incidence matrix of a graph is totally unimodular (an observation made by Poincar\'{e} \cite{poincare}) and thus by the framework of linear programming,
the flow vector that reaches the minimum is integral, corresponding precisely to a discrete shortest path, giving rise to $\diam(G)$ or $\dist(i, u)$.
\end{remark}

\subsubsection{Pseudomanifolds without boundary}
Recall the block form 
\[
B_{q+1}^{\LL}
=
\begin{bmatrix}
B\\
D
\end{bmatrix},
\]
where the row indices of \(B\) and \(D\) correspond to \(S_q^{\KK}\) and
\(S_q^{\LL}\setminus S_q^{\KK}\), respectively, as described in \cref{sec:cheeger_perslap}. We choose the basis $\{c_1,\cdots c_m\}$ of $C_{q+1}^{\LL, \KK}\subset C_{q+1}^\LL$ as described in \cref{sec:kron}.
Let $Z$ be the column matrix representation of this basis,
i.e.,
$Z=\begin{bmatrix}
    c_1 & \cdots & c_m
\end{bmatrix}$ and thus $P=Z(Z^TZ)^{-1}Z^T$ is a projection matrix. 
In the following lemma, we do not assume that $\LL$ is without boundary.
\begin{lemma}\label{lemma:PZ_l_1_norm}
We have the equality $\|PB^Tx\|_1=\|Z^TB^Tx\|_1$ for an arbitrary $x\in C_q^\KK$.
\end{lemma}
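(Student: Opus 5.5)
The plan is to compute $P$ explicitly using the special structure of the basis $\{c_1,\dots,c_m\}$ from \cref{sec:kron}, and then compare the two vectors $PB^Tx$ and $Z^TB^Tx$ coordinate by coordinate. The chains $c_i$ are $0/1$ indicator chains with pairwise disjoint supports. Hence $Z^TZ=W=\operatorname{diag}(V_1,\dots,V_m)$, where $V_i=|\mathrm{supp}(c_i)|$, and
\[
P=Z W^{-1}Z^T=\sum_{i=1}^m \frac{1}{V_i}\, c_i c_i^T .
\]
Thus, for any $y\in C_{q+1}^\LL$, the vector $Py$ is supported on $\bigcup_i \mathrm{supp}(c_i)$. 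On $\mathrm{supp}(c_i)$ it is the constant $\frac{1}{V_i}\langle c_i,y\rangle=\frac{1}{V_i}(Z^Ty)_i$. Indeed, $P$ replaces $y$ on each block $\mathrm{supp}(c_i)$ by its block average and kills everything outside the union of the supports.

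Next take $y=B^Tx$ and compute the $\ell^1$ norm block by block. By disjointness of the supports,
\[
\|PB^Tx\|_1=\sum_{i=1}^m \sum_{\tau\in \mathrm{supp}(c_i)} \frac{1}{V_i}\bigl|(Z^TB^Tx)_i\bigr| =\sum_{i=1}^m \bigl|(Z^TB^Tx)_i\bigr| =\|Z^TB^Tx\|_1 .
\]
Each block contributes exactly $V_i$ equal terms, and these cancel the factor $1/V_i$. This is the whole argument. Nothing in it uses boundarylessness, which matches the remark before the lemma. It also does not need orientability beyond the existence of the basis with properties (i)–(ii), which comes from \cite{MR5053447}.

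The only point needing care, which I regard as the main (mild) obstacle, is justifying the formula for $P$. The paper defines $P=Z(Z^TZ)^{-1}Z^T$ for an arbitrary basis and notes it is the orthogonal projection onto $C_{q+1}^{\LL,\KK}$, independent of the basis. So we may use this particular basis, and we only need $Z^TZ=W$ diagonal. That follows from (ii) disjoint supports, giving vanishing off-diagonal inner products, and from (i) $0/1$ entries, giving $\langle c_i,c_i\rangle=V_i$. One should also note that the coefficients of the $c_i$ are $0/1$ rather than $\pm1$ in this oriented setting. If signs $\pm1$ were allowed, the same computation would go through after noting that $|(c_i)_\tau|=1$ on the support, so this causes no difficulty either way.
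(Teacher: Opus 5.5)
Your proof is correct, but it argues differently from the paper. You write $P=\sum_i V_i^{-1}c_ic_i^T$ explicitly, so $Py=\sum_i V_i^{-1}(Z^Ty)_i\,c_i$ is (signed) block averaging. The identity $\|Py\|_1=\|Z^Ty\|_1$ then follows in one line from the disjointness of the supports and $\|c_i\|_1=V_i$.

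The paper never writes $P$ out. Instead it proves two opposite inequalities using induced $\ell^1$ operator norms:
\begin{itemize}
\item $\|PB^Tx\|_1\le\|Z(Z^TZ)^{-1}\|_1\,\|Z^TB^Tx\|_1$, where $\|Z(Z^TZ)^{-1}\|_1=1$ is the maximal column sum;
\item via $Z^TP=Z^T$, $\|Z^TB^Tx\|_1\le\|Z^T\|_1\,\|PB^Tx\|_1$, where $\|Z^T\|_1=1$ because each $(q+1)$-simplex lies in at most one support.
\end{itemize}

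Both arguments use exactly the same structural input. Both also give the identity for every $y\in C_{q+1}^\LL$, not just $y=B^Tx$. Your exact computation is a bit more informative: it shows what $P$ actually does, and it makes the consequence $\ker(PB^T)=\ker(Z^TB^T)$, used right afterwards, immediate. The paper's sandwich is more compact but hides the mechanism.

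Your remark about signs is well placed. The matrix $Z$ in the paper's own \cref{ex:ungrounded} has entries $-1$, despite the stated $0/1$ normalization. Both proofs go through verbatim for $0/\pm1$ chains with disjoint supports.
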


\begin{proof}
Recall from \cref{sec:kron} that each $c_i$ is a $0/1$ indicator vector, 
and $\mathrm{supp}(c_i)\cap\mathrm{supp}(c_j)=\varnothing$ for $i\neq j$. A well-known result in linear algebra states that the $\ell^1$-norm of a matrix is equal to the maximum $\ell^1$-norm of all its column vectors,
hence $\|Z(Z^TZ)^{-1}\|_1=1$.

Thus we have the following inequality
\[
\|PB^Tx\|_1=\|Z(Z^TZ)^{-1}Z^TB^Tx\|_1
\leq \|Z(Z^TZ)^{-1}\|_1\cdot\|Z^TB^Tx\|_1=\|Z^TB^Tx\|_1.
\]
On the other hand,
the equality $Z^TPB^T=Z^TB^T$ implies that $\|Z^TB^Tx\|_1\leq \|Z^T\|_1\cdot\|PB^Tx\|_1$.
Since $\|Z^T\|_1=1$,
we have 
\[\|Z^TB^Tx\|_1\leq \|PB^Tx\|_1.
\]
Therefore
\[
\|PB^Tx\|_1=\|Z^TB^Tx\|_1.
\]
\end{proof}

We can then give a description of the nonzero persistent Cheeger constant $\varphi_{q}^{\KK, \LL}$ via the corresponding dual graph as below.

\begin{proposition}\label{prop:pers_cheeger_nbrch_clsd}
Let $\KK\hookrightarrow \LL$ where $\LL$ is a $(q+1)$-dimensional orientable pseudomanifold without boundary. Let $G$ be the dual graph of $\LL$, i.e., the graph with incidence matrix given by the transpose of $B_{q+1}^{\LL, \KK}$. Then the Cheeger constant from \cref{def:nonzero_cheeger} takes the form 
\begin{equation}\label{eq:cheeger_clsd_non_branching}
\varphi_{q}^{\KK, \LL}
=\frac{2}{\diam(G)}.
\end{equation}
\end{proposition}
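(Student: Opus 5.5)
The plan is to reduce $\varphi_q^{\KK,\LL}$ to the graph quantity in \cref{lemma:ungrd_graph_diam}, applied to the incidence matrix of the dual graph. First, by \cref{lemma:PZ_l_1_norm}, the numerator satisfies $\|PB^Tx\|_1=\|Z^TB^Tx\|_1$ for every $x\in C_q^\KK$. Next I will show that the two constraint sets coincide, i.e.\ $\ker(PB^T)=\ker(Z^TB^T)$. Since $P=Z(Z^TZ)^{-1}Z^T$ and $Z$ has linearly independent columns, $Z(Z^TZ)^{-1}$ is injective. Hence $PB^Tx=0$ if and only if $Z^TB^Tx=0$, and the conditions $x\perp^1\ker(PB^T)$ and $x\perp^1\ker(Z^TB^T)$ are the same. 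Finally, by \cite[Lemma 3.4]{pers_lap}, $Z^TB^T=(BZ)^T=(B_{q+1}^{\LL,\KK})^T$. Together these give
\[
\varphi_q^{\KK,\LL}=\min_{\substack{x\neq 0\\ x\perp^1\ker((B_{q+1}^{\LL,\KK})^T)}}\frac{\|(B_{q+1}^{\LL,\KK})^Tx\|_1}{\|x\|_1}.
\]

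The second step is to identify $(B_{q+1}^{\LL,\KK})^T$ with the signed incidence matrix $I$ of the dual graph $G$. Its rows are indexed by the chains $c_i$, which are the vertices of $G$, and its columns by the $q$-simplices of $\KK$. By \cref{kron.shared}, non-branching and orientability, each column has at most two nonzero entries. Because $\LL$ is boundaryless, $\mathrm{free}_i=0$ for all $i$, so no column has exactly one nonzero entry. Thus every column is either zero or has one $+1$ and one $-1$, the latter being exactly the edges of $G$, with parallel edges allowed when $\mathrm{sf}_{ij}>1$. This is precisely the incidence-matrix form used in \cref{lemma:ungrd_graph_diam}, up to zero columns. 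The zero columns correspond to $q$-simplices of $\KK$ not in the boundary of any $c_i$.

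The third step handles these zero columns and parallel edges. If $e_\sigma$ is the standard basis vector of such a zero column, then $e_\sigma$ lies in the kernel. An $\ell^1$-minimal representative must therefore have $x_\sigma=0$, since otherwise subtracting $x_\sigma e_\sigma$ strictly lowers $\|x\|_1$. So the minimization restricts to the coordinates indexed by genuine edges, where the matrix is exactly the incidence matrix of $G$. Parallel edges do not change distances, so $\diam(G)$ is unaffected. If the proof of \cref{lemma:ungrd_graph_diam} in \cref{sec:missing_proof} assumes a simple graph, I will check that it still goes through, via the total unimodularity and shortest-path LP argument, for multigraphs. Connectedness of $G$ is needed for the lemma; it follows from the standing assumption that the dual graph is connected, or else the components are treated separately.

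Applying \cref{lemma:ungrd_graph_diam} then yields $\varphi_q^{\KK,\LL}=2/\diam(G)$. I expect the main obstacle to be the third step: verifying that the graph lemma transfers verbatim to the present matrix, with its zero columns and parallel edges, and that the $\ell^1$-orthogonality conditions correspond exactly under this restriction. The remaining steps are bookkeeping with results already established.
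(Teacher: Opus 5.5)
Your proposal is correct and follows the paper's proof essentially step for step. It uses \cref{lemma:PZ_l_1_norm} for the numerator, then the kernel identity $\ker(PB^T)=\ker(Z^TB^T)$, then $B_{q+1}^{\LL,\KK}=BZ$, and finally \cref{lemma:ungrd_graph_diam}. The only difference on the kernel identity is minor: you justify it by injectivity of $Z(Z^TZ)^{-1}$, while the paper reads it off the $\ell^1$-norm equality. Your extra bookkeeping on zero columns, parallel edges, and connectedness of $G$ fills in details the paper leaves implicit; it does not change the argument.
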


\begin{proof}
From \cref{lemma:PZ_l_1_norm},
 \[
 \varphi_{q}^{\KK, \LL}
    =
    \min_{x\perp^1\ker(PB^T)}\frac{\|PB^Tx\|_1}{\|x\|_1}
    =
    \min_{x\perp^1\ker(PB^T)}\frac{\|Z^TB^Tx\|_1}{\|x\|_1}.
    \]

Since $B_{q+1}^{\LL,\KK} = BZ$ \cite[Lemma 3.4]{pers_lap}, we have that $(BZ)^T$ is the incidence matrix of the dual graph associated to $\KK\hookrightarrow \LL$. The result follows from \cref{lemma:ungrd_graph_diam} by noting that \cref{lemma:PZ_l_1_norm} implies
\[
    \ker(PB^T)=\ker(Z^TB^T),
\]
since \(\|PB^Tx\|_1=0\) if and only if
\(\|Z^TB^Tx\|_1=0\). 
\end{proof}

\begin{remark}
\label{rem.pseudo.jz}
When $\KK=\LL$,
\cref{eq:cheeger_clsd_non_branching} reduces to 
\begin{equation}\label{eq:cheeger_const_psdmfld}
\varphi_q^\LL=\frac{2}{\diam(G)},
\end{equation}
where $G$ is the dual graph of $\LL$.
This is compatible with the claim in the proof of \cite[Theorem 2.3]{jost2024cheeger}.
We recall that in \cite[Definition 2.3]{jost2024cheeger} Jost and Zhang defined a \emph{non-trivial} Cheeger constant as
\begin{equation}\label{eq:JZ_Cheeger_const}
\varphi_{q}^{\operatorname{JZ}}(\LL)=
\min_{x\perp^1\im(B_q^T)}\frac{\|(B_{q+1}^\LL)^Tx\|_1}{\|x\|_{1, \operatorname{deg}}}.
\end{equation}
See also \cref{remark.connection.to.jost}.

There are two key differences between $\varphi_{q}^{\operatorname{JZ}}(\LL)$ in \cref{eq:JZ_Cheeger_const} and $\varphi_{q}^\LL$ in \cref{eq:non_zero_cheeger}.
\begin{itemize}
\item $\varphi_q^\LL$ is non-zero,
while $\varphi_{q}^{\operatorname{JZ}}(\LL)$ is equal to $0$ if and only if $H_q(\LL, \mathbb{R})\neq 0$:
\item In the definition of $\varphi_q^\LL$,
the denominator is the $\ell^1$-norm $\|x\|_1$,
while in the definition of $\varphi_{q}^{\operatorname{JZ}}(\LL)$ Jost and Zhang used the degree-weighted $\ell^1$-norm $\|x\|_{1, \operatorname{deg}}$,
we recall that $\|x\|_{q, \operatorname{deg}}:=\sum |x_i|\cdot\deg(\sigma_i)$ for a $q$-chain $x=\sum x_i\sigma_i$,
here $\deg(\sigma)=\#\{\tau\in S_{q+1}^\LL: \sigma\subset \tau\}$.
\end{itemize}
We recall a claim in \cite{jost2024cheeger} below:

\noindent \textbf{Claim: } Let $\LL$ be an orientable, closed $(q+1)$-dimensional pseudomanifold.
Let $H_1(\LL, \mathbb{R})=0$.
Then 
\[\varphi_q^{\operatorname{JZ}}(\LL)=\frac{1}{\diam(G)},\]
here $G$ is the dual graph of $\LL$.

This is compatible with \cref{eq:cheeger_const_psdmfld}.
When $H_q(\LL, \mathbb{R})=0$,
$\im ((B_q^\LL)^T)=\ker((B_{q+1}^\LL)^T)$,
and since $\LL$ has no boundary,
each $q$-simplex in $\LL$ has degree $2$,
thus,
for an arbitrary $x\in C_q^\LL$,
$\|x\|_1=\frac{1}{2}\|x\|_{1, \deg}$,
therefore,
\[\varphi_q^\LL=2\cdot \varphi_q^{\operatorname{JZ}}(\LL).\]
\end{remark}
\subsubsection{Pseudomanifolds with boundary}
When the simplicial complex $\LL$ has boundary,
i.e.,
there exists at least one $q$-simplex in $\LL$ that is not shared by two different $(q+1)$-simplices in common, the matrix $(BZ)^T$ can be regarded as the reduced incidence matrix of some graph $G$, with a ground vertex $u$. That is, one can add another column to $(BZ)^T$ representing the additional vertex, such that the resulting matrix is an incidence matrix of a graph. Observe that in this setting, the ground vertex $u$ in the dual graph $G$ corresponds to the missing polyhedra whose boundary contains all the $q$-simplices that are not the facets of two different $(q+1)$-simplices.

As in the previous section, we refer to this graph $G$ as the dual graph of the inclusion $\KK\hookrightarrow \LL$. 

\begin{proposition}\label{prop:pers_cheeger_nobrch_bdry}
Let $\KK\hookrightarrow\LL$ where $\LL$ is a $(q+1)$-dimensional orientable pseudomanifold with boundary. Let $G=(V, E)$ be the dual graph of $\KK\subset \LL$, and $u$ be the ground vertex.
Then 
\begin{equation}\label{eq:cheeger_bdry_non_branching}
\varphi_{q}^{\KK, \LL}
=\min_{i\in V}\frac{1}{\dist(i, u)}.
\end{equation}
\end{proposition}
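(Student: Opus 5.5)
The argument runs in parallel with the proof of \cref{prop:pers_cheeger_nbrch_clsd}, with \cref{lemma:grd_graph_diam} taking the place of \cref{lemma:ungrd_graph_diam}.

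First, \cref{lemma:PZ_l_1_norm} does not use the boundaryless assumption. It gives $\|PB^Tx\|_1=\|Z^TB^Tx\|_1$ for every $x\in C_q^\KK$. In particular $\ker(PB^T)=\ker(Z^TB^T)$, so the $\ell^1$-orthogonality condition $x\perp^1\ker(PB^T)$ is the same as $x\perp^1\ker(Z^TB^T)$. Hence
\[
\varphi_q^{\KK,\LL}=\min_{x\perp^1\ker(Z^TB^T)}\frac{\|Z^TB^Tx\|_1}{\|x\|_1}.
\]
By \cite[Lemma 3.4]{pers_lap}, $BZ=B_{q+1}^{\LL,\KK}$, so the operator in this quotient is $(B_{q+1}^{\LL,\KK})^T$.

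Second, I need to identify $(B_{q+1}^{\LL,\KK})^T$ with a reduced incidence matrix. Rows are indexed by the chains $c_i$ and columns by the $q$-simplices of $\KK$. By non-branching and orientability (as in the proof of \cref{kron.shared}), each column has either two nonzero entries of opposite sign (a face shared by $c_i$ and $c_j$) or one entry $\pm1$ (a free face). Columns with no nonzero entries correspond to $q$-simplices of $\KK$ not lying in $\partial c_i$ for any $i$; they are zero columns and impose no constraints. I will adjoin a ground vertex $u$ and, for each free face, give it the compensating entry $\mp1$ at $u$. After transposing to match the convention of \cref{sec:introduction}, this turns $(BZ)^T$ into a reduced incidence matrix $R$ of a multigraph $G$ on $\{c_1,\dots,c_m\}\cup\{u\}$, with the row for $u$ deleted. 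Each $q$-simplex becomes an edge, with orientation and signs fixed by the choice of orientations of the $c_i$. Isolated $q$-simplices of $\KK$ give zero columns of $R$. A zero column contributes to neither $\ker$-orthogonality nor the numerator, but it can carry mass in $\|x\|_1$. Since an $\ell^1$-minimal representative modulo a kernel that contains those coordinate directions has those coordinates equal to $0$, such columns do not change the minimum.

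Third, I will apply \cref{lemma:grd_graph_diam}, with the roles of the two indices interpreted correctly: the lemma's variable $x$ lives on edges, and here $x\in C_q^\KK$ is indexed by $q$-simplices, i.e.\ edges of $G$. This gives
\[
\varphi_q^{\KK,\LL}=\min_{i\in V\setminus\{u\}}\frac{1}{\dist(i,u)},
\]
which is \eqref{eq:cheeger_bdry_non_branching}. The minimum over $i\in V$ in the statement should be read with $i\neq u$.

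The main obstacle I expect is the bookkeeping in the second step. I have to check that the multigraph may have parallel edges (several free faces of one $c_i$, or several shared faces between $c_i$ and $c_j$), and that \cref{lemma:grd_graph_diam} still holds for multigraphs. I also need $G$ to be connected, so that $R$ has full row rank. Connectivity follows from the boundary assumption together with the connected-dual-graph hypothesis used throughout this section. Parallel edges should be harmless: in the linear programming proof the optimum is attained at an integral shortest path, and parallel edges do not change distances. I would verify this first by inspecting the proof in \cref{sec:missing_proof}.
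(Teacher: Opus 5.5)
Your proposal is correct and follows the same route as the paper. You reduce to $\|Z^TB^Tx\|_1$ via \cref{lemma:PZ_l_1_norm}, recognize $(BZ)^T=(B_{q+1}^{\LL,\KK})^T$ as the reduced incidence matrix of the grounded dual graph, and invoke \cref{lemma:grd_graph_diam}. Your extra bookkeeping fills in details that the paper's three-line proof leaves implicit: zero columns forcing the corresponding coordinates of an $\ell^1$-minimal representative to vanish, parallel edges, connectivity of the grounded graph (which genuinely needs the connected-dual-graph hypothesis), and reading the minimum over $i\neq u$.
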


\begin{proof}
Precisely as in the proof of \cref{prop:pers_cheeger_nbrch_clsd},
we have the equality
\[
 \varphi_{q}^{\KK, \LL}
    =
    \min_{x\perp^1\ker(PB^T)}\frac{\|PB^Tx\|_1}{\|x\|_1}
    =
    \min_{x\perp^1\ker(PB^T)}\frac{\|Z^TB^Tx\|_1}{\|x\|_1}.
    \]
Since $B_{q+1}^{\LL,\KK} = BZ$ \cite[Lemma 3.4]{pers_lap}, we have that $(BZ)^T$ is the reduced incidence matrix of the dual graph $G$. Thus the result follows from \cref{lemma:grd_graph_diam}.
\end{proof}

\subsubsection{Examples}
We give two examples to explain how to apply \cref{prop:pers_cheeger_nbrch_clsd} to compute the Cheeger constant $\varphi_{q}^{\KK, \LL}$.
In \cref{ex:ungrounded} the matrix $Z^TB^T$ is identical to the incidence matrix of an ungrounded graph,
and in \cref{ex:grounded} the matrix $Z^TB^T$ is identical to the incidence matrix of a grounded graph.
\begin{example}\label{ex:ungrounded}
Consider $\KK\hookrightarrow \LL$,
where where \(\LL\) is the boundary of the \(3\)-simplex $[1234]$, and
\(\KK\) is the \(2\)-simplex \([123]\) together with its faces. In this
case \(\LL\) has no boundary; see \cref{fig:combined}
\begin{figure}[htbp]
    \centering
    
    \begin{subfigure}[b]{0.45\textwidth}
        \centering
        \begin{tikzpicture}[line join = round, line cap = round]
            \def\side{2.5}
            \coordinate [label=above:4] (4) at (0,{sqrt(2) * \side},0);
            \coordinate [label=left:3] (3) at ({-.5*sqrt(3) * \side},0,-.5);
            \coordinate [label=below:2] (2) at (0,0,\side);
            \coordinate [label=right:1] (1) at ({.5*sqrt(3) * \side},0,-.5*\side);

            \begin{scope}[decoration={markings,mark=at position 0.5 with {\arrow{to}}}]
                \draw[densely dotted,postaction={decorate}] (1)--(3);
                \draw[fill=lightgray,fill opacity=.5] (2)--(1)--(4)--cycle;
                \draw[fill=gray,fill opacity=.5] (3)--(2)--(4)--cycle;
                \draw[fill=red,fill opacity=.15] (3)--(2)--(1)--cycle;
                \draw[postaction={decorate}] (2)--(1);
                \draw[postaction={decorate}] (2)--(3);
                \draw[postaction={decorate}] (3)--(4);
                \draw[postaction={decorate}] (2)--(4);
                \draw[postaction={decorate}] (1)--(4);
            \end{scope}
        \end{tikzpicture}
        \caption{Simplicial complexes $\KK\hookrightarrow \LL$.}%
        \label{fig:tetrahedron}
    \end{subfigure}
    \hfill
    \begin{subfigure}[b]{0.45\textwidth}
        \centering
        \begin{tikzpicture}[
            node distance=3cm,
            main node/.style={circle, draw, font=\sffamily\Large\bfseries, minimum size=0.8cm,  fill=blue!10, thick}
        ]
          \node[main node] (1) {1};
          \node[main node] (2) [right=of 1] {2};

          \path[thick]
            (1) edge [bend left=40] node[above] {$e_1$} (2)
            (2) edge [pos=0.5]      node[above] {$e_2$} (1)
            (1) edge [bend right=40] node[below] {$e_3$} (2);
        \end{tikzpicture}
        
        \vspace{0.8cm} 
        
        \caption{The dual graph $G$.}
        \label{fig:graph}
    \end{subfigure}
    
    \caption{Figures in \cref{ex:ungrounded}}
    \label{fig:combined}
\end{figure}.
The boundary matrix $B_2^\LL$ is 
\[
B_2^\LL
=
\begin{bNiceMatrix}[
  first-row,code-for-first-row=\scriptstyle,
  first-col,code-for-first-col=\scriptstyle,
]
& [123] & [124] & [134] & [234]\\
[12] &  1 & 1 & 0 & 0\\
[13] &  -1 & 0 & 1 & 0\\
[23] &  1 & 0 & 0 & 1\\
[14] &  0 & -1 & -1 & 0\\
[24] &  0 & 1 & 0 & -1\\
[34] &  0 & 0 & 1 & 1
\end{bNiceMatrix},
\]
and
\[
B=
\begin{bmatrix}
 1 & 1 & 0 & 0\\
 -1 & 0 & 1 & 0\\
 1 & 0 & 0 & 1\\
\end{bmatrix},
\quad
Z=
\begin{bmatrix}
    1 & 0\\
    0 & -1\\
    0 & 1\\
    0 & -1\\
\end{bmatrix},
\quad \text{ and thus }
(BZ)^T=
\begin{bmatrix}
 1 & -1 & 1\\
 -1 & 1 & -1\\
\end{bmatrix}.
\]
Consider $(BZ)^T$ as the incidence matrix of a dual graph $G$ containing two vertices and three edges. Since $\diam(G)=1$,
\[\varphi_{1}^{\KK, \LL}=\frac{2}{1}=2.\]
\end{example}

\begin{example}\label{ex:grounded}
Consider $\KK\subset \LL$ as in \cref{fig:grounded_graph}. 
Here $\LL$ has a boundary and $\KK$ is the $1$-dimensional subcomplex given by the outer four edges, and the diagonal, all colored in blue.
\begin{figure}[h]
    \centering
    
    \begin{subfigure}[b]{0.55\textwidth}
        \centering
        \begin{tikzpicture}
            \def\side{3.5}

            \fill[fill=yellow!40, fill opacity=0.5] (0, 0) -- (\side, 0) -- (1/2 * \side, \side * 0.8660254) -- cycle;
            \fill[fill=yellow!40, fill opacity=0.5] (\side, 0) -- (1/2 * \side, \side * 0.8660254) -- (\side * 3/2, \side * 1.732051 / 2) -- cycle;

            \draw (0,0) -- (30: 0.577*\side); 
            \draw (1/2*\side, \side * 0.8660254) -- (1/2*\side, 0.289*\side); 
            \draw (1/2*\side, 0.289*\side) -- (\side, 0); 
            \draw (\side, 0.577*\side) -- (\side, 0); 
            \draw (\side, 0.577*\side) -- (1/2*\side, 0.866*\side); 
            \draw (\side, 0.577*\side) -- (3/2*\side, \side * 1.732051 / 2); 

            \draw[blue, very thick] (0, 0) -- (\side, 0); 
            \draw[blue, very thick] (\side, 0) -- (1/2 * \side, \side * 0.8660254); 
            \draw[blue, very thick] (1/2 * \side, \side * 0.8660254) -- (0,0); 
            \draw[blue, very thick] (1/2 * \side, \side * 0.8660254) -- (3/2 * \side, \side * 1.732051 / 2); 
            \draw[blue, very thick] (3/2 * \side, \side * 1.732051 / 2) -- (\side, 0); 
            
            \node at (1/2 * \side, \side * 0.8 + 0.5) {1};
            \node at (3/2 * \side, \side * 0.8 + 0.5) {4};
            \node at (0, -0.4) {2};
            \node at (\side, -0.4) {3};
            \node at (0.4*\side, 0.35*\side) {5};
            \node at (0.92*\side, 0.5*\side) {6};

            \fill (0, 0) circle (2pt);  
            \fill (1/2 * \side, \side * 0.8660254) circle (2pt); 
            \fill (3/2 * \side, \side * 1.732051 / 2) circle (2pt); 
            \fill (\side, 0) circle (2pt); 
            \fill (1/2*\side, 0.289*\side) circle (2pt); 
            \fill (\side, 0.577*\side) circle (2pt); 
        \end{tikzpicture}
        \caption{The pair of simplicial complexes $\KK\subset \LL$.}
        \label{fig:triangle_k_l}
    \end{subfigure}
    \hfill 
    \begin{subfigure}[b]{0.4\textwidth}
        \centering
        \begin{tikzpicture}[
            main node/.style={circle, draw, font=\sffamily\Large\bfseries, minimum size=0.8cm, fill=blue!10, thick},
            ground node/.style={circle, draw, fill=gray!20, font=\sffamily\Large\bfseries, minimum size=0.8cm, thick}
        ]

          \node[main node]   (1) at (0, 3) {1};
          \node[main node]   (2) at (4, 3) {2};
          \node[ground node] (G) at (2, 0) {0}; 

          \path[thick]
            
            (1) edge [bend right=20] node[left=2pt]  {} (G)
            (1) edge [bend left=20]  node[right=2pt] {} (G)
            
            (2) edge                 node[above]     {} (1)
            
            (G) edge [bend left=20]  node[left=2pt]  {} (2)
            (2) edge [bend left=20]  node[right=2pt] {} (G);
        \end{tikzpicture}
        \caption{The associated grounded dual graph.}
        \label{fig:grounded_graph}
    \end{subfigure}
    \caption{The pair of simplicial complexes and the associated dual graph in \cref{ex:grounded}.}
    \label{fig:combined_figure}
\end{figure}
We get 
\[
B_2^\LL 
=
\begin{bNiceMatrix}[
  first-row,code-for-first-row=\scriptstyle,
  first-col,code-for-first-col=\scriptstyle,
]
& [125] & [153] & [235] & [136] & [164] & [346]\\
[12] & 1 & 0 & 0 & 0 & 0 & 0\\
[23] & 0 & 0 & 1 & 0 & 0 & 0\\
[13] & 0 & -1 & 0 & 1 & 0 & 0\\
[14] & 0 & 0 & 0 & 0 & -1 & 0\\
[34] & 0 & 0 & 0 & 0 & 0 & 1\\
[15] & -1 & 1 & 0 & 0 & 0 & 0\\
[25] & 1 & 0 & -1 & 0 & 0 & 0\\
[35] & 0 & -1 & 1 & 0 & 0 & 0\\
[16] & 0 & 0 & 0 & -1 & 1 & 0\\
[36] & 0 & 0 & 0 & 1 & 0 & -1\\
[46] & 0 & 0 & 0 & 0 & -1 & 1\\
\end{bNiceMatrix},
\]
\[
B = 
\begin{bmatrix}
1 & 0 & 0 & 0 & 0 & 0\\
0 & 0 & 1 & 0 & 0 & 0\\
0 & -1 & 0 & 1 & 0 & 0\\
0 & 0 & 0 & 0 & -1 & 0\\
0 & 0 & 0 & 0 & 0 & 1\\
\end{bmatrix},
\quad
Z=
\begin{bmatrix}
    1 & 0\\
    1 & 0\\
    1 & 0\\
    0 & 1\\
    0 & 1\\
    0 & 1\\
\end{bmatrix},
\quad \text{ and }
(BZ)^T=
\begin{bmatrix}
1 & 1 & -1 & 0 & 0\\
0 & 0 & 1 & -1 & 1\\
\end{bmatrix},
\]
and the matrix $(BZ)^T$ can be identified as the reduced incidence matrix of a graph $G$ as displayed in \cref{fig:grounded_graph}. Here $0$-vertex is the ground vertex, and the vertices \([1]\) and \([2]\) correspond to the polyhedra \([123]\) and \([134]\), respectively. It is obvious to see that $\dist(1, 0)=\dist(2, 0)=1$,
thus the Cheeger constant is 
\[\varphi_{1}^{\KK, \LL}=1/1=1.\]
\end{example}

When $\KK=\LL$,
\cref{prop:pers_cheeger_nbrch_clsd} and \cref{prop:pers_cheeger_nobrch_bdry} reduce to the (non-persistent) Cheeger constant.

\begin{example}\label{ex:cheeger_L}
Let $\LL$ be a $2$-dimensional simplicial complex as displayed in \cref{fig:cheeger_L}. 
    \begin{figure}[h]
\centering

\begin{subfigure}[b]{0.55\textwidth}
\centering
\begin{tikzpicture}
    \def\side{3.5}


    \filldraw[fill=yellow!40] (0, 0) -- (\side, 0) -- (1/2 * \side, \side * 0.8660254) -- cycle;
    \filldraw[fill=yellow!40] (\side, 0) -- (1/2 * \side, \side * 0.8660254) -- (\side * 3/2, \side * 1.732051 / 2) -- cycle;

    \draw (0,0) -- (30: 0.577*\side);
    \draw (1/2*\side, 0.866*\side) -- (1/2*\side, 0.289*\side);
    \draw (1/2*\side, 0.289*\side) -- (\side, 0);
    \draw (\side, 0.577*\side) -- (\side, 0);
    \draw (\side, 0.577*\side) -- (1/2*\side, 0.866 * \side);
    \draw (\side, 0.577*\side) -- (3/2*\side, 0.866 * \side);


    \node at (1/2 * \side,  \side * 0.8 + 0.5) {1};
    \node at (3/2 * \side,  \side * 0.8 + 0.5) {4};
    \node at (0, -0.3) {2};
    \node at (\side, -0.3) {3};
    \node at (0.4*\side, 0.35*\side) {5};
    \node at (0.92*\side, 0.5*\side) {6};

    \fill (0, 0) circle (2pt);  
    \fill (1/2 * \side, \side * 0.8660254) circle (2pt); 
    \fill (3/2 * \side, \side * 0.8660254) circle (2pt);
    \fill (\side, 0) circle (2pt);
    \fill (1/2*\side, 0.289*\side) circle (2pt);
    \fill (\side, 0.577*\side) circle (2pt);
\end{tikzpicture}
\caption{The simplicial complex $\LL$.}
\label{fig:cheeger_L}
\end{subfigure}
\hfill
\begin{subfigure}[b]{0.4\textwidth}
\centering
        \begin{tikzpicture}[scale=0.5,
    vertex/.style={circle, draw=black, font=\sffamily\Large\bfseries, thick, minimum size=8mm, fill=blue!10},
    ground/.style={circle, draw=black, 
    font=\sffamily\Large\bfseries, thick, minimum size=8mm, fill=gray!20},
    edge/.style={thick}
]

    \node[vertex] (1) at (-3, 2.5) {1};
    \node[vertex] (2) at (-3, 0)   {2};
    \node[vertex] (3) at (-1, 1.25) {3};
    
    \node[vertex] (4) at (1, 1.25) {4};
    \node[vertex] (5) at (3, 0)    {5};
    \node[vertex] (6) at (3, 2.5)  {6};

    \node[ground] (0) at (0, -2)   {0};

    \draw[edge] (0) to[bend left=95] node[midway, left] {} (1);
    \draw[edge] (2) -- node[midway, left] {} (1);
    \draw[edge] (1) -- node[midway, above right] {} (3);
    \draw[edge] (0) -- node[midway, below left] {} (2);
    \draw[edge] (3) -- node[midway, below right] {} (2);
    
    \draw[edge] (3) -- node[midway, above] {} (4);
    
    \draw[edge] (6) -- node[midway, above left] {} (4);
    \draw[edge] (4) -- node[midway, below left] {} (5);
    \draw[edge] (5) -- node[midway, below right] {} (0);
    \draw[edge] (5) -- node[midway, right] {} (6);
    \draw[edge] (0) to[bend right=95] node[midway, right] {} (6);
\end{tikzpicture}
\caption{The associated grounded dual graph.}
\label{fig:L_dual_graph}
\end{subfigure}
\caption{The simplicial complex $\LL$ and the associated dual graph in \cref{ex:cheeger_L}.}
\end{figure}
The associated grounded dual graph is displayed in \cref{fig:L_dual_graph}.
It is easy to see that the longest distance between a vertex to the grounded vertex $0$ is $2$,
thus according to \cref{prop:pers_cheeger_nobrch_bdry} 
 the Cheeger constant is $\varphi_{1}^\LL=\frac{1}{2}$.
\end{example}

\section{Persistent Cheeger constants for graph inclusions}
\label{sec.graph.comparisons}
In this section, we focus on persistent Cheeger constants for graphs. Throughout, let \(G=(V^G,E^G)\) be a connected graph and let \(H=(V^H,E^H)\) be a subgraph of \(G\). Note that \(\Delta_0^{H,G}\) depends only on the vertex set \(V^H\), and not on the edge set \(E^H\).

As observed in \cref{lem.kron}, the persistent Laplacian \(\Delta_0^{H,G}\) can be identified with the Laplacian \(\Delta_0^{\widetilde H}\) of a connected weighted graph \(\widetilde H=(V^H,E^{\widetilde H},w^{\widetilde H})\), where \(E^{\widetilde H}\) and \(w^{\widetilde H}\) denote its edge set and edge weights, respectively. Using this identification,
together with standard Cheeger constants for weighted graphs \cite{MR1395858,MR1421568},
M\'{e}moli et al. defined the following persistent Cheeger constants for graph pairs \cite[Definition~4.18]{pers_lap} and proved a corresponding persistent Cheeger inequality \cite[Theorem 4.21]{pers_lap}.

\begin{definition}\label{def:kron_cheeger}
For \(A\subseteq V^H\), let \(\chi_A\in\mathbb{R}^{V^H}\) denote its indicator vector, i.e.,
\[
(\chi_A)_v=
\begin{cases}
1, & v\in A,\\
0, & v\notin A.
\end{cases}
\]
The Kron reduction versions of the persistent Cheeger constants are defined by
\begin{equation}\label{eq:kron_cheeger_asymm}
h_{\operatorname{Kron}}^{H,G}
=
\min_{\substack{V^H=H_1\sqcup H_2\\ H_1,H_2\neq\varnothing}}
\frac{|V^H|\cdot(\chi_{H_1})^T\Delta_0^{H,G}\chi_{H_1}}{|H_1|\cdot |H_2|},
\end{equation}
and
\begin{equation}\label{eq:kron_cheeger_symm}
\varphi_{\operatorname{Kron}}^{H,G}
=
\min_{\substack{\varnothing\neq H_1\subseteq V^H\\ |H_1|\leq |V^H|/2}}
\frac{(\chi_{H_1})^T\Delta_0^{H,G}\chi_{H_1}}{|H_1|}.
\end{equation}
\end{definition}
While these persistent Cheeger constants do not extend to higher-degree simplicial complexes, it is natural to compare them with the persistent Cheeger constants that do. This comparison is the goal of the remainder of this section.

\subsection{Relating $h_{0}^{H, G}$ to $h_{\operatorname{Kron}}^{H, G}$}
In the case of graphs, for which the condition on a complete skeleton is trivially satisfied, 
the non-trivial Cheeger constant  \cref{eq:nt_cheeger_inq_graph} simplifies to the form
\begin{align}
\label{eq.complete.skeleton.graph}
h_{0}^{H, G}=
\min_{\substack{V^H=H_1\sqcup H_2\\
V^G=G_1\sqcup G_2\\\varnothing \neq H_i\subset G_i}}\frac{|V^H|\cdot |E(G_1, G_2)|}{|H_1|\cdot |H_2|}.
\end{align}

\begin{proposition}
\label{prop:ineq_cheger_kron_nonzero_1}
Let \(H\hookrightarrow G\) be an inclusion of graphs where $G$ is connected. Then
\[
h_{\operatorname{Kron}}^{H,G}\leq h_0^{H,G}.
\]
\end{proposition}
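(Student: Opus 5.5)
Fix a minimizing configuration for $h_0^{H,G}$ in \cref{eq.complete.skeleton.graph}: a partition $V^H=H_1\sqcup H_2$ together with an extension $V^G=G_1\sqcup G_2$ satisfying $\varnothing\neq H_i\subseteq G_i$. We will use the vertex partition $H_1\sqcup H_2$ as a competitor in the definition of $h_{\operatorname{Kron}}^{H,G}$ in \cref{eq:kron_cheeger_asymm}. Both quotients have the same prefactor $|V^H|/(|H_1|\cdot|H_2|)$. It therefore suffices to prove
\[
(\chi_{H_1})^T\Delta_0^{H,G}\chi_{H_1}\leq |E(G_1,G_2)|.
\]
Minimizing over partitions then gives $h_{\operatorname{Kron}}^{H,G}\leq h_0^{H,G}$.

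To prove this bound, we apply \cref{lemma:schur_optimization} with $\KK=H$, $\LL=G$ and $q=0$. The boundary index set is $\mathcal B=V^H$ and the interior index set is $\mathcal I=V^G\setminus V^H$. The lemma gives
\[
(\chi_{H_1})^T\Delta_{0,\mathrm{up}}^{H,G}\chi_{H_1}=\inf_{u_{\mathcal I}} u^T\Delta_{0,\mathrm{up}}^G u ,
\]
where the infimum runs over vectors $u$ with $u|_{V^H}=\chi_{H_1}$. As a test extension we take $u=\chi_{G_1}\in\mathbb R^{V^G}$. It is admissible: since $H_1\subseteq G_1$ and $H_2\subseteq G_2=V^G\setminus G_1$, its restriction to $V^H$ is exactly $\chi_{H_1}$. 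The graph Laplacian has Dirichlet energy $u^T\Delta_{0,\mathrm{up}}^G u=\sum_{\{a,b\}\in E^G}(u_a-u_b)^2$. For $u=\chi_{G_1}$ an edge contributes $1$ exactly when it crosses between $G_1$ and $G_2$, and $0$ otherwise. The energy therefore equals $|E(G_1,G_2)|$, which gives the desired inequality.

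Finally, we must check that the quadratic form in \cref{def:kron_cheeger} is the up part of the persistent Laplacian. In degree $0$ the down Laplacian vanishes, so $\Delta_0^{H,G}=\Delta_{0,\mathrm{up}}^{H,G}$. There is one degenerate case: $C_1^{G,H}=0$, for instance when $H$ has no edges and no $1$-chain of $G$ has boundary supported in $V^H$. In that case $\Delta_0^{H,G}=0$ and the inequality is trivial. We also note that the lemma still holds when $\mathcal I=\varnothing$, since then the infimum is over an empty set of interior coordinates and the extension is unique. The main (mild) point to verify is that the minimizing configuration of $h_0^{H,G}$ has both $H_1$ and $H_2$ nonempty, so that it is admissible for $h_{\operatorname{Kron}}^{H,G}$. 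This holds by the constraint $\varnothing\neq H_i$ in \cref{eq.complete.skeleton.graph}.
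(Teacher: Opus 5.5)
Your proof is correct and matches the paper's argument exactly. You take a minimizing admissible pair for $h_0^{H,G}$, use $\chi_{G_1}$ as a competitor extension of $\chi_{H_1}$ in \cref{lemma:schur_optimization}, and note that its Dirichlet energy on $G$ is $|E(G_1,G_2)|$. (The degenerate case $C_1^{G,H}=0$ never actually arises when admissible partitions exist: $G$ connected and $|V^H|\geq 2$ give a path between two vertices of $H$ whose boundary lies in $C_0^H$. The remark is harmless.)
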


\begin{proof}
Let \(V^G=G_1\sqcup G_2\) and \(V^H=H_1\sqcup H_2\) be an admissible pair of partitions achieving \(h_0^{H,G}\), so that \(\varnothing\neq H_i\subseteq G_i\).

Since \(H_i\subset G_i\), the restriction of \(\chi_{G_1}\) to \(H\) is \(\chi_{H_1}\). By the variational characterization in \cref{lemma:schur_optimization},
\[
(\chi_{H_1})^T\Delta_0^{H,G}\chi_{H_1}\leq (\chi_{G_1})^T\Delta_0^G\chi_{G_1}.
\]
Finally,
\[
(\chi_{G_1})^T\Delta_0^G\chi_{G_1}=|E(G_1,G_2)|.
\]
Therefore
\[
(\chi_{H_1})^T\Delta_0^{H,G}\chi_{H_1}\leq |E(G_1,G_2)|.
\]
Multiplying by \(|H|/(|H_1|\cdot |H_2|)\) gives \(h_{\operatorname{Kron}}^{H,G}\leq h_0^{H,G}\).
\end{proof}

The Cheeger constants $h_0^{H, G}$ and $h_{\operatorname{Kron}}^{H, G}$ are in general not equivalent,
i.e.,
there is not a constant $c$ such that $ 
h_{0}^{H, G}  \leq c \cdot h_{\operatorname{Kron}}^{H, G}$. See \cref{ex:cheeger_equiv} for a counterexample.

\subsection{Relating $\varphi_0^{H, G}$ to $\varphi_{\operatorname{Kron}}^{H, G}$}
Next, we shall consider $\varphi_{\operatorname{Kron}}^{H, G}$ and the Cheeger constant $\varphi_0^{H, G}$ obtained via the $1$-Laplacian in \cref{def:nonzero_cheeger}. We obtain an inequality analogous to that in the previous section, but the proof is more involved.

\begin{proposition}\label{prop:ineq_cheger_kron_nonzero}
Let \(H\hookrightarrow G\) be an inclusion of finite graphs, and assume that \(G\) is connected. Then
\[
\varphi_{\operatorname{Kron}}^{H,G}\leq \varphi_0^{H,G}.
\]
\end{proposition}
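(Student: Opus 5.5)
The plan is to mimic the classical proof that the $1$-Laplacian Cheeger constant dominates the combinatorial one, via a coarea (level-set) argument. The new ingredient is that $PB^T x$ has a concrete interpretation as the gradient of a harmonic extension. Throughout, $q=0$, $B$ consists of the rows of $B_1^G$ indexed by $V^H$, and $D$ consists of the rows indexed by $V^G\setminus V^H$.

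\emph{Step 1 (identifying $PB^T$).} A $1$-chain lies in $C_1^{G,H}$ exactly when its divergence vanishes at every vertex outside $V^H$, that is, when $D c=0$. Hence $(C_1^{G,H})^\perp=\operatorname{im}(D^T)$. For $x\in\R^{V^H}$ this gives
\[
PB^Tx=B^Tx-D^Ty=(B_1^G)^T u,\qquad u=\begin{bmatrix}x\\ -y\end{bmatrix},
\]
where $y$ is chosen so that $(B_1^G)^T u\in C_1^{G,H}$. Equivalently, $u$ is the energy-minimizing (harmonic) extension of $x$ to $V^G$, as in \cref{lemma:schur_optimization}. Consequently
\[
\|PB^Tx\|_1=\sum_{\{a,b\}\in E^G}|u_a-u_b|,
\]
and for every $A\subseteq V^H$ and every extension $g$ of $\chi_A$ to $V^G$, \cref{lemma:schur_optimization} gives
\[
\chi_A^T\Delta_0^{H,G}\chi_A\leq\sum_{\{a,b\}\in E^G}(g_a-g_b)^2 .
\]

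\emph{Step 2 (the constraint).} Since $G$ is connected and $H$ is nonempty, $\ker(PB^T)$ consists of the constant vectors on $V^H$. Indeed, $PB^Tx=0$ forces the harmonic extension $u$ to be constant. So the condition $x\perp^1\ker(PB^T)$ says that $0$ is a median of $x$. By the standard subgradient argument this yields
\[
\#\{v\in V^H : x_v>0\}\leq |V^H|/2,\qquad \#\{v\in V^H : x_v<0\}\leq |V^H|/2 .
\]

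\emph{Step 3 (coarea).} Fix a minimizer $x$ for $\varphi_0^{H,G}$ and its extension $u$. For $t>0$ put $A_t=\{v\in V^H: x_v>t\}$. The indicator $\mathbf 1_{\{u>t\}}$ on $V^G$ extends $\chi_{A_t}$, and its energy equals the number of edges of $G$ crossing the level $t$. Since $|A_t|\leq|V^H|/2$, Step 1 gives, whenever $A_t\neq\varnothing$,
\[
\varphi_{\operatorname{Kron}}^{H,G}\,|A_t|\leq\#\{\{a,b\}\in E^G:\ t\text{ lies between }u_a,u_b\}.
\]
Integrating over $t\in(0,\infty)$, the left side becomes $\varphi_{\operatorname{Kron}}^{H,G}\sum_v x_v^+$. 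The right side is bounded by the part of $\sum|u_a-u_b|$ lying above level $0$.

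Doing the same for $t<0$ with the sets $\{x_v<t\}$ (using $\chi_{V^H\setminus A}$ symmetry, or simply applying the argument to $-x$) and adding the two estimates yields
\[
\varphi_{\operatorname{Kron}}^{H,G}\|x\|_1\leq\|PB^Tx\|_1 .
\]
This gives $\varphi_{\operatorname{Kron}}^{H,G}\leq\varphi_0^{H,G}$.

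The main obstacle I expect is Step 1: justifying cleanly that $PB^Tx$ is the gradient of the harmonic extension, rather than merely some chain with the same $2$-norm. Once this identification is in place, the $\ell^1$ norm becomes a total variation, and the coarea bookkeeping in Step 3 is routine.
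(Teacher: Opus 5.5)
Your proof is correct and is essentially the paper's argument. You write $PB^Tx=(B_1^G)^Tu$ for an extension $u$ of $x$ using $(C_1^{G,H})^\perp=\operatorname{im}(D^T)$, then run the layer-cake argument: each Kron quadratic form is bounded by the corresponding cut in $G$ via \cref{lemma:schur_optimization}, and the median characterization of $x\perp^1\one$ controls the level-set sizes. The paper treats negative levels by complement symmetry rather than by passing to $-x$, which is immaterial. The harmonicity of $u$ is never used, since only $u|_{V^H}=x$ matters, so the obstacle you anticipate in Step 1 is already resolved by your own orthogonal decomposition.
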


\begin{proof}
Since \(\Delta_0^{H,G}\) can be regarded as the Laplacian of a weighted graph \(\widetilde H\) by \cref{lem.kron}, and since \(G\) is connected, we have:
\[
\ker(\Delta_0^{H,G})=\ker(PB^T)=\operatorname{span}(\one).
\]
Hence the \(1\)-orthogonality condition in \cref{def:nonzero_cheeger} reduces to \(x\perp^1\one\). We use the standard characterization of this condition \cite{MR3433638,MR3600065,MR3403773}: it is satisfied precisely when at most half of the entries of \(x\) are positive and at most half are negative. Thus, if \(|S|\leq |H|/2\), then \(\chi_S\perp^1\one\).

Let \(H^c:=V^G\setminus V^H\). Order the vertices of \(G\) so that the vertices in \(H\) come first, followed by the vertices in \(H^c\). With respect to this ordering, write the incidence matrix as
\[
I=\begin{bmatrix}B\\D\end{bmatrix},
\]
where the rows of \(B\) and \(D\) are indexed by \(H\) and \(H^c\), respectively.

Let
\[
x^*\in \argmin_{x\perp^1\one}\frac{\|PB^Tx\|_1}{\|x\|_1},\qquad f:=PB^Tx^*.
\]
Recall that \(P\) is the orthogonal projection from \(C_1^G\) onto \(C_1^{H,G}=\ker D\). Hence \(C_1^G=\ker D\oplus\operatorname{im}D^T\), so there exists \(y^*\in C_0^{H^c}\) such that
\[
B^Tx^*=f-D^Ty^*.
\]
Set \(z=\begin{bmatrix}x^*\\y^*\end{bmatrix}\). Then \(I^Tz=B^Tx^*+D^Ty^*=f=PB^Tx^*\). Therefore
\[
\|PB^Tx^*\|_1=\|I^Tz\|_1.
\]

For \(t\in\mathbb{R}\), define \(S_t:=\{u\in V^G:z_u>t\}\). By the ``layer-cake formula'' (cf. the proof in \cref{sec.app.proof.dirichlet}), we have
\[
\|I^Tz\|_1=\int_{-\infty}^{\infty}|E(S_t,S_t^c)|\,dt.
\]
Moreover, since \(\chi_{S_t}\) is an indicator vector and $G$ is unweighted,
\[
|E(S_t,S_t^c)|=\chi_{S_t}^T\Delta_0^G\chi_{S_t}.
\]
Let \(S_t^H:=S_t\cap H\). Since \(\chi_{S_t}\) is an extension of \(\chi_{S_t^H}\) from \(H\) to \(G\), \cref{lemma:schur_optimization} gives
\[
(\chi_{S_t^H})^T\Delta_0^{H,G}\chi_{S_t^H}\leq \chi_{S_t}^T\Delta_0^G\chi_{S_t}=|E(S_t,S_t^c)|.
\]
Thus
\[
\frac{\|PB^Tx^*\|_1}{\|x^*\|_1}\geq \frac{\int_{-\infty}^{\infty}(\chi_{S_t^H})^T\Delta_0^{H,G}\chi_{S_t^H}\,dt}{\|x^*\|_1}.
\]

The \(1\)-orthogonality condition \(x^*\perp^1\one\) implies that at most half of the entries of \(x^*\) are positive and at most half are negative. Hence, for \(t>0\), the set \(S_t^H\) has size at most \(|H|/2\), and for \(t<0\), the set \(H\setminus S_t^H\) has size at most \(|H|/2\). Also,
\[
\|x^*\|_1=\int_0^\infty |S_t^H|\,dt+\int_{-\infty}^0 |H\setminus S_t^H|\,dt.
\]
For \(t>0\), the defining inequality for \(\varphi_{\operatorname{Kron}}^{H,G}\) gives
\[
(\chi_{S_t^H})^T\Delta_0^{H,G}\chi_{S_t^H}\geq \varphi_{\operatorname{Kron}}^{H,G}|S_t^H|,
\]
with both sides equal to \(0\) if \(S_t^H=\varnothing\). For \(t<0\), the set \(H\setminus S_t^H\) is admissible whenever it is nonempty. Since \(\chi_{H\setminus S_t^H}=\one-\chi_{S_t^H}\) and \(\Delta_0^{H,G}\one=0\), we have
\[
(\chi_{H\setminus S_t^H})^T\Delta_0^{H,G}\chi_{H\setminus S_t^H}=(\chi_{S_t^H})^T\Delta_0^{H,G}\chi_{S_t^H}.
\]
Therefore
\[
(\chi_{S_t^H})^T\Delta_0^{H,G}\chi_{S_t^H}\geq \varphi_{\operatorname{Kron}}^{H,G}|H\setminus S_t^H|,
\]
again trivially if \(H\setminus S_t^H=\varnothing\). Integrating over \(t>0\) and \(t<0\), respectively, gives
\[
\int_{-\infty}^{\infty}(\chi_{S_t^H})^T\Delta_0^{H,G}\chi_{S_t^H}\,dt\geq \varphi_{\operatorname{Kron}}^{H,G}\|x^*\|_1.
\]
Thus
\[
\varphi_0^{H,G}=\frac{\|PB^Tx^*\|_1}{\|x^*\|_1}\geq \varphi_{\operatorname{Kron}}^{H,G}.
\]
\end{proof}
The Cheeger constants $\varphi_0^{H, G}$ and $\varphi_{\operatorname{Kron}}^{H, G}$ are not equivalent in general,
i.e., there is not a number $c$ such that $\varphi_{\operatorname{Kron}}^{H, G} \leq \varphi_0^{H, G}\leq c\cdot \varphi_{\operatorname{Kron}}^{H, G}$. See \cref{ex:cheeger_equiv} for a counterexample.
\subsection{Lower-bounding the Kron Cheeger constants}
In this section, we show that there is no equivalence between the Cheeger constants unless additional structure is imposed.

\begin{proposition}
There does not exist a constant $c$ such that either of the following holds for all inclusions of graphs $H\hookrightarrow G$:
\begin{enumerate}
    \item $h_{0}^{H, G}  \leq c \cdot h_{\operatorname{Kron}}^{H, G}$;
    \item $\varphi_0^{H, G}\leq c\cdot \varphi_{\operatorname{Kron}}^{H, G}$.
\end{enumerate}
    \label{ex:cheeger_equiv}
\end{proposition}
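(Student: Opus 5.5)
The plan is to exhibit one explicit family of graph inclusions $H_n\hookrightarrow G_n$ for which both Kron constants decay like $1/n$, while $h_0^{H_n,G_n}$ and $\varphi_0^{H_n,G_n}$ stay bounded below by positive constants independent of $n$. Letting $n\to\infty$ then rules out any constant $c$ in either item.

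\emph{The family.} Let $G_n$ be the path $a=u_0,u_1,\dots,u_n=b$ with $n$ edges, and let $H_n$ be the subgraph with vertex set $\{a,b\}$. We may take $E^{H_n}=\varnothing$, since $\Delta_0^{H,G}$ only depends on $V^H$.

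\emph{Kron side.} By \cref{lem.kron} and the Schur-complement formula, eliminating the interior path vertices replaces $n$ unit resistors in series by a single edge of conductance $1/n$. Hence
\[
\Delta_0^{H_n,G_n}=\tfrac1n\begin{bmatrix}1&-1\\-1&1\end{bmatrix}.
\]
As a direct check, \cref{lemma:schur_optimization} gives the harmonic (linear) extension along the path, whose energy is $(x_a-x_b)^2/n$. With $H_1=\{a\}$ and $H_2=\{b\}$ this yields $h_{\operatorname{Kron}}^{H_n,G_n}=2\cdot\frac1n/(1\cdot1)=2/n$ and $\varphi_{\operatorname{Kron}}^{H_n,G_n}=1/n$.

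\emph{Complete-skeleton side.} In \cref{eq.complete.skeleton.graph}, any admissible partition must put $a\in G_1$ and $b\in G_2$. The path then crosses between $G_1$ and $G_2$ at least once, so $|E(G_1,G_2)|\ge 1$, with equality for a single cut. Therefore $h_0^{H_n,G_n}=2\cdot 1/(1\cdot1)=2$, and the ratio $h_0/h_{\operatorname{Kron}}=n$ is unbounded.

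\emph{The $1$-Laplacian side.} This is the only step needing care. First I would identify $C_1^{H_n,G_n}$, the $1$-chains whose boundary is supported on $\{a,b\}$. Since interior vertices must receive zero boundary, this space is spanned by the oriented path chain $c=\sum_i [u_{i-1}u_i]$, with $\|c\|_2^2=n$, so $P=cc^T/n$. For $x=(x_a,x_b)$, the vector $B^Tx$ is supported on the first and last edges, with entries $\mp x_a$ and $\pm x_b$. Hence
\[
PB^Tx=\frac{\langle c,B^Tx\rangle}{n}\,c=\pm\frac{x_a-x_b}{n}\,c,
\qquad
\|PB^Tx\|_1=|x_a-x_b|.
\]
Next, $\ker(PB^T)=\operatorname{span}(\one)$. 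As in the proof of \cref{prop:ineq_cheger_kron_nonzero}, the condition $x\perp^1\one$ means that at most one of the two entries is positive and at most one is negative. So either $x_a,x_b$ have opposite signs or one of them vanishes, and in both cases $|x_a-x_b|=|x_a|+|x_b|=\|x\|_1$. Thus $\varphi_0^{H_n,G_n}=1$ for every $n$, while $\varphi_{\operatorname{Kron}}^{H_n,G_n}=1/n$, and the ratio is again unbounded.

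The expected obstacle is exactly this computation of $P$ and of the $\ell^1$-orthogonality condition. It reduces to recognizing that $C_1^{H,G}$ is one-dimensional here, after which everything is explicit.
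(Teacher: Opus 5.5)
Your proposal is correct and is essentially the paper's proof. It uses the same path-graph family with $H$ the two endpoints, giving $h_0^{H,G}=2$, $\varphi_0^{H,G}=1$, and Kron constants of order $1/n$; the only difference is that you verify $\varphi_0^{H,G}=1$ directly from $P=cc^T/n$ on the one-dimensional space $C_1^{H,G}$, whereas the paper cites the pseudomanifold-with-boundary formula \cref{prop:pers_cheeger_nobrch_bdry}. Your direct check is self-contained and equally valid.
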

\begin{proof}
Consider a path graph $G$ on $n$ vertices and let $H=\{1, n\}$ be the two end-points; see \cref{fig:path_graph}.
\begin{figure}[H]
\centering
\begin{tikzpicture}[
    dot/.style={circle, fill=black, inner sep=1.5pt},
    interval/.style={thick}
]

    \draw[interval] (0,0) -- (4,0);
    \draw[interval, loosely dotted] (4.5,0) -- (6.5,0); 
    \draw[interval] (7,0) -- (9,0);

    \node[dot, fill=red, label=below:{$1$}] at (0,0) {};
    \node[dot, label=below:{$2$}] at (2,0) {};
    \node[dot, label=below:{$3$}] at (4,0) {};
    \node[dot, label=below:{$n-1$}] at (7,0) {};
    \node[dot, fill=red, label=below:{$n$}] at (9,0) {};


\end{tikzpicture}
\caption{$G$ is a path graph on $n$ vertices, $H=\{1, n\}$.}
\label{fig:path_graph}
\end{figure}

First, the path graph $G$ is non-branching, and therefore the non-zero Cheeger constant 
\[\varphi_{0}^{H, G}=1\] by \cref{prop:pers_cheeger_nobrch_bdry}.

To compute $h_0^{H, G}$ (\cref{eq.complete.skeleton.graph}), one observes that since $H$ contains only two vertices $1$ and $n$, there is only one way to separate $H$ into two components. Thus, one simply needs
to partition the vertices of $G$ such that $|E(V_1^G, V_2^G)|$ is minimal and $1\in V_1^G$ and $n\in V_2^G$. This minimum is obviously $1$, and thus \[h_0^{H, G}=\frac{2\cdot 1}{1\cdot 1}=2.\]

For the two Kron Cheeger constants, we first observe that 
\[\Delta_0^{H, G}=\frac{1}{n-1}\begin{bmatrix}
    1 & -1\\
    -1 & 1
\end{bmatrix}.\]
Thus 
\[\varphi_{\operatorname{Kron}}^{H, G}=\frac{1}{n-1} \qquad \qquad h_{\operatorname{Kron}}^{H, G}=\frac{2}{n-1}\]
and when $n\to \infty$,
$\varphi_{\operatorname{Kron}}^{H, G}\to 0$ and $h_{\operatorname{Kron}}^{H, G}\to 0$.

\end{proof}
However,
if $H$ and $H^c$ have enough connections with each other,
and the vertices in $H^c$ are disconnected from each other,
the persistent Cheeger constants
$\varphi_0^{H, G}$ and $\varphi_{\operatorname{Kron}}^{H, G}$ are equivalent.
We first recall that if $\Delta_0^G$ is the Laplacian on a graph $G=(V, E)$,
and $\chi_S$ is the indicator vector associated with a subset $S\subset V$,
by a direct computation (\cref{app.proof.of.eq}),
we have the equality
\begin{equation}\label{eq:L_chi_1_norm}
\|\Delta_0^G\chi_S\|_1=2\chi_S^T\Delta_0^G\chi_S.
\end{equation}

\begin{proposition}\label{prop:kron_nonzero_cheeger_equiv}
Suppose that \(H^c:=V^G\setminus V^H\) is an independent set and that every vertex of \(H\) has exactly one neighbor in \(H^c\). Then
\[
\varphi_{\operatorname{Kron}}^{H,G}
\leq
\varphi_0^{H,G}
\leq
2\varphi_{\operatorname{Kron}}^{H,G}.
\]
\end{proposition}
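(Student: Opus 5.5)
The lower bound $\varphi_{\operatorname{Kron}}^{H,G}\leq\varphi_0^{H,G}$ is exactly \cref{prop:ineq_cheger_kron_nonzero}, so only the upper bound $\varphi_0^{H,G}\leq 2\varphi_{\operatorname{Kron}}^{H,G}$ needs proof. I will prove it by plugging an optimal Kron cut into the definition of $\varphi_0^{H,G}$ as a test vector. Let $S\subseteq V^H$ with $|S|\leq |V^H|/2$ attain $\varphi_{\operatorname{Kron}}^{H,G}$, and set $x=\chi_S$. As recalled in the proof of \cref{prop:ineq_cheger_kron_nonzero}, $\ker(PB^T)=\operatorname{span}(\one)$ and $\chi_S\perp^1\one$ when $|S|\leq|V^H|/2$. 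Hence $x$ is admissible in \cref{def:nonzero_cheeger}, and $\varphi_0^{H,G}\leq \|PB^T\chi_S\|_1/|S|$. It therefore suffices to show
\[
\|PB^T\chi_S\|_1\leq 2\,\chi_S^T\Delta_0^{H,G}\chi_S .
\]

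Next I compute $PB^T\chi_S$ explicitly. As in the proof of \cref{prop:ineq_cheger_kron_nonzero}, write $I=\begin{bmatrix}B\\D\end{bmatrix}$ and $PB^T\chi_S=B^T\chi_S+D^Ty=I^Tz$ with $z=(\chi_S,y)$, where $y$ is determined by $D(B^T\chi_S+D^Ty)=0$, i.e.\ $DD^Ty=-DB^T\chi_S$. Because $H^c$ is independent, $DD^T$ is diagonal with entries $d_w=\deg(w)$ for $w\in H^c$. Because each vertex of $H$ has exactly one neighbour in $H^c$, the neighbourhoods $N(w)$, $w\in H^c$, partition $V^H$ (or part of it), and every edge is either inside $H$ or a star edge $\{v,w\}$ with $v\in N(w)$. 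Solving the diagonal system shows, up to the sign convention of the orientation, that $z_w=a_w:=k_w/d_w$ where $k_w:=|S\cap N(w)|$. In other words, $z$ is the harmonic (energy-minimizing) extension of $\chi_S$ to $G$.

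Then I evaluate both sides edge by edge. The edges inside $H$ contribute $|E_H(S,V^H\setminus S)|$ to $\|I^Tz\|_1$. The star of $w$ contributes
\[
k_w(1-a_w)+(d_w-k_w)a_w=\frac{2k_w(d_w-k_w)}{d_w}.
\]
On the other side, \cref{lemma:schur_optimization} gives $\chi_S^T\Delta_0^{H,G}\chi_S$ as the minimal energy over extensions. This energy is attained at the same $z$, since the stars decouple: each $z_w$ minimizes $k_w(1-t)^2+(d_w-k_w)t^2$ independently. Its value is
\[
|E_H(S,V^H\setminus S)|+\sum_{w\in H^c}\frac{k_w(d_w-k_w)}{d_w}.
\]
Comparing the two expressions term by term gives $\|PB^T\chi_S\|_1\leq 2\chi_S^T\Delta_0^{H,G}\chi_S$; the factor $2$ is lost only on the internal $H$-edges. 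Dividing by $|S|$ finishes the proof.

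The main obstacle is the bookkeeping in the second step. I must verify that the decomposition $C_1^G=\ker D\oplus\operatorname{im}D^T$ really yields $z_w=a_w$ with consistent signs, given the orientation convention of $I$. I must also handle degenerate cases: a vertex $w\in H^c$ with no neighbours in $H$ contributes nothing, and $d_w>0$ is guaranteed by the connectedness of $G$.
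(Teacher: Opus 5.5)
Your proof is correct and follows essentially the same route as the paper: both use the Kron-optimal $\chi_S$ as a test vector, and both compare the $\ell^1$-norm with the energy separately on the edges inside $H$ and on the stars around the vertices of $H^c$. The only difference is packaging: the paper writes $P=\mathrm{diag}(\id,P_c)$ and $\Delta_0^{H,G}=B_HB_H^T+P_c$, then applies $\|L\chi_S\|_1=2\chi_S^TL\chi_S$ to the Kron-reduced star Laplacian $P_c$, whereas you compute the harmonic extension $z_w=k_w/d_w$ by hand (and your sign worry is moot, since $DI^Tz=(\Delta_0^G z)|_{H^c}$ does not depend on the orientation).
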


\begin{proof}
The first part of the inequality is proved in \cref{prop:ineq_cheger_kron_nonzero},
we only need to prove the second part.
Since \(H^c\) is independent, every edge of \(G\) either lies in \(H\) or joins a vertex of \(H\) to a vertex of \(H^c\). Order the edge set as \(E^G=E^H\sqcup E_c\), where \(E_c:=E(H,H^c)\). Since every vertex of \(H\) has exactly one neighbor in \(H^c\), we may orient the edges in \(E_c\) from \(H\) to \(H^c\) and write the incidence matrix of $G$ (the matrix representation of $\partial_1^G$) as
\[
I=\begin{bmatrix}B\\D\end{bmatrix}=\begin{bmatrix}B_H&\id\\0&D_c\end{bmatrix},
\]
where \(B_H\) is the incidence matrix of the subgraph induced by \(H\), and \(D_c\) records the endpoints of the crossing edges in \(H^c\). Since \(D=[0\;\;D_c]\), we have \(\ker D=C_1^H\oplus\ker D_c\). Thus the orthogonal projection \(P:C_1^G\twoheadrightarrow\ker D \subseteq C_1^G\) splits as
\[
P=\begin{bmatrix}\id_{C_1^H}&0\\0&P_c\end{bmatrix},
\]
where \(P_c\) is the orthogonal projection from the ``crossing-edge'' subspace of $C_1^G$ onto \(\ker D_c\).  Hence, for \(x\in C_0^H\),
\[
PB^Tx=\begin{bmatrix}B_H^Tx\\P_cx\end{bmatrix}.
\]
Therefore
\[
\varphi_0^{H,G}=\min_{x\perp^1\one}\frac{\|PB^Tx\|_1}{\|x\|_1}=\min_{x\perp^1\one}\frac{\|B_H^Tx\|_1+\|P_cx\|_1}{\|x\|_1}.
\]

Furthermore, by \cref{eq:P_pers_up_lap},
\[
\Delta_0^{H,G}=(PB^T)^T(PB^T)=B_HB_H^T+P_cP_c^T=B_HB_H^T+P_c,
\]
since \(P_c\) is an orthogonal projection. Hence
\[
\varphi_{\operatorname{Kron}}^{H,G}
=
\min_{\substack{\varnothing\neq S\subseteq V^H\\ |S|\leq |V^H|/2}}
\frac{\chi_S^T(B_HB_H^T+P_c)\chi_S}{|S|}.
\]

Now let \(S\subseteq V^H\) be nonempty with \(|S|\leq |V^H|/2\). The entries of \(B_H^T\chi_S\) lie in \(\{-1,0,1\}\), so
\[
\|B_H^T\chi_S\|_1=\|B_H^T\chi_S\|_2^2=\chi_S^TB_HB_H^T\chi_S.
\]
The submatrix \(\begin{bmatrix}\id\\D_c\end{bmatrix}\) is the incidence matrix of the subgraph \(G_c\) obtained from \(G\) by deleting the edges of \(H\). For this subgraph, the corresponding persistent Laplacian is
\[
\Delta_0^{H,G_c}=(P_c\id)(P_c\id)^T=P_c.
\]
Since \(\Delta_0^{H,G_c}\) is the graph Laplacian of the corresponding Kron reduction, \cref{eq:L_chi_1_norm} gives
\[
\|P_c\chi_S\|_1=2\chi_S^TP_c\chi_S.
\]
Therefore
\[
\varphi_0^{H,G}\leq
\min_{\substack{\varnothing\neq S\subseteq V^H\\ |S|\leq |V^H|/2}}
\frac{\|B_H^T\chi_S\|_1+\|P_c\chi_S\|_1}{|S|}
\leq
\min_{\substack{\varnothing\neq S\subseteq V^H\\ |S|\leq |V^H|/2}}
\frac{2\chi_S^T(B_HB_H^T+P_c)\chi_S}{|S|}
=
2\varphi_{\operatorname{Kron}}^{H,G}.
\]
\end{proof}

\begin{example}
\label{ex.lower.bound.cheeger.graph}
Consider the graphs $H\subset G$ from \cite{pers_lap}, where $H$ consists of the three outermost vertices, and $\widetilde{H}$ is the associated Kron reduction; see \cref{fig.lowerbound.cheeger.graph.example}:
    \begin{figure}
    \centering
    \begin{subfigure}[b]{0.45\textwidth}
        \centering
        \begin{tikzpicture}[scale=1.5]
            \node[circle, fill=blue, minimum size=14pt, inner sep=0pt] (C) at (0,0) {};
            \node[circle, fill=red, minimum size=14pt, inner sep=0pt] (TL) at (150:1.5) {};
            \node[circle, fill=red, minimum size=14pt, inner sep=0pt] (TR) at (30:1.5) {};
            \node[circle, fill=red, minimum size=14pt, inner sep=0pt] (B) at (270:1.5) {};

            \draw[blue, line width=1.5pt] (C) -- node[below left=1pt] {\Large 1} (TL);
            \draw[blue, line width=1.5pt] (C) -- node[below right=1pt] {\Large 1} (TR);
            \draw[blue, line width=1.5pt] (C) -- node[right=2pt] {\Large 1} (B);

            \node at (0, 1.4) {\large $G$};
        \end{tikzpicture}
    \end{subfigure}
    \hfill
    \begin{subfigure}[b]{0.45\textwidth}
        \centering
        \begin{tikzpicture}[scale=1.5]
            \node[circle, fill=red, minimum size=14pt, inner sep=0pt] (TL) at (150:1.5) {};
            \node[circle, fill=red, minimum size=14pt, inner sep=0pt] (TR) at (30:1.5) {};
            \node[circle, fill=red, minimum size=14pt, inner sep=0pt] (B) at (270:1.5) {};

            \draw[red, line width=1.5pt] (TL) -- node[below=2pt] {\LARGE $\frac{1}{3}$} (TR);
            \draw[red, line width=1.5pt] (TL) -- node[left=4pt] {\LARGE $\frac{1}{3}$} (B);
            \draw[red, line width=1.5pt] (TR) -- node[right=4pt] {\LARGE $\frac{1}{3}$} (B);

            \node at (0, 1.4) {\large $\Tilde{H}$};
        \end{tikzpicture}
    \end{subfigure}
    \caption{The graphs from \cref{ex.lower.bound.cheeger.graph}.}
\label{fig.lowerbound.cheeger.graph.example}
\end{figure}
Since there are no edges internal to $H$, it follows from the proof of \cref{prop:kron_nonzero_cheeger_equiv} (with $B_H$ empty), that $\Delta_0^{H, G}$ equals the projection matrix $P\colon C_1^G\twoheadrightarrow C_1^{G,H}\subseteq C_1^G$,
\[P=\Delta_0^{H, G}=\Delta_0^{\widetilde{H}} = \frac{1}{3}\begin{bmatrix}
    2 & -1 & -1\\
    -1 & 2 & -1\\
    -1 & -1 & 2
\end{bmatrix}.\]
We claim that $\varphi_0^{H, G}=1$ and $\varphi_{\operatorname{Kron}}^{H, G}=\frac{2}{3}$,
and hence \cref{prop:kron_nonzero_cheeger_equiv} holds. 

To compute \(\varphi_0^{H,G}\), consider
\[
\varphi_0^{H,G}=\min_{x\perp^1\one}\frac{\|Px\|_1}{\|x\|_1}.
\]
By the characterization of \(1\)-orthogonality recalled in the proof of \cref{prop:ineq_cheger_kron_nonzero}, at most half of the entries of \(x\) are positive and at most half are negative. Hence, after permuting the vertices and using the symmetry of \(\widetilde H\), we may take \(x=(a,0,b)^T\) with \(a\leq 0\leq b\). Then
\[
\frac{\|Px\|_1}{\|x\|_1}
=
\frac{(b-2a)+|a+b|+(2b-a)}{3(b-a)}
=
1+\frac{|a+b|}{3(b-a)}.
\]
This is minimized when \(a+b=0\), and hence \(\varphi_0^{H,G}=1\).

On the other hand, since $|H|=3$,
we must have that $|S|=1$.
Thus,
\[\varphi_{\operatorname{Kron}}^{H, G}=
\frac{1}{3}\begin{bmatrix}
    1 & 0 & 0
\end{bmatrix}
\begin{bmatrix}
    2 & -1 & -1\\
    -1 & 2 & -1\\
    -1 & -1 & 2
\end{bmatrix}
\begin{bmatrix}
    1\\0\\0
\end{bmatrix}=
\frac{2}{3}.
\]
\end{example}

\section{Conclusion and outlook}
In this paper, we have shown that several Cheeger constants for
simplicial complexes admit natural extensions to the persistent setting.
This raises the question of whether there is a more unified approach to
this problem. For an inclusion \(\KK\hookrightarrow \LL\), there is no
Kron reduction for which \(\Delta_{q,\mathrm{up}}^{\KK,\LL}\) coincides
with the (non-persistent) Laplacian of a simplicial complex. It is therefore
natural to ask whether such a replacement object can be found in a
broader category, for instance among \(\Delta\)-complexes or
CW-complexes. Cheeger inequalities proved in that setting would then
translate directly to the persistent setting studied in this paper.

Furthermore, we introduced the persistent \(p\)-Laplacian. Given the
prominence of \(p\)-Laplacians for graphs, we are currently exploring
their role in the persistent setting of simplicial complexes.

Lastly, it might be the case that the upper bounds of \cref{thm.pers.cheeger} also apply in the non-orientable setting. We leave that to future research.

\appendix
\section{Facts from linear algebra}\label{sec.adjprop}
These facts are well-known and can be found in many textbooks on linear algebra or spectral graph theory. 
The following \emph{Courant--Fischer theorem} is well-known. 
\begin{theorem}
\label{thm.rayleigh}
Let $A$ be a symmetric positive semidefinite $n\times n$ matrix with respect to
a given inner product $\langle \cdot,\cdot\rangle$ on $\mathbb{R}^n$.  Denote
its eigenvalues by
\[
0 = \lambda_1 \le \lambda_2 \le \cdots \le \lambda_n,
\]
and assume $A$ has at least one positive eigenvalue.  
For $x \neq 0$, the Rayleigh quotient of $A$ is
\[
R_A(x) := \frac{\langle Ax, x\rangle}{\langle x, x\rangle}.
\]
Then the smallest non-zero eigenvalue $\lambda_2$ satisfies
\[
\lambda_2
= \min_{\substack{x \perp \ker A \\ x \neq 0}} R_A(x)
= \min_{\substack{x \perp \ker A \\ x \neq 0}}
   \frac{\langle Ax, x\rangle}{\langle x, x\rangle}.
\]
A non-zero vector $x\perp \ker A$ attains this minimum if and only if it is an
eigenvector of $A$ with eigenvalue $\lambda_2$.
\end{theorem}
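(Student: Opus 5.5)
The statement is the Courant--Fischer characterization of the smallest non-zero eigenvalue of a symmetric positive semidefinite matrix $A$. I would prove it with the spectral theorem. The inner product $\langle\cdot,\cdot\rangle$ is fixed and $A$ is self-adjoint with respect to it, so there is an orthonormal basis $v_1,\dots,v_n$ of eigenvectors with $Av_j=\lambda_j v_j$ and $0=\lambda_1\le\cdots\le\lambda_n$. Since $A$ is positive semidefinite, $\ker A$ is spanned exactly by those $v_j$ with $\lambda_j=0$. Its orthogonal complement $(\ker A)^\perp$ is spanned by the $v_j$ with $\lambda_j>0$, and this set is nonempty by assumption. Throughout, I interpret $\lambda_2$ as the smallest positive eigenvalue; call it $\mu$. (If $\ker A$ has dimension at least two, the indexing in the statement should be read in this sense.)

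The first step is to take $0\neq x\perp\ker A$ and expand $x=\sum_{\lambda_j>0} c_j v_j$. Then
\[
\langle Ax,x\rangle=\sum_{\lambda_j>0}\lambda_j c_j^2\ \ge\ \mu\sum_{\lambda_j>0} c_j^2=\mu\langle x,x\rangle,
\]
so $R_A(x)\ge\mu$. Any eigenvector for $\mu$ is orthogonal to $\ker A$ and has Rayleigh quotient exactly $\mu$. Hence the minimum is attained and equals $\mu$.

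For the characterization of minimizers, equality in the displayed inequality holds if and only if $c_j=0$ whenever $\lambda_j>\mu$. In that case $x$ lies in the $\mu$-eigenspace, so it is an eigenvector with eigenvalue $\mu$. Conversely, every nonzero vector in the $\mu$-eigenspace is orthogonal to $\ker A$ and attains the minimum.

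There is no genuine obstacle here. The only care needed is to note that orthogonality of the eigenbasis, and the identity $\ker A^\perp=\operatorname{im}A$, are taken with respect to the given inner product, in which $A$ is self-adjoint.
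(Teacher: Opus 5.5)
Your spectral-theorem argument is correct, and it is the standard proof behind the paper's citation of this result as the well-known Courant--Fischer theorem; the paper itself gives no proof. You are right to read $\lambda_2$ as the smallest positive eigenvalue, since the literal indexing is only consistent when $\dim\ker A=1$, and because your argument never uses $\lambda_1=0$ it also covers the case $\ker A=0$, which is how the paper actually invokes the result in the Dirichlet case of \cref{thm:weighted_graph_cheeger_dirichlet}.
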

Similar characterizations can be given for the higher eigenvalues, see, e.g., \cite{MR3077874}.

\begin{proposition}
    \label{prop:laplaceusefulLA}
\leavevmode
\begin{enumerate}
    \item Let $A \in \mathbb{R}^{m \times n}$ and $B \in \mathbb{R}^{n \times m}$. Then $AB \in \mathbb{R}^{m \times m}$ and $BA \in \mathbb{R}^{n \times n}$ have the same nonzero (complex) eigenvalues, counted with algebraic multiplicities. Moreover:
    \begin{itemize}
        \item If $v \in \mathbb{R}^m$ satisfies $AB v = \lambda v$ with $\lambda \neq 0$, then $Bv \in \mathbb{R}^n$ is a nonzero eigenvector of $BA$ with eigenvalue $\lambda$.
        \item Conversely, if $w \in \mathbb{R}^n$ satisfies $BA w = \lambda w$ with $\lambda \neq 0$, then $Aw \in \mathbb{R}^m$ is a nonzero eigenvector of $AB$ with eigenvalue $\lambda$.
    \end{itemize}

    \item Let $C, D \in \mathbb{R}^{n \times n}$ be real symmetric matrices satisfying $CD = DC$. Then $C$ and $D$ are simultaneously diagonalizable; that is, there exists an orthonormal basis of common eigenvectors $v$ such that
    \[
    Cv = \alpha v, \qquad Dv = \beta v.
    \]
\end{enumerate}
\end{proposition}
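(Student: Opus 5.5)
The plan is to prove both parts of \cref{prop:laplaceusefulLA} by direct linear-algebraic arguments rather than by invoking external results.

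\textbf{Part 1.} First I will treat the eigenvectors. Suppose $ABv=\lambda v$ with $\lambda\neq 0$ and $v\neq 0$. Then $Bv\neq 0$, since otherwise $\lambda v=A(Bv)=0$. Moreover
\[
BA(Bv)=B(ABv)=\lambda Bv,
\]
so $Bv$ is an eigenvector of $BA$ with eigenvalue $\lambda$. The converse statement for $Aw$ follows by swapping the roles of $A$ and $B$.

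This only shows that the sets of nonzero eigenvalues agree. To match algebraic multiplicities I will use the determinant identity
\[
\lambda^{n}\det(\lambda I_m-AB)=\lambda^{m}\det(\lambda I_n-BA).
\]
This identity comes from evaluating the determinant of the block matrix $\begin{bmatrix}\lambda I_m & A\\ B & I_n\end{bmatrix}$ in two ways. Factoring out the Schur complement with respect to the lower-right block $I_n$ gives $\det(\lambda I_m-AB)$. Factoring out the Schur complement with respect to the upper-left block $\lambda I_m$, first for $\lambda\neq0$, gives $\lambda^m\det(I_n-\lambda^{-1}BA)=\lambda^{m-n}\det(\lambda I_n-BA)$. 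Both sides are polynomials in $\lambda$ that agree for all $\lambda\neq 0$, so they are equal as polynomials. Comparing them, the characteristic polynomials of $AB$ and $BA$ differ only by a power of $\lambda$. Hence the nonzero roots coincide together with their multiplicities.

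I expect this multiplicity bookkeeping to be the only real obstacle. The block-determinant identity handles it cleanly, and the polynomial-identity argument covers $\lambda=0$.

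\textbf{Part 2.} Since $C$ is real symmetric, I will decompose $\mathbb{R}^n$ orthogonally into the eigenspaces $E_\alpha$ of $C$. Commutativity gives that each $E_\alpha$ is $D$-invariant: if $Cv=\alpha v$, then
\[
C(Dv)=DCv=\alpha Dv.
\]
The restriction of $D$ to $E_\alpha$ is again symmetric with respect to the restricted inner product. By the spectral theorem, each $E_\alpha$ therefore has an orthonormal basis of eigenvectors of $D$. The union of these bases over all $\alpha$ is an orthonormal basis of $\mathbb{R}^n$ consisting of common eigenvectors of $C$ and $D$.
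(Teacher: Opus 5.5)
Your proof is correct and complete. The paper gives no argument of its own for \cref{prop:laplaceusefulLA}; it simply defers to standard linear algebra texts. Your proof is the standard textbook argument: Sylvester's identity $\lambda^{n}\det(\lambda I_m-AB)=\lambda^{m}\det(\lambda I_n-BA)$ from the two Schur-complement expansions for Part 1, and restricting $D$ to the $D$-invariant eigenspaces of $C$ for Part 2. You were also right to state explicitly the hypothesis $v\neq 0$ (resp.\ $w\neq 0$), which the statement leaves implicit and which is needed for $Bv$ (resp.\ $Aw$) to be nonzero.
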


\paragraph{Properties of Adjoints}
Let $V$ be a finite-dimensional inner product space over $ \mathbb{R}$, and fix a basis $\{b_1, \dots, b_n\}$ for $V$. The \emph{Gram matrix} $G_V$ associated to this basis is the $n \times n$ matrix with entries
\[
(G_V)_{ij} = \langle b_i, b_j \rangle_V.
\]
In particular, for column vectors $v$ and $v'$ expressed in the basis above, we have 
\[\langle v, v'\rangle_V = v^tG_Vv'.\]

\begin{proposition}
    \label{prop:matrixformadjoint}
Let $f \colon V \to W$ be a linear map between finite-dimensional real inner product spaces, with Gram matrices $G_V$ and $G_W$ (in chosen bases of $V$ and $W$, respectively). If $[f]$ is the matrix of $f$ in these bases, then the matrix of the adjoint $f^* \colon W \to V$ in the same bases is
\[
[f^*] = G_V^{-1} [f]^T G_W.
\]
\end{proposition}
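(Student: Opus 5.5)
The plan is to characterize the adjoint through its defining identity $\langle f(v), w\rangle_W = \langle v, f^*(w)\rangle_V$ for all $v\in V$, $w\in W$. We translate both sides into matrix expressions using the Gram matrices, and then compare coefficients. Throughout, $v$ and $w$ denote coordinate column vectors with respect to the fixed bases of $V$ and $W$. Thus $[f]v$ is the coordinate vector of $f(v)$, and $[f^*]w$ is the coordinate vector of $f^*(w)$.

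First, using the Gram-matrix formula $\langle u,u'\rangle = u^T G u'$ recalled just before the statement, we write the left-hand side as
\[
\langle f(v),w\rangle_W = ([f]v)^T G_W w = v^T [f]^T G_W w .
\]
Second, the right-hand side becomes
\[
\langle v, f^*(w)\rangle_V = v^T G_V [f^*] w .
\]
Since these agree for all coordinate vectors $v$ and $w$, we may take $v=e_i$ and $w=e_j$ to compare the $(i,j)$ entries. This gives the matrix identity
\[
G_V[f^*] = [f]^T G_W .
\]

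Finally, we observe that $G_V$ is invertible. It is the Gram matrix of a basis with respect to an inner product, hence symmetric positive definite: $u^T G_V u = \|u\|_V^2 > 0$ for every nonzero coordinate vector $u$. Multiplying on the left by $G_V^{-1}$ then yields $[f^*] = G_V^{-1}[f]^T G_W$.

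There is no real obstacle here. The only points requiring care are keeping track of which side the Gram matrix appears on, which follows from the convention $\langle v,v'\rangle_V = v^T G_V v'$, and justifying the invertibility of $G_V$ as above.
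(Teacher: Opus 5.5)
Your proof is correct and complete, including the justification that $G_V$ is invertible because it is positive definite. The paper gives no proof of its own; it lists this as a well-known textbook fact, and your argument (translate the adjoint identity through the Gram matrices, compare entries, invert $G_V$) is the standard one.
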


\section{Totally unimodular matrices and linear programming}\label{sec:TU}
\begin{definition}
We say a matrix $M$ is totally unimodular (TU) if the determinant of every squared submatrix of $M$ is equal to $1$, $-1$ or $0$.
\end{definition}
It can be proved that the incidence matrix with respect to a directed graph is totally unimodular,
This fact was first stated by Poincar\'{e} \cite{poincare};
see \cite[page 274]{schrijver1986theory}.

\begin{lemma}\label{lemma: tot_unimod}
Let $e_i, e_j\in \mathbb{R}^{|V|}$ be two standard vectors,
and define the operator $I^+: \im(I)\to \mathbb{R}^{|E|}$ as:
\[
I^+: w\mapsto \argmin_x\{ \|x\|_1 : Ix=w \}.
\]
Note that this minimizer is characterized by the orthogonality condition $\|x+y\|_1\geq \|x\|_1$ for any $y\in \ker (I)$.
We then have that
\[
I^+(e_i-e_j)\in \{-1, 0, 1\}^{|E|},
\]
and $\|I^+(e_i-e_j)\|_1$ is equal to the length of the shortest path between $i, j\in V$. 
\end{lemma}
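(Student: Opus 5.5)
We treat the statement as the classical linear-programming fact that the $\ell^1$-minimal flow between two vertices is an integral shortest path. Interpret $x\in\mathbb{R}^{|E|}$ as a flow on the oriented edges; the constraint $Ix=e_i-e_j$ says that $x$ is a unit flow from $j$ to $i$, up to the sign convention of $I$. The problem $\min\{\|x\|_1: Ix=e_i-e_j\}$ becomes a linear program once we split $x=x^+-x^-$ with $x^\pm\ge 0$: minimize $\one^T(x^++x^-)$ subject to $[I\;\,-I]\,(x^+,x^-)^T=e_i-e_j$ and $x^\pm\ge 0$. The constraint matrix $[I\;\,-I]$ is again totally unimodular, since appending negated columns preserves total unimodularity, and $I$ is TU by the fact recalled above from \cite{schrijver1986theory}. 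The right-hand side is integral. Hence every vertex of the feasible polyhedron is integral.

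The first step is to check that the LP is feasible and bounded. It is feasible because $G$ is connected: the signed edge sequence of any $i$--$j$ path gives a feasible $x$. It is bounded below by $0$. So the optimum is attained at a vertex, which is an integral solution $(x^+,x^-)$. At an optimum we may take $x^+_e x^-_e=0$, since otherwise subtracting $\min(x^+_e,x^-_e)$ from both lowers the cost. Then $x=x^+-x^-$ is integral with $\|x\|_1=\one^T(x^++x^-)$.

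The next step, which is the main obstacle, is to show that an optimal integral $x$ has entries in $\{-1,0,1\}$ and that its value equals $\dist(i,j)$.

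\emph{Upper bound.} The path flow gives $\|x\|_1\le\dist(i,j)$.

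\emph{Lower bound.} Use flow decomposition. An integral flow with net divergence $e_i-e_j$ decomposes into one directed $j\to i$ path together with directed cycles, where each edge may be traversed in either direction according to its sign. So $\|x\|_1\ge$ the length of that path, which is at least $\dist(i,j)$. Equality forces the cycle part to vanish and each edge to be used at most once, so $x\in\{-1,0,1\}^{|E|}$.

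Alternatively, the lower bound follows from LP duality. Set $\phi(v)=\dist(v,j)$. Then $|\phi(u)-\phi(v)|\le 1$ on every edge, so $\phi$ is dual feasible, and it gives $\|x\|_1\ge\langle\phi, Ix\rangle$. Evaluating the right-hand side yields $\phi(i)-\phi(j)=\dist(i,j)$, up to sign.

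A subtlety remains: $I^+$ is defined via an $\argmin$ that may not be unique. We interpret the statement as saying that the minimal value equals $\dist(i,j)$ and that a minimizer can be chosen in $\{-1,0,1\}^{|E|}$, namely a vertex solution. A non-vertex minimizer is a convex combination of vertex minimizers, each of which is a shortest path, so its entries still lie in $[-1,1]$ and its norm is unchanged. We would state this choice explicitly.

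Finally, we verify the remark on the orthogonality condition. $x$ is a minimizer over the affine space $x_0+\ker I$ exactly when $\|x+y\|_1\ge\|x\|_1$ for all $y\in\ker I$, which is immediate from the definitions.
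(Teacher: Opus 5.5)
Your proposal is correct and follows essentially the same route as the paper. Both write the problem as a linear program with the totally unimodular constraint matrix $[I\;\,-I]$, use Hoffman--Kruskal to get an integral optimum, and then apply flow decomposition into one path plus cycles, where minimality forces the cycles to vanish. Your extra care about the $\argmin$ not being unique is a genuine refinement that the paper glosses over: the paper says ``the optimal solution'' is integral, but on a $4$-cycle with $i,j$ opposite the average of the two shortest paths is also a minimizer, with entries $\pm\tfrac12$. So reading the lemma as ``some (vertex) minimizer lies in $\{-1,0,1\}^{|E|}$, and every minimizer has norm $\dist(i,j)$'' is the right interpretation.
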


\begin{proof}
We observe that computing $I^+(e_i-e_j)$ is equivalent to solving the optimization problem:
\[
\min_x \|x\|_1 \quad \text{subject to} \quad I x = e_i-e_j.
\]
If we depart $x = x^+ - x^-$ with $x^+, x^- \ge 0$, 
this optimization problem is equivalent to the following Linear Program (LP):
\begin{equation}\label{eq:LP}
\min \mathbf{1}^T x^+ + \mathbf{1}^T x^- \quad \text{subject to} \quad 
\begin{bmatrix}
I & -I\\  
\end{bmatrix}
\begin{bmatrix}
x^+ \\ x^- 
\end{bmatrix} = 
e_i-e_j.
\end{equation}
Since $I$ is TU (\cite{schrijver1986theory}),
it is straightforward to verify that the block matrix 
$\begin{bmatrix}
    I & -I
\end{bmatrix}$ is TU as well.
According to Hoffman-Kruskal theorem \cite[Corollary 19.2a]{schrijver1986theory},
the optimal solution to \cref{eq:LP} is integral,
i.e.,
$
\begin{bmatrix} x^+ \\ x^- 
\end{bmatrix} 
\in \mathbb{Z}^{2\times|E|}.
$
Thus,
the minimizer $x\in \mathbb{Z}^{|E|}$ is an integral flow.

We now prove that $x\in \{-1, 0, 1\}^{|E|}$.
According to the Flow Decomposition Theorem \cite[Theorem 3.5]{MR1205775}, the integral flow $x$ satisfying $Ix = e_i - e_j$ can be decomposed into exactly one simple path $p$ from $i$ to $j$, 
plus a set of simple closed cycles $C$:
\[
x = p + \sum_{c \in C} c,
\]
where $p\in \{-1, 0, 1\}^{|E|}$ and the cycles satisfy $\sum\limits_{c\in C} c\in \ker (I)$.
Since $x$ was chosen to minimize the $\ell^1$-norm,
the set of cycles $C$ must be empty. 
If $C$ were non-empty,
removing the cycles would yield the flow $p$,
which still satisfies $Ip = e_i - e_j$ but has a strictly smaller norm ($\|p\|_1 < \|x\|_1$), 
contradicting the minimality of $x$. 
Therefore, $C = \emptyset$ and $x = p \in \{-1, 0, 1\}^{|E|}$. 
Consequently,
$I^+(e_i-e_j)=p$,
and its $\ell^1$-norm corresponds exactly to the number of edges in the path $p$. Since it minimizes the $\ell^1$-norm, $p$ must be the shortest path between $i$ and $j$.
\end{proof}

If we pick a vertex $g\in V$ as a designated ground vertex,
and denote by $\tilde{I}$ the reduced incidence matrix associated with this grounded graph
(i.e.,
$\tilde{I}$ is obtained by deleting the $g$-th row from $I$),
the solution of the optimization problem with respect to $\tilde{I}^+$ is also restricted to $\{-1, 0, 1\}$.
We summarize the result as follows:
 
\begin{corollary}\label{cor:tot_unimod_ground}
Let $e_i\in \mathbb{R}^{|V|-1}$ be a standard vector for $i\in V\setminus \{g\}$.
Let $\tilde{I}^+: \im(I)\to \mathbb{R}^{|E|}$ be the map defined as
\[
\tilde{I}^+: w\mapsto \argmin_x\{ \|x\|_1 : \tilde{I}x=w \}.
\]
Then
\[
\tilde{I}^+(e_i)\in \{-1, 0, 1\}^{|E|},
\]
and $\|\tilde{I}^+(e_i)\|_1$ is equal to the length of the shortest path between $i$ and the ground vertex $g$.
\end{corollary}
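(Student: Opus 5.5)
The statement is \cref{cor:tot_unimod_ground}. I will reduce it to \cref{lemma: tot_unimod} by adjoining the deleted row. The key observation is that every column of the full incidence matrix $I$ sums to zero, since each edge has one $+1$ and one $-1$. The deleted row $g$ is therefore minus the sum of the remaining rows.

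First I show that the two constraint sets agree. For $x\in\mathbb{R}^{|E|}$, the condition $\tilde I x = e_i$ (with $e_i\in\mathbb{R}^{|V|-1}$) forces
\[
(Ix)_g=-\sum_{v\neq g}(Ix)_v=-1 .
\]
Hence $\tilde I x=e_i$ holds if and only if $Ix=e_i-e_g$ in $\mathbb{R}^{|V|}$. The converse direction is immediate, since deleting row $g$ from $Ix=e_i-e_g$ gives back $\tilde I x=e_i$.

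It follows that the two feasible sets coincide, so the $\ell^1$-minimization problems defining $\tilde I^+(e_i)$ and $I^+(e_i-e_g)$ are literally the same problem. In particular $\ker\tilde I=\ker I$, because the same row-sum argument applied with right-hand side $0$ shows $\tilde Ix=0\Rightarrow Ix=0$. So the $\ell^1$-orthogonality characterization of the minimizer also transfers verbatim.

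Applying \cref{lemma: tot_unimod} with $j=g$ then gives both conclusions: $\tilde I^+(e_i)\in\{-1,0,1\}^{|E|}$, and its $\ell^1$-norm equals the length of a shortest path from $i$ to $g$.

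There is one subtlety I expect to be the main (minor) obstacle. The argmin need not be unique: for example, two shortest paths may tie, or fractional combinations of them may also be optimal. I will handle this the same way the lemma implicitly does. The optimal value is attained at an integral vertex of the LP by Hoffman–Kruskal, since $\tilde I$ is a submatrix of a TU matrix and is therefore TU. One then interprets $\tilde I^+$ as selecting such a minimizer. The norm statement holds for every minimizer regardless, because the optimal value equals $\dist(i,g)$.

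Alternatively, one could argue directly: $\tilde I$ is TU, flow decomposition of an integral $x$ with $Ix=e_i-e_g$ yields a single path plus cycles, and minimality removes the cycles. This just repeats the lemma's proof, so the reduction above is the cleaner route.
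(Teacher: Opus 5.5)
Your proof is correct and follows the same route as the paper: the rows of $I$ sum to zero, so $\tilde I x = e_i$ is equivalent to $I x = e_i - e_g$, and \cref{lemma: tot_unimod} with $j=g$ gives both conclusions. Your remark on non-uniqueness is a genuine refinement the paper leaves implicit. Ties between shortest paths allow non-integral minimizers, so the integrality claim must be read as choosing an integral vertex optimum; the norm claim holds for every minimizer.
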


\begin{proof}
Notice that the equation $\tilde{I}x=e_i$ implies $Ix=e_i-e_g$,
where $e_g$ denotes the standard basis vector in  $\mathbb{R}^{|V|}$ corresponding to the ground vertex $g$.
This is because the sum of the rows of the full incidence matrix $I$ is always zero. 
Hence,
the grounded problem $\tilde{I} x = e_i$ is mathematically identical to the ungrounded problem $I x = e_i - e_g$.
It naturally follows from \cref{lemma: tot_unimod} that $\tilde{I}^+(e_i)\in \{-1, 0, 1\}^{|E|}$,
and $\tilde{I}^+ e_i=I^+(e_i-e_g)$.
\end{proof}

\section{Missing details}
\subsection{Proof of \cref{thm:weighted_graph_cheeger_dirichlet} }
\label{sec.app.proof.dirichlet}
\begin{proof}
For \(x\in\mathbb R^m\), write \(g=W^{-1/2}x\). By the definitions of \(w_{ij}\) and \(b_i\),
\[
x^TMx=g^TXg=\sum_{i<j}w_{ij}(g_i-g_j)^2+\sum_i b_i g_i^2,\qquad \|x\|^2=\sum_i\mu_i g_i^2.
\]
In particular, if a $b_i>0$, then the matrix $M$ is positive definite and $\ker M =0$. Here we used that the interior graph associated with \(X\) is connected.
\paragraph{Upper bounds.}
We first prove the upper bounds. Suppose first that at least one row sum of \(X\) is positive, and let \(S\) achieve \(h_{\mathrm{vol}}^{\mathrm{Bdy}}\). Taking \(g=\chi_S\) (the indicator vector on $S$) and \(x=W^{1/2}g\), we get
\[
\frac{x^TMx}{\|x\|^2}
=
\frac{\operatorname{Cut}(S)+\operatorname{Bdy}(S)}{\operatorname{Vol}(S)}
=
h_{\mathrm{vol}}^{\mathrm{Bdy}},
\]
so \(\lambda_{\min}(M)\leq h_{\mathrm{vol}}^{\mathrm{Bdy}}\) by \cref{thm.rayleigh} noting that $\ker M =0$.

Now suppose that all row sums of \(X\) vanish. Let \(S\) achieve \(h_{\mathrm{vol}}\), with \(\operatorname{Vol}(S)\leq V_{\mathrm{tot}}/2\). Set \(\alpha=\operatorname{Vol}(S)/V_{\mathrm{tot}}\), and define
\[
g_i=
\begin{cases}
1-\alpha, & i\in S,\\
-\alpha, & i\notin S.
\end{cases}
\]
Then \(x=W^{1/2}g\) is orthogonal to \(\ker M = W^{1/2}\one\). Since \(b_i=0\) for all \(i\), we have
\[
x^TMx=\operatorname{Cut}(S),
\]
while
\[
\|x\|^2=\operatorname{Vol}(S)(1-\alpha)^2+(V_{\mathrm{tot}}-\operatorname{Vol}(S))\alpha^2
=\frac{\operatorname{Vol}(S)(V_{\mathrm{tot}}-\operatorname{Vol}(S))}{V_{\mathrm{tot}}}.
\]
Since \(\operatorname{Vol}(S)\leq V_{\mathrm{tot}}/2\), we have \(\|x\|^2\geq \operatorname{Vol}(S)/2\). Hence
\[
\lambda_1(M)\leq \frac{x^TMx}{\|x\|^2}\leq 2\frac{\operatorname{Cut}(S)}{\operatorname{Vol}(S)}=2h_{\mathrm{vol}},
\]
by \cref{thm.rayleigh}.
\paragraph{Lower bounds.}
We now prove the lower bounds.

\emph{(1) Dirichlet case.}
Assume that at least one row sum of \(X\) is positive, i.e. $M$ is positive definite and $\ker M =0$. 

Let \(x\) be an eigenvector for \(\lambda_{\min}(M)\), let \(g=W^{-1/2}x\), and set \(h_i=g_i^2\). Introduce an auxiliary ``ground'' vertex, indexed by \(0\), with fixed value \(\widetilde g_0=0\). For \(i\geq 1\), set \(\widetilde g_i=g_i\) and \(\widetilde h_i=\widetilde g_i^2\), while \(\widetilde h_0=0\). Define augmented weights by \(\widetilde w_{ij}=w_{ij}\) for \(i,j\geq 1\), and \(\widetilde w_{i0}=b_i\). Then
\[
x^TMx=\sum_{i<j}w_{ij}(g_i-g_j)^2+\sum_i b_i(g_i-\widetilde g_0)^2=\sum_{0\leq i<j\leq m}\widetilde w_{ij}(\widetilde g_i-\widetilde g_j)^2.
\]
Moreover,
\[
|\widetilde h_i-\widetilde h_j|=|\widetilde g_i^2-\widetilde g_j^2|=|\widetilde g_i-\widetilde g_j|\,|\widetilde g_i+\widetilde g_j|.
\]
Hence, applying Cauchy--Schwarz to \(a_{ij}:=\sqrt{\widetilde w_{ij}}|\widetilde g_i-\widetilde g_j|\) and \(c_{ij}:=\sqrt{\widetilde w_{ij}}|\widetilde g_i+\widetilde g_j|\), we obtain
\[
\left(\sum_{0\leq i<j\leq m}\widetilde w_{ij}|\widetilde h_i-\widetilde h_j|\right)^2
\leq
\left(\sum_{0\leq i<j\leq m}\widetilde w_{ij}(\widetilde g_i-\widetilde g_j)^2\right)
\left(\sum_{0\leq i<j\leq m}\widetilde w_{ij}(\widetilde g_i+\widetilde g_j)^2\right),
\]
and therefore
\[
x^TMx\geq
\frac{\left(\sum_{0\leq i<j\leq m}\widetilde w_{ij}|\widetilde h_i-\widetilde h_j|\right)^2}
{\sum_{0\leq i<j\leq m}\widetilde w_{ij}(\widetilde g_i+\widetilde g_j)^2}.
\]
The denominator is bounded by
\[
\sum_{0\leq i<j\leq m}\widetilde w_{ij}(\widetilde g_i+\widetilde g_j)^2
\leq 2\sum_{0\leq i<j\leq m}\widetilde w_{ij}(\widetilde g_i^2+\widetilde g_j^2)
=
2\sum_{0\leq i\leq m}\left(\sum_{j\neq i,0}w_{ij}+b_i\right)g_i^2
=
2\sum_{0\leq i\leq m}X_{ii}g_i^2
\leq 2\Delta\|x\|^2.
\]
For \(t>0\), let \(S_t:=\{i:h_i>t\}\). By the ``layer-cake formula'',
\[
\sum_{0\leq i<j\leq m}\widetilde w_{ij}|\widetilde h_i-\widetilde h_j|
=
\int_0^\infty
\sum_{0\leq i<j\leq m}\widetilde w_{ij}
|\chi_{S_t}(i)-\chi_{S_t}(j)|\,dt.
\]
where $\chi_A$ denote the indicator vector on $A$. 

Since the vertex \(0\) is never in \(S_t\), the integrand is \(\operatorname{Cut}(S_t)+\operatorname{Bdy}(S_t)\). Thus
\[
\sum_{0\leq i<j\leq m}\widetilde w_{ij}|\widetilde h_i-\widetilde h_j|
=
\int_0^\infty\left(\operatorname{Cut}(S_t)+\operatorname{Bdy}(S_t)\right)\,dt
\geq
h_{\mathrm{vol}}^{\mathrm{Bdy}}\int_0^\infty\operatorname{Vol}(S_t)\,dt.
\]
Finally,
\[
\int_0^\infty\operatorname{Vol}(S_t)\,dt=\sum_i\mu_i h_i=\sum_i\mu_i g_i^2=\|x\|^2.
\]
Substituting the numerator and denominator estimates gives
\[
x^TMx\geq
\frac{\left(h_{\mathrm{vol}}^{\mathrm{Bdy}}\|x\|^2\right)^2}{2\Delta\|x\|^2}
=
\frac{(h_{\mathrm{vol}}^{\mathrm{Bdy}})^2}{2\Delta}\|x\|^2.
\]
Dividing by \(\|x\|^2\) gives \(\lambda_{\min}(M)\geq (h_{\mathrm{vol}}^{\mathrm{Bdy}})^2/(2\Delta)\).

\emph{(2) Boundaryless case.}
Assume that all row sums of \(X\) vanish. Then \(b_i=0\) for all \(i\), and
\[
x^TMx=\sum_{i<j}w_{ij}(g_i-g_j)^2.
\]
Since the interior graph associated with \(X\) is connected, \(\ker M=\operatorname{span}(W^{1/2}\one)\). Let \(x\perp W^{1/2}\one\) be an eigenvector for \(\lambda_1(M)\), let \(g=W^{-1/2}x\), and set \(h_i=g_i^2\). The orthogonality condition is \(\sum_i\mu_i g_i=0\). Thus \(g\) has both nonnegative and nonpositive parts. Replacing \(g\) by \(-g\) if necessary, we may assume that its positive support \(S_+:=\{i:g_i>0\}\) satisfies \(\operatorname{Vol}(S_+)\leq V_{\mathrm{tot}}/2\). Define \(g_i^+:=\max(g_i,0)\). Edgewise, \(|g_i^+-g_j^+|\leq |g_i-g_j|\), so the energy of \(g^+\) is at most the energy of \(g\). Thus it suffices to prove the desired lower bound for a nonnegative function \(g\) whose support \(S_+\) satisfies \(\operatorname{Vol}(S_+)\leq V_{\mathrm{tot}}/2\).

For such a nonnegative \(g\), we use
\[
|h_i-h_j|=|g_i^2-g_j^2|=|g_i-g_j|\,|g_i+g_j|.
\]
By Cauchy--Schwarz,
\[
\left(\sum_{i<j}w_{ij}|h_i-h_j|\right)^2
\leq
\left(\sum_{i<j}w_{ij}(g_i-g_j)^2\right)
\left(\sum_{i<j}w_{ij}(g_i+g_j)^2\right).
\]
Thus
\[
x^TMx\geq
\frac{\left(\sum_{i<j}w_{ij}|h_i-h_j|\right)^2}
{\sum_{i<j}w_{ij}(g_i+g_j)^2}.
\]
The denominator is bounded by
\[
\sum_{i<j}w_{ij}(g_i+g_j)^2
\leq 2\sum_{i<j}w_{ij}(g_i^2+g_j^2)
=
2\sum_i\left(\sum_{j\neq i}w_{ij}\right)g_i^2
=
2\sum_iX_{ii}g_i^2
\leq 2\Delta\|x\|^2.
\]
For \(t>0\), let \(S_t:=\{i:h_i>t\}\). Since \(S_t\subseteq S_+\), we have \(\operatorname{Vol}(S_t)\leq V_{\mathrm{tot}}/2\). By the layer-cake formula,
\[
\sum_{i<j}w_{ij}|h_i-h_j|
=
\int_0^\infty\sum_{i<j}w_{ij}|\mathbf 1_{S_t}(i)-\mathbf 1_{S_t}(j)|\,dt
=
\int_0^\infty\operatorname{Cut}(S_t)\,dt.
\]
By the definition of \(h_{\mathrm{vol}}\),
\[
\int_0^\infty\operatorname{Cut}(S_t)\,dt
\geq
h_{\mathrm{vol}}\int_0^\infty\operatorname{Vol}(S_t)\,dt
=
h_{\mathrm{vol}}\sum_i\mu_i h_i
=
h_{\mathrm{vol}}\|x\|^2.
\]
Combining the numerator and denominator estimates gives
\[
x^TMx\geq
\frac{(h_{\mathrm{vol}}\|x\|^2)^2}{2\Delta\|x\|^2}
=
\frac{h_{\mathrm{vol}}^2}{2\Delta}\|x\|^2.
\]
Dividing by \(\|x\|^2\) gives \(\lambda_1(M)\geq h_{\mathrm{vol}}^2/(2\Delta)\).
\end{proof}

\subsection{Missing details in \cref{sec:cheeger_phi_psdmfld}}\label{sec:missing_proof}
\begin{proof}[\textbf{Proof of} \cref{lemma:ungrd_graph_diam}]
Let $w=I x$.
 We can regard $x \in \mathbb{R}^{|E|}$ as a flow vector on the graph $G$, 
 and $w \in \mathbb{R}^{|V|}$ as the resulting node imbalance.
Since the incidence matrix $I$ acts as a linear operator from $\mathbb{R}^{|E|}$ to $\mathbb{R}^{|V|}$,
we define an operator $I^{+}: \im (I)\to \mathbb{R}^{|E|}$ that maps a valid node imbalance $w$ back to the flow that satisfies this imbalance with the minimal $\ell^1$-norm:
\[I^{+}:w\mapsto \argmin_{x}\{ \|x\|_1 : Ix = w \}.\]
Note that this minimal $\ell^1$-norm solution is characterized by the orthogonality condition $\|x+y\|_1 \geq \|x\|_1$ for any $y \in \ker(I)$. 
We then have:
\begin{equation}\label{eq:min_w}
\min_{x\perp^1\ker(I)}\frac{\|Ix\|_1}{\|x\|_1}=
\min_{x\notin \ker(I)} \frac{\|I x\|_1}{\min\limits_{y\in \ker(I)}\|x+y\|_1}=
\min_{w\in \im I}\frac{\|w\|_1}{\|I^{+}w\|_1}.
\end{equation}
Let $\widehat{w}$ be a node imbalance that minimize the right-hand side of \cref{eq:min_w},
and let $\widehat{x}=I^{+}\widehat{w}$.
Since $I$ is a totally unimodular matrix,
the conformal decomposition property guarantees that any such flow imbalance $\widehat{w}$ can be decomposed as
\[
\widehat{w}=
\sum_{i\neq j} c_{ij} (e_i-e_j),
\]
where $e_i, e_j\in \mathbb{R}^{|V|}$ are standard basis vectors, $c_{ij}\geq 0$,
and $\|\widehat{w}\|_1=\sum\limits_{i\neq j} c_{ij} \|e_i - e_j\|_1=\sum\limits_{i\neq j} 2c_{ij}$.

On the other hand,
let $x_{ij}=I^{+} (e_i-e_j)$.
According to \cref{lemma: tot_unimod},
we obtain $x_{ij }\in \{-1, 0, 1\}^{|E|}$ for each pair of $e_i\neq e_j$.
Geometrically, $x_{ij}$ corresponds to the flow along the shortest path between nodes $i$ and $j$ in the underlying undirected graph.
Thus, 
$\|x_{ij}\|_1 = \dist(i, j)$.
By the triangle inequality for the $\ell^1$-norm,
we have:
\[
\frac{\|\widehat{w}\|_1}{\|\widehat{x}\|_1} = 
\frac{\sum c_{ij} \|e_i-e_j\|_1}{\| I^{+}\left(\sum c_{ij} (e_i-e_j)\right) \|_1} \geq 
\frac{\sum 2c_{ij}}{\sum c_{ij} \|x_{ij}\|_1}\geq
\min_{i, j}\frac{2}{\dist (i, j)}=\frac{2}{\diam(G)}.
\]
Since $\widehat{w}$ is chosen to minimize \cref{eq:min_w},
hence \cref{eq:ungrd_graph_diam} is proved.
\end{proof}

\begin{proof}[\textbf{Proof of} \cref{lemma:grd_graph_diam}]
Similarly to \cref{eq:min_w},
we have the equality
\begin{equation}\label{eq:min_w_ground}
\min_{x\perp^1\ker(R)}\frac{\|Rx\|_1}{\|x\|_1}=
\min_{w\in \im R}\frac{\|w\|_1}{\|R^{+}w\|_1},
\end{equation}
where $R^+:\im(R)\to \mathbb{R}^{|E|}$ is defined as mapping a node imbalance to the minimal $\ell^1$-norm flow:
\[
R^{+}:w\mapsto \argmin_{x}\{ \|x\|_1 : Rx = w \}.
\]
Let $\widehat{w}\in \im (R)$ be a vector that minimizes the right-hand side of \cref{eq:min_w_ground}.
By conformal decomposition,
$\widehat{w}$ can be decomposed into the standard basis vectors of $\mathbb{R}^{|V|-1}$:
\[
\widehat{w}=
\sum_i c_i(\sigma_i e_i),
\]
where $\sigma_i \in \{-1, 1\}$ represents the sign of the component,
$c_i\geq 0$,
and $\|\widehat{w}\|_1=\sum\limits_i c_i$.

Similarly to the proof of \cref{lemma:ungrd_graph_diam},
we take $x_i=R^{+}(\sigma_i e_i)$.
According to \cref{cor:tot_unimod_ground},
$x_i\in \{-1, 0, 1\}^{|E|}$ for each $i\in V$.
Geometrically, $x_i$ represents the minimal flow between vertex $i$ and the ground vertex $u$,
thus $\|x_i\|_1 = \dist(i, u)$.
Therefore,
\[
\frac{\|\widehat{w}\|_1}{\|\widehat{x}\|_1} = 
\frac{\sum c_{i}}{\| R^{+}\left(\sum c_{i}(\sigma_i  e_i)\right) \|_1} \geq 
\frac{\sum c_{i}}{\sum c_{i} \|x_{i}\|_1}\geq
\min_{i\in V}\frac{1}{\dist (i, u)}.
\]
Since $\widehat{w}$ is the vector that minimizes the right-hand side of \cref{eq:min_w_ground},
\cref{eq:grd_graph_diam} is proved.
\end{proof}

\subsection{Proof of \cref{eq:L_chi_1_norm}}
\label{app.proof.of.eq}
\begin{lemma}
Let \(G=(V,E)\) be an undirected graph with adjacency matrix \(A\), where \(A_{ij}=1\) if \(\{i,j\}\in E\) and \(A_{ij}=0\) otherwise. Let \(D\) be the diagonal degree matrix, \(D_{ii}=\sum_j A_{ij}\), and let \(\Delta_0^G=L:=D-A\). Then, for every \(S\subseteq V\),
\[
\|\Delta_0^G\chi_S\|_1=2\chi_S^T\Delta_0^G\chi_S=2|E(S,S^c)|.
\]
\end{lemma}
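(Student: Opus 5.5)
The proof is a direct computation of the vector \(L\chi_S\) entry by entry, followed by summing absolute values. For \(i\in V\) we have
\[
(L\chi_S)_i = D_{ii}\chi_S(i)-\sum_j A_{ij}\chi_S(j).
\]
We split into two cases according to whether \(i\in S\) or \(i\notin S\).

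If \(i\in S\), then
\[
(L\chi_S)_i=\deg(i)-|\{j\in S: \{i,j\}\in E\}|=|\{j\in S^c:\{i,j\}\in E\}|,
\]
which is the number of crossing edges at \(i\) and is nonnegative. If \(i\notin S\), then
\[
(L\chi_S)_i=-|\{j\in S:\{i,j\}\in E\}|,
\]
which is minus the number of crossing edges at \(i\) and is nonpositive. In both cases \(|(L\chi_S)_i|\) equals the number of edges of \(E(S,S^c)\) incident to \(i\).

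Summing over \(i\), each crossing edge \(\{u,v\}\) with \(u\in S\), \(v\in S^c\) is counted exactly twice: once at \(u\) and once at \(v\). Edges inside \(S\) or inside \(S^c\) are never counted. Hence
\[
\|L\chi_S\|_1=2|E(S,S^c)|.
\]

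For the quadratic form, we use the Dirichlet energy identity recalled at the start of \cref{sec:edge-vertex-laplacian} with unit weights:
\[
\chi_S^TL\chi_S=\sum_{\{i,j\}\in E}(\chi_S(i)-\chi_S(j))^2.
\]
Each summand is \(1\) exactly when the edge crosses between \(S\) and \(S^c\), and \(0\) otherwise. So \(\chi_S^TL\chi_S=|E(S,S^c)|\).

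Alternatively, one can write \(\chi_S^TL\chi_S=\sum_{i\in S}(L\chi_S)_i\) and reuse the case computation for \(i\in S\). Combining the two identities gives \(\|\Delta_0^G\chi_S\|_1=2\chi_S^T\Delta_0^G\chi_S=2|E(S,S^c)|\).

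There is no real obstacle in this argument. The only point needing care is the sign analysis, which shows that the entries of \(L\chi_S\) are nonnegative on \(S\) and nonpositive on \(S^c\), so that the absolute values simply count incident crossing edges. One should also note that self-loops are excluded, since \(A_{ii}=0\), and that \(G\) is unweighted; both are implicit in the stated definitions.
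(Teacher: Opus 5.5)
Your proof is correct and follows essentially the same route as the paper: an entrywise case split on $i\in S$ versus $i\notin S$ to get $\|L\chi_S\|_1=2|E(S,S^c)|$ (each crossing edge counted twice, which the paper phrases via symmetry of $A$), and then $\chi_S^TL\chi_S=\sum_{i\in S}(L\chi_S)_i$ for the quadratic form. Your Dirichlet-energy alternative for the second identity is an equally valid shortcut.
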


\begin{proof}
For \(i\in V\),
\[
(L\chi_S)_i=
\begin{cases}
\sum_{j\notin S}A_{ij}, & i\in S,\\
-\sum_{j\in S}A_{ij}, & i\notin S.
\end{cases}
\]
Hence
\[
\|L\chi_S\|_1=\sum_{i\in S}\sum_{j\notin S}A_{ij}+\sum_{i\notin S}\sum_{j\in S}A_{ij}=2|E(S,S^c)|,
\]
where we used the symmetry of \(A\). Moreover,
\[
\chi_S^TL\chi_S=\sum_{i\in S}(L\chi_S)_i=\sum_{i\in S}\sum_{j\notin S}A_{ij}=|E(S,S^c)|.
\]
\end{proof}
\bibliography{sample}

\end{document}